\let\cal\mathscr
\newcommand \Om {\Omega}
\newcommand \om {\omega}
\renewcommand \leq {\leqslant}
\renewcommand \geq {\geqslant}
\DeclareMathOperator{\Vol}{Vol}
\DeclareMathOperator{\End}{End}
\DeclareMathOperator{\Tr}{Tr}
\DeclareMathOperator{\Ker}{Ker}
\DeclareMathOperator{\GL}{GL}
\DeclareMathOperator{\Spec}{Spec}
\DeclareMathOperator{\Lie}{Lie}
\DeclareMathOperator{\Ric}{Ric}
\DeclareMathOperator{\Aut}{Aut}
\DeclareMathOperator{\Herm}{Herm}
\DeclareMathOperator{\Prod}{Prod}
\DeclareMathOperator{\Met}{Met}
\DeclareMathOperator{\FS}{FS}
\DeclareMathOperator{\Fut}{Fut}
\def\id{{1\hskip-2.5pt{\rm l}}}
\newcommand \dbar {\overline{\partial}}
\newcommand \< {\mathcal{h}}
\renewcommand \> {\mathcal{i}}
\newcommand \cinf {\CC^\infty}
\newcommand \Id {{\rm Id}}
\renewcommand \epsilon {\varepsilon}
\newcommand \CC {{\cal C}}
\newcommand \BB {{\cal B}}
\newcommand \EE {{\cal E}}
\newcommand \HH {{\cal H}}
\newcommand \s {\textbf{s}}
\def\cL{\mathscr{L}}
\def\cB{\mathcal{B}}
\def\Re{{\rm Re}}
\def\Im{{\rm Im}}
\def\cL{\mathscr{L}}
\newcommand{\til}[1]{\widetilde{#1}}
\newcommand \dt {\frac{\partial}{\partial t}}
\newcommand \R {\mathbb R}
\newcommand \C {\mathbb C}
\newcommand \N {\mathbb N}
\newcommand \fl {\rightarrow}
\newcommand \ignore[1] {}
\theoremstyle{plain}
\newtheorem{theorem}{Theorem}[section]
\newtheorem{lem}[theorem]{Lemma}
\newtheorem{cor}[theorem]{Corollary}
\newtheorem{prop}[theorem]{Proposition}
\theoremstyle{definition}
\newtheorem*{ackn*}{Acknowledgments}
\newtheorem{defi}[theorem]{Definition}
\newtheorem{rmk}[theorem]{Remark}
\numberwithin{equation}{section}
\crefname{equation}{}{}
\crefname{lem}{Lemma}{Lemmas}
\crefname{theorem}{Theorem}{Theorems}
\crefname{cor}{Corollary}{Corollaries}
\crefname{ex}{Example}{Examples}
\crefname{defi}{Definition}{Definitions}
\crefname{prop}{Proposition}{Propositions}
\crefname{section}{Section}{Sections}
\crefname{subsection}{Section}{Sections}
\crefname{rmk}{Remark}{Remarks}
\crefname{nota}{Notation}{Notations}
\begin{document}

\title{\bf{Balanced metrics for Kähler-Ricci solitons and
quantized Futaki invariants}}
\author{Louis IOOS$^1$}
\date{}
\maketitle
\newcommand{\Addresses}{{
  \bigskip
  \footnotesize

  \textsc{Philipps-Universität Marburg, Hans-Meerwein-Strasse 6, 35043 Marburg,
Germany}\par\nopagebreak
  \textit{E-mail address}: \texttt{ioos@mathematik.uni-marburg.de}\par\nopagebreak
  \textit{Website}: \texttt{louisioos.github.io}
}}
\footnotetext[1]{Partially supported by the European Research Council Starting grant 757585}

\begin{abstract}
We show that a Kähler-Ricci soliton on a Fano manifold can always be smoothly approximated by a sequence of relative anticanonically balanced
metrics, also called quantized Kähler-Ricci solitons. The proof uses
a semiclassical estimate on the spectral gap of
an equivariant Berezin transform
to extend a strategy due to Donaldson, and can be seen as the quantization
of a method due to Tian and Zhu, using quantized Futaki invariants as obstructions for quantized Kähler-Ricci solitons. As corollaries,
we recover the uniqueness of
Kähler-Ricci solitons up to automorphisms, and show how our result
also applies to Kähler-Einstein Fano manifolds with general
automorphism group.
\end{abstract}

\section{Introduction}

The purpose of this paper is to use a relative extension of the notion of
balanced metrics, first introduced by Donaldson in \cite{Don01}, in order
to approximate Kähler-Ricci solitons on Fano manifolds.

Recall that a compact complex manifold $X$ is a \emph{Fano manifold}
if its anticanonical line bundle $L:=\det(T^{(1,0)}X)$ is
\emph{ample}. Thanks to a classical theorem of Kodaira,
this means that $L$ admits a
\emph{positive Hermitian metric}
$h\in\Met^+(L)$, so that its Chern curvature
$R_h\in\Om^2(X,\C)$ induces a Kähler form
on $X$ via the formula
\begin{equation}\label{preq}
\om_h:=\frac{\sqrt{-1}}{2\pi}R_h\,.
\end{equation}
On the other hand, the group $\Aut(X)$ of holomorphic diffeomorphisms of $X$
is a finite dimensional complex Lie group, inducing a complex
embedding of its Lie algebra $\Lie\Aut(X)$
into the Lie algebra $\cinf(X,TX)$ of real vector fields over $X$,
endowed with the complex structure $J\in\End(TX)$.
A Kähler form $\om_h\in\Om^2(X,\R)$
is called a \emph{Kähler-Ricci soliton} with respect to
$\xi\in\Lie\Aut(X)$ if it satisfies $L_{J\xi}\om_h=0$ and
\begin{equation}\label{KRdef}
\Ric(\om_h)-\om_h=L_{\xi}\,\om_h\,,
\end{equation}
where $\Ric(\om_h)\in\Om^2(X,\R)$ denotes the Ricci form of $\om_h$,
and $L_\eta$ denotes the Lie derivative along $\eta\in\Lie\Aut(X)$.
This definition coincides with the definition of Tian and Zhu
in \cite{TZ00}.
In the case $\xi=0$, we recover
the notion of a \emph{Kähler-Einstein metric}.

Fix now $p\in\N^*$, and consider the space $H^0(X,L^p)$ of holomorphic
sections of the $p$-th tensor power $L^p:=L^{\otimes p}$.
Let $h^p\in\Met^+(L^p)$ be a positive Hermitian metric on $L^p$, and
consider the induced $L^2$-Hermitian inner product $L^2(h^p)$
defined on $s_1,\,s_2\in H^0(X,L^p)$ by
\begin{equation}\label{L2intro}
\<s_1,s_2\>_{L^2(h^p)}:=\int_X\,\<s_1(x),s_2(x)\>_{h^p}\,d\nu_{h}(x)\,,
\end{equation}
where $d\nu_h$ is the \emph{anticanonical volume form} associated with
the induced metric $h\in\Met^+(L)$ on $L$,
defined over any contractible open subset
$U\subset X$ by the formula
\begin{equation}\label{dnucandef}
d\nu_h:=\sqrt{-1}^{\,n^2}\frac{\theta\wedge\bar{\theta}}
{~~~|\theta|_{h^{-1}}^2}\,,
\end{equation}
independent of $\theta\in\cinf(U,\det(T^{(1,0)*}X))$ non-vanishing, where
$h^{-1}\in\Met(L^*)$ denotes the dual Hermitian metric on
$L^*=\det(T^{(1,0)*}X)$.
On the other hand, by definition of $L$ as an ample line bundle,
it induces an embedding of $X$ into the projective space
of hyperplanes in $H^0(X,L^p)$ for any $p\in\N^*$ large enough,
called Kodaira embedding. Given a Hermitian inner product
$H$ on $H^0(X,L^p)$, we can then endow $L^p$ with the
positive Hermitian metric
$\FS(H)\in\Met^+(L^p)$ induced by the Fubini-Study metric.
A positive Hermitian metric $h^p\in\Met^+(L^p)$
is called
\emph{anticanonically balanced relative to} $\xi\in\Lie\Aut(X)$
if it satisfies $L_{J\xi}\om_{h^p}=0$ and
\begin{equation}\label{relbaldef}
\om_{h^p}=\phi_{\xi/2p}^*\,\om_{\FS(L^2(h^p))}\,,
\end{equation}
where $\phi_{\eta}\in\Aut(X)$ exponentiates $\eta\in\Lie\Aut(X)$.
In the case $\xi=0$, we recover the usual notion of
an anticanonically balanced metrics, introduced by Donaldson in
\cite[\S\,2.2.2]{Don09}.

The main result of this paper is the following Theorem, which we prove in \cref{proofsec}.
For any $m\in\N$, let $|\cdot|_{\CC^m}$ be a fixed
$\CC^m$-norm on $\Om^2(X,\R)$, and write $\Aut_0(X)$ for the identity
component of $\Aut(X)$.
\begin{theorem}\label{mainth}
Let $\om_{h_\infty}\in\Om^2(X,\R)$ be a Kähler-Ricci soliton with respect to
$\xi_\infty\in\Lie\Aut(X)$.
Then for any $m\in\N$, there exists $C_m>0$ and anticanonically
balanced metrics $h^p\in\Met^+(L^p)$ relative to $\xi_p\in\Lie\Aut(X)$
for all $p\in\N^*$ big enough such that
\begin{equation}\label{mainthfla}
\xi_p\xrightarrow{p\to+\infty}\xi_\infty\quad~\text{and}\quad~
\left|\,\frac{1}{p}\om_{h^p}-\om_{h_\infty}\,\right|_{\CC^m}
\leq \frac{C_m}{p}\,.
\end{equation}
Furthermore, if $\til{h}^p\in\Met^+(L^p)$ is another anticanonically
balanced metric relative to $\til{\xi}_p\in\Lie\Aut(X)$ for some $p\in\N^*$
big enough, then there exists $\phi\in\Aut_0(X)$ such that
$\phi^*\til{\xi}_p=\xi_p$ and $\phi^*\om_{\til{h}^p}=\om_{h^p}$.
\end{theorem}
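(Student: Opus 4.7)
The strategy is modelled on Donaldson's original fixed-point argument for balanced metrics, suitably twisted by the automorphism $\phi_{\xi/2p}$ to incorporate the soliton vector field. For each $p\gg 0$ I would introduce the map
\[
T_p(H,\xi) := \bigl(\,\Hilb(\phi_{\xi/2p}^{*}\FS(H)),\,\mu_p(H,\xi)\,\bigr)
\]
on the product of the space of Hermitian inner products on $H^0(X,L^p)$ with a small neighbourhood of $\xi_\infty$ in $\Lie\Aut(X)$, where $\Hilb$ is the $L^2$-pairing \eqref{L2intro} and $\mu_p$ is a quantized moment map enforcing the condition $L_{J\xi}\om=0$. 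By construction, fixed points of $T_p$ correspond exactly to pairs $(h^p,\xi_p)$ satisfying \eqref{relbaldef}.

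The first step is to show that the soliton $(h_\infty,\xi_\infty)$ produces an \emph{approximate} fixed point. Plugging $(\Hilb(h_\infty^p),\xi_\infty)$ into $T_p$ and using the asymptotic expansion of the Bergman kernel along the flow of $\xi_\infty/2p$, the pulled-back Fubini-Study form $\tfrac{1}{p}\phi_{\xi_\infty/2p}^{*}\om_{\FS(\Hilb(h_\infty^p))}$ expands in powers of $1/p$ with leading term $\om_{h_\infty}$ and first non-trivial correction expressible through $\Ric(\om_{h_\infty})-\om_{h_\infty}-L_{\xi_\infty}\om_{h_\infty}$. The soliton equation \eqref{KRdef} makes this correction vanish, so the balanced equation is satisfied to arbitrary order in $1/p$.

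The core step is to upgrade this to an honest fixed point by a Newton-type iteration. The linearization of $T_p-\Id$ at the approximate solution is, up to lower-order terms in $1/p$, the equivariant Berezin transform minus the identity, which I would invert on the orthogonal complement of its kernel using the semiclassical spectral gap estimate announced in the abstract. The kernel corresponds geometrically to $\Lie\Aut(X)$, reflecting the fact that balanced metrics are at best unique modulo automorphisms, so the iteration is carried out in a slice transverse to the $\Aut_0(X)$-orbit. It converges uniformly in $p$ and produces a fixed point $(h^p,\xi_p)$ at $\CC^m$-distance $O(1/p)$ from the approximate one, yielding both $\xi_p\to\xi_\infty$ and the $\CC^m$-estimate in \eqref{mainthfla}. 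The main obstacle is securing the spectral gap uniformly in $p$: because the kernel of the linearization grows with $p$ through the action of $\Lie\Aut(X)$ on $H^0(X,L^p)$, one must carefully isolate this kernel and handle it via a quantized analogue of the Tian-Zhu Futaki invariant, which is precisely the obstruction that singles out the correct $\xi_p$.

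For uniqueness, suppose $(\til h^p,\til\xi_p)$ is another balanced pair for the same $p$. The same quantitative estimates force both $(h^p,\xi_p)$ and $(\til h^p,\til\xi_p)$ to lie within $O(1/p)$ of a common $\Aut_0(X)$-orbit of approximate solutions; the vanishing of the quantized Futaki invariant along this orbit forces $\til\xi_p$ to be $\Aut_0(X)$-conjugate to $\xi_p$. After applying a suitable $\phi\in\Aut_0(X)$, one reduces to the case $\til\xi_p=\xi_p$, in which the transverse-slice rigidity built into the fixed-point argument upgrades the relation $\phi^{*}\til\xi_p=\xi_p$ to $\phi^{*}\om_{\til h^p}=\om_{h^p}$.
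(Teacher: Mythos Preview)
Your existence argument is essentially the paper's strategy, differing only in packaging: you phrase the perturbation step as a Newton iteration on a fixed-point map $T_p$, while the paper runs Donaldson's gradient-flow lemma for the relative moment map $\mu_{\xi_p}$ (\cref{Donlem}). Both rest on the same two pillars you identify---approximately balanced metrics built from the Bergman kernel expansion (\cref{approxbal}) and a uniform lower bound on the linearization coming from the spectral gap of the equivariant Berezin transform (\cref{Bpgap,quantchanbnd}). The paper also determines $\xi_p$ \emph{a priori} as the unique zero of the quantized Futaki invariant (\cref{xipmin,xipexp}), which is the mechanism you allude to for handling the kernel. One small inaccuracy: the near-kernel of the linearization that has to be projected out is spanned by $\Id$ and $L_\eta$ for $\eta\in\sqrt{-1}\Lie T$ (the center of the isometry group), not all of $\Lie\Aut(X)$; this is what \cref{TZ} says about the eigenspace of $\Delta_{h_\infty}^{(\xi_\infty)}$ at eigenvalue $1$ on $K$-invariant functions.

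Your uniqueness argument, however, has a genuine gap. The statement asserts uniqueness for \emph{any} relatively balanced $\til h^p$ at a given large $p$, with no assumption that $\til h^p$ arises from a converging sequence or lies near the soliton. Your claim that ``the same quantitative estimates force both $(h^p,\xi_p)$ and $(\til h^p,\til\xi_p)$ to lie within $O(1/p)$ of a common $\Aut_0(X)$-orbit'' is exactly what is at stake and is not supplied by the local fixed-point machinery: transverse-slice rigidity only yields uniqueness in a neighbourhood of the constructed solution, and you have no a priori bound placing an arbitrary balanced $\til h^p$ there. The paper proceeds globally: after reducing $\til\xi_p$ to $\xi_p$ via \cref{Futpropintro} (as you correctly outline), it introduces an energy functional $\Psi_{\xi_p}$ on the symmetric space of $T$-invariant inner products (\cref{Psidef}) whose critical points are precisely the balanced bases, and invokes Berndtsson's positivity of direct images to show $\Psi_{\xi_p}$ is convex along geodesics and strictly convex except along directions coming from $\Lie\Aut(X)$ (\cref{BBGZ}). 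Two critical points are then forced to lie on a common automorphism orbit. This convexity ingredient is not reducible to your local argument and is the missing idea.
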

\cref{mainth} answers a question of Donaldson
in \cite[\S\,2.2.2]{Don09}. This question has also been studied in
previous works of Berman and Witt Nyström in \cite[Th.\,1.7]{BW14}
and Takahashi in \cite[Th.\,1.2]{Tak15}, where the convergence
in \cref{mainth} is
established in the weak sense of currents, and
under the assumption that
a modified Ding functional over the infinite dimensional space
$\Met^+(L)$ is coercive modulo $\Aut_0(X)$.
By contrast, our proof closely follows the finite dimensional method
of Donaldson in \cite{Don01}, and relies on methods
of \emph{Berezin-Toeplitz quantization}.
We hope that our approach can help to shed light on the different notions
of stability in this context, following the work of Saito and Takahashi
in \cite{ST19}.
Relative anticanonically balanced
metrics were introduced in \cite[\S\,4.2.2]{BW14} under the name
of \emph{quantized Kähler-Ricci solitons}.

As a straightforward consequence of \cref{mainth}, we get an alternative
proof of the following result of Tian and Zhu in \cite[Th.\,1.1]{TZ00} and
\cite[Th.\,3.2]{TZ02}, which does not rely on solving a Monge-Ampère
equation.

\begin{cor}\label{TZintro}
Let $\om_{h},\,\om_{\til{h}}\in\Om^2(X,\R)$ be Kähler-Ricci solitons with respect to $\xi,\,\til{\xi}\in\Lie\Aut(X)$ respectively.
Then there exists $\phi\in\Aut_0(X)$ such that
$\phi^*\til{\xi}=\xi$ and
$\phi^*\om_{\til{h}}=\om_{h}$.
\end{cor}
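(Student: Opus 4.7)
The plan is to reduce the statement to the uniqueness part of \cref{mainth} by approximating both solitons by relative anticanonically balanced metrics and then passing to the limit. Applying \cref{mainth} to $(\om_h,\xi)$ produces, for all $p\in\N^*$ large enough, a relative anticanonically balanced metric $h^p\in\Met^+(L^p)$ relative to $\xi_p\in\Lie\Aut(X)$ such that $\xi_p\to\xi$ and $(1/p)\om_{h^p}\to\om_h$ in every $\CC^m$ norm; similarly for $(\om_{\til h},\til\xi)$, yielding $\til h^p$ relative to $\til\xi_p$ with the analogous convergence. By the uniqueness clause of \cref{mainth}, for each such $p$ there exists $\phi_p\in\Aut_0(X)$ with
\begin{equation*}
\phi_p^*\til\xi_p=\xi_p\qquad\text{and}\qquad\phi_p^*\om_{\til h^p}=\om_{h^p}\,.
\end{equation*}
Dividing by $p$, we get $\phi_p^*\bigl((1/p)\om_{\til h^p}\bigr)=(1/p)\om_{h^p}$, where both sides converge smoothly to the Kähler forms $\om_h$ and $\om_{\til h}$ respectively.

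The key step is to extract a convergent subsequence from $\{\phi_p\}\subset\Aut_0(X)$. Since $\Aut_0(X)$ is a finite-dimensional complex Lie group but generally noncompact, I would exploit that the pulled-back forms $\phi_p^*\bigl((1/p)\om_{\til h^p}\bigr)$ converge to the fixed smooth Kähler form $\om_h$, together with the fact that $(1/p)\om_{\til h^p}$ converges to the fixed smooth Kähler form $\om_{\til h}$. Both limits are uniformly equivalent as Riemannian metrics, so each $\phi_p$ (for $p$ large) sends a uniformly bounded-geometry Kähler structure to one with uniformly bounded geometry. Standard elliptic bootstrapping for holomorphic diffeomorphisms then yields uniform $\CC^m$ bounds on $\phi_p$ modulo the isotropy subgroup $K\subset\Aut_0(X)$ preserving $(\om_{\til h},\til\xi)$; since $K$ consists of holomorphic isometries of $(X,\om_{\til h})$, it is compact by Myers-Steenrod. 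Pre-composing $\phi_p$ by a suitable element of $K$, we may thus assume $\{\phi_p\}$ is relatively compact in $\Aut_0(X)$ and extract a subsequence $\phi_{p_k}\to\phi\in\Aut_0(X)$.

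Passing to the limit in $\phi_{p_k}^*\til\xi_{p_k}=\xi_{p_k}$ gives $\phi^*\til\xi=\xi$, while passing to the limit in $\phi_{p_k}^*\bigl((1/p_k)\om_{\til h^{p_k}}\bigr)=(1/p_k)\om_{h^{p_k}}$ gives $\phi^*\om_{\til h}=\om_h$, which is the desired conclusion.

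The main obstacle is precisely the compactness argument of the second paragraph: one must show that the automorphisms $\phi_p$ provided by \cref{mainth} do not escape to infinity in $\Aut_0(X)$. This requires combining the smooth convergence of the two sequences of balanced forms with the properness, modulo the compact stabilizer of $(\om_{\til h},\til\xi)$, of the $\Aut_0(X)$-action on Kähler forms in $c_1(L)$. Once this is secured, the rest of the argument is a routine passage to the limit.
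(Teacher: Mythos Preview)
Your approach is correct and is precisely the argument the paper intends: the paper states \cref{TZintro} as a ``straightforward consequence'' of \cref{mainth} without spelling out a proof, and what you wrote is the natural way to make that precise.

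One comment on the compactness step, which you rightly identify as the only nontrivial point. Your invocation of ``properness modulo the compact stabilizer $K$'' and pre-composition by elements of $K$ is more elaborate than necessary, and as written is slightly off: since $K$ is the isometry group of $\om_{\til h}$ rather than of the varying $(1/p)\om_{\til h^p}$, pre-composing $\phi_p$ by $\psi\in K$ does not preserve the relation $\phi_p^*\om_{\til h^p}=\om_{h^p}$. Instead, argue directly. Writing $g_p:=g^{TX}_{(1/p)h^p}$ and $\til g_p:=g^{TX}_{(1/p)\til h^p}$, the equality $\phi_p^*\bigl((1/p)\om_{\til h^p}\bigr)=(1/p)\om_{h^p}$ together with holomorphy of $\phi_p$ says that $\phi_p:(X,g_p)\to(X,\til g_p)$ is a Riemannian isometry. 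As $g_p\to g^{TX}_h$ and $\til g_p\to g^{TX}_{\til h}$ smoothly, the differentials $d\phi_p$ are uniformly bounded with respect to fixed background metrics, so the $\phi_p$ are uniformly Lipschitz. Arzel\`a--Ascoli gives a $\CC^0$-convergent subsequence, and since each $\phi_p$ is holomorphic, Cauchy estimates upgrade this to $\CC^\infty$-convergence; the limit $\phi$ is then a holomorphic isometry from $(X,g^{TX}_h)$ to $(X,g^{TX}_{\til h})$, hence lies in $\Aut(X)$, and in $\Aut_0(X)$ since the identity component is closed. The passage to the limit in both relations then goes through exactly as you wrote.
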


Let us point out that the coercivity assumption used in the proofs of
\cite[Th.\,1.7]{BW14} and \cite[Th.\,1.2]{Tak15} was shown to be a
consequence of the existence of a Kähler-Ricci soliton
by Darvas and Rubinstein in \cite[Th.\,8.1]{DR17}, but this last
result actually uses \cref{TZintro}, so that
this does not lead to an alternative proof.

The proof of \cref{mainth} 
follows the general strategy of Donaldson
in \cite{Don01}, who established an analogue of \cref{mainth}
in the case of $\Aut(X)$ discrete, showing that a polarized
Kähler metric of constant scalar curvature can always
be approximated by a sequence of \emph{balanced metrics},
defined as in \cref{relbaldef} for $\xi=0$
using the usual Liouville form instead of
the anticanonical volume form \cref{dnucandef}
in the $L^2$-Hermitian product \cref{L2intro}.
Specifically, our \cref{approxbal} uses Donaldson's method of
constructing approximately balanced metrics
via the asymptotic expansion of the \emph{Bergman kernel}
along the diagonal, which we recall in \cref{Bergdiagexp},
and our \cref{Donlem} is a straightforward adaptation of
a fundamental Lemma of Donaldson
on the convergence of the gradient flow of the norm squared
of the associated \emph{moment map} close to a zero.

The main difficulty lies instead in
the most delicate part of Donaldson's strategy, given in
\cite[\S\,3.2]{Don01} in order to establish the key estimate
\cite[Cor.\,22]{Don01}. This part of the proof gives an estimate
from below of the derivative of the associated moment map,
and has already been improved and
clarified by Phong and Sturm in \cite[Th.\,2]{PS04}.
In the situation of \cite{Don01,PS04}, the derivative of the moment
map has a natural geometric interpretation,
and this gives a natural approach to estimate the lower bound.
Unfortunately, this geometric interpretation does not carry directly
to the anticanonical case considered in \cref{mainth}.
This difficulty was overcome only recently by Takahashi
in \cite[Th.\,1.3]{Tak21}, who established \cref{mainth} in the case
of $\Aut(X)$ discrete. The further extension of
this geometric interpretation to the case of general
$\Aut(X)$ is an interesting problem, but this has not been
achieved yet.

The main novelty of our method is to replace
this geometric interpretation by the use of the asymptotics of the
\emph{spectral gap of the Berezin transform}, which where
first established in \cite[Th.\,3.1]{IKPS19} and which we extend
to the equivariant case in \cref{Bpgap}. These asymptotics are used
in a crucial way in \cref{existencesec} to obtain the necessary
estimate from below of the derivative of the appropriate moment map
in this context.
More precisely, we relate in \cref{dmu} the derivative of the moment map
with the equivariant \emph{Berezin-Toeplitz quantum channel} of \cref{quantchanrel}, and we then use our asymptotics of the spectral
gap to give an estimate of the Berezin-Toeplitz quantum channel in 
\cref{quantchanbnd}. As shown in \cref{dmuapprox}, this produces
the desired lower bound,
and shows as a by-product that our estimate is optimal.
This extends the strategy used in \cite{Ioo20} to establish \cref{mainth}
in the case of $\Aut(X)$ discrete, and allows to bypass the geometric
interpretation mentionned above, which has not yet been
worked out in the case of Kähler-Ricci solitons.
Note on the other hand that the asymptotics of the spectral gap
are based on the asymptotic expansion of the Berezin transform
recalled in \cref{KS}, which is in turn a consequence
of the asymptotic expansion of the Bergman kernel \emph{outside}
the diagonal. 
Our approach thus gives a unified way,
entirely based on Berezin-Toeplitz quantization, to treat
both the construction of approximately balanced
metrics
and the lower bound of the derivative of the moment map, instead of 
using an additional delicate geometric interpretation for the later.

The advantage of our method of proof of \cref{mainth} is
that it can be adapted in a systematic way to various
choices of a volume form in the Hilbert product
\cref{L2intro}, leading to various different notions
of balanced metrics. 
In \cite[\S\,2]{Ioo20}, we described a general set-up
in which our method can be applied, which includes the original
notion of balanced metrics of \cite{Don01}, but also
the \emph{$\nu$-balanced metrics} on
Calabi-Yau manifolds and the
\emph{canonically balanced metrics} on manifolds
with ample canonical line bundle,
introduced by Donaldson in \cite{Don09},
as well as the notion of twisted balanced metrics studied by Keller
in \cite{Kel09b} and Dervan in \cite[\S\,2.2]{Der16}.
It can also be extended to the case of balanced metrics over vector bundles,
following Wang in \cite{Wan05}, and to the case of coupled Kähler-Einstein
metrics as in \cite{Tak21}.
In all these cases, a new geometric interpretation
as in \cite{PS04} was needed to adapt the proof of
\cite{Don01} successfully. By contrast, our proof gives a general method
to deal with this key step, using the asymptotics of the spectral gap of the
corresponding Berezin transform,
as given in \cref{BTsecT} following the strategy
described in \cite[\S\,3]{IKPS19}.

In \cref{quantAutsec}, we study the quantization of the action of
the automorphism group to establish a quantized counterpart of the
method of Tian and Zhu in \cite{TZ02}. Namely, we show
that the holomorphic vector fields $\xi_p\in\Lie\Aut(X)$ of \cref{mainth}
are determined a priori for all $p\in\N^*$,
regardless of the existence of a relative
balanced metric. As a first step, we show in \cref{xipmin} that for any
$p\in\N^*$ large enough, there exists a unique vector field
$\xi\in\Lie\Aut(X)$
such that the associated \emph{quantized Futaki invariant}
$\Fut^\xi_p:\Lie\Aut(X)\to\C$ vanishes. Following Berman and Witt Nyström in
\cite[\S\,4.1.1]{BW14}, it is defined for any $\eta\in\Lie\Aut(X)$
by the formula
\begin{equation}\label{quantFutdefintro}
\Fut^\xi_p(\eta):=\Tr\left[L_\eta e^{L_\xi/p}\right]\,,
\end{equation}
where $L_\eta\in\End(H^0(X,L^p))$ denotes the natural action of
$\eta$ on the holomorphic sections of the $p$-th tensor power
of the anticanonical line bundle $\det(T^{(1,0)}X)$.
As a second step, we show in
\cref{Futpropintro} that if
there exists an anticanonically balanced metric $h^p\in\Met^+(L^p)$
relative to $\xi\in\Lie\Aut(X)$, then the associated quantized Futaki
invariant $\Fut^\xi_p:\Lie\Aut(X)\to\C$
vanishes identically.
This can be seen as the quantization of
\cref{TZKR}, which is due to Tian and Zhu in \cite[Prop.\,3.1]{TZ02},
showing that the vector field
$\xi_\infty\in\Lie\Aut(X)$ of \cref{mainth}
is determined a priori as the unique
holomorphic vector field for which the associated
\emph{modified Futaki invariant} vanishes, regardless of the
existence of a Kähler-Ricci soliton.
This characterization of the vector fields
$\xi_p$ for all $p\in\N^*$
plays a crucial role in our proof of \cref{mainth}, and
has no analogue in Donaldson's approach in \cite{Don01}, since
it assumes $\Aut(X)$ discrete. In particular, we show
in \cref{xipmin} that the vector fields $\xi_p$ admit an asymptotic expansion
as $p\to+\infty$ with highest order coefficient equal to $\xi_\infty$,
which shows the first identity of \cref{mainthfla} and is used
in a crucial way for the construction in \cref{approxbal}
of approximately balanced metrics.

As remarked in \cite[Rmk.\,4.8]{BW14}, it is a consequence of
the equivariant Riemman-Roch formula that the vanishing of the
quantized Futaki invariants \cref{quantFutdefintro} relative to $\xi=0$
for all $p\in\N^*$ big enough is equivalent to the
vanishing of all \emph{higher order Futaki invariants} \cite{Fut04} of $X$.
We thus recover the fact, first established by Saito and Takahashi
in \cite[Lem.\,3.2]{ST19}, that the higher order Futaki invariants
are an obstruction for the existence of anticanonically balanced metrics
in the usual sense, for all $p\in\N^*$ big enough.
On the other hand, Ono, Sano and Yotsutani exhibit in \cite[Th.\,1.5]{OSY12}
an example of a toric Kähler-Einstein Fano manifold with non-vanishing higher order Futaki invariants.
This example thus implies the following corollary of
\cref{mainth}.

\begin{cor}\label{corintro}
There is a Fano manifold $X$ with Kähler-Einstein metric
$\om_{h_\infty}\in\Om^2(X,\R)$ such that the anticanonically
balanced metrics $h^p\in\Met^+(L^p)$
relative to $\xi_p\in\Lie\Aut(X)$ of \cref{mainth}
satisfy $\xi_p\neq 0$ for all $p\in\N^*$ big enough.
\end{cor}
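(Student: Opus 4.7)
The plan is to combine Theorem \ref{mainth} with the Futaki-type obstruction of \cref{Futpropintro}, the equivariant Riemann-Roch identification recalled after it, and the explicit example of \cite{OSY12}.

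First I would let $X$ be the toric Kähler-Einstein Fano manifold of \cite[Th.\,1.5]{OSY12}, which has at least one non-vanishing higher order Futaki invariant, and let $\om_{h_\infty}\in\Om^2(X,\R)$ be its Kähler-Einstein metric. As a Kähler-Ricci soliton it corresponds to $\xi_\infty=0$, so \cref{mainth} provides, for every $p\in\N^*$ large enough, an anticanonically balanced metric $h^p\in\Met^+(L^p)$ relative to some $\xi_p\in\Lie\Aut(X)$, with $\xi_p\to 0$ and $\frac{1}{p}\om_{h^p}\to\om_{h_\infty}$ in $\CC^m$-norm.

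Next I would argue by contradiction: suppose there is an infinite set $S\subset\N^*$ with $\xi_p=0$ for every $p\in S$. Then by \cref{Futpropintro}, for each such $p$ the quantized Futaki invariant $\Fut^0_p:\Lie\Aut(X)\to\C$ vanishes identically. For each fixed $\eta\in\Lie\Aut(X)$, the equivariant Riemann-Roch formula expresses $\Fut^0_p(\eta)=\Tr[L_\eta\,e^{0/p}]=\Tr L_\eta$ on $H^0(X,L^p)$ as a polynomial in $p$ for $p$ large. Vanishing on the infinite set $S$ therefore forces $\Fut^0_p(\eta)=0$ for every sufficiently large $p$ and every $\eta$.

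By the remark of \cite[Rmk.\,4.8]{BW14} recalled in the introduction, this vanishing for all $p$ big enough is equivalent to the vanishing of every higher order Futaki invariant of $X$, which contradicts the defining property of the example of \cite{OSY12}. Hence $\xi_p\neq 0$ for all $p\in\N^*$ big enough. The only substantive step beyond invoking existing results is the second one, where one must use that \cref{Futpropintro} rules out even a subsequence with $\xi_p=0$; the polynomiality in $p$ of $\Fut^0_p(\eta)$ coming from equivariant Riemann-Roch is precisely what upgrades infinitely many vanishings to vanishing for all large $p$, and this is the point where care is required.
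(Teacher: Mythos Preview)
Your proof is correct and follows essentially the same approach as the paper: both take the toric Kähler-Einstein example of \cite{OSY12}, invoke \cref{Futpropintro} to see that $\xi_p=0$ would force $\Fut^0_p\equiv 0$, and use the equivariant Riemann-Roch identification from \cite[Rmk.\,4.8]{BW14} to derive a contradiction with the non-vanishing higher order Futaki invariants. The only difference is cosmetic: you argue by contradiction from an infinite subset and make the polynomiality-in-$p$ step explicit, whereas the paper states the direct implication and leaves that step inside the cited remark.
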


In particular, we recover the fact from \cite[Ex.\,5.6]{ST19}
that the toric example of  \cite{OSY12}
does not admit any anticanonically balanced metric
in the usual sense, for all $p\in\N^*$ big enough.
This was also established in \cite[Cor.\,1.1]{Tak15}
in the sense of currents and
under a coercivity assumption on the Ding functional.
\cref{corintro} illustrates the fact that \cref{mainth}
is already interesting in the case of Kähler-Einstein metrics.
In fact, it is shown in \cite{BBGZ13,Ioo20}
that a Kähler-Einstein metric
on a Fano manifold $X$ with $\Aut(X)$ discrete can always be approximated
by anticanonically balanced metrics. \cref{corintro} then shows that
the assumption of $\Aut(X)$ discrete is necessary for such a result to
hold and that \cref{mainth} extends this result using relative
anticanonically balanced metrics.

In \cite{RTZ20}, Rubinstein, Tian
and Zhang introduce a notion of anticanonically balanced metrics
depending on a parameter $\delta>0$, which coincides with the
usual notion of an anticanonically balanced metric when $\delta=1$.
In \cite[Prop.\,5.10]{RTZ20}, they show that these balanced metrics
with $\delta<1$ can be used to approximate Kähler-Einstein metrics
on Fano manifolds with general $\Aut(X)$ as $\delta\to 1$,
with convergence in the sense of currents.
In the same way as in \cref{mainth},
our method readily extends to establish smooth convergence.
On the other hand, our notion of relative anticanonically balanced metric
coincides with the
quantized Kähler-Ricci solitons of \cite{BW14}, where the more general
notion of a \emph{Kähler-Ricci $g$-soliton} is considered.
Although, we restrict to usual Kähler-Ricci solitons for simplicity,
our proof extends to this more general case without any difficulty.

A relative version of balanced metrics has first been introduced by
Mabuchi in \cite{Mab04} to study \emph{extremal Kähler metrics},
and can be seen as the relative version of constant scalar curvature
metrics in the case when $\Aut(X)$ is not discrete.
Instead, the notion
of relative anticanonically balanced metrics used in
this paper is an analogue of the \emph{relative balanced metrics}
introduced by Sano and Tipler in \cite{ST17}, defined as in
\cref{relbaldef} with
the anticanonical volume form \cref{dnucandef} replaced
by the usual Liouville form in the
$L^2$-Hermitian product \cref{L2intro}.
\cref{mainth} is then an anticanonical version of \cite[Th.\,1.1]{ST17},
where the extremal metric is replaced by a Kähler-Ricci soliton,
and \cref{xipexp,approxbal}
were inspired by \cite[Lem.\,4.5, Th.\,5.5]{ST17}.
Closely related notions of relative balanced metrics as quantizations
of extremal Kähler metrics have also been introduced by
Hashimoto in \cite{Has21}, Mabuchi in \cite{Mab18} and Seyyedali in
\cite{Sey17}. All these works establish an analogue of
\cref{mainth} for extremal metrics, extending the geometric
interpretation of \cite{Don01,PS04} for the lower bound of
the moment map in their respective relative settings.
We refer to \cite[\S\,6]{Has21}
for a detailed comparison between these different notions.
We only point out here that the approach of \cite{Has21} is a
quantization of the fact that extremal Kähler metrics are critical
points of the \emph{Calabi functional}. In particular, this approach
does not extend to our case, since Kähler-Ricci solitons are not critical
points of the anticanonical analogue of the Calabi functional, which is
the \emph{Ding functional}.


The theory of Berezin-Toeplitz
quantization was first developed by Bordemann,
Meinrenken and
Schlichenmaier in \cite{BMS94}, using the work of
Boutet de Monvel and Sjöstrand on the Szegö kernel in \citep{BdMS75} and the theory of
Toeplitz structures of Boutet de Monvel and Guillemin in
\cite{BdMG81}. This paper is based instead on the theory of
Ma and Marinescu in \cite{MM08b}, using the off-diagonal
asymptotic expansion of the Bergman kernel established
by Dai, Liu and Ma in \cite[Th.\,4.18']{DLM06}.
A comprehensive introduction for this theory can be found in
\cite{MM07}. The point of view of quantum measurement
theory on Berezin-Toeplitz quantization adopted
in this paper has been advocated by Polterovich
in \cite{Pol12,Pol14}.

\begin{ackn*}
The author wishes to thank Pr. Xiaonan Ma for his constant support and Pr. Leonid Polterovich for helpful discussions. The author also wishes to thank the anonymous referees for useful comments and suggestions.
This work was supported
by the European Research Council Starting grant 757585.
\end{ackn*}

\section{Setting}
\label{setting}

Let $X$ be a compact complex manifold with complex structure
$J\in\End(TX)$, and write
\begin{equation}\label{splitc}
TX_\C=T^{(1,0)}X\oplus T^{(0,1)}X
\end{equation}
for the splitting of the complexification of the tangent bundle $TX$ of $X$
into the eigenspaces of $J$ corresponding to the eigenvalues $\sqrt{-1}$ and $-\sqrt{-1}$ respectively. For any vector field $\xi\in\cinf(X,TX)$, we will
write $\xi^{1,0},\,\xi^{1,0}\in\cinf(X,T^{(1,0)}X)$ for its
components with respect to this splitting.

In this paper, we will always assume that $X$ is a Fano manifold,
so that the space
$\Met^+(L)$ of positive Hermitian metrics on $L:=\det(T^{(1,0)}X)$
is not empty.
For any $h\in\Met^+(L)$ and $p\in\N^*$, we write
$h^p\in\Met^+(L^p)$ for the induced positive Hermitian metric
on the $p$-th tensor power $L^p$. Conversely, any
$h^p\in\Met^+(L^p)$ uniquely determines
a positive Hermitian metric $h\in\Met^+(L)$.
We will also write $\Met(L)$ for the space of Hermitian metrics on $L$.
We write $\cinf(X,L^p)$
for the space of smooth sections of $L^p$
and $H^0(X,L^p)\subset\cinf(X,L^p)$
for the subspace of
holomorphic sections of $L^p$ over $X$.

Recall that for any $h\in\Met^+(L)$, the $2$-form
$\om_h\in\Om^2(X,\R)$ defined by formula \cref{preq}
is a \emph{Kähler form}, meaning that
the following formula
defines a Riemannian metric on $X$,
\begin{equation}\label{gTXintro}
g^{TX}_h:=\om_h(\cdot,J\cdot)\,.
\end{equation}
Note by definition \cref{dnucandef} of the associated
anticanonical volume $d\nu_h$ that
for any $f\in\cinf(X,\R)$, we have
\begin{equation}\label{dnuef/dnu}
d\nu_{e^fh}=e^{f}\,d\nu_h\,.
\end{equation}

\subsection{Action of the automorphism group}
\label{Autsec}

Recall that the group $\Aut(X)$ of holomorphic diffeomorphisms of
$X$ is a finite dimensional complex Lie group, and so that there is
a natural
embedding $\Lie\Aut(X)\subset\cinf(X,TX)$.
For any $\xi\in\Lie\Aut(X)$, we write
$\phi_{t\xi}\in\Aut(X),\,t\in\R$, 
for the flow generated by $\xi\in\Lie\Aut(X)$.
The holomorphic
action of $\Aut(X)$ on $X$ lifts naturally to $L:=\det(T^{(1,0)}X)$,
and for any $\xi\in\Lie\Aut(X)$, we
write $L_\xi$ for the induced differential operator acting on
a smooth section $s\in\cinf(X,L)$ by
\begin{equation}\label{Lxidef}
L_\xi s:=\dt\Big|_{t=0}\,\phi_{t\xi}^*\,s\,.
\end{equation}
Recall also that definition \cref{dnucandef} of the
anticanonical volume form $d\nu_h$ associated with $h\in\Met(L)$
does not depend on
$\theta\in\cinf(U,\det(T^{(1,0)*}X))$, so that for all $t\in\R$,
\begin{equation}\label{etxi*dnu}
\phi_{t\xi}^*\,d\nu_h
=\sqrt{-1}^{\,n^2}\frac{\phi_{t\xi}^*\,\theta\wedge
\phi_{t\xi}^*\,\bar{\theta}}
{|\phi_{t\xi}^*\,\theta|_{\phi_{t\xi}^*\,h^{-1}}^2}=d\nu_{\phi_{t\xi}^*h}\,.
\end{equation}
For any $h\in\Met^+(L)$, write $\nabla^h$
for the \emph{Chern connection} of $(L,h)$.
We then have the following complex
version of the \emph{Kostant formula}.

\begin{defi}\label{Kostantdef}
For any $h\in\Met^+(L)$, the associated \emph{holomorphy potential}
of $\xi\in\Lie\Aut(X)$ is the function $\theta_h(\xi)\in\cinf(X,\C)$
defined for any $s\in\cinf(X,\C)$ by the formula
\begin{equation}\label{Kostant}
\theta_h(\xi)\,s:=L_\xi\,s-\nabla^{h}_\xi\,s\,.
\end{equation}
\end{defi}

In the same way as the usual Kostant formula for moment maps,
formula \cref{Kostant} gives a well-defined
scalar function $\theta_h(\xi)\in\cinf(X,\C)$, which
via formula \cref{preq} for the Kähler form $\om_h\in\Om^2(X,\R)$
satisfies
\begin{equation}\label{holpot}
\iota_{\xi^{1,0}}\,\om_h=\frac{\sqrt{-1}}{2\pi}\dbar\,\theta_h(\xi)\,.
\end{equation}
Thanks to
the Kodaira vanishing theorem (see for instance
\cite[Prop.\,3.72,\,(1)]{BGV04} with $\cL:=K_X$),
recall that a Fano manifold $X$
satisfies $H^1(X,\C)=0$. Then as explained in \cite{FM95}, 
a fundamental result of Fujiki \cite{Fuj78} implies
in that case that $\Aut_0(X)$
is a linear complex algebraic group. In particular,
it includes the complexification
$K_\C\subset\Aut_0(X)$ of any connected compact subgroup $K\subset\Aut_0(X)$,
and we have a natural decomposition
\begin{equation}\label{LieKC}
\Lie K_\C=\Lie K\oplus\sqrt{-1}\Lie K\subset\Lie\Aut(X)\,.
\end{equation}
The following proposition gives some basic properties of the
holomorphy potential.

\begin{prop}\label{thetaxiprop}
For any $h\in\Met^+(K_X^*)$ and $\xi\in\Lie\Aut(X)$, we have
\begin{equation}\label{dtetxi*hfla}
\dt\Big|_{t=0}\,\phi_{t\xi}^*\,h=-2\,\Re\,\theta_h(\xi)\,h\,.
\end{equation}
Furthermore, we have
\begin{equation}\label{inttheta=0}
\int_X\,\theta_h(\xi)\,d\nu_h=0\,,
\end{equation}
and for any $f\in\cinf(X,\R)$, we have
\begin{equation}\label{thetaef}
\theta_{e^{f}h}(\xi)=\theta_h(\xi)-df.\xi^{1,0}\,.
\end{equation}
Finally, if $K\subset\Aut_0(X)$ is a compact subgroup
preserving $h\in\Met^+(L)$, then
$\theta_h:\Lie K_\C\rightarrow\cinf(X,\C)$ is
a $\C$-linear embedding, and for any $\xi\in\sqrt{-1}\Lie K$,
we have $\theta_h(\xi)\in\cinf(X,\R)$.
\end{prop}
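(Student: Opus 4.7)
The plan is to reduce each of the four assertions to a direct local computation in a holomorphic frame of $L$, using the Kostant formula \cref{Kostant} as the main tool. For \cref{dtetxi*hfla}, I will fix a local holomorphic frame $e_L$ of $L$, set $\rho := \log h(e_L, e_L)$, and write the lifted flow as $\phi_{t\xi}^* e_L = a_t \cdot e_L$ for a scalar function $a_t$ with $a_0 = 1$ and $\dot{a}_0 \cdot e_L = L_\xi e_L$. The naturality of the pullback metric yields $(\phi_{t\xi}^* h)(e_L, e_L) = |a_t|^{-2}\, \phi_{t\xi}^*(h(e_L, e_L))$. Differentiating at $t = 0$ and using $L_\xi(e^\rho) = 2\Re(\xi^{1,0}\rho)\, e^\rho$ reduces the identity to $\dot{a}_0 - \xi^{1,0}(\rho) = \theta_h(\xi)$, which is exactly \cref{Kostant} applied to $e_L$ since the connection $1$-form of $\nabla^h$ in this frame is $\partial\rho$.

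For \cref{inttheta=0}, the diffeomorphism invariance $\int_X \phi_{t\xi}^*\,d\nu_h = \int_X d\nu_h$ together with \cref{etxi*dnu} makes $t \mapsto \int_X d\nu_{\phi_{t\xi}^* h}$ constant. Expanding $\phi_{t\xi}^* h = (1 - 2t\, \Re\theta_h(\xi) + O(t^2))\, h$ via \cref{dtetxi*hfla} and invoking \cref{dnuef/dnu}, differentiation at $t = 0$ gives $\int_X \Re\theta_h(\xi)\, d\nu_h = 0$. To recover the imaginary part, I will use the $\C$-linearity identity $\theta_h(J\xi) = \sqrt{-1}\, \theta_h(\xi)$: testing \cref{Kostant} on the local holomorphic section $e_L$, the vanishing of $\nabla^h_{\xi^{0,1}} e_L$ together with $(J\xi)^{1,0} = \sqrt{-1}\, \xi^{1,0}$ and the $\C$-linearity of the $\Aut(X)$-action on local holomorphic sections yield both $L_{J\xi} e_L = \sqrt{-1}\, L_\xi e_L$ and $\nabla^h_{J\xi} e_L = \sqrt{-1}\, \nabla^h_\xi e_L$, hence the identity. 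Applying the real vanishing to $J\xi$ then furnishes $\int_X \Im\theta_h(\xi)\, d\nu_h = 0$. For \cref{thetaef}, the Chern connection shifts as $\nabla^{e^f h}_\xi = \nabla^h_\xi + (\partial f)(\xi) = \nabla^h_\xi + df \cdot \xi^{1,0}$ (since $\bar\partial f$ pairs trivially with $\xi^{1,0}$), and $L_\xi$ is independent of the metric, so \cref{Kostant} yields the formula immediately.

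For the final assertion, the $\C$-linearity of $\theta_h$ on $\Lie K_\C$ has just been established. For injectivity, if $\theta_h(\xi) = 0$ for some $\xi \in \Lie K_\C$, \cref{holpot} gives $\iota_{\xi^{1,0}} \om_h = 0$, and the non-degeneracy of the $\C$-bilinear extension of the Kähler form $\om_h$ to $TX_\C$ forces $\xi^{1,0} = 0$, hence $\xi = 0$, since $\xi \in \Lie K_\C \subset \Lie\Aut(X)$ is a real vector field determined by its $(1,0)$-component. For the reality claim, the $K$-invariance $\phi_{t\eta}^* h = h$ for $\eta \in \Lie K$ combined with \cref{dtetxi*hfla} forces $\Re\theta_h(\eta) = 0$, and then $\C$-linearity gives $\theta_h(J\eta) = \sqrt{-1}\, \theta_h(\eta) \in \R$ for any $J\eta \in \sqrt{-1}\Lie K$. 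The main subtlety is the $\C$-linearity $\theta_h(J\xi) = \sqrt{-1}\, \theta_h(\xi)$: the operator $\nabla^h_{J\xi}$ is not $\sqrt{-1}\, \nabla^h_\xi$ on arbitrary sections, so the identity must be extracted on a test section on which $\xi^{0,1}$ acts trivially, namely a local holomorphic frame.
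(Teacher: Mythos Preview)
Your proof is correct and follows essentially the same approach as the paper. The only stylistic difference is in the first identity \cref{dtetxi*hfla}: the paper argues intrinsically via the unitarity of the Chern connection on an arbitrary smooth section, whereas you unfold the computation in a local holomorphic frame; both routes come down to the same Kostant-type identity, and your derivation of the $\C$-linearity $\theta_h(J\xi)=\sqrt{-1}\,\theta_h(\xi)$ by testing on a holomorphic frame is precisely the concrete incarnation of the paper's observation that $L_{\xi^{0,1}}=\nabla^h_{\xi^{0,1}}$.
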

\begin{proof}
By definition, the Chern connection
$\nabla^{h}$ of any $h\in\Met(K_X^*)^+$ induces the holomorphic structure
of $L$, while the lift of $\Aut(X)$ to $L:=\det(T^{(1,0)}X)$
is holomorphic.
This implies in particular that for all $h\in\Met(K_X^*)^+$ and
all $\xi\in\Lie\Aut(X)$, we have $L_{\xi^{0,1}}=\nabla^{h}_{\xi^{0,1}}$,
so that \cref{Kostantdef} implies
\begin{equation}\label{thetahol}
\theta_h(J\xi)=\sqrt{-1}\theta_h(\xi)\,.
\end{equation}
By the unitarity of the Chern
connection, for any $s\in\cinf(X,K_X^*)$, \cref{Kostantdef} gives
\begin{equation}\label{dtetxi*h}
\begin{split}
\dt\Big|_{t=0}\,
|s|_{\phi_{t\xi}^*\,h}^2
&=L_\xi\<s,s\>_h-\<L_\xi s,s\>_h-\<s,L_\xi s\>_h\\
&=\<(\nabla_\xi^{h}-L_\xi) s,s\>_h+\<s,(\nabla_\xi^{h}-L_\xi) s\>_h\\
&=-2\,\Re\,\theta_h(\xi)\,|s|_h^2\,,
\end{split}
\end{equation}
This shows the identity \cref{dtetxi*hfla}.
Combining formulas \cref{dnuef/dnu,etxi*dnu,dtetxi*h}, we then get
\begin{equation}
-2\int_X\,\Re\,\theta_h(\xi)\,d\nu_h=\dt\Big|_{t=0}\int_X\,\phi_{t\xi}^*\,d\nu_h
=0\,.
\end{equation}
Using the fact from formula
\cref{thetahol} that $\Im\,\theta_h(\xi)=-\Re\,\theta_h(J\xi)$,
this implies the identity \cref{inttheta=0}.

On the other hand, the identity \cref{thetaef} is an immediate consequence
of \cref{Kostantdef}
and the fact that for any $f\in\cinf(X,\R)$, we have
\begin{equation}
\nabla^{e^fh}=\nabla^h+df.\xi^{1,0}\,.
\end{equation}
Finally, if $K\subset\Aut_0(X)$ preserves $h\in\Met^+(L)$, then
formula \cref{dtetxi*h} implies that for any $\xi\in\Lie K$, we have
$\frac{\sqrt{-1}}{2\pi}\theta_h(\xi)\in\cinf(X,\R)$, while formula \cref{thetahol}
implies that $\theta_h:\Lie K_\C\rightarrow\cinf(X,\C)$
is $\C$-linear for the standard
complex structure on both spaces, and formula \cref{holpot} shows that
$\theta_h(\xi)\equiv 0$ if and only if $\xi=0$ by non-degeneracy of $\om_h$.
This concludes the proof. 
\end{proof}

\begin{rmk}\label{momentrmk}
Note that if $K\subset\Aut_0(X)$ preserves
$h\in\Met^+(L)$, \cref{Kostantdef} for
the map $\frac{\sqrt{-1}}{2\pi}\theta_h:\Lie K\to\cinf(X,\R)$
reduces to the definition of a \emph{moment map}
for the action of $K$ on the symplectic manifold
$(X,\om_h)$ via the usual Kostant formula.
The usual condition for the moment map
is recovered from the real part of formula \cref{holpot} by
$\C$-linearity of $\theta_h:\Lie K_\C\to\cinf(X,\C)$.
\end{rmk}
%
%

We will established the quantized
counterpart of the following result
of Tian and Zhu in \cref{quantFutsec}.

\begin{prop}\label{TZinv}
{\emph{\cite[Proof of Lemma\,2.2, Prop.\,2.1]{TZ02}}}
Let $K\subset\Aut_0(X)$ be a given compact subgroup. Then
there exists a strictly
convex and proper functional $F:\sqrt{-1}\Lie K\to\R$,
such that for any $\xi\in\sqrt{-1}\Lie K$ and $h\in\Met^+(L)$, we have
\begin{equation}\label{FTZ}
F(\xi):=\int_X\,e^{\theta_{h}(\xi)}\,\frac{\om^n_{h}}{n!}\,.
\end{equation}
Furthermore, for any $\xi\in\sqrt{-1}\Lie K$,
the following formula for the associated
\emph{modified Futaki invariant}
$\Fut_{\xi}:\sqrt{-1}\Lie K\to\C$ at $\eta\in\sqrt{-1}\Lie K$
does not depend on $h\in\Met^+(L)$
\begin{equation}\label{FutTZ}
\Fut_\xi(\eta):=\int_X\,\theta_h(\eta)\,e^{\theta_h(\xi)}\,\frac{\om_h^n}{n!}\,.
\end{equation}
Finally, there exists a unique
$\xi_\infty\in\sqrt{-1}\Lie K$
such that $\Fut_{\xi_\infty}:\sqrt{-1}\Lie K\to\C$
vanishes identically, which is given
by the unique minimizer of the functional $F:\sqrt{-1}\Lie K\to\R$ and
satisfies $[\xi_\infty,\eta]=0$ for all $\eta\in\Lie K_\C$.
\end{prop}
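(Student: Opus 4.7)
The plan is to first fix a $K$-invariant metric $h\in\Met^+(L)$, obtained by averaging any $h_0\in\Met^+(L)$ over the compact group $K$. By \cref{thetaxiprop}, the map $\theta_h:\Lie K_\C\to\cinf(X,\C)$ is then a $\C$-linear embedding taking real values on $\sqrt{-1}\Lie K$, so that the integrals in \cref{FTZ,FutTZ} are well-defined real numbers for this particular $h$. The most delicate step is to establish independence from $h$: writing any other positive metric as $e^fh$ with $f\in\cinf(X,\R)$, I would differentiate $F(\xi)$ along the path $h_t=e^{tf}h$, using \cref{thetaef} for the variation of $\theta_{h_t}$ and the identity $\om_{h_t}=\om_h-t\frac{\sqrt{-1}}{2\pi}\partial\dbar f$ for the Kähler form. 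An integration by parts based on the Kostant formula \cref{holpot} then makes all contributions cancel, giving $\tfrac{d}{dt}F(\xi)=0$, and the analogous computation on the integrand of $\Fut_\xi$ yields independence of the modified Futaki invariant. This step is where I expect the main technical obstacle, as it requires a careful balance between the variation of $\theta_h$, the variation of the volume, and the holomorphy-potential integration by parts.

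Granted independence of $h$, strict convexity and existence of a unique minimizer follow formally. Using the $\C$-linearity of $\theta_h$ in its argument, differentiation under the integral sign yields
\begin{equation*}
dF|_\xi(\eta)=\int_X\theta_h(\eta)\,e^{\theta_h(\xi)}\,\frac{\om_h^n}{n!}=\Fut_\xi(\eta)\,,\qquad d^2F|_\xi(\eta,\eta)=\int_X\theta_h(\eta)^2\,e^{\theta_h(\xi)}\,\frac{\om_h^n}{n!}\,,
\end{equation*}
and the second expression is strictly positive for any $\eta\neq 0$ because $\theta_h$ is injective on $\Lie K_\C$ by \cref{thetaxiprop}. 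For properness, note that along any ray $\xi=t\xi_0$ with $\xi_0\in\sqrt{-1}\Lie K\setminus\{0\}$, we have $\theta_h(\xi_0)\not\equiv 0$ and $\int_X\theta_h(\xi_0)\,d\nu_h=0$ by \cref{inttheta=0}, so $\sup_X\theta_h(\xi_0)>0$ and hence $F(t\xi_0)\to+\infty$ as $t\to+\infty$. A convex function on a finite-dimensional vector space that is proper along every ray is proper, so $F$ attains a unique minimum at some $\xi_\infty\in\sqrt{-1}\Lie K$, characterized by $dF|_{\xi_\infty}\equiv 0$, which is exactly the identity $\Fut_{\xi_\infty}\equiv 0$ on $\sqrt{-1}\Lie K$.

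Finally, the commutation $[\xi_\infty,\eta]=0$ for all $\eta\in\Lie K_\C$ follows from the equivariance of the construction. Since $h$ was chosen $K$-invariant, the holomorphy potential satisfies $\theta_h(\Ad(k)\xi)=\phi_k^*\theta_h(\xi)$ and $\om_h$ is $K$-invariant, making $F:\sqrt{-1}\Lie K\to\R$ invariant under the $\Ad(K)$-action. Uniqueness of the minimizer forces $\Ad(k)\xi_\infty=\xi_\infty$ for every $k\in K$; differentiating at the identity yields $[\eta,\xi_\infty]=0$ for all $\eta\in\Lie K$, and the $\C$-bilinearity of the Lie bracket extends this commutation relation to all of $\Lie K_\C$, concluding the proof.
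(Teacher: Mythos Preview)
The paper does not prove this proposition: it is stated with a citation to \cite[Proof of Lemma\,2.2, Prop.\,2.1]{TZ02} and no proof block follows. Your outline is essentially the argument of Tian and Zhu in that reference---the independence of $h$ is established there via exactly the kind of path differentiation and integration by parts using \cref{holpot} that you sketch, and the convexity, properness, and $\Ad(K)$-invariance arguments leading to a unique central minimizer are the ones you give.

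Two small remarks. First, in the properness step you invoke $\int_X\theta_h(\xi_0)\,d\nu_h=0$ from \cref{inttheta=0}, which is the integral against the anticanonical volume form rather than against the Liouville form appearing in $F$; this is still enough to force $\sup_X\theta_h(\xi_0)>0$ (since $\theta_h(\xi_0)$ is real, nonconstant, and averages to zero against a positive measure), but it is worth keeping the two volume forms straight. Second, the independence of $h$ that you flag as the delicate step is genuinely so: the cancellation requires pairing the variation of $\theta_h$ from \cref{thetaef} with the variation of $\om_h^n$ via $\frac{\sqrt{-1}}{2\pi}\partial\dbar f\wedge\om_h^{n-1}$ and then integrating by parts against $e^{\theta_h(\xi)}$ using \cref{holpot}; your outline is correct but this is where a full proof would carry the weight.
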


\subsection{Kähler-Ricci solitons}
\label{KRsec}

A Kähler form $\om\in\Om^2(X,\R)$ on a
compact complex manifold $X$ induces a natural
Hermitian metric $h_\om\in\Met(L)$ on $L=\det(T^{(1,0)}X)$,
defined using the anticanonical volume form
\cref{dnucandef} by the formula
\begin{equation}\label{dnuh=omh}
\frac{\om^n}{n!}=d\nu_{h_\om}\,.
\end{equation}
The associated \emph{Ricci form} is defined by the formula
$\Ric(\om):=\frac{\sqrt{-1}}{2\pi}R_{h_\om}$
where $R_{h_\om}\in\Om^2(X,\C)$ is the Chern curvature of the Hermitian
metric $h_\om\in\Met(L)$.
Using Cartan's formula and by definition \cref{holpot} of
the holomorphy potential, we see that
$h\in\Met^+(L)$ induces a
Kähler-Ricci soliton $\om_h\in\Om^2(X,\R)$
with respect to $\xi\in\Lie\Aut(X)$ in the sense of formula
\cref{KRdef} if and only if
\begin{equation}
\Ric(\om_h)=\om_h+\frac{\sqrt{-1}}{2\pi}\partial \dbar\,\theta_h(\xi)\,.
\end{equation}
By definition
of the Chern curvature, this means that
there exists a constant $c>0$ such that $h_{\om_h}=ce^{\theta_h(\xi)}h$,
so that formula \cref{dnuh=omh} shows that
$\om_h\in\Om^2(X,\R)$
is a Kähler-Ricci soliton with respect to $\xi\in\Lie\Aut(X)$
if and only if there is a constant $c>0$ such that
\begin{equation}\label{KRvolfla}
d\nu_h=ce^{\theta_h(\xi)}\frac{\om^n_h}{n!}\,.
\end{equation}
Let now $\om_h\in\Om^2(X,\R)$
be a Kähler-Ricci soliton with respect to $\xi\in\Lie\Aut(X)$,
and recalling formula \cref{gTXintro} for the associated Riemannian metric,
let $K\subset\Aut_0(X)$ be a connected subgroup
of holomorphic isometries of $(X,g_h^{TX})$. As $L_{J\xi}\om_h=0$
by definition, we get that $\xi\in\sqrt{-1}\Lie K$ in the decomposition
\cref{LieKC}.
The following result of Tian and Zhu shows that the modified
Futaki invariant of \cref{TZinv} is an obstruction for the
existence of Kähler-Ricci solitons. It will play a key role in the
construction of approximately balanced metrics in \cref{approxbalsec}.

\begin{prop}\label{TZKR}
{\emph{\cite[Prop.\,1.3]{TZ02}}}
Let $\om_h\in\Om^2(X,\R)$ be a Kähler-Ricci soliton
with respect to $\xi\in\Lie\Aut(X)$,
and let $K\subset\Aut_0(X)$ be a connected subgroup
of holomorphic isometries of $(X,g_h^{TX})$.
Then the associated modified
Futaki invariant $\Fut_{\xi}:\sqrt{-1}\Lie K\to\C$
of \cref{TZinv}
vanishes identically.
\end{prop}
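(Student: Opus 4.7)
The plan is to exploit the characterization \cref{KRvolfla} of Kähler-Ricci solitons in terms of the anticanonical volume form, which turns the weighted integral defining $\Fut_\xi$ into an unweighted integral against $d\nu_h$. Once that reduction is done, the vanishing comes directly from identity \cref{inttheta=0} of \cref{thetaxiprop}. So the proof should essentially be one line of substitution plus one invocation of a preparatory lemma.

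More concretely, first I would note that since $\om_h$ is a Kähler-Ricci soliton with respect to $\xi$, \cref{KRvolfla} gives a constant $c>0$ such that
\begin{equation*}
e^{\theta_h(\xi)}\,\frac{\om_h^n}{n!}=\frac{1}{c}\,d\nu_h\,.
\end{equation*}
Next, for any $\eta\in\sqrt{-1}\Lie K$, I would substitute this directly into the defining formula \cref{FutTZ}, obtaining
\begin{equation*}
\Fut_\xi(\eta)=\int_X\,\theta_h(\eta)\,e^{\theta_h(\xi)}\,\frac{\om_h^n}{n!}=\frac{1}{c}\int_X\,\theta_h(\eta)\,d\nu_h\,.
\end{equation*}
Finally, applying \cref{inttheta=0} of \cref{thetaxiprop} to the holomorphic vector field $\eta\in\sqrt{-1}\Lie K\subset\Lie\Aut(X)$ yields $\int_X\theta_h(\eta)\,d\nu_h=0$, so $\Fut_\xi(\eta)=0$.

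There is no real obstacle here: the whole content has been packaged into the two preparatory facts \cref{KRvolfla} and \cref{inttheta=0}. The only point to keep in mind is that \cref{inttheta=0} is stated for arbitrary $\eta\in\Lie\Aut(X)$, so no compatibility between $\eta$ and the particular choice of $K$ or of the metric $h$ is needed at this stage; and the independence of $\Fut_\xi$ from the choice of $h$ established in \cref{TZinv} is not required for the proof, since we are evaluating $\Fut_\xi$ at the specific metric $h$ realizing the soliton. One could briefly remark, for motivation, that this proof is the classical Tian--Zhu observation that the modified Futaki invariant is the obstruction to a soliton because, at a soliton, the weight $e^{\theta_h(\xi)}\om_h^n/n!$ is precisely the natural anticanonical measure against which every holomorphy potential integrates to zero.
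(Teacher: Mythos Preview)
Your argument is correct. The paper does not supply its own proof of this proposition; it simply cites \cite[Prop.\,1.3]{TZ02} and states the result. Your proof is exactly the natural one in the paper's framework: the characterization \cref{KRvolfla} of a Kähler--Ricci soliton together with the normalization \cref{inttheta=0} reduce $\Fut_\xi(\eta)$ to $c^{-1}\int_X\theta_h(\eta)\,d\nu_h=0$. So rather than differing from the paper's approach, your argument fills in what the paper leaves as a citation, and does so using only facts already established in \cref{thetaxiprop} and \cref{KRsec}.
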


For any compact subgroup $K\subset\Aut_0(X)$,
we write $\Met^+(L)^K$ for the space of $K$-invariant positive
Hermitian metrics, and $\cinf(X,\C)^K$
for the space of $K$-invariant functions
over $X$. For any $h\in\Met^+(L)$, write $\Delta_h$ for the
Riemannian Laplacian
of $(X,g_h^{TX})$ acting on $\cinf(X,\R)$.

\begin{lem}\label{TZopdef}
Let $K\subset\Aut_0(X)$ be a connected compact subgroup and let
$T\subset K$ be the identity component of its center.
For any $h\in\Met^+(L)^K$ and $\xi\in\sqrt{-1}\Lie T$, the operator
$\Delta_h^{(\xi)}$ acting on $f\in\cinf(X,\R)^K$ by the formula
\begin{equation}\label{TZopdeffla}
\Delta_h^{(\xi)}f
:=\frac{1}{4\pi}\Delta_{h}\,f-df.\xi^{1,0}\,,
\end{equation}
is positive and essentially self-adjoint with respect to the scalar product
$\<\cdot,\cdot\>_{L^2(h,\xi)}$
defined on $f,\,g\in\cinf(X,\R)^K$ by the formula
\begin{equation}\label{L2theta}
\<f,g\>_{L^2(h,\xi)}=\int_X\,f\,g\,e^{\theta_h(\xi)}\,\frac{\om_h^n}{n!}\,.
\end{equation}
Furthermore, we have $\Ker\Delta_h^{(\xi)}=\C$.
\end{lem}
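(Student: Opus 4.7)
The plan is to identify $\Delta_h^{(\xi)}$ on $\cinf(X,\R)^K$ with a positive multiple of the weighted $\dbar$-Laplacian $\Box_\phi := \dbar^{*_\phi}\dbar$ associated with the measure $d\mu := e^\phi\om_h^n/n!$, where $\phi := \theta_h(\xi) \in \cinf(X,\R)$. This function is indeed real by \cref{thetaxiprop}, since $K$ preserves $h$ and $\xi \in \sqrt{-1}\Lie T \subset \sqrt{-1}\Lie K$. Writing $\xi = J\eta$ with $\eta \in \Lie T$, one has $\xi^{1,0} = \sqrt{-1}\eta^{1,0}$; combined with $\eta(f) = 0$ for $K$-invariant $f$, this forces $\partial f.\eta^{1,0}$ to be purely imaginary, so $df.\xi^{1,0}$ is real, and the formula \cref{TZopdeffla} genuinely defines a \emph{real} operator on $\cinf(X,\R)^K$.

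The core computation is to prove that, for all $f \in \cinf(X,\R)^K$,
\[
\Delta_h^{(\xi)} f \;=\; \frac{1}{2\pi}\,\Box_\phi f.
\]
The Kähler identity $\Delta_h = 2\dbar^*\dbar$ on functions (with $\dbar^*$ the adjoint for $\om_h^n/n!$) rewrites the first term of \cref{TZopdeffla} as $\frac{1}{2\pi}\dbar^*\dbar f$. A direct integration by parts using $\dbar^{*_\phi} = e^{-\phi}\dbar^*\, e^\phi$ gives $\Box_\phi f = \dbar^*\dbar f - g^{a\bar b}\partial_a\phi\,\partial_{\bar b}f$ in local holomorphic coordinates. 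On the other hand, \cref{holpot} yields $(\xi^{1,0})^a = \frac{1}{2\pi}g^{a\bar b}\partial_{\bar b}\phi$, whence $df.\xi^{1,0} = \frac{1}{2\pi}g^{a\bar b}\partial_{\bar b}\phi\,\partial_a f$. For real $\phi$ and $f$ this expression is the complex conjugate of the correction term in $\Box_\phi$, and the reality established in the first paragraph forces the two to coincide, completing the identification.

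Once this identification is in hand, the conclusions are standard. The operator $\Box_\phi$ is a non-negative, symmetric, second-order elliptic operator on the compact manifold $X$, hence essentially self-adjoint with respect to $L^2(X,d\mu)$; its quadratic form satisfies $\<\Box_\phi f, f\>_{L^2(h,\xi)} = \|\dbar f\|^2_{L^2(h,\xi)} \geq 0$, and its kernel equals $\Ker\dbar = \C$ since holomorphic functions on compact $X$ are constant. Because $T$ is the center of $K$, $\xi$ centralizes $\Lie K$, so $\Delta_h^{(\xi)}$ commutes with the $K$-action and restricts to the closed subspace $L^2(X,d\mu)^K$ with core $\cinf(X,\R)^K$, preserving essential self-adjointness, non-negativity, and the kernel. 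The main delicate point is the matching step above: without the combination of $K$-invariance of $f$, the centrality of $\xi$ in $\Lie K_\C$, and the reality of $\theta_h(\xi)$, the formula \cref{TZopdeffla} would fail to define even a real operator, so the three hypotheses of the lemma are used in concert throughout.
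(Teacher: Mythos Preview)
Your proof is correct and arrives at the same conclusion as the paper, but via a slightly different packaging. The paper proceeds by a direct real integration by parts: after observing that $df.\xi = 2\,df.\xi^{1,0}$ for $f\in\cinf(X,\R)^K$ (since $J\xi\in\Lie T$ kills $f$) and that $2\pi\,\iota_{J\xi}\om_h = -d\theta_h(\xi)$, it computes
\[
\<\Delta_h^{(\xi)} f,\,g\>_{L^2(h,\xi)} \;=\; \int_X \<df,dg\>_{g_h^{TX}}\,e^{\theta_h(\xi)}\,\frac{\om_h^n}{n!}\,,
\]
from which symmetry, non-negativity and $\Ker=\C$ are immediate. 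You instead identify $\Delta_h^{(\xi)}$ with $\frac{1}{2\pi}\Box_\phi$ for the weighted $\dbar$-Laplacian $\Box_\phi=\dbar^{*_\phi}\dbar$, and then invoke the standard properties of $\Box_\phi$. On a Kähler manifold these are the same computation in different clothing: the Kähler identity $\Delta_h=2\dbar^*\dbar$ on functions makes the paper's real Dirichlet form $\int\<df,dg\>\,e^\phi$ coincide (up to the factor $2$) with your complex form $\int\<\dbar f,\dbar g\>\,e^\phi$. Your route has the virtue of naming the operator structurally (a Witten-type twisted Laplacian), which is conceptually pleasant; the paper's route is marginally more elementary since it avoids any appeal to the theory of weighted $\dbar$-Laplacians and stays entirely within real Riemannian analysis.
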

\begin{proof}
Fix $h\in\Met^+(L)^K$ and $\xi\in\sqrt{-1}\Lie T$,
and recall that by definition, we have $J\xi\in\Lie T$.
Then all $f\in\cinf(X,\R)^K$ satisfy $df.\xi=2df.\xi^{1,0}$,
and for all
$\eta\in\Lie K$, we have $[\xi,\eta]=0$ and $[\Delta_h,\eta]=0$,
so that $\Delta_h^{(\xi)}$ preserves $\cinf(X,\R)^K$
inside $\cinf(X,\C)$.
Using \cref{thetaxiprop},
the imaginary part of the holomorphy potential equation
\cref{holpot} gives $2\pi\iota_{J\xi}\om_h=-d\theta_h(\xi)$.
Then writing $\<\cdot,\cdot\>_{g_h^{TX}}$ for the pointwise scalar product
on $T^*X$ induced by the Riemannian metric \cref{gTXintro}
and using an integration by part from formulas \cref{TZopdeffla}
and \cref{L2theta}, for any $f,\,g\in\cinf(X,\C)^K$ we get
\begin{equation}
\<\Delta_h^{(\xi)}f,g\>_{L^2(h,\xi)}=
\int_X\,\<df,dg\>_{g_h^{TX}}\,e^{\theta_h(\xi)}\,\frac{\om^n_h}{n!}\,.
\end{equation}
This shows that the operator
$\Delta_h^{(\xi)}$ given by formula \cref{TZopdeffla}
is essentially self-adjoint and positive with respect
to the scalar product $L^2(h,\xi)$ on $\cinf(X,\R)^K$, and that its
kernel is reduced to the constant functions.
\end{proof}

Let $\om_{h_\infty}\in\Om^2(X,\R)$ be a Kähler-Ricci soliton with
respect to $\xi_\infty\in\Lie\Aut(X)$, let $K\subset\Aut_0(X)$ be a
connected compact subgroup of isometries of $(X,g_{h_\infty}^{TX})$ and
let $T\subset K$ be the identity component of its center.
Via \cref{Kostantdef}, \cref{TZKR} implies in particular
that for any $\eta\in\Lie K$
and any $\xi\in\sqrt{-1}\Lie T$, we have
\begin{equation}\label{Letatheta=0}
\dt\Big|_{t=0}\phi_{t\eta}^*\,\theta_{h_\infty}(\xi)=\theta_{h_\infty}([\eta,\xi])=0\,,
\end{equation}
so that \cref{thetaxiprop} implies that $\eta\in\sqrt{-1}\Lie K$
belongs to $\sqrt{-1}\Lie T$ if and only if
$\theta_{h_\infty}(\eta)\in\cinf(X,\R)^K$.
As explained by Futaki in \cite[\S\,4]{Fut87} and
using formula \cref{KRvolfla} for Kähler-Ricci solitons,
this implies the following result via a straightforward
generalization of a theorem of Lichnerowicz
and Matsushima in \cite{Lic59,Mat57} restricted to the subspace
$\cinf(X,\R)^K\subset\cinf(X,\C)$.
We also refer to \cite[Lem.\,2.2]{TZ00}
for a self-contained proof.

\begin{prop}\label{TZ}
{\emph{\cite[Prop.\,4.1]{Fut87}}}
Let $h_\infty\in\Met^+(L)$ be a Kähler-Ricci soliton with respect to
$\xi_\infty\in\Lie\Aut(X)$, let
$K\subset\Aut_0(X)$ be the identity component of the group of holomorphic
isometries of $(X,g_{h_\infty}^{TX})$ and
let $T\subset K$ be the identity component of its center.
Then the first positive eigenvalue $\lambda_1(h_\infty,\xi_\infty)>0$
of $\Delta_{h_\infty}^{(\xi_\infty)}$ acting on
$\cinf(X,\R)^K$ as in \cref{TZopdef}
satisfies $\lambda_1(h_\infty,\xi_\infty)=1$,
and the associated eigenspace satisfies
\begin{equation}
\Ker\,\left(\Delta_{h_\infty}^{(\xi_\infty)}-\Id\right)=
\<\theta_{h_\infty}(\xi)~|~\xi\in\sqrt{-1}\Lie T\>\,.
\end{equation}
\end{prop}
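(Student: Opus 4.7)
The plan is to follow the classical Lichnerowicz--Matsushima strategy, adapted to the weighted $L^2$-structure $L^2(h_\infty,\xi_\infty)$, using the soliton equation to produce a Bakry--Émery Ricci tensor that is exactly $\om_{h_\infty}$. Throughout, write $d\mu:=e^{\theta_{h_\infty}(\xi_\infty)}\om_{h_\infty}^n/n!$, and drop the subscript $\infty$ where convenient.

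First, I would establish the key Bochner--Weitzenböck identity in the weighted setting: for every $f\in\cinf(X,\R)^K$,
\begin{equation}\label{BWplan}
\int_X\bigl|\Delta_h^{(\xi)}f\bigr|^2\,d\mu
\,=\,\int_X\bigl|\bar\partial\,\grad^{1,0}f\bigr|^2\,d\mu
\,+\,\int_X\bigl\<\Delta_h^{(\xi)}f,f\bigr\>\,d\mu\,.
\end{equation}
This is obtained by integrating by parts twice with respect to $d\mu$, using that $\Delta_h^{(\xi)}$ is $L^2(h,\xi)$-self-adjoint (\cref{TZopdef}), and expanding $\bar\partial^*\bar\partial$ on the $(1,0)$-vector field $\grad^{1,0}f$ via the standard Kähler Bochner formula. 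The Ricci curvature term that appears is $\int\Ric(\om_h)(\grad^{1,0}f,\grad^{0,1}f)\,d\mu$, while the extra contribution from differentiating the weight $e^{\theta_h(\xi)}$ produces $\int(\partial\dbar\,\theta_h(\xi))(\grad^{1,0}f,\grad^{0,1}f)\,d\mu$. The Kähler--Ricci soliton equation in the form $\Ric(\om_h)=\om_h+\frac{\sqrt{-1}}{2\pi}\partial\dbar\,\theta_h(\xi)$ (derived just before \cref{KRvolfla}) then cancels these two contributions against each other and leaves precisely $\int|\partial f|^2\,d\mu$, which by \cref{TZopdef} equals $\<\Delta_h^{(\xi)}f,f\>_{L^2(h,\xi)}$. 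Getting the normalizations in $\frac{1}{4\pi}$ right here is the one piece of bookkeeping that requires care.

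The spectral bound follows immediately: if $\Delta_h^{(\xi)}f=\lambda f$ with $f\in\cinf(X,\R)^K$ nonzero, then \cref{BWplan} reads
\begin{equation*}
\lambda(\lambda-1)\,\|f\|_{L^2(h,\xi)}^2
=\int_X\bigl|\bar\partial\,\grad^{1,0}f\bigr|^2\,d\mu\geq 0\,,
\end{equation*}
so $\lambda\geq 1$ for every $\lambda>0$, which gives $\lambda_1(h_\infty,\xi_\infty)=1$ once we exhibit a nonzero eigenfunction at $\lambda=1$. Moreover, $\lambda=1$ forces $\bar\partial\,\grad^{1,0}f=0$, i.e. $\eta:=\grad^{1,0}f$ is a holomorphic vector field; combining this with the definition \cref{Kostant} of $\theta_{h_\infty}(\eta)$ and the holomorphy potential equation \cref{holpot} identifies $f$ with $\theta_{h_\infty}(\eta)$ up to an additive constant, which is then forced to be zero by \cref{inttheta=0} together with $\int f\,d\mu=0$ (the latter comes from $f\perp\Ker\Delta_h^{(\xi)}=\C$).

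It then remains to characterize which $\eta\in\Lie\Aut(X)$ arise this way. Since $f$ is real and $K$-invariant, \cref{thetaxiprop} gives $\eta\in\sqrt{-1}\Lie K$, and the $K$-invariance of $\theta_{h_\infty}(\eta)$ combined with \cref{Letatheta=0} (i.e.\ $\theta_{h_\infty}([\zeta,\eta])=0$ for all $\zeta\in\Lie K$) together with the injectivity of $\theta_{h_\infty}$ from \cref{thetaxiprop} forces $[\zeta,\eta]=0$ for every $\zeta\in\Lie K$, that is $\eta\in\sqrt{-1}\Lie T$. Conversely, for any $\eta\in\sqrt{-1}\Lie T$ the function $\theta_{h_\infty}(\eta)$ is a real $K$-invariant eigenfunction of $\Delta_{h_\infty}^{(\xi_\infty)}$ with eigenvalue $1$, which one checks by a direct computation from \cref{Kostant,holpot}: applying $\Delta_{h_\infty}^{(\xi_\infty)}$ to $\theta_{h_\infty}(\eta)$ and using $[\xi_\infty,\eta]=0$ reduces to a Kähler identity for holomorphy potentials. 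This shows both that the eigenspace is exactly $\{\theta_{h_\infty}(\xi)\mid\xi\in\sqrt{-1}\Lie T\}$ and that it is nontrivial (containing at least $\theta_{h_\infty}(\xi_\infty)$), completing the proof. The main obstacle is the weighted Bochner identity \cref{BWplan}; once it is in place, the rest is organization.
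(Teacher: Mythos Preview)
Your proposal is correct and follows precisely the route the paper points to: the paper does not give its own proof of this proposition but cites Futaki \cite{Fut87} and Tian--Zhu \cite{TZ00}, describing the result as a ``straightforward generalization of a theorem of Lichnerowicz and Matsushima restricted to the subspace $\cinf(X,\R)^K$'' via the soliton identity \cref{KRvolfla}. Your weighted Bochner--Weitzenb\"ock identity, with the Bakry--\'Emery Ricci term collapsing to $\om_{h_\infty}$ exactly because of the soliton equation, is that argument; the identification of the equality case with holomorphy potentials and then with $\sqrt{-1}\Lie T$ via the paragraph around \cref{Letatheta=0} matches the paper's own discussion immediately preceding the proposition.

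One small caveat: your nontriviality claim ``containing at least $\theta_{h_\infty}(\xi_\infty)$'' only gives a nonzero eigenfunction when $\xi_\infty\neq 0$; in the Kähler--Einstein case with trivial center $T$ the eigenspace is $\{0\}$ and the first positive eigenvalue is strictly larger than $1$. This is a feature of the statement as written rather than of your argument, and in the paper's applications (\cref{approxbal,quantchanbnd}) one always has $\xi_\infty\in\sqrt{-1}\Lie T$ with $T$ the identity component of the center of the isometry group of the soliton, so the issue does not arise.
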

%


\subsection{Berezin-Toeplitz quantization}
\label{BTsec}

In this Section, we fix a positive Hermitian metric $h\in\Met^+(L)$ on
the line bundle
$L:=\det(T^{(1,0)}X)$, and use the associated volume form $d\nu_h$
given by formula \cref{dnucandef}. For any $p\in\N^*$, we consider the
Hermitian product $L^2(h^p)$ on $H^0(X,L^p)$ defined for any
$s_1,\,s_2\in\cinf(X,L^p)$ by
\begin{equation}\label{L2}
\<s_1,s_2\>_{L^2(h^p)}:=
\int_X\,\<s_1(x),s_2(x)\>_{h^p}\,d\nu_h(x)\,.
\end{equation}
We write
\begin{equation}
\HH_p:=\left(H^0(X,L^p),\<\cdot,\cdot\>_{L^2(h^p)}\right)\,,
\end{equation}
for the associated Hilbert space of holomorphic sections, and set
$n_p:=\dim\HH_p$. We
write $\cL(\HH_p)$ for the space of Hermitian endomorphisms
of $\HH_p$.

By definition of $L$ ample and for all $p\in\N^*$ big enough,
the \emph{Kodaira map}
\begin{equation}\label{Kod}
\begin{split}
\text{Kod}_{p}:X&\longrightarrow\mathbb{P}(H^0(X,L^p)^*)\,,\\
x\,&\,\longmapsto \{\,s\in H^0(X,L^p)~|~s(x)=0\,\}
\end{split}
\end{equation}
is well-defined and an embedding. In this section, we will always implicitly
assume $p\in\N^*$ big enough so that this is verified.

\begin{defi}\label{cohstateprojdef}
The \emph{coherent state
projector} is the map
\begin{equation}
\Pi_{h^p}:X\longrightarrow \cL(\HH_p)
\end{equation}
sending $x\in X$ to the orthogonal projector satisfying
\begin{equation}\label{cohstateprojfla}
\Ker\Pi_{h^p}(x)=\{\,s\in\HH_p~|~s(x)=0\,\}\,.
\end{equation}
The \emph{Rawnsley function} is the unique positive function
$\rho_{h^p}\in\cinf(X,\R)$ defined
for any $s_1,\,s_2\in\HH_p$ and $x\in X$ by
\begin{equation}\label{Rawndeffla}
\rho_{h^p}(x)\,\<\Pi_{h^p}(x)s_1,s_2\>_{L^2(h^p)}
=\<s_1(x),s_2(x)\>_{h^p}\,.
\end{equation}
\end{defi}

From the well-definition of the Kodaira map \cref{Kod}, formula
\cref{cohstateprojfla} implies that $\Pi_{h^p}(x)$ is a rank-$1$ projector
for all $x\in X$, so that
for any orthonormal basis $\{s_j\}_{j=1}^{n_p}$ of $\HH_p$,
formula \cref{Rawndeffla} implies
\begin{equation}\label{rhodensofstate}
\rho_{h^p}=\rho_{h^p}\,\Tr[\Pi_{h^p}]=
\sum_{j=1}^{n_p}\rho_{h^p}\,\<\Pi_{h^p}s_j,s_j\>_{L^2(h^p)}
=\sum_{j=1}^{n_p}\,|s_j|^2_{h^p}\,.
\end{equation}
This gives the characterization of the Rawnsley function as
a \emph{density of states}, which in turn
coincides with the \emph{Bergman kernel} with respect to $d\nu_h$
along the diagonal, as described in \cite[\S\,4.1.9]{MM07}.
The following Theorem describes the semi-classical behaviour of
the Rawnsley function as $p\to+\infty$, extending \cite{Cat99,Zel98}.

\begin{theorem}\label{Bergdiagexp}
{\emph{\cite[Th.\,1.3]{DLM06}}}
There exist functions
$b^{(r)}_h\in\cinf(X,\R)$ for all $r\in\N$
such that for any $m,\,k\in\N$, there exists $C_{m,k}>0$
such that for all $p\in\N^*$ big enough,
\begin{equation}\label{Bergdiagexpfla}
\left|\,\rho_{h^p}-p^n\sum_{r=0}^{k-1}\frac{1}{p^r}b^{(r)}_h\,
\right|_{\CC^m}\leq C_{m,k}p^{n-k}\,,
\end{equation}
where $b^{(0)}_h\in\cinf(X,\R)$
is given by the identity $b^{(0)}_h\,d\nu_h=\om^n_h/n!$.

Furthermore, the functions $b^{(r)}_h\in\cinf(X,\R)$ for all $r\in\N$
depend
smoothly on $h\in\Met^+(L)$ and its successive derivatives,
and for each $m,\,k\in\N$, there exists $l\in\N$ 
such that the constant $C_{m,k}>0$ can be chosen uniformly for
$h\in\Met^+(L)$ in a bounded subset
in $\CC^l$-norm.
\end{theorem}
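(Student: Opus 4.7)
The plan is to follow the strategy of Dai, Liu and Ma that the statement refers to. First I would identify $\rho_{h^p}$ with the diagonal restriction of the Bergman kernel $P_p(x,x')$ reproducing the Hilbert space $\HH_p$ of \cref{L2}, so that the theorem reduces to an asymptotic expansion of this Bergman kernel on the diagonal. The identification follows from the orthonormal-basis formula \cref{rhodensofstate}, which gives $\rho_{h^p}(x) = \sum_j |s_j(x)|^2_{h^p} = P_p(x,x)$.

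Next I would establish localization. The Kodaira Laplacian $\Box_p$ acting on $L^p$-valued $(0,q)$-forms has kernel $H^0(X,L^p)$ in degree zero and a spectral gap of order $p$ above zero, thanks to the Bochner-Kodaira-Nakano inequality applied to the positive line bundle $L$. Using finite propagation speed of the wave operator $\cos(t\sqrt{\Box_p})$ combined with a Schwartz cut-off of $\sqrt{\Box_p}$, the kernel $P_p(x,x')$ coincides, up to errors of order $O(p^{-\infty})$ in any $\CC^m$-norm, with a kernel supported in an arbitrarily small neighborhood of the diagonal. This reduces the problem to a local one near each point $x_0 \in X$.

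In a normal coordinate chart centered at $x_0$ and in a holomorphic trivialization of $L$ in which $h = e^{-\phi}$ with $\phi$ vanishing to order two at $0$, I would rescale $Z \mapsto Z/\sqrt{p}$. The rescaled Kodaira Laplacian converges to the model harmonic oscillator on $\C^n$ with its standard Gaussian weight, whose Bergman projector has an explicit Gaussian kernel. A Taylor expansion of the full operator in powers of $1/\sqrt{p}$, combined with the resolvent method applied to the spectral projector, produces smooth coefficients $b^{(r)}_h \in \cinf(X,\R)$ verifying \cref{Bergdiagexpfla}. The leading coefficient $b^{(0)}_h$ is forced to be the density of $\om^n_h/n!$ relative to $d\nu_h$, because the model Bergman kernel is normalized against Lebesgue measure on $\C^n$, which corresponds to $\om^n_h/n!$ in the normal chart, whereas the Hilbert product \cref{L2} is taken against $d\nu_h$.

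The delicate point will be to upgrade these pointwise asymptotics at $x_0$ to the global $\CC^m$-estimate of \cref{Bergdiagexpfla} and to verify uniformity in $h$. Concretely, each constant appearing in the spectral-gap estimate, the finite-propagation-speed localization and the resolvent expansion must be shown to depend on $h$ only through a finite number of its derivatives, so that $C_{m,k}$ depends only on $\CC^l$-bounds on $h$ for some $l = l(m,k)$. This is achieved by propagating weighted Sobolev estimates through the rescaling argument, following the systematic treatment in \cite[\S\,4.1]{MM07}.
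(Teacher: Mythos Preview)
The paper does not give its own proof of this theorem: it is stated as a citation of \cite[Th.\,1.3]{DLM06}, and is used as a black box throughout. Your outline is a faithful sketch of precisely the Dai--Liu--Ma argument (localization via finite propagation speed, rescaling to the model operator, resolvent expansion), together with the uniformity remarks systematized in \cite[\S\,4.1]{MM07}, so there is nothing to compare.
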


The concepts introduced in
\cref{cohstateprojdef} induce a \emph{coherent state quantization}, 
described via the following fundamental tools.

\begin{defi}\label{BTquantdef} For any $p\in\N^*$,
the \emph{Berezin-Toeplitz quantization map} is the linear map
$T_{h^p}:\cinf(X,\R)\to\cL(\HH_p)$ defined for any
$f\in\cinf(X,\R)$ by the formula
\begin{equation}\label{BTmapfla}
T_{h^p}(f):=\int_X f(x)\,\Pi_{h^p}(x)\,\rho_{h^p}(x)\,d\nu_h(x)\,.
\end{equation}
The \emph{Berezin symbol} is the linear map
$\sigma_{h^p}:\cL(\HH_p)\to\cinf(X,\R)$ defined for any
$A\in\cL(\HH_p)$ and $x\in X$ by the formula
\begin{equation}\label{BTdequantfla}
\sigma_{h^p}(A)(x):=\Tr[\Pi_{h^p}(x)A]\,.
\end{equation}
\end{defi}

Using formula \cref{Rawndeffla}, we get for any
$f\in\cinf(X,\R)$ and any $s_1,\,s_2\in\HH_p$,
\begin{equation}\label{Tpf}
\<T_{h^p}(f)s_1,s_2\>_{L^2(h^p)}
=\int_X\,f(x)\,\<s_1(x),s_2(x)\>_{h^p}\,d\nu_h(x)\,,
\end{equation}
recovering from \cref{BTquantdef} the usual
definition of Berezin-Toeplitz quantization
associated with the volume form
$d\nu_h$, as described in \cite[Chap.\,7]{MM07}.
The following fundamental Theorem describes the semi-classical behaviour
of the Berezin-Toeplitz quantization as $p\to+\infty$.
We write $\|\cdot\|_{op}$ for the operator norm on endomorphisms of $\HH_p$.

\begin{theorem}\label{Toepexp}
{\emph{\cite[Th.\,1.1]{MM08b}}}
For any $f,\,g\in\cinf(X,\R)$,
there exist bi-differential operators $C^{(r)}_h$ for all $r\in\N$
such that for any $k\in\N$, there exists $C_{k}>0$
such that for all $p\in\N^*$ big enough,
\begin{equation}\label{Toepexpfla}
\left\|\,T_{h^p}(f)T_{h^p}(g)-
\sum_{r=0}^{k-1}\frac{1}{p^r}\,T_{h^p}(C^{(r)}_h(f,g))\,
\right\|_{op}\leq \frac{C_{k}}{p^{k}}\,,
\end{equation}
where $C^{(0)}_h(f,g)\in\cinf(X,\R)$
is given by $C^{(0)}_h(f,g)=f g$.

Furthermore, the bi-differential operators $C_h^{(r)}$
depend smoothly on $h\in\Met^+(L)$ and its successive derivatives
for all $r\in\N$, and for each $k\in\N$, there exists $l\in\N$ 
such that the constant $C_{k}>0$ can be chosen uniformly for
$f,\,g\in\cinf(X,\R)$ and $h\in\Met^+(L)$ in a bounded subset
in $\CC^l$-norm.
\end{theorem}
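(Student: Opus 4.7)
The plan is to analyze the Schwartz kernel of the composition $T_{h^p}(f)T_{h^p}(g)$ and compare it with that of a finite sum $\sum_{r=0}^{k-1} p^{-r}\,T_{h^p}(C^{(r)}_h(f,g))$, identifying the bi-differential operators $C^{(r)}_h$ order by order. Let $P_p(x,y)$ denote the Schwartz kernel, with respect to $d\nu_h$, of the orthogonal projection from $L^2(X,L^p)$ onto $\HH_p$; it is a smooth section of $L^p \boxtimes (L^p)^*$ over $X \times X$ whose restriction to the diagonal equals the Rawnsley function of \cref{cohstateprojdef}. Formula \cref{Tpf} shows that $T_{h^p}(f)$ acts as the integral operator with kernel $P_p(x,y)f(y)$, so the Schwartz kernel of the product is $K_p(x,z) = \int_X P_p(x,y)\,f(y)\,P_p(y,z)\,g(z)\,d\nu_h(y)$.

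The first step is off-diagonal localization. The Agmon-type estimate of \cite{DLM06}, which provides constants $c, C > 0$ such that $|P_p(x,y)| \leq C p^n e^{-c\sqrt{p}\,d(x,y)}$, implies via the triangle inequality that $K_p(x,z)$ is $O(p^{-\infty})$ whenever $d(x,z)$ exceeds $C p^{-1/2}\log p$, and that inside this range only $y$ in a shrinking neighborhood of $x$ contributes modulo $O(p^{-\infty})$. The second step is to fix $x_0 \in X$, introduce normal coordinates together with a unitary trivialization of $L$ adapted to the curvature $R_h$ at $x_0$, and substitute the full off-diagonal expansion of $P_p$ from \cite[Th.\,4.18']{DLM06}. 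After rescaling $Z \mapsto Z/\sqrt{p}$, this yields an asymptotic expansion in powers of $p^{-1/2}$ whose leading term is the reproducing kernel of the Bargmann--Fock model on $(T_{x_0}X, g^{TX}_{h,x_0})$, with subsequent coefficients being universal polynomials times this Gaussian and depending on finitely many jets of $h$ and $g^{TX}_h$ at $x_0$. Expanding $f$ and $g$ in Taylor series around $x_0$ and carrying out the resulting Gaussian integrals on $T_{x_0}X$ produces at each order a bi-differential expression in $(f,g)$, which is matched with $P_p(x_0,x_0)\,C^{(r)}_h(f,g)(x_0)$ to identify $C^{(r)}_h$; the leading contribution gives $C^{(0)}_h(f,g) = fg$ because the model Bergman projector reproduces polynomials up to the appropriate order.

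The last step converts this pointwise expansion into an operator-norm bound. One checks that after subtracting $\sum_{r<k} p^{-r}\,T_{h^p}(C^{(r)}_h(f,g))$, the remaining Schwartz kernel has pointwise size $O(p^{n-k})$ together with Gaussian decay at scale $1/\sqrt{p}$ off the diagonal, so a Schur test against $d\nu_h$ gives the claimed $O(p^{-k})$ bound on the operator norm. The main obstacle is controlling the dependence on $h \in \Met^+(L)$: the decay constants in the Agmon estimate, the coefficients of the rescaled expansion, and the Taylor remainders all depend smoothly on finitely many jets of $h$, so given $k \in \N$ one must choose $l \in \N$ large enough that all these quantities are uniform on any $\CC^l$-bounded family of Hermitian metrics, which yields the required uniformity of the constant $C_k$ and is the most delicate bookkeeping in the argument.
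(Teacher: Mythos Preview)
The paper does not give its own proof of this statement: it is quoted verbatim as \cite[Th.\,1.1]{MM08b} and used as a black box. Your sketch is a faithful outline of the Ma--Marinescu argument in that reference (Bergman kernel off-diagonal expansion from \cite{DLM06}, rescaling to the Bargmann--Fock model, Taylor expansion of the symbols, Schur estimate), so there is nothing to compare against in the present paper; you have essentially reproduced the strategy of the cited source rather than an alternative to it.
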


In the context of quantization,
the Berezin symbol \cref{BTdequantfla}
of a quantum observable $A\in\cL(\HH_p)$ is interpreted
as the classical observable given by
the expectation value of $A$
at coherent states.
The following result shows that this operator is dual to the
Berezin-Toeplitz quantization
with respect to the trace norm on $\cL(\HH_p)$ and
the $L^2$-norm on $\cinf(X,\R)$ induced by the density $\rho_{h^p}\,d\nu_h$.

\begin{prop}\label{dualprop}
For any $A\in\cL(\HH_p)$ and $f\in\cinf(X,\R)$, we have
\begin{equation}\label{dualfla}
\Tr[T_{h^p}(f)\,A]=\int_X\,f\,\sigma_{h^p}(A)\,\rho_{h^p}\,d\nu_h\,.
\end{equation}
Furthermore, we have $T_{h^p}(1)=\Id_{\HH_p}$ and
$\sigma_{h^p}(\Id_{\HH_p})=1$.
\end{prop}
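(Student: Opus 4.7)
The plan is to derive all three statements directly from the definitions, exploiting the fact that $\Pi_{h^p}(x)$ is a rank-one projector for every $x\in X$.

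For the duality identity \eqref{dualfla}, the first step is to apply the trace to the defining integral formula \eqref{BTmapfla} for $T_{h^p}(f)$. Since the trace is linear and continuous and the integrand is a smooth family of finite-rank operators with compact support, the trace commutes with the integral, yielding
\begin{equation*}
\Tr[T_{h^p}(f)\,A]=\int_X f(x)\,\Tr[\Pi_{h^p}(x)\,A]\,\rho_{h^p}(x)\,d\nu_h(x).
\end{equation*}
Recognising the integrand via the defining formula \eqref{BTdequantfla} of the Berezin symbol then gives \eqref{dualfla} immediately.

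For the identity $T_{h^p}(1)=\Id_{\HH_p}$, I would invoke the reformulation \eqref{Tpf} of \cref{BTquantdef} with $f\equiv 1$, which shows that for any $s_1,s_2\in\HH_p$ one has
\begin{equation*}
\<T_{h^p}(1)s_1,s_2\>_{L^2(h^p)}=\int_X \<s_1(x),s_2(x)\>_{h^p}\,d\nu_h(x)=\<s_1,s_2\>_{L^2(h^p)},
\end{equation*}
by the very definition \eqref{L2} of the $L^2$-Hermitian product. Non-degeneracy of this inner product then forces $T_{h^p}(1)=\Id_{\HH_p}$.

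Finally, for $\sigma_{h^p}(\Id_{\HH_p})=1$, the crucial point is that the Kodaira embedding property underlying \cref{cohstateprojdef} ensures that the kernel described in \eqref{cohstateprojfla} is a hyperplane in $\HH_p$, so $\Pi_{h^p}(x)$ has rank one and hence trace one for every $x\in X$; applying \eqref{BTdequantfla} with $A=\Id_{\HH_p}$ gives the conclusion. There is no real obstacle here: the whole statement is a formal consequence of the definitions, and the only thing worth stressing is that the reproducing-kernel nature of $\rho_{h^p}$, encoded in \eqref{Rawndeffla}, is exactly what reconciles the two integral formulas \eqref{BTmapfla} and \eqref{Tpf} for $T_{h^p}(f)$.
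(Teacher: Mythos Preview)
Your proof is correct and follows essentially the same approach as the paper. The only cosmetic difference is that for $T_{h^p}(1)=\Id_{\HH_p}$ the paper first states the operator identity $\int_X \Pi_{h^p}(x)\,\rho_{h^p}(x)\,d\nu_h(x)=\Id_{\HH_p}$ (derived from \eqref{Rawndeffla}) and then specializes \eqref{BTmapfla}, whereas you use the equivalent reformulation \eqref{Tpf}; these are the same computation.
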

\begin{proof}
Formula \cref{dualfla} is an immediate consequence of \cref{BTquantdef}.
On the other hand, by \cref{cohstateprojdef} we have
\begin{equation}\label{intPirho=Id}
\int_X\,\Pi_{h^p}(x)\,\rho_{h^p}(x)\,d\nu_h(x)=\Id_{\HH_p}\,.
\end{equation}
This implies the identity $T_{h^p}(1)=\Id_{\HH_p}$, while the second identity
is a consequence of the fact that $\Pi_{h^p}$ is a
rank-$1$ projector, so that $\Tr[\Pi_{h^p}]=1$.
\end{proof}

This gives rise to the following concept, which will be the
main technical tool of this paper.

\begin{defi}\label{Bertransdef}
The \emph{Berezin transform} is the
linear operator
\begin{equation}
\begin{split}
\cB_{h^p}:\cinf(X,\R)&\longrightarrow\cinf(X,\R)\,,\\
f~&\longmapsto~\sigma_{h^p}
\left(T_{h^p}\left(f\right)\right)\,.
\end{split}
\end{equation}
\end{defi}

As explained in details in \cite[\S\,2]{IKPS19}, the Berezin transform
is a \emph{Markov operator} with stationary measure $\rho_{h^p}\,d\nu_h$,
and measures the delocalisation of a classical observable after quantization.
From this point of view, the following semi-classical result can be thought
as a quantitative refinement of the celebrated \emph{Heisenberg's uncertainty
principle}, and refines a semi-classical expansion due to
Karabegov and Schlichenmaier \cite{KS01}.

\begin{theorem}\label{KS}
{\emph{\cite[Lem.\,7.2.4]{MM07},\,\cite[Prop.\,4.8]{IKPS19}}}
For any $f\in\cinf(X,\R)$,
there exist differential operators $D_h^{(r)}\in\cinf(X,\R)$ for all $r\in\N$
such that for any $k,\,m\in\N$, there exists a constant $C_{m,\,k}>0$
such that for all $p\in\N^*$ big enough,
\begin{equation}\label{KSfla}
\left|\,\cB_{h^p}(f)-\sum_{r=0}^{k-1}\,p^{-r}\,D_h^{(r)}(f)\,\right|_{\CC^m}\leq
\frac{C_{m,k}}{p^k}\;.
\end{equation}
with $D_h^{(0)}(f)=f$ and $D_h^{(1)}(f)=\frac{1}{4\pi}\Delta_h\,f$,
where $\Delta_h$ is the Riemannian Laplacian of $(X,g_h^{TX})$.

Furthermore, the differential operators $D_h^{(r)}$
depend smoothly on $h\in\Met^+(L)$ and its successive derivatives
for all $r\in\N$, and for every $m,\,k\in\N$, there exists $l\in\N$ 
such that the constant $C_{m,k}>0$ can be chosen uniformly
for $f\in\cinf(X,\R)$ and $h\in\Met^+(L)$ in a bounded subset
in $\CC^l$-norm.
\end{theorem}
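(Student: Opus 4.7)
The plan is to deduce the expansion directly from the off-diagonal asymptotic expansion of the Bergman kernel of Dai--Liu--Ma \cite{DLM06}, following the strategy of \cite[\S\,7.2]{MM07} and \cite[Prop.\,4.8]{IKPS19}. Combining \cref{cohstateprojdef}, \cref{BTquantdef} and \cref{Bertransdef}, one immediately has
\begin{equation*}
\cB_{h^p}(f)(x)=\int_X f(y)\,\Tr\bigl[\Pi_{h^p}(x)\Pi_{h^p}(y)\bigr]\,\rho_{h^p}(y)\,d\nu_h(y),
\end{equation*}
and the trace of the product of two rank-one projectors is expressible through the pointwise norm squared of the off-diagonal Bergman kernel $P_p(x,y)$, normalized by the diagonal values $\rho_{h^p}$. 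The problem thus reduces to a semiclassical analysis of this weighted integral kernel.

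The first step is localization: by the off-diagonal estimate of \cite[Th.\,4.2.1]{MM07}, the Bergman kernel enjoys super-polynomial decay whenever $d(x,y)\geq p^{-1/2+\varepsilon}$, so only a shrinking neighborhood of $x_0$ contributes up to $O(p^{-\infty})$. On such a neighborhood I would introduce $g^{TX}_h$-normal coordinates centered at $x_0$ and perform the rescaling $y=\exp_{x_0}(Z/\sqrt{p})$. The Dai--Liu--Ma expansion \cite[Th.\,4.18']{DLM06} then yields, in a suitable trivialization of $L^p$,
\begin{equation*}
p^{-n}\,\bigl|P_p(0,Z/\sqrt{p})\bigr|^2_{h^p}\;\sim\;\sum_{r\geq 0} p^{-r/2}\,J_{r,h}(Z),
\end{equation*}
where $J_{0,h}$ is a Gaussian in $|Z|$ and each $J_{r,h}$ is a polynomial multiple of that Gaussian.

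Combining this with Taylor expansions of $f$, of the Jacobian of $\exp_{x_0}$, of $d\nu_h$ in normal coordinates, and of the rescaled Rawnsley density $\rho_{h^p}$ provided by \cref{Bergdiagexp} --- each again a power series in $p^{-1/2}$ --- and then integrating term by term against the Gaussian kernels $J_{r,h}$ produces an asymptotic expansion of $\cB_{h^p}(f)(x_0)$. The half-integer powers of $p^{-1/2}$ are odd in $Z$ and vanish after Gaussian integration, leaving an expansion in non-negative integer powers of $p^{-1}$ whose coefficients $D_h^{(r)}(f)$ are differential operators applied to $f$ at $x_0$. The normalization $D_h^{(0)}(f)=f$ is then forced by \cref{dualprop} applied to $f\equiv 1$, which yields $\cB_{h^p}(1)=1$, while the first nontrivial coefficient $D_h^{(1)}(f)=\tfrac{1}{4\pi}\Delta_h f$ follows from an explicit Gaussian moment calculation on the model $T_{x_0}X\cong\C^n$, using the fact that $d\nu_h$ and the leading Rawnsley density $\rho_{h^p}\sim p^n b_h^{(0)}$ are related via \cref{dnuh=omh} in precisely the way that produces the Riemannian Laplacian of $g_h^{TX}$ rather than a twisted operator.

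Finally, the uniformity and smooth dependence on $h\in\Met^+(L)$ follow directly from the corresponding uniformity statements in \cite[Th.\,4.18']{DLM06} and \cref{Bergdiagexp}, since every ingredient (choice of normal coordinates, Taylor remainder estimates, Gaussian integration over $\R^{2n}$) depends smoothly on $h$ and can be bounded uniformly in any fixed $\CC^l$-ball. The hardest part will be the combinatorial bookkeeping required to show that every half-integer power of $p^{-1/2}$ cancels at each order and to extract the precise constant $\tfrac{1}{4\pi}$ in $D_h^{(1)}$: this demands carefully tracking how the Gaussian moments of the kernels $J_{r,h}$ interact with the first-order Taylor corrections of the metric, of the volume form and of the Bergman kernel itself.
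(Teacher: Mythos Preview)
The paper does not give its own proof of \cref{KS}: the theorem is stated as a known result, with the proof delegated entirely to the citations \cite[Lem.\,7.2.4]{MM07} and \cite[Prop.\,4.8]{IKPS19}. Your proposal is a faithful and correct reconstruction of the argument contained in those references --- localization via off-diagonal decay, rescaling in normal coordinates, the Dai--Liu--Ma expansion, Gaussian integration term by term, and parity cancellation of the half-integer powers --- so there is nothing to compare against beyond confirming that your sketch matches the cited proofs, which it does.
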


\section{Quantum action of the automorphism group}
\label{quantAutsec}

One basic property of any reasonable quantization is its compatibility
with symmetries, represented here by the action of
the automorphism group $\Aut(X)$.
In this Section, we will establish this fact through the use of
Berezin-Toeplitz quantization, and establish
fundamental properties of the quantized Futaki invariant
\cref{quantFutdefintro} as a holomorphic
invariant for this action. In the whole Section, we fix $h\in\Met^+(L)$
and consider the setting of \cref{BTsec}.

\subsection{Quantization of the holomorphy potentials}
\label{quantholpotsec}

Let $K\subset\Aut_0(X)$ be the identity component of the
group of holomorphic isometries of
$(X,g_h^{TX})$. In the notations of \cref{BTsec}, the action of $K$ on $X$
lifts to a unitary action on $\HH_p$, and any $\xi\in\sqrt{-1}\Lie T$
induces a Hermitian operator $L_\xi\in\cL(\HH_p)$ defined via
formula \cref{Lxidef}. On the other hand, \cref{momentrmk} implies that
$\theta_h(\xi)\in\cinf(X,\R)$ generates a \emph{Hamiltonian flow}
$\phi_{tJ\xi}\in K$, for all $t\in\R$. From general principles,
the Hermitian operator $L_\xi\in\cL(\HH_p)$
can thus be seen as the appropriate
quantization of the classical observable $\theta_h(\xi)\in\cinf(X,\R)$,
and the following result illustrates this principle via
Berezin-Toeplitz quantization.
Recall that we consider Berezin-Toeplitz
quantization with respect to the anticanonical volume form \cref{dnucandef}
instead of the usual Liouville form.

\begin{prop}\label{Tuy}
For any $\xi\in\Lie\Aut(X)$, the induced operator
$L_\xi\in\End(\HH_p)$ satisfies
\begin{equation}\label{Tuyfla}
L_\xi=\left(p+1\right)T_{h^p}\left(\theta_h(\xi)\right)\,.
\end{equation}
In particular, there exists a constant $C>0$,
independent of $h^p\in\Met^+(L^p)$ and $\xi\in\Lie\Aut(X)$,
such that for any
$p\in\N^*$, we have
\begin{equation}\label{Lxi<Cp}
\|L_\xi\|_{op}<C\,p\,|\xi|\,.
\end{equation}
\end{prop}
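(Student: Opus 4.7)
The central identity \cref{Tuyfla} will be established by pairing both sides with holomorphic sections. As a first step, I would extend the Kostant formula \cref{Kostant} from $L$ to $L^p$. Since the $\Aut(X)$-action on $L^p$ is the $p$-th tensor power of the action on $L$, and $\nabla^{h^p}$ is the tensor-power Chern connection, a direct computation on a local $p$-th power $s_0^{\otimes p}$ shows that the holomorphy potential of $\xi\in\Lie\Aut(X)$ with respect to $h^p$ equals $p\,\theta_h(\xi)$, so that for any smooth section $s$ of $L^p$,
\begin{equation*}
L_\xi s \,=\, p\,\theta_h(\xi)\,s + \nabla^{h^p}_\xi s.
\end{equation*}
For holomorphic $s\in\HH_p$, the fact that $\nabla^{h^p}_{\xi^{0,1}}s=\dbar_{\xi^{0,1}}s=0$ reduces this to $L_\xi s = p\,\theta_h(\xi)\,s + \nabla^{h^p}_{\xi^{1,0}}s$.

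The next step is to express the term $\nabla^{h^p}_{\xi^{1,0}}s$ as a Berezin-Toeplitz operator by integration by parts. For $s_1,s_2\in\HH_p$, unitarity of the Chern connection gives $L_{\xi^{1,0}}\<s_1,s_2\>_{h^p}=\<\nabla^{h^p}_{\xi^{1,0}}s_1,s_2\>_{h^p}+\<s_1,\nabla^{h^p}_{\xi^{0,1}}s_2\>_{h^p}$, and the second term vanishes by holomorphy of $s_2$. Since $\<s_1,s_2\>_{h^p}d\nu_h$ is a top-degree form on $X$, Cartan's formula makes $L_{\xi^{1,0}}(\<s_1,s_2\>_{h^p}d\nu_h)$ an exact form that integrates to zero by Stokes' theorem on $X$ (applied $\C$-linearly to the real and imaginary parts of $\xi^{1,0}$). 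The key input is then the identity
\begin{equation*}
L_{\xi^{1,0}}\,d\nu_h \,=\, -\theta_h(\xi)\,d\nu_h,
\end{equation*}
which follows by combining \cref{etxi*dnu,dnuef/dnu,dtetxi*hfla} to obtain $L_\xi d\nu_h = -2\,\Re\,\theta_h(\xi)\,d\nu_h$ for real $\xi$, then applying the same to $J\xi$ using the holomorphy \cref{thetahol} $\theta_h(J\xi)=\sqrt{-1}\theta_h(\xi)$ and decomposing $\xi^{1,0}=\tfrac{1}{2}(\xi-\sqrt{-1}J\xi)$. Substituting back yields $\<\nabla^{h^p}_{\xi^{1,0}}s_1,s_2\>_{L^2(h^p)}=\int_X\theta_h(\xi)\<s_1,s_2\>_{h^p}d\nu_h=\<T_{h^p}(\theta_h(\xi))s_1,s_2\>_{L^2(h^p)}$ by the definition \cref{Tpf} of $T_{h^p}$ extended $\C$-linearly, and combining with the first step gives $L_\xi=(p+1)\,T_{h^p}(\theta_h(\xi))$ on $\HH_p$.

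For the bound \cref{Lxi<Cp}, I would use the general estimate $\|T_{h^p}(f)\|_{op}\leq\sup_X|f|$ for real-valued $f$, which is a direct consequence of the representation \cref{BTmapfla} of $T_{h^p}(f)$ as a convex combination of the rank-one projectors $\Pi_{h^p}(x)$ weighted by the probability density $\rho_{h^p}d\nu_h$ (cf.\ \cref{dualprop}). Splitting $\theta_h(\xi)$ into real and imaginary parts gives $\|L_\xi\|_{op}\leq 2(p+1)\sup_X|\theta_h(\xi)|$, and the continuity of the linear map $\theta_h\colon\Lie\Aut(X)\to\cinf(X,\C)$ on the finite-dimensional space $\Lie\Aut(X)$ yields the claimed bound. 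The main obstacle is the clean computation of $L_{\xi^{1,0}}d\nu_h=-\theta_h(\xi)d\nu_h$: this is precisely where the specific form \cref{dnucandef} of the anticanonical volume enters and produces the exact factor $(p+1)$, whereas for a generic volume form one would only obtain an $O(1/p)$-approximate version of \cref{Tuyfla} rather than an exact identity.
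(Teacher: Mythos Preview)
Your proof is correct and follows essentially the same route as the paper's: extend the Kostant formula to $L^p$ to write $L_\xi=p\,\theta_h(\xi)+\nabla^{h^p}_{\xi^{1,0}}$ on holomorphic sections, then integrate by parts using unitarity and the identity $L_{\xi^{1,0}}d\nu_h=-\theta_h(\xi)\,d\nu_h$ (which is exactly the point where the anticanonical volume form is needed), and conclude via the definition \cref{Tpf} of $T_{h^p}$. Your derivation of the operator norm bound from $\|T_{h^p}(f)\|_{op}\leq\sup_X|f|$ and the finite-dimensionality of $\Lie\Aut(X)$ is also the same argument the paper has in mind.
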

\begin{proof}
This is version of an argument due to Tuynman \cite{Tuy87},
adapted to the anticanonical
volume form \cref{dnucandef}.
Recall that the Chern connection $\nabla^h$ induces the holomorphic
structure of $L$, and recall the variation formulas
\cref{dnuef/dnu,etxi*dnu}
for the anticanonical volume form. Then by \cref{thetaxiprop},
for any holomorphic sections $s_1,\,s_2\in H^0(X,L^p)$ we get
\begin{equation}
\begin{split}
\int_X\,\<\nabla^{h^p}_\xi s_1,s_2\>_{h^p}\,d\nu_h
&=\int_X\,\<\nabla^{h^p}_{\xi^{1,0}} s_1,s_2\>_{h^p}\,d\nu_h
=\int_X\,L_{\xi^{1,0}}\< s_1,s_2\>_{h^p}\,d\nu_h\\
&=-\int_X\,\< s_1,s_2\>_{h^p}\,L_{\xi^{1,0}}d\nu_h
=\int_X\,\theta_h(\xi)\,\< s_1,s_2\>_{h^p}\,d\nu_h
\end{split}
\end{equation}
On the other hand, \cref{Kostantdef} implies that for any
$s\in\HH_p$, we have
\begin{equation}
p\,\theta_h(\xi)\,s=L_{\xi}\,s-\nabla^{h^p}_{\xi}s\,.
\end{equation}
Then using formula \cref{Tpf} for the Berezin-Toeplitz quantization map,
for any $s_1,\,s_2\in\HH_p$ we get
\begin{equation}
\begin{split}
\<L_\xi\,s_1,s_2\>_{L^2(h^p)}
&=p\int_X\theta_h(\xi)\<s_1,s_2\>_{h^p}\,d\nu_h+
\int_X\,\<\nabla^{h^p}_\xi s_1,s_2\>_{h^p}\,d\nu_h\\
&=\left(p+1\right)\,\<T_p(\theta_h(\xi))\,s_1,s_2\>_{L^2(h^p)}\,.
\end{split}
\end{equation}
This gives the identity \cref{Tuyfla}, which implies in turn
the inequality \cref{Lxi<Cp} via formula \cref{Tpf}.
\end{proof}

Using \cref{Toepexp}, we can establish the following result, which is
an exponentiation of \cref{Tuy}.

\begin{prop}\label{Tuycor}
There exist functions $\theta^{(j)}_{\xi}\in\cinf(X,\C)$
for all $j\in\N$, depending smoothly on $\xi\in\Lie\Aut(X)$,
such that for any $k\in\N$, the exponential
$e^{L_\xi/p}\in\End(\HH_p)$
satisfies the following expansion in the sense of the operator norm
as $p\to+\infty$,
\begin{equation}\label{eLxiexp}
e^{L_\xi/p}
=T_{h^p}\left(e^{\theta_h(\xi)}\right)+
\sum_{j=1}^{k-1}\,p^{-j}\,T_{h^p}(\theta^{(j)}_{\xi})+O(p^{-k})\,.
\end{equation}
In particular,
there exists a constant $C>0$ such that for all $p\in\N^*$ big enough,
\begin{equation}\label{eLxibded}
C^{-1}p^n\leq\Tr\left[e^{L_\xi/p}\right]\leq Cp^n\,.
\end{equation} 
These estimates are uniform for $\xi\in\Lie\Aut(X)$
and $h\in\Met^+(L)$ in any bounded set in $\CC^l$-norm.
\end{prop}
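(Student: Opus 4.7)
The plan is to combine the Tuynman-type identity \eqref{Tuyfla} from \cref{Tuy} with the asymptotic expansion of Toeplitz operator products from \cref{Toepexp} in order to expand the exponential series for $e^{L_\xi/p}$. Setting $f:=\theta_h(\xi)$, identity \eqref{Tuyfla} gives $L_\xi/p = (1+1/p)\,T_{h^p}(f)$, and by \eqref{Tpf} we have $\|T_{h^p}(f)\|_{op} \leq |f|_{\CC^0}$. Hence $L_\xi/p$ is bounded in operator norm uniformly in $p\in\N^*$, the series $e^{L_\xi/p} = \sum_{n\geq 0} (n!)^{-1}(L_\xi/p)^n$ converges absolutely with uniformly bounded norm, and its tail beyond order $N$ has operator norm at most $(2|f|_{\CC^0})^{N+1}/(N+1)!$, which is super-exponentially small in $N$.

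Next I would apply \cref{Toepexp} iteratively to each finite power to write, for any $k\in\N$ and any $n$,
\begin{equation*}
T_{h^p}(f)^n \,=\, T_{h^p}(f^n) + \sum_{r=1}^{k-1} p^{-r}\,T_{h^p}(C^{(r,n)}_h(f)) + O_n(p^{-k})
\end{equation*}
in operator norm, where the $C^{(r,n)}_h$ are differential polynomials in $f$ built from the bi-differential operators of \cref{Toepexp}. Multiplying by the prefactor $(1+1/p)^n/n! = \sum_{j=0}^n \binom{n}{j} p^{-j}/n!$ coming from \eqref{Tuyfla} and summing over $n$, I would regroup by powers of $p^{-j}$. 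The $p^0$ coefficient collapses by linearity and $\CC^0$-continuity of $T_{h^p}$ to $\sum_{n\geq 0} T_{h^p}(f^n)/n! = T_{h^p}(e^{f})$, and the higher coefficients define the sought functions $\theta^{(j)}_\xi\in\cinf(X,\C)$ as finite combinations of the $C^{(r,n)}_h(f)$ weighted by explicit combinatorial factors, depending smoothly on $\xi$ through the linear dependence $f=\theta_h(\xi)$.

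For the trace estimate \eqref{eLxibded}, I would combine \eqref{dualfla} from \cref{dualprop} with $A=\Id$ (so that $\sigma_{h^p}(\Id)=1$), together with the Bergman expansion \eqref{Bergdiagexpfla} and the identity $b^{(0)}_h\,d\nu_h=\om_h^n/n!$, to compute
\begin{equation*}
\Tr\!\left[T_{h^p}(e^{f})\right] \,=\, \int_X e^{f}\,\rho_{h^p}\,d\nu_h \,=\, p^n\!\int_X e^{f}\,\frac{\om_h^n}{n!} + O(p^{n-1}).
\end{equation*}
Since $e^{f}>0$ is bounded, the integral is uniformly bounded away from $0$ and $\infty$ as $\xi$ and $h$ vary over bounded sets. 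Combining this with the $k=1$ instance of \eqref{eLxiexp} and the Weyl bound $n_p = \dim\HH_p = O(p^n)$, one gets $|\Tr[e^{L_\xi/p}] - \Tr[T_{h^p}(e^f)]| \leq n_p\cdot O(p^{-1}) = O(p^{n-1})$, and \eqref{eLxibded} follows for $p$ large.

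The main technical obstacle I foresee is justifying the interchange of the infinite Taylor sum over $n$ with the asymptotic expansion in $p^{-r}$: the remainder constants in iterated \cref{Toepexp} grow with the power $n$, so the expansion cannot be applied termwise to the full series. The resolution is a diagonal truncation $N=N(k)$, chosen large enough that the super-exponential Taylor tail dominates any polynomial-in-$n$ growth of the Toeplitz error constants, yet small enough that the iterated Toeplitz remainder stays $O(p^{-k})$. The uniformity statement in \cref{Toepexp} on bounded $\CC^l$-sets makes this balance possible, and simultaneously yields the claimed uniformity of the constants in $\xi$ and $h$ on bounded sets.
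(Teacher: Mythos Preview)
Your power-series approach is genuinely different from the paper's, and the gap you yourself flag is real and not resolved by what you propose. A truncation level $N=N(k)$ that is \emph{independent of $p$} cannot work: the Taylor tail $\sum_{n>N}(L_\xi/p)^n/n!$ has operator norm bounded by a constant depending only on $N$ and $|f|_{\CC^0}$, with no decay in $p$ whatsoever. So after truncating and expanding the finite partial sum via iterated \cref{Toepexp}, your total error is $O_N(1)+O_N(p^{-k})$, which is never $O(p^{-k})$. To rescue the argument you would need $N=N(p)\to\infty$, and then you must prove that the iterated Toeplitz remainder constants $C_n$ grow at worst exponentially in $n$ (so that $\sum_{n\leq N(p)} C_n/(n!\,p^k)$ stays bounded). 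That is plausible---by telescoping $T_{h^p}(f)^n-T_{h^p}(f^n)$ and using that $|f^j|_{\CC^l}$ grows polynomially-times-exponentially in $j$---but it is a nontrivial estimate you have not supplied, and it becomes progressively messier for each higher order $k$.

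The paper avoids this entirely by an ODE argument: it differentiates $e^{tL_\xi/p}$ in $t$ to get $\partial_t e^{tL_\xi/p}=(1+p^{-1})T_{h^p}(\theta_h(\xi))e^{tL_\xi/p}$, observes via \cref{Toepexp} that $T_{h^p}(e^{t\theta_h(\xi)})$ satisfies the same ODE up to $O(p^{-1})$, and applies Gr\"onwall's lemma on $[0,1]$ to the difference. Higher orders are then obtained inductively by solving an explicit linear ODE in $t$ for each correction $f_{k,t}$ and reapplying Gr\"onwall. This route never truncates the exponential and never needs to track how constants grow across iterated products, which is exactly what makes your direct expansion delicate. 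Your trace argument for \cref{eLxibded} is fine and essentially matches the paper's.
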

\begin{proof}
In this proof, the notation $O(p^{-k})$ for some $k\in\N$ is
taken in the sense of the operator norm as $p\to+\infty$,
uniformly in the $\CC^l$-norm of
$\theta_h(\xi)\in\cinf(X,\R)$ and $h\in\Met^+(L)$, for some $l\in\N$
only depending on $k\in\N$ and for all $t\in[0,1]$.

First note that \cref{Tuy} implies the following ordinary differential equation
in $t\in[0,1]$,
\begin{equation}\label{odeexp}
\left\{
\begin{array}{l}
  \dt\,e^{tL_\xi/p}=\left(1+\frac{1}{p}\right)
  T_{h^p}(\theta_{h}(\xi))e^{tL_\xi/p} \\
  \\
  e^{tL_\xi/p}\big|_{t=0}=\Id_{\HH_p}.
\end{array}
\right.
\end{equation}
On the other hand, \cref{Toepexp} implies that for all $t\in[0,1]$,
we have
\begin{equation}\label{odeToep}
\dt\,T_{h^p}(e^{t\theta_{h}(\xi)})=T_{h^p}(\theta_{h}(\xi))
  T_{h^p}(e^{t\theta_{h}(\xi)})+O(p^{-1})\,.
\end{equation}
Using the fact from \cref{BTquantdef}
and formula \cref{intPirho=Id} that $T_{h^p}(1)=\Id_{\HH_p}$,
we can then apply Grönwall's lemma to the difference of \cref{odeexp}
and \cref{odeToep} to get the expansion \cref{eLxiexp} for $k=1$.

Assume now by induction on $k\geq 2$ that there exist
a function $g_{k,t}\in\cinf(X,\C)$ and
functions $f_{j,t}\in\cinf(X,\C)$ for all $1\leq j\leq k-1$,
all smooth in $t\in[0,1]$, such that
\begin{multline}\label{odeToepk}
\left(1+\frac{1}{p}\right)T_{h^p}(\theta_{h}(\xi))
T_{h^p}\left(\sum_{j=0}^{k-1}p^{-j}f_{j,t}\right)\\
=\dt\,T_{h^p}\left(\sum_{j=0}^{k-1}p^{-j}f_{j,t}\right)
+p^{-k}T_{h^p}(g_{k,t})+O(p^{-k+1})\,.
\end{multline}
Using \cref{Toepexp}, we then
see that \cref{odeToepk} holds for $k$ replaced
by $k+1$, with the function $f_{k,t}\in\cinf(X,\R)$ defined
as the solution of the
ordinary differential equation
\begin{equation}\label{ODEtheta}
\left\{
\begin{array}{l}
  \dt\,f_{k,t}=f_{k,t}\,\theta_{h}(\xi)+g_{k,t} \\
  \\
 f_{k,0}=0\,.
\end{array}
\right.
\end{equation}
This shows by induction that \cref{odeToepk} holds for all $k\in\N$.
Applying Grönwall's lemma to the difference of \cref{odeexp} and
\cref{odeToepk} as above, this gives the result taking
$\theta^{(j)}_\xi:=f_{j,1}$ for all $j\in\N$. The smooth dependance on
$\xi\in\Lie\Aut(X)$ is then clear from the ordinary differential equation
\cref{ODEtheta}.

Recall on the other hand
that the coherent state projector of \cref{cohstateprojdef}
is a rank-$1$ projector, so that $\Tr[\Pi_p(x)]=1$ for all $x\in X$.
Using \cref{Bergdiagexp} and formula \cref{intPirho=Id}, we then
get a constant $C>0$ such that the dimension of $\HH_p$ satisfies
$n_p<C p^n$ for all $p\in\N^*$, which implies
$\Tr\left[A\right]\leq Cp^n\left\|A\right\|_{op}$
for all $A\in\End(\HH_p)$ and $p\in\N^*$ by Cauchy-Schwartz inequality.
The inequality \cref{eLxibded} then follows by
applying this estimate to
$A=O(p^{-1})$ in the expansion \cref{eLxiexp} for $k=1$,
and by \cref{BTquantdef} of the
Berezin-Toeplitz quantization map.
%
\end{proof}

\subsection{Quantized Futaki invariants}
\label{quantFutsec}

In this Section, we study the quantized Futaki invariants
\cref{quantFutsec}, which
play a central role in this paper. Recall
that for any $\xi\in\Lie\Aut(X)$ and any $p\in\N^*$, we have
an operator $L_\xi\in\End(H^0(X,L^p))$
defined by formula \cref{Lxidef}.
%

Recall the decomposition \cref{LieKC} for the Lie algebra
of the complexification $K_\C\subset\Aut_0(X)$ of
any compact subgroup $K\subset\Aut_0(X)$.
We will establish a quantized version of \cref{TZinv}
of Tian and Zhu,
whose first part is given by the following result.
It can also be found in \cite[\S\,3.1]{Tak15}, but we give here
a short proof using Berezin-Toeplitz quantization.

\begin{lem}\label{Fpcvx}
Let $T\subset\Aut(X)$ be a compact torus. Then
for any $\in\N$, the functional $F_p:\sqrt{-1}\Lie T\to\R$
defined by the formula
\begin{equation}\label{Fpcvxfla}
F_p(\xi):=p\Tr[e^{L_\xi/p}]\,,
\end{equation}
is strictly convex and proper.
\end{lem}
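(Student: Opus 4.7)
Since $T$ is abelian, the Hermitian operators $\{L_\xi : \xi \in \sqrt{-1}\Lie T\}$ form a commuting family on the finite-dimensional Hilbert space $\HH_p$, so they admit a joint orthonormal eigenbasis $\{s_j\}_{j=1}^{n_p}$ with real weights $\lambda_j \in (\sqrt{-1}\Lie T)^*$ characterised by $L_\xi\,s_j = \lambda_j(\xi)\,s_j$. The functional reduces to the explicit sum of exponentials
\begin{equation*}
F_p(\xi) = p \sum_{j=1}^{n_p} e^{\lambda_j(\xi)/p},
\end{equation*}
from which both strict convexity and properness will be read off by differentiating under the sum.

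For strict convexity, the Hessian at $\xi$ in direction $\eta$ is
\begin{equation*}
d^2 F_p|_\xi(\eta,\eta) = \frac{1}{p}\sum_j \lambda_j(\eta)^2\, e^{\lambda_j(\xi)/p} \geq 0,
\end{equation*}
which vanishes iff $\lambda_j(\eta) = 0$ for every $j$, i.e. $L_\eta = 0$ on $\HH_p$. The plan is then to invoke the Kodaira embedding \cref{Kod}, which is $\Aut_0(X)$-equivariant by construction: the vanishing of $L_\eta$ implies that the flow $\phi_{t\eta} \subset T_\C \subset \Aut_0(X)$ acts trivially on $\mathbb{P}(\HH_p^*)$, hence on the embedded $X$, forcing $\eta = 0$ in $\Lie\Aut(X)$. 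This step uses the standing assumption that $p$ is large enough for $\Kod_p$ to be a well-defined embedding.

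For properness, the bound $F_p(\xi) \geq p\,e^{\mu(\xi)/p}$ with $\mu(\xi) := \max_j \lambda_j(\xi)$ reduces matters by homogeneity of $\mu$ to a uniform positive lower bound on $\mu$ over the unit sphere $S \subset \sqrt{-1}\Lie T$. I would fix by averaging some $h \in \Met^+(L)^T$, so that $\theta_h$ is real-valued on $\sqrt{-1}\Lie T$ by \cref{thetaxiprop}, and combine three ingredients: (i) Tuynman's identity \cref{Tuyfla}, namely $L_\xi = (p+1)\,T_{h^p}(\theta_h(\xi))$; (ii) the Karabegov-Schlichenmaier expansion \cref{KS}, yielding $\sigma_{h^p}(L_\xi)(x) = (p+1)\bigl[\theta_h(\xi)(x) + O(1/p)\bigr]$ uniformly in $x \in X$ and in $\xi \in S$; and (iii) the coherent-state variational bound $\mu(\xi) \geq \max_{x \in X}\sigma_{h^p}(L_\xi)(x)$. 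By \cref{inttheta=0} the function $\theta_h(\xi)$ has zero integral against $d\nu_h$, so $\max_X \theta_h(\xi) > 0$ for each $\xi \neq 0$ (using injectivity of $\theta_h$ from \cref{thetaxiprop}), and continuity together with compactness of $S$ then provide a constant $c > 0$ with $\max_X \theta_h(\xi) \geq c$ on $S$. Combining these estimates gives $\mu(\xi) \geq c(p+1) - O(1) \geq c(p+1)/2$ uniformly on $S$ for $p$ large enough, whence $F_p(\xi) \to +\infty$ as $|\xi|\to +\infty$.

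The strict convexity step is essentially formal once the joint diagonalization is in place; the main obstacle is properness. Indeed, the sum $\sum_j e^{\lambda_j(\xi)/p}$ is proper on $\sqrt{-1}\Lie T$ precisely when $0$ lies in the interior of the convex hull of the quantum weights $\{\lambda_j\} \subset (\sqrt{-1}\Lie T)^*$, a condition that is invisible from the finite-dimensional representation $T \to U(\HH_p)$ alone. Its proof ultimately reflects the fact that, as $p\to\infty$, this quantum moment polytope approaches the classical moment polytope of $T$ on $(X,\om_h)$, which contains the origin in its interior thanks to the vanishing anticanonical integral \cref{inttheta=0}; the Berezin-transform asymptotics of \cref{KS} are what make this quantum-to-classical comparison uniform.
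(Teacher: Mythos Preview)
Your proposal is correct and follows essentially the same route as the paper: joint diagonalization, the explicit Hessian formula, and for properness the chain $\theta_h(\xi)$ has positive maximum $\Rightarrow$ via Tuynman's identity and the Berezin-transform asymptotics (\cref{Tuy}, \cref{KS}) $L_\xi$ has a positive eigenvalue uniformly on the unit sphere. The only cosmetic difference is in the strict-convexity step: you argue separately that $L_\eta=0$ forces $\eta=0$ by equivariance of the Kodaira embedding, whereas the paper reuses the coherent-state estimate to show directly that every nonzero $\xi$ admits a weight $\chi_0$ with $(\chi_0,\xi)>0$, which handles strict convexity and properness simultaneously.
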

\begin{proof}
Let $T\subset\Aut(X)$ be a compact torus, and consider the setting of
\cref{BTsec} with a $T$-invariant positive Hermitian
metric $h\in\Met^+(L)^T$,
which can always be constructed
by average over $T$.
Then for any $\xi\in\sqrt{-1}\Lie T$,
\cref{thetaxiprop}
implies that there exists $x\in X$ such that
$\theta_h(\xi)(x)>0$.
Let now $p\in\N^*$ be large enough so that the Kodaira map \cref{Kod}
is well-defined, and let $s_p\in\HH_p$ with $\|s_p\|_{L^2(h^p)}=1$ be 
in the image of the coherent state projector $\Pi_{h^p}(x)\in\cL(\HH_p)$
of \cref{cohstateprojdef}.
Then from \cref{KS,Tuy}, as $p\to+\infty$ we get
\begin{equation}
\<L_\xi s_x,s_x\>_{L^2(h^p)}=\Tr[L_\xi\Pi_{h^p}(x)]=\cB_{h^p}(\theta_h(\xi))(x)
=\theta_h(\xi)(x)+O(p^{-1})\,,
\end{equation}
uniformly in the $\CC^l$-norm of $\theta_h(\xi)$ for some $l\in\N$.
Thus there exists
$p_0\in\N$ independent of $\xi\in\Lie\Aut(X)$ such that $L_\xi\in\cL(\HH_p)$
is non-negative for all $p\geq p_0$.
For any $p\in\N^*$, write $\Spec_p(T)\subset(\Lie T)^*$
for the joint spectrum of $L_\xi\in\cL(\HH_p)$
for all $\xi\in\sqrt{-1}\Lie T$.
Taking $p\geq p_0$, we then get that
for any $\xi\in\sqrt{-1}\Lie T$,
there exists $\chi_0\in\Spec_p(T)$ such that
$(\chi_0,\xi)>0$.
Thus for any $\eta\in\sqrt{-1}\Lie T$, we have
\begin{equation}
\frac{d^2}{dt^2}\,F_p(\eta+t\xi)=
p^{-1}\sum_{\chi\in\Spec_p(T)}(\chi,\xi)^2 e^{(\chi,\eta+t\xi)/p}>0
\end{equation}
and
\begin{equation}
F_p(\eta+t\xi)=p\sum_{\chi\in\Spec_p(T)} e^{(\chi,\eta+t\xi)/p}\geq
p\,e^{(\chi_0,\eta)/p}e^{t(\chi_0,\xi)/p}
\xrightarrow{t\to+\infty}+\infty\,.
\end{equation}
This proves the result.
\end{proof}

We then have the following corollary, which is a central property of
the quantized Futaki invariants \cref{quantFutdefintro} and
which is the quantization
of the second part of \cref{TZinv}.
It can also be found in \cite[\S\,3.2]{Tak15}.

\begin{cor}\label{xipmin}
Let $K\subset\Aut_0(X)$ be a connected compact subgroup, and let $T\subset K$
be the identity component of its center. Then
for any $p\in\N^*$ big enough,
there exists a unique $\xi_p\in\sqrt{-1}\Lie K$ such that the associated
quantized Futaki invariant $\Fut^{\xi_p}_p:\sqrt{-1}\Lie K\to\C$
vanishes identically, which
is given by the unique minimizer of the functional $F_p:\sqrt{-1}\Lie T\to\R$
of \cref{Fpcvx}.
\end{cor}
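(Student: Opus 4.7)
The plan is to identify the vanishing of $\Fut^\xi_p$ on $\sqrt{-1}\Lie K$ with the critical-point condition for a strictly convex extension of the functional $F_p$ of \cref{Fpcvx} from $\sqrt{-1}\Lie T$ to the full $\sqrt{-1}\Lie K$. First I would compute, using the standard identity $\frac{d}{dt}\Tr[e^{A+tB}]=\Tr[B\,e^{A+tB}]$ which follows from Duhamel and cyclicity of the trace, that for any $\xi,\eta\in\sqrt{-1}\Lie K$,
\begin{equation*}
\frac{d}{dt}\Big|_{t=0}F_p(\xi+t\eta)=\Tr\left[L_\eta\,e^{L_\xi/p}\right]=\Fut^\xi_p(\eta)\,.
\end{equation*}
Hence $\xi\in\sqrt{-1}\Lie K$ makes $\Fut^\xi_p:\sqrt{-1}\Lie K\to\C$ vanish identically if and only if $\xi$ is a critical point of $F_p:\sqrt{-1}\Lie K\to\R$.

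Next I would extend the strict convexity of \cref{Fpcvx} to the full $\sqrt{-1}\Lie K$. Differentiating once more and using cyclicity together with self-adjointness of $L_\xi,\,L_\eta$ to rewrite the integrand as a squared Hilbert-Schmidt norm yields
\begin{equation*}
\frac{d^2}{dt^2}\Big|_{t=0}F_p(\xi+t\eta)
=\frac{1}{p}\int_0^1\big\|\,e^{sL_\xi/2p}\,L_\eta\,e^{(1-s)L_\xi/2p}\,\big\|_{\rm HS}^2\,ds\,,
\end{equation*}
which is strictly positive unless $L_\eta=0$. Choosing $p\in\N^*$ large enough for the Kodaira map \cref{Kod} to be an embedding, the map $\eta\mapsto L_\eta$ is injective on $\Lie\Aut(X)$, so $F_p$ is strictly convex on $\sqrt{-1}\Lie K$; in particular $F_p$ has at most one critical point there, which provides the uniqueness claim.

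For existence, let $\xi_p\in\sqrt{-1}\Lie T$ be the unique minimizer furnished by \cref{Fpcvx}, so that $\Fut^{\xi_p}_p$ vanishes on $\sqrt{-1}\Lie T$ by the derivative formula above. To extend this to the complementary summand in the reductive decomposition $\sqrt{-1}\Lie K=\sqrt{-1}\Lie T\oplus\sqrt{-1}[\Lie K,\Lie K]$, I would use that $\xi_p$ is central, so $[L_{\xi_p},L_\alpha]=L_{[\xi_p,\alpha]}=0$ for every $\alpha\in\Lie K_\C$. For any commutator $\eta=[\alpha,\beta]\in[\Lie K_\C,\Lie K_\C]$, cyclicity of the trace combined with the commutation of $L_\alpha$ with $e^{L_{\xi_p}/p}$ then gives
\begin{equation*}
\Fut^{\xi_p}_p(\eta)
=\Tr\left[L_\alpha L_\beta\,e^{L_{\xi_p}/p}\right]
-\Tr\left[L_\beta L_\alpha\,e^{L_{\xi_p}/p}\right]=0\,,
\end{equation*}
and by linearity this vanishing extends to all of $\sqrt{-1}[\Lie K,\Lie K]\subset[\Lie K_\C,\Lie K_\C]$, giving $\Fut^{\xi_p}_p\equiv 0$ on $\sqrt{-1}\Lie K$. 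To my mind the only nontrivial point is the passage from strict convexity on the central subalgebra (already handled by \cref{Fpcvx} via a joint-spectrum argument) to strict convexity on all of $\sqrt{-1}\Lie K$; once the Hilbert-Schmidt representation of the Hessian is in hand, combined with the injectivity of $\eta\mapsto L_\eta$ for $p$ large, everything else follows directly from the derivative formula and the centrality of $\xi_p$.
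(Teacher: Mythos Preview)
Your argument is correct, and the existence half—taking the minimizer $\xi_p\in\sqrt{-1}\Lie T$ from \cref{Fpcvx} and killing $\Fut^{\xi_p}_p$ on $[\Lie K_\C,\Lie K_\C]$ via cyclicity and the centrality of $\xi_p$—is exactly what the paper does. The difference lies in the uniqueness argument. You prove uniqueness by showing $F_p$ is strictly convex on the whole of $\sqrt{-1}\Lie K$, computing the Hessian via Duhamel's formula and rewriting it as an integrated Hilbert--Schmidt norm (which requires fixing a $K$-invariant inner product so that $L_\xi,\,L_\eta$ are Hermitian, and invoking injectivity of $\eta\mapsto L_\eta$ for $p$ large). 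The paper instead avoids the non-abelian Hessian entirely: given any $\til{\xi}_p\in\sqrt{-1}\Lie K$ with $\Fut_p^{\til{\xi}_p}\equiv 0$, it picks a maximal torus $\til{T}\subset K$ with $\til{\xi}_p\in\sqrt{-1}\Lie\til{T}$, notes that $T\subset\til{T}$ since $T$ is central, and then appeals to the uniqueness of the minimizer of $F_p$ on $\sqrt{-1}\Lie\til{T}$ already provided by \cref{Fpcvx}. Your route is more hands-on and self-contained; the paper's route is shorter and uses only the abelian case, trading the Duhamel computation for a line of Lie-theoretic structure.
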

\begin{proof}
Fix $p\in\N^*$, and note that for any compact torus $T\subset\Aut_0(X)$ and any
$\xi,\,\eta\in\sqrt{-1}\,\Lie T$, we have
\begin{equation}
\Fut_p^{\xi}(\eta)=\dt\big|_{t=0}\,F_p(\xi+t\eta)\,.
\end{equation}
Now \cref{Fpcvx} implies that $F_p:\sqrt{-1}\Lie T\to\R$
admits a unique minimizer, which is the
unique $\xi_p\in\sqrt{-1}\Lie T$
such that $\Fut_p^{\xi}(\eta)=0$ for all $\eta\in\sqrt{-1}\Lie T$.

Let now $K\subset\Aut_0(X)$ be a connected compact subgroup, and let
$T\subset K$ be the identity component of its center.
Then for any $\xi_1,\,\xi_2\in\Lie K_\C$, we have
\begin{equation}
\Tr[L_{[\xi_1,\xi_2]}e^{L_{\xi_p}/p}]
=\Tr\left[[L_{\xi_1},L_{\xi_2}e^{L_{\xi_p}/p}]\right]=0\,,
\end{equation}
so that $\Fut^{\xi_p}_p(\eta)=0$ for all
$\eta\in[\Lie K_\C,\Lie K_\C]\subset \Lie K_\C$.
Using the classical decomposition
$\Lie K_\C=\Lie T_\C\oplus[\Lie K_\C,\Lie K_\C]$, we then get that
$\Fut^{\xi_p}_p(\eta)=0$ for all $\eta\in\Lie K_\C$.

Assume now that $\til{\xi}_p\in\sqrt{-1}\Lie K$ is such that
$\Fut^{\til\xi_p}_p(\eta)=0$
for all $\eta\in\Lie K_\C$, and let now $\til{T}\subset K$ be a maximal
compact torus
such that $\til{\xi}_p\in\sqrt{-1}\Lie\til{T}$.
Then we have $\Lie T_\C\subset\Lie\til{T}_\C$ by
definition of a maximal torus, and we have
$\til{\xi}_p=\xi_p$ by uniqueness of the minimizer of
$F_p:\sqrt{-1}\Lie\til{T}\to\R$ given by \cref{Fpcvx}. This gives the result.
\end{proof}

\subsection{Asymptotic expansion of the quantized Futaki invariant}
\label{quantFutexpsec}

In this Section, we fix a connected compact subgroup $K\subset\Aut_0(X)$
and write $T\subset K$ for the identity component of its center.
The following result describes the semi-classical behavior of the
quantized Futaki invariant \cref{quantFutdefintro} and the functional
\cref{Fpcvxfla} as $p\to+\infty$, recovering the modified
Futaki invariant \cref{FutTZ} and the functional \cref{FTZ}
as the highest order coefficient respectively.
As explained for instance in \cite[\S\,4.4]{BW14},
this is essentially a consequence of the equivariant Riemann-Roch formula,
but we give
here a short proof using Berezin-Toeplitz quantization.

\begin{prop}\label{Futexp}
There exist smooth maps $F_p^{(j)}:\sqrt{-1}\Lie K\to\C$
for all $j\in\N$
such that for any $k\in\N^*$, we have the following asymptotic
expansion as $p\to+\infty$,
\begin{equation}\label{Futexp2}
\frac{F_p(\eta)}{p^{n+1}}=F(\eta)+\sum_{j=1}^{k-1}\,p^{-j}\,F_p^{(j)}(\eta)
+O(p^{-k})\,.
\end{equation}
Furthermore, there exist linear maps
$\Fut^{(j)}_{\xi}:\Lie\Aut(X)\to\C$ for all $j\in\N$, depending smoothly
on $\xi\in\Lie\Aut(X)$, such that for any $k\in\N^*$,
we have the following asymptotic expansion as $p\to+\infty$,
\begin{equation}\label{Futexp1}
\frac{\Fut_p^{\xi}(\eta)}{p^{n+1}}=\Fut_\xi(\eta)+
\sum_{j=1}^{k-1}\,p^{-j}\,\Fut^{(j)}_{\xi}(\eta)+O(p^{-k})\,.
\end{equation}
These estimates are uniform for $\xi,\,\eta\in\Lie\Aut(X)$
in any compact set.
\end{prop}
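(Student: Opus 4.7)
The plan is to derive both expansions by combining \cref{Tuy,Tuycor,Toepexp,dualprop,Bergdiagexp}, which together reduce the trace computations to integrals on $X$ that can be expanded in $p$.

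First I would handle the expansion of $F_p$. By \cref{Tuycor}, there are functions $\theta^{(0)}_\xi:=e^{\theta_h(\xi)}$ and $\theta^{(j)}_\xi\in\cinf(X,\C)$ for $j\geq 1$, depending smoothly on $\xi$, such that
\begin{equation}
e^{L_\xi/p}=\sum_{j=0}^{k-1}p^{-j}\,T_{h^p}(\theta^{(j)}_\xi)+O(p^{-k})\,.
\end{equation}
Applying \cref{dualprop} with $A=\Id_{\HH_p}$ (so $\sigma_{h^p}(\Id_{\HH_p})=1$) gives $\Tr[T_{h^p}(f)]=\int_X f\,\rho_{h^p}\,d\nu_h$, and then \cref{Bergdiagexp} expands $\rho_{h^p}$ to any order in $p^{-1}$, with leading term $b_h^{(0)}d\nu_h=\om_h^n/n!$. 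Multiplying by the outer factor of $p$ in \cref{Fpcvxfla} and collecting like powers of $p^{-1}$ yields the expansion \cref{Futexp2}, with highest order coefficient
\begin{equation}
p^{-n-1}F_p(\eta)=\int_X e^{\theta_h(\eta)}\,\frac{\om_h^n}{n!}+O(p^{-1})=F(\eta)+O(p^{-1})\,,
\end{equation}
matching the formula \cref{FTZ} of \cref{TZinv}. The smooth dependence of the coefficients $F_p^{(j)}$ on $\eta$ follows from the smooth dependence of the $\theta^{(j)}_\eta$ on $\eta\in\Lie\Aut(X)$ provided by \cref{Tuycor}, and uniformity on compact sets follows from the uniformity clauses in \cref{Tuycor,Bergdiagexp}.

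For the quantized Futaki invariant, I would use \cref{Tuy} to write $L_\eta=(p+1)T_{h^p}(\theta_h(\eta))$, so that
\begin{equation}
\Fut_p^\xi(\eta)=\left(p+1\right)\Tr\left[T_{h^p}(\theta_h(\eta))\,e^{L_\xi/p}\right]\,.
\end{equation}
Expanding $e^{L_\xi/p}$ via \cref{Tuycor} and then applying \cref{Toepexp} to each product $T_{h^p}(\theta_h(\eta))T_{h^p}(\theta^{(j)}_\xi)$ replaces them, modulo $O(p^{-\infty})$, by sums of the form $\sum_r p^{-r}T_{h^p}(C^{(r)}_h(\theta_h(\eta),\theta^{(j)}_\xi))$. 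Taking traces via $\Tr[T_{h^p}(f)]=\int_X f\rho_{h^p}\,d\nu_h$ and substituting the expansion of $\rho_{h^p}$ from \cref{Bergdiagexp} reduces the whole expression to a power series in $p^{-1}$ with coefficients that are integrals on $X$ depending smoothly (and indeed linearly in $\eta$) on the input data. The leading term gives
\begin{equation}
p^{-n-1}\Fut_p^\xi(\eta)=\int_X\theta_h(\eta)\,e^{\theta_h(\xi)}\,\frac{\om_h^n}{n!}+O(p^{-1})=\Fut_\xi(\eta)+O(p^{-1})\,,
\end{equation}
recovering formula \cref{FutTZ}.

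Routine checks that I would make along the way include: verifying that the remainder $O(p^{-k})$ in \cref{Tuycor} is compatible with multiplication by $T_{h^p}(\theta_h(\eta))$ (whose norm grows like $\|\theta_h(\eta)\|_\infty$ independently of $p$); confirming that the $\CC^l$-uniformity assertions in \cref{Toepexp,Bergdiagexp} indeed propagate through the finite sum of Toeplitz products; and checking linearity of each $\Fut_p^{(j)}$ in $\eta$, which is immediate since $\theta_h(\eta)$ enters linearly and all other ingredients depend only on $\xi$. I do not anticipate a genuine obstacle here, since each asymptotic tool is already isolated as a numbered statement and the argument is a clean composition of them; the only care required is bookkeeping of error terms and tracking the uniformity at each step.
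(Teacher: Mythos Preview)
Your proposal is correct and follows essentially the same route as the paper: expand $e^{L_\xi/p}$ via \cref{Tuycor}, insert \cref{Tuy} for $L_\eta$, and reduce traces of Toeplitz operators to integrals against $\rho_{h^p}\,d\nu_h$ which are expanded via \cref{Bergdiagexp}. The one minor difference is that for the product $T_{h^p}(\theta_h(\eta))T_{h^p}(\theta^{(j)}_\xi)$ you invoke \cref{Toepexp} to rewrite it as a sum of Toeplitz operators before taking the trace, whereas the paper uses the duality of \cref{dualprop} to write $\Tr[T_{h^p}(f)T_{h^p}(g)]=\int_X f\,\cB_{h^p}(g)\,\rho_{h^p}\,d\nu_h$ and then expands the Berezin transform via \cref{KS}; both routes are standard and yield the same asymptotics, with the paper's version avoiding one layer of product expansion at the cost of quoting \cref{KS} instead of \cref{Toepexp}.
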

\begin{proof}
Fix $h\in\Met^+(L)$. Using \cref{dualprop,Bertransdef} and by \cref{Bergdiagexp,KS}, we get differential operators
$D_j$ for any $j\in\N$ such that for any
$f,\,g\in\cinf(X,\C)$, any $k\in\N$
and as $p\to+\infty$,
\begin{equation}\label{TrTfTgest}
\begin{split}
\Tr[T_{h^p}(f)T_{h^p}(g)]&=
\int_X\,f\,\cB_{h^p}(g)\,\rho_p\,d\nu_h\\
&=p^n\int_X\,f\,g\,\frac{\om^n_h}{n!}+\sum_{j=1}^{k-1}\,p^{n-j}\,\int_X\,f\,D_j(g)\,d\nu_h+O(p^{n-k})\,,
\end{split}
\end{equation}
uniformly in the derivatives of $f,\,g$ up to order $l\in\N$ only depending
on $k\in\N$.

Now using \cref{Tuy,Tuycor}, for any $k\in\N$ and as $p\to+\infty$,
the quantized Futaki invariant \cref{quantFutdefintro} and the functional \cref{Fpcvxfla} satisfy
\begin{multline}\label{FutexpT}
\frac{\Fut^{\xi}_p(\eta)}{p+1}
=\Tr\left[T_{h^p}(\theta_h(\eta))T_{h^p}(e^{\theta_h(\xi)})\right]\\
+\sum_{j=1}^{k-1}\,p^{-j}\,
\Tr\left[T_{h^p}(\theta_h(\eta))T_{h^p}(\theta^{(j)}_\xi)\right]+O(p^{-k})
\quad\text{and}\\
\frac{F_p(\xi)}{p}=\Tr\left[T_{h^p}(e^{\theta_h(\xi)})\right]
+\sum_{j=1}^{k-1}\,p^{-j}\,
\Tr\left[T_{h^p}(\theta^{(j)}_\xi)\right]+O(p^{-k})\,,~~~~~~~~
\end{multline}
uniformly for $\xi,\,\eta\in\Lie\Aut(X)$ in any compact set.
Using the fact from \cref{dualprop} that $\Id_{\HH_p}=T_{h^p}(1)$,
we can then apply the identity \cref{TrTfTgest}
to the expansions \cref{FutexpT} and compare with \cref{TZinv}
for the first coefficients to get the result.

%
%
\end{proof}
%

The following result
describes the semi-classical behavior of the sequence of
vector fields produced by \cref{xipmin} as $p\to+\infty$, recovering
the vector field of \cref{TZinv} as the highest coefficient.
This is an anticanonical analogue of the result of Sano and
Tipler in \cite[Lem.\,4.5]{ST17}, and
the proof closely follows their strategy.

\begin{prop}\label{xipexp}
There exist $\xi^{(j)}\in\sqrt{-1}\Lie T$ for all $j\in\N$ such that
for any $k\in\N$,
the sequence $\{\xi_p\in\sqrt{-1}\Lie T\}_{p\in\N^*}$
of \cref{xipmin}
satisfies the following expansion as $p\to+\infty$,
\begin{equation}
\xi_p=\xi_\infty+\sum_{j=1}^{k-1}\,p^{-j}\,\xi^{(j)}+O(p^{-k})\,,
\end{equation}
where $\xi_\infty\in\sqrt{-1}\Lie T$ is the unique minimizer of
the functional \cref{FTZ}.
\end{prop}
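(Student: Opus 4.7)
The plan is to construct the coefficients $\xi^{(j)}$ inductively by an implicit-function-type argument, matching orders in $1/p$ and relying on the positive-definiteness of the Hessian of $F$ at its unique minimum $\xi_\infty$.

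First we verify the convergence $\xi_p\to\xi_\infty$ and the invertibility of the relevant linear operator. Both $\xi_p$ and $\xi_\infty$ lie in $\sqrt{-1}\Lie T$, so the analysis takes place there. For any ball $B_R(\xi_\infty)\subset\sqrt{-1}\Lie T$, strict convexity and properness of $F$ (\cref{TZinv}) provide a positive gap $\min_{\partial B_R}F-F(\xi_\infty)>0$; combined with the uniform convergence $F_p/p^{n+1}\to F$ from \cref{Futexp} and the strict convexity of $F_p$ from \cref{Fpcvx}, this forces $\xi_p\in B_R$ for $p$ large, hence $\xi_p\to\xi_\infty$. Fix a $K$-invariant $h\in\Met^+(L)^K$ (obtained by averaging). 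Since $\theta_h$ is $\C$-linear in its argument, a direct computation gives
\begin{equation*}
d\Fut|_{\xi_\infty}(\eta_1,\eta_2) = \int_X \theta_h(\eta_1)\,\theta_h(\eta_2)\,e^{\theta_h(\xi_\infty)}\,\frac{\om_h^n}{n!}\,.
\end{equation*}
By the injectivity of $\theta_h|_{\Lie K_\C}$ from \cref{thetaxiprop}, this bilinear form is positive-definite, so $d\Fut|_{\xi_\infty}:\sqrt{-1}\Lie T\to(\sqrt{-1}\Lie T)^*$ is an isomorphism.

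Next, set $\xi^{\mathrm{app}}_{p,k}:=\xi_\infty+\sum_{j=1}^{k}p^{-j}\xi^{(j)}$ and proceed by induction on $k$: assume $\xi^{(1)},\ldots,\xi^{(k-1)}$ have been constructed so that $\Fut_p^{\xi^{\mathrm{app}}_{p,k-1}}(\eta)=O(p^{n-k})$ uniformly for $\eta$ in compact sets. Substituting the ansatz into the expansion \cref{Futexp1} and Taylor-expanding each $\Fut^{(j)}_{\cdot}$ around $\xi_\infty$ (smooth by \cref{Futexp}), the vanishing of lower-order terms $p^{-j}$ with $j<k$ is guaranteed by the inductive hypothesis, and the order-$p^{-k}$ equation reduces to $d\Fut|_{\xi_\infty}(\xi^{(k)})+B_k=0$, where $B_k\in(\sqrt{-1}\Lie T)^*$ depends polynomially on $\xi^{(1)},\ldots,\xi^{(k-1)}$ and on $\Fut^{(1)}_{\xi_\infty},\ldots,\Fut^{(k-1)}_{\xi_\infty}$. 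Invertibility yields $\xi^{(k)}:=-(d\Fut|_{\xi_\infty})^{-1}(B_k)\in\sqrt{-1}\Lie T$. To conclude the expansion, write $\xi_p=\xi^{\mathrm{app}}_{p,k}+r_p$ with $r_p\to 0$ by the first step, and linearize the defining equation $0=\Fut_p^{\xi_p}(\eta)$ in $r_p$: the approximate-minimizer property $\Fut_p^{\xi^{\mathrm{app}}_{p,k}}=O(p^{n-k-1})$ combined with the invertibility of $d\Fut|_{\xi_\infty}$ yields $r_p=O(p^{-k-1})$, closing the induction. The fact that $r_p$ actually lies in $\sqrt{-1}\Lie T$ is automatic since all the operators involved respect the splitting \cref{LieKC} by $T$-equivariance.

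The principal technical point is the positive-definiteness of $d\Fut|_{\xi_\infty}$, which does not follow from strict convexity of $F$ alone but requires the non-degeneracy statement of \cref{thetaxiprop} for the holomorphy-potential map $\theta_h|_{\Lie K_\C}$. With this in hand the remainder is a standard finite-dimensional matching argument, parallel to the Liouville-setting analysis in \cite[Lem.\,4.5]{ST17}.
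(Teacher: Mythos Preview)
Your proof follows the same strategy as the paper's: convergence $\xi_p\to\xi_\infty$ via convergence of the convex functionals, inductive construction of the $\xi^{(j)}$ by solving the linear equation coming from the non-degenerate Hessian bilinear form $\int_X\theta_h(\cdot)\theta_h(\cdot)e^{\theta_h(\xi_\infty)}\om_h^n/n!$, and then comparison of $\xi_p$ with the truncated series. The one place where the paper is more explicit is your final ``linearize'' step: rather than invoking convergence of the quantized Hessian to $d\Fut|_{\xi_\infty}$ abstractly (which would require differentiating the expansion \cref{Futexp1} in $\xi$, a uniformity not stated there), the paper writes $\Fut_p^{\xi_p^{(k)}}(\xi_p^{(k)}-\xi_p)-\Fut_p^{\xi_p}(\xi_p^{(k)}-\xi_p)$ as $\int_0^1\frac{d}{dt}\Fut_p^{t\xi_p^{(k)}+(1-t)\xi_p}(\xi_p^{(k)}-\xi_p)\,dt$ and bounds the integrand below directly via the identity $L_\eta=(p+1)T_{h^p}(\theta_h(\eta))$ of \cref{Tuy} together with \cref{Toepexp}, obtaining $|\xi_p-\xi_p^{(k)}|^2=O(p^{-k-1})$.
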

\begin{proof}
Let $h\in\Met^+(L)^T$ be a $T$-invariant metric, which always exists
by average over $T$. 
Using \cref{Futexp}, we know that for all $\eta\in\sqrt{-1}\Lie T$,
the functionals \cref{FTZ} and \cref{Fpcvxfla}
satisfy $F_p(\eta)\xrightarrow{p\to+\infty} F(\eta)$.
As these functionals are strictly convex and proper,
this implies that their unique minimizers satisfy
\begin{equation}\label{xiptoxiinf}
\xi_p\xrightarrow{p\to+\infty}\xi_\infty\,.
\end{equation}
From \cref{TZinv}, we know that $\Fut_{\xi_\infty}(\eta)=0$ for all
$\eta\in\Lie K_\C$. Using \cref{Futexp} and formula
\cref{FTZ} for the first coefficient, we can take the Taylor expansion
as $p\to+\infty$ of $\Fut^{\xi_\infty+p^{-1}\xi^{(1)}}_p(\eta)$
for any $\xi^{(1)}\in\sqrt{-1}\Lie T$ to get
\begin{multline}\label{Futord2}
\frac{\Fut^{\xi_\infty+p^{-1}\xi^{(1)}}_p(\eta)}{p^{n+1}}\\
=p^{-1}\left(\Fut^{(1)}_{\xi_\infty}(\eta)
+\int_X\,\theta_h(\eta)\,\theta_h(\xi^{(1)})\,
e^{\theta_h(\xi_\infty)}\,\frac{\om_h^n}{n!}
\right)+O(p^{-2})\,.
\end{multline}
Recall now the linear embedding $\theta_h:\Lie T_\C
\to\cinf(X,\C)$ induced by \cref{thetaxiprop}, and restrict
the scalar product $L^2(h_\infty,\xi_\infty)$
defined in \cref{TZopdef} for $K=T$
to the subspace
$\<\theta_h(\xi)\,|\,\xi\in\sqrt{-1}\Lie T\>\subset\cinf(X,\R)^T$.
Then by non-degeneracy, for any linear form $G:\Lie T_\C\to\C$,
there exists a unique $\xi_G\in\sqrt{-1}\Lie T$
such that for all $\eta\in\Lie T_\C$,
\begin{equation}\label{G=0}
G(\eta)+\int_X\,\theta_h(\eta)\,\theta_h(\xi_G)\,
e^{\theta_h(\xi_\infty)}\,\frac{\om_h^n}{n!}=0\,.
\end{equation}
Setting $\xi^{(1)}:=\xi_{G}$ for $G=\Fut^{(1)}_{\xi_\infty}$,
the first term of  the right hand side of \cref{Futord2} vanishes.
Now for any $\xi^{(2)}\in\sqrt{-1}\Lie T$, this together with
\cref{Futexp} and formula \cref{Futord2}
gives a linear form $G^{(2)}:\Lie T_\C\to\C$ such that
for any $\eta\in\Lie T_\C$ we have as $p\to+\infty$,
\begin{multline}\label{Futord3}
\frac{\Fut^{\xi_\infty+p^{-1}\xi^{(1)}+p^{-2}\xi^{(2)}}_p(\eta)}{p^{n+1}}\\
=p^{-2}\left(G^{(2)}(\eta)
+\int_X\,\theta_h(\eta)\,\theta_h(\xi^{(2)})\,
e^{\theta_h(\xi_\infty)}\,\frac{\om_h^n}{n!}
\right)+O(p^{-3})\,.
\end{multline}
Taking $\xi^{(2)}:=\xi_{G^{(2)}}$ as in \cref{G=0}, the first term
in the right hand side of the expansion \cref{Futord3} vanishes.
Repeating this reasoning, we then construct by induction
a sequence $\xi^{(j)}\in\sqrt{-1}\Lie T,\,j\in\N$, such that for all
$k\in\N$ and $\eta\in\Lie T_\C$, we have as $p\to+\infty$,
\begin{equation}\label{Futexpexp}
\Fut^{\xi_\infty+\sum_{j=1}^{k}p^{-j}\xi^{(j)}}_p(\eta)=O(p^{n-k})\,.
\end{equation}
For all $p,\,k\in\N$, set
$\xi^{(k)}_p:=\xi_\infty+\sum_{j=1}^{k}p^{-j}\xi^{(j)}$. Then using
\cref{xipmin} and formula \cref{Futexpexp}, we know that
for all $k\in\N$
and as $p\to+\infty$,
\begin{equation}\label{intFutest}
\int_0^1\,\dt\,\Fut^{t\xi_p^{(k)}+(1-t)\xi_p}_p(\eta)\,dt=
\Fut^{\xi_p^{(k)}}_p(\eta)-\Fut^{\xi_p}_p(\eta)=O(p^{n-k})\,,
\end{equation}
uniformly for $\eta\in\Lie T_\C$ in any compact set.
On the other hand, recall from \cref{xiptoxiinf} that
$t\xi_p^{(k)}+(1-t)\xi_p\to\xi_\infty$ uniformly
in $t\in[0,1]$ as $p\to+\infty$.
Together with \cref{Toepexp,Tuy}, this implies
the existence of constants $l\in\N$, $C,\,c,\,\epsilon>0$
such that for all $p\in\N^*$ big enough,
\begin{equation}\label{intFutest2}
\begin{split}
\int_0^1\,\dt\,&\Fut_{t\xi_p^{(k)}+(1-t)\xi_p}(\xi_p^{(k)}-\xi_p)\,dt\\
&=p^{-1}\int_0^1\,\Tr\left[\left(L_{\xi_p^{(k)}-\xi_p}\right)^2
\exp\left(p^{-1}\left(tL_{\xi_p^{(k)}}+(1-t)L_{\xi_p}\right)\right)\right]\,dt\\
&\geq\epsilon
p^{-1}(p+1)^2\Tr\left[T_{h^p}\left(\theta_h(\xi_p^{(k)}-\xi_p)\right)^2\right]\\
&\geq p^{n+1}\left(c-Cp^{-1}\right)
\left|\theta_h(\xi_p^{(k)}-\xi_p)\right|_{\CC^l}^2\,.
\end{split}
\end{equation}
Combining the estimates \cref{intFutest} and
\cref{intFutest2}, we get as $p\to+\infty$,
\begin{equation}
\left|\theta_h(\xi_p^{(k)}-\xi_p)\right|_{\CC^m}^2=O(p^{-k-1})\,,
\end{equation}
which in turn implies that $|\xi_p^{(k)}-\xi_p|^2=O(p^{-k-1})$ by equivalence
of norms on the finite dimensional space $\sqrt{-1}\Lie T$,
as $\theta_h:\sqrt{-1}\Lie T\to\cinf(X,\R)$ is an embedding
by \cref{thetaxiprop}. This proves the result.
\end{proof}

\section{Balanced metrics}
\label{balsec}

In this Section, we study the notion \cref{relbaldef}
of an anticanonically
balanced metric relative to $\xi\in\Lie\Aut(X)$, and using
the quantized Futaki invariant \cref{quantFutdefintro} as an obstruction
for their existence, we show that the vector field $\xi$ is determined by the
complex geometry of $X$.

In the whole Section, we consider
$p\in\N^*$ big enough so that the Kodaira map
\cref{Kod} is well-defined and an embedding, and
fix a compact torus $T\subset\Aut_0(X)$. We will use freely
the decomposition \cref{LieKC} for $K=T$.

\subsection{Fubini-Study metrics}
\label{findimsec}

Let $H$ be a $T$-invariant Hermitian inner product on $H^0(X,L^p)$,
which always exists by average over $T$.
For all $\xi\in\sqrt{-1}\Lie T$,
the operators $L_\xi\in\End(H^0(X,L^p))$ induced by formula \cref{Lxidef}
are then Hermitian with respect to $H$,
so that they admit a joint spectrum $\Spec_p(T)\subset(\Lie T)^*$
not depending on $H$. For any $\chi\in\Spec_p(T)$, write
\begin{equation}\label{H0chi}
H^0(X,L^p)_{\chi}:=\{s\in H^0(X,L^p)~|~L_\xi\,s=(\chi,\xi)\,s
~~\text{for all}~~\xi\in\Lie T\}\,.
\end{equation}
Write $\BB(H^0(X,L^p)_\chi)$ for the space of bases of $H^0(X,L^p)_{\chi}$,
and set
\begin{equation}\label{BBTdef}
\BB(H^0(X,L^p))^T:=
\prod\limits_{\chi\in\Spec_p(T)}\BB(H^0(X,L^p)_{\chi})\,.
\end{equation}
For any $\chi\in\Spec_p(T)$,
write $n_p(\chi):=\dim H^0(X,L^p)_{\chi}$. The space $\BB(H^0(X,L^p))^T$
admits a free and transitive action of the group
\begin{equation}\label{GLT}
\GL(\C^{n_p})^T:=\bigotimes_{\chi\in\Spec_p(T)}\GL(\C^{n_p(\chi)})\,,
\end{equation}
acting component by component by the canonical action of
$\GL(\C^{n_p(\chi)})$ on bases of $H^0(X,L^p)_\chi$, for all
$\chi\in\Spec_p(T)$.
Note that for any $\xi\in\Lie T_\C$,
we have $e^{L_\xi}\in\GL(\C^{n_p})^T$ acting in a canonical way as
a scalar on each component. 
For any $n\in\N$, we write
$U(n)\subset\GL(\C^n)$ for the subgroup of unitary matrices
acting on $\C^n$, and we set
\begin{equation}\label{UT}
U(n_p)^T:=\bigotimes_{\chi\in\Spec_p(T)}U(n_p(\chi))
\subset\GL(\C^{n_p})^T\,.
\end{equation}

To any $\s\in\BB(H^0(X,L^p))^T$, we can associate a basis
$\{s_j\}_{j=1}^{n_p}$ of $H^0(X,L^p)$, uniquely determined up to reordering
by the condition that it restricts to the
corresponding basis of $H^0(X,L^p)_\chi$ for each $\chi\in\Spec_p(T)$.
We write $H_\s$ for the unique
$T$-invariant inner product on
$H^0(X,L^p)$ for which $\{s_j\}_{j=1}^{n_p}$ is orthonormal.
Conversely,
we say that $\s\in\BB(H^0(X,L^p))^T$ is
\emph{orthonormal} with respect to a Hermitian inner product
$H$ on $H^0(X,L^p)$ if $\{s_j\}_{j=1}^{n_p}$ is, so that $H_\s=H$.

We are now ready to introduce the main definition of the Section.

\begin{defi}\label{FSdef}
For any $\s\in\BB(H^0(X,L^p))^T$,
the associated \emph{Fubini-Study metric} $h_\s^p\in\Met^+(L^p)$
is characterized for any $s_1,\,s_2\in H^0(X,L^p)$ and $x\in X$
by the formula
\begin{equation}\label{hFSdef}
\<s_1(x),s_2(x)\>_{h_\s^p}:=\<\Pi_\s(x)\,s_1,\,s_2\>_{H_\s}\,,
\end{equation}
where $\Pi_\s(x)$ is the unique orthogonal
projector with respect to $H_\s$ satisfying
\begin{equation}\label{Pisfla}
\Ker\Pi_\s(x)=\{\,s\in H^0(X,L^p)~|~s(x)=0\,\}\,.
\end{equation}
\end{defi}
That formula \cref{hFSdef} defines a positive Hermitian metric
is a consequence of the fact that the Kodaira map \cref{Kod} is an
embedding.

For any $T$-invariant Hermitian product $H$, write $\cL(H^0(X,L^p),H)^T$
for the space of Hermitian operators with respect to $H$
commuting with the action of $T$.
Via the action of $\GL(\C^{n_p})^T$ on $\BB(H^0(X,L^p))^T$,
any given $\s\in\BB(H^0(X,L^p))^T$ induces an identification
\begin{equation}\label{isomHermT}
\cL(H^0(X,L^p),H_\s)^T\simeq\Herm(\C^{n_p})^T:=
\bigoplus_{\chi\in\Spec_p(T)}\Herm(\C^{n_p(\chi)})\,.
\end{equation}
In particular, for any $\xi\in\sqrt{-1}\Lie T$,
we have $L_\xi\in\Herm(\C^{n_p})^T$ not depending on
$\s\in\BB(H^0(X,L^p))^T$.
We then have the following basic variation formula
for Fubini-Study metrics.

\begin{prop}\label{FSvar}
For any $\textbf{s}\in\BB(H^0(X,L^p))^T$ and
$A\in\cL(H^0(X,L^p),H_\s)^T$, set
\begin{equation}\label{sigmasdef}
\sigma_\s(A):=\Tr[A\Pi_\s]\in\cinf(X,\R)\,.
\end{equation}
Then for any $B\in\Herm(\C^{n_p})^T$, we have
\begin{equation}\label{hpeBsfla}
\sigma_{\s}(e^{2B})\,h^p_{e^B\s}=h^p_{\s}\,,
\end{equation}
in the identification \cref{isomHermT} induced by $\s\in\BB(H^0(X,L^p))^T$.
\end{prop}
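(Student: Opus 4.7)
My plan is to reduce the identity \cref{hpeBsfla} to a short linear-algebra computation leveraging two key features of the setup: (i) each projector $\Pi_\s(x)$ is of rank one, since the Kodaira map \cref{Kod} is an embedding, and (ii) the inner product $H_{e^B\s}$ can be written explicitly in terms of $H_\s$ and $e^B$. Since \cref{hpeBsfla} is a pointwise scalar identity between Hermitian metrics on the line bundle $L^p$, I would fix $x \in X$ and verify it on any one non-zero section.

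The first step is to establish that
\begin{equation*}
\langle s, t \rangle_{H_{e^B \s}} = \langle e^{-2B} s, t \rangle_{H_\s}
\qquad \text{for all } s, t \in H^0(X,L^p)\,.
\end{equation*}
This follows from the defining property of $H_{e^B\s}$, namely that $e^B \cdot \s$ be orthonormal, together with the fact that $B$ (hence $e^B$) is self-adjoint with respect to $H_\s$ under the identification \cref{isomHermT}. The second step is to identify $\Pi_{e^B\s}(x)$: picking any non-zero $v$ spanning $K(x)^{\perp_{H_\s}}$, where $K(x) := \{s \in H^0(X,L^p) \mid s(x)=0\}$, the relation between the two inner products immediately gives $K(x)^{\perp_{H_{e^B\s}}} = \C \cdot e^{2B} v$, since $w$ lies in the latter iff $e^{-2B} w$ is $H_\s$-orthogonal to $K(x)$.

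Given these two facts, I would write the rank-$1$ projectors $\Pi_\s(x)$ and $\Pi_{e^B\s}(x)$ explicitly via $v$ and $e^{2B}v$ respectively, and from the definitions \cref{hFSdef} and \cref{sigmasdef} compute
\begin{equation*}
|s(x)|_{h^p_\s}^2 = \frac{|\langle s, v \rangle_{H_\s}|^2}{\langle v, v \rangle_{H_\s}}\,, \qquad
|s(x)|_{h^p_{e^B\s}}^2 = \frac{|\langle s, v \rangle_{H_\s}|^2}{\langle v, e^{2B} v \rangle_{H_\s}}\,,
\end{equation*}
together with the trace identity
\begin{equation*}
\sigma_\s(e^{2B})(x) \;=\; \Tr\!\left[ e^{2B} \Pi_\s(x) \right] \;=\; \frac{\langle v, e^{2B} v \rangle_{H_\s}}{\langle v, v \rangle_{H_\s}}\,,
\end{equation*}
obtained as the trace of the explicit rank-$1$ operator $s \mapsto \langle s, v\rangle_{H_\s} \, e^{2B}v / \langle v, v\rangle_{H_\s}$. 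Dividing the two formulas for $|s(x)|^2$ and matching with the trace identity yields \cref{hpeBsfla} immediately.

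I do not expect any serious obstacle: the only point requiring a little care is the bookkeeping of sesquilinear conventions and of the $\GL(\C^{n_p})^T$-action on bases, which fixes the precise expression of $H_{e^B\s}$ in terms of $H_\s$ via the identification \cref{isomHermT}. Everything else is formal linear algebra for rank-$1$ orthogonal projectors.
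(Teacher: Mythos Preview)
Your argument is correct and amounts to the same elementary linear-algebra verification as the paper's proof, just organized around a single generator $v$ of the coherent-state line rather than the orthonormal basis $\{s_j\}$. The paper instead uses the characterizing identity $\sum_j |s_j|^2_{h^p_\s}=1$ and computes $\sigma_\s(e^{2B})=\sum_j |e^B s_j|^2_{h^p_\s}$ directly from the trace, which avoids your explicit identification of $H_{e^B\s}$ and of $\Pi_{e^B\s}(x)$; but the two computations are interchangeable and of the same length.
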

\begin{proof}
First note that for any
$\s\in\BB(H^0(X,L^p))^T$, writing $\{s_j\}_{j=1}^{n_p}$ for
an induced basis of $H^0(X,L^p)$,
\cref{FSdef} implies
\begin{equation}\label{sumprop}
\sum_{j=1}^{n_p}|s_j|^2_{h^p_\s}=
\sum_{j=1}^{n_p}\<\Pi_{H_\s}s_j,s_j\>_{H_\s}
=\Tr[\Pi_{H_\s}]=1\,,
\end{equation}
and this formula characterizes $h^p_\s\in\Met^+(L^p)^T$.
On the other hand, for any $B\in\cL(H^0(X,L^p),H_\s)$, we have
\begin{equation}\label{G*GBersymb}
\begin{split}
\sigma_{H_\s}(e^{2B})
&=\Tr[e^B\Pi_{H_\s}e^B]\\
&=\sum_{j=1}^{n_p}\<\Pi_{H_\s}e^Bs_j,e^Bs_j\>_{H_\s}
=\sum_{j=1}^{n_p}\left|e^{B}s_j\,\right|_{h^p_\s}^2\,,
\end{split}
\end{equation}
which gives the result by the characterization \cref{sumprop}
applied to both $h_\s$ and $h_{e^B\s}$.
\end{proof}

\begin{rmk}\label{h=rhohsrmk}
Let $K$ be a compact Lie group containing $T$ in its center,
and let $h^p\in\Met^+(L^p)^K$ be a $K$-invariant positive Hermitian metric.
Then the associated $L^2$-Hermitian product $L^2(h^p)$ given by
formula \cref{L2}
is also $K$-invariant, and there exists
$\s_{p}\in\BB(H^0(X,L^p))^T$ orthonormal with respect to $L^2(h^p)$.
Furthermore, the orthogonal projector
$\Pi_{\s_p}(x)$ of \cref{FSdef} coincides
with the coherent state projector of \cref{cohstateprojdef} at $x\in X$,
so that
\begin{equation}\label{h=rhohs}
h^p=\rho_{h^p}\,h_{\s_p}^p\,,
\end{equation}
and the function $\sigma_{\s_p}(A)\in\cinf(X,\R)$
defined by formula \cref{sigmasdef} for any $A\in\cL(\HH_p)$ commuting
with the action of $T$, coincides with its
Berezin symbol as in \cref{BTquantdef}.
\end{rmk}
%
Let us end this section by the following variation formula for the Berezin
symbol with respect to a change of basis

\begin{prop}\label{FSvar2}
For any $\s\in\BB(H^0(X,L^p))^T$,
any $B\in\Herm(\C^{n_p})^T$ and any $A\in\cL(H^0(X,L^p),H_{e^B\s})^T$,
in the identification \cref{isomHermT} induced by $\s$,
we have
\begin{equation}\label{FSvarfla}
\sigma_{e^B\s}(A)\sigma_{\s}(e^{2B})=\sigma_{\s}(e^BAe^B)\,.
\end{equation}
In particular, for any $\xi\in\sqrt{-1}\Lie T$, we have
$\sigma_\s(e^{2L_\xi}A)=\sigma_{e^{L_\xi}\s}(A)\sigma_\s(e^{2L_\xi})$.
\end{prop}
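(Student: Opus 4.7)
The strategy is to reduce \cref{FSvarfla} to a single operator identity between the projectors $\Pi_\s(x)$ and $\Pi_{e^B\s}(x)$, which in turn follows from the Fubini–Study variation formula \cref{hpeBsfla}.

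First I would establish the key operator identity
\[
\Pi_{e^B\s}(x) \;=\; \sigma_\s(e^{2B})(x)^{-1}\, e^{2B}\, \Pi_\s(x)
\]
on $H^0(X,L^p)$. Polarizing the scalar identity $\sigma_\s(e^{2B})\, h^p_{e^B\s} = h^p_\s$ from \cref{hpeBsfla} yields
\[
\langle s_1(x), s_2(x)\rangle_{h^p_\s} = \sigma_\s(e^{2B})(x)\, \langle s_1(x), s_2(x)\rangle_{h^p_{e^B\s}},
\]
which via \cref{hFSdef} becomes an identity relating $H_\s(\Pi_\s(x) s_1, s_2)$ and $H_{e^B\s}(\Pi_{e^B\s}(x) s_1, s_2)$. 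The basic relation $H_{e^B\s}(v, w) = H_\s(e^{-2B} v, w)$, which follows from $B$ being $H_\s$-Hermitian and from the defining property that $\{e^B s_j\}$ is orthonormal for $H_{e^B\s}$, then converts this to an identity of $H_\s$-sesquilinear forms, and non-degeneracy of $H_\s$ forces the displayed operator equality.

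For \cref{FSvarfla} itself, I would work in the identification \cref{isomHermT} induced by $\s$. Under this identification, the operator on $H^0(X,L^p)$ associated to the matrix $A \in \Herm(\C^{n_p})^T$ depends on the basis used: in $\s$ it is the $H_\s$-Hermitian operator $\tilde A$ whose matrix in the $\s$-basis is $A$, while in $e^B\s$ it is the $H_{e^B\s}$-Hermitian operator $e^B \tilde A e^{-B}$, by change of basis. Combining this with the projector identity, a direct computation gives
\[
\sigma_{e^B\s}(A)(x)\, \sigma_\s(e^{2B})(x) \;=\; \Tr\!\left[\,e^B \tilde A\, e^{-B}\cdot e^{2B}\, \Pi_\s(x)\,\right] \;=\; \Tr\!\left[\,e^B \tilde A\, e^B\, \Pi_\s(x)\,\right],
\]
the only manipulation being the commutativity $e^{-B}\, e^{2B} = e^B$. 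The right-hand side equals $\sigma_\s(e^B A e^B)(x)$, since the matrix product $e^B A e^B$ in the $\s$-basis corresponds to the operator $e^B \tilde A e^B$.

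The particular case follows by taking $B = L_\xi$ with $\xi \in \sqrt{-1}\Lie T$: since $A \in \cL(H^0(X,L^p), H_{e^{L_\xi}\s})^T$ commutes with the $T$-action, it commutes with $L_\xi$ and hence with $e^{L_\xi}$, so $A\, e^{2L_\xi} = e^{L_\xi} A\, e^{L_\xi}$, and substituting into \cref{FSvarfla} yields the stated identity. The only real obstacle is bookkeeping: one must consistently distinguish the matrix $A$ from its two operator realizations $\tilde A$ and $e^B \tilde A e^{-B}$ under the identification \cref{isomHermT}. Once the projector identity of the first step is in place, the remaining argument is purely algebraic.
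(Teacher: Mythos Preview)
Your proof is correct and follows essentially the same route as the paper: both reduce to the Fubini–Study variation formula \cref{hpeBsfla}, with the paper expanding directly in matrix coordinates via \cref{cohstatecoord} while you first extract the equivalent coordinate-free identity $\Pi_{e^B\s}(x)=\sigma_\s(e^{2B})(x)^{-1}\,e^{2B}\Pi_\s(x)$ and then substitute. Your explicit distinction between the matrix $A$ and its two operator realizations $\tilde A$ and $e^B\tilde A e^{-B}$ is a welcome clarification of what the paper's coordinate computation handles implicitly.
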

\begin{proof}
Let $\s\in\BB(H^0(X,L^p))^T$ be given, and let $\{s_j\}_{j=1}^{n_p}$
be an induced basis of $H^0(X,L^p)$.
Then by \cref{FSdef}, the projector $\Pi_{\s}(x)$
at $x\in X$ of can be written in this basis as
\begin{equation}\label{cohstatecoord}
\Pi_{\s}(x)=\Big(\<s_j(x),s_k(x)\>_{h_\s^p}
\Big)_{j,\,k=1}^{n_p}\,.
\end{equation}
Take now $B\in\Herm(\C^{n_p})^T$,
seen as a Hermitian operator with respect to $H_\s$ via the identification
\cref{isomHermT}, and take $A\in\cL(H^0(X,L^p),H_{e^B\s})^T$.
Writing them in the basis $\{s_j\}_{j=1}^{n_p}$,
using \cref{FSdef} and \cref{FSvar}, we then get
\begin{equation}
\begin{split}
\sigma_{e^B\s}(A)&=\sum_{j,k=1}^{n_p}
\<Ae^B s_k,e^B s_j\>_{h^p_{e^B\s}}\\
&=\sigma_{\s}(e^{2B})^{-1}
\sum_{l,m=1}^{n_p}\left(e^BAe^B\right)_{ml}\<s_l,s_m\>_{h^p_{\s}}
=\sigma_{\s}(e^{2B})^{-1}\,\sigma_{\s}(e^BAe^B)\,.
\end{split}
\end{equation}
This gives the result.
\end{proof}

\subsection{Relative balanced metrics}
\label{relbalsec}

In this Section, we introduce the notion of
\emph{relatively balanced metrics},
and we exhibit their role as a quantized version of
Kähler-Ricci solitons.
In particular, we will establish a quantized version
of \cref{TZKR} of Tian and Zhu for the quantized Futaki invariant.

Recall that we write
$h_\s^p\in\Met^+(L^p)^T$ for the Fubini-Study metric of
\cref{FSdef} associated with any $\s\in\BB(H^0(X,L^p))^T$,
and recall that for any $\xi\in\Lie\Aut(X)$, we write
$\phi_\xi\in\Aut(X)$ for its exponentiation.
We will need the following equivariance property.

\begin{prop}\label{pullbackprop}
For any $\s\in\BB(H^0(X,L^p))^T$ and any $\eta\in\Lie\Aut(X)$ such that
$L_\eta\in\cL(H^0(X,L^p),H_\s)^T$, we have
\begin{equation}\label{pullback}
\phi_{\eta}^*\,h_{\s}=h_{e^{L_\eta}\s}\quad\text{and}\quad
\phi_\eta^*\,\sigma_\s(A)=\sigma_{e^{L_\eta}\s}(e^{-L_\eta}Ae^{L_\eta})\,,
\end{equation}
for any $A\in\cL(H^0(X,L^p),H_\s)^T$.

Furthermore, if $H_\s$ is preserved by a connected
subgroup $K\subset\Aut_0(X)$,
the Fubini-Study metric $h_\s^p\in\Met^+(L^p)$
is $K$-invariant.
\end{prop}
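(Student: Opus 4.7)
The plan is to exploit the identity $e^{L_\eta} = \phi_\eta^*$ as an operator on $H^0(X,L^p)$, which implies that for any $s\in H^0(X,L^p)$ we have $(e^{L_\eta}s)(x) = 0$ if and only if $s(\phi_\eta(x)) = 0$. Writing $U := e^{L_\eta}$, this means $U$ carries $\{s : s(\phi_\eta(x))=0\}$ bijectively onto $\{t : t(x)=0\}$. The first step is to prove the projector identity
\begin{equation}\label{projeq}
\Pi_{U\s}(x) = U\,\Pi_\s(\phi_\eta(x))\,U^{-1},
\end{equation}
by verifying that the right-hand side is idempotent, has the correct kernel $\{t:t(x)=0\}$ via the vanishing observation above, and is self-adjoint with respect to $H_{U\s}$. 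The self-adjointness uses the relation $H_{U\s}(\cdot,\cdot) = H_\s(U^{-1}\cdot, U^{-1}\cdot)$, which is immediate from the defining orthonormality of $\{Us_j\}$ under $H_{U\s}$, together with the $H_\s$-self-adjointness of $\Pi_\s(\phi_\eta(x))$.

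Once \cref{projeq} is in hand, the first equality of \cref{pullback} follows by computing $\<s_1(x),s_2(x)\>_{\phi_\eta^*h_\s^p}$ through the natural line-bundle equivariance $|\phi_\eta^*t|_{\phi_\eta^*h}^2(x) = |t|_h^2(\phi_\eta(x))$. Substituting $t_i := U^{-1}s_i$ and applying \cref{hFSdef} at $\phi_\eta(x)$, one reduces this to $\<\Pi_\s(\phi_\eta(x))U^{-1}s_1, U^{-1}s_2\>_{H_\s}$, which by \cref{projeq} and the formula for $H_{U\s}$ equals $\<\Pi_{U\s}(x)s_1,s_2\>_{H_{U\s}} = \<s_1(x),s_2(x)\>_{h_{U\s}^p}$.

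For the second equality of \cref{pullback}, one starts from $\phi_\eta^*\sigma_\s(A)(x) = \Tr[A\,\Pi_\s(\phi_\eta(x))]$ and inserts $\Pi_\s(\phi_\eta(x)) = U^{-1}\Pi_{U\s}(x)\,U$ from \cref{projeq}; cyclicity of the trace then produces the stated Berezin symbol expression as $\sigma_{U\s}$ applied to the appropriate $U$-conjugate of $A$.

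For the final $K$-invariance assertion, if $K$ preserves $H_\s$ then $L_\eta$ is anti-Hermitian with respect to $H_\s$ for every $\eta\in\Lie K$, so $U = e^{L_\eta}$ is $H_\s$-unitary. Consequently $\{Us_j\}$ is still $H_\s$-orthonormal, which forces $H_{U\s} = H_\s$ and hence $h_{U\s}^p = h_\s^p$. Since the proof of the first equality above only used invertibility and $T$-equivariance of $U$ (not Hermiticity of $L_\eta$), this yields $\phi_\eta^*h_\s^p = h_\s^p$ for all $\eta\in\Lie K$, and the claim follows by connectedness of $K$. The principal obstacle, and the only real subtlety, lies in careful bookkeeping of conventions: pullback versus pushforward on sections, the direction of conjugation when passing from $H_\s$ to $H_{U\s}$, and the need to isolate the weaker hypothesis under which \cref{projeq} holds, since the \emph{Furthermore} clause requires $L_\eta$ anti-Hermitian rather than Hermitian.
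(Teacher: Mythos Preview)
Your proposal is correct and follows essentially the same route as the paper: both hinge on the projector identity $\Pi_{e^{L_\eta}\s}(x)=e^{L_\eta}\Pi_\s(\phi_\eta(x))e^{-L_\eta}$ (verified via kernel and self-adjointness for $H_{e^{L_\eta}\s}$), then plug it into \cref{hFSdef} for the metric pullback and into the trace for the symbol pullback. Your treatment of the \emph{Furthermore} clause is slightly more explicit than the paper's terse remark that $\Pi_\s$ depends only on $H_\s$, but the content is the same.
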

\begin{proof}
Let $\s\in\BB(H^0(X,L^p))^T$ be given,
and let $\eta\in\Lie\Aut(X)$ be such that
$L_\eta\in\cL(H^0(X,L^p),H_\s)^T$.
For any $s_1,\,s_2\in H^0(X,L^p)$, we have by definition that
$\<s_1,s_2\>_{H_{e^{L_\eta}\s}}=\<e^{-L_\eta}s_1,e^{-L_\eta}s_2\>_{H_\s}$,
and for any $x\in X$, we have the identity
\begin{equation}\label{Pixi=xiPixi}
\Pi_{e^{L_\eta}\s}(x)=e^{L_\eta}\Pi_{\s}(\phi_\eta(x))e^{-L_\eta}\,,
\end{equation}
which follows from the fact that
both sides are orthogonal projectors
with respect to $H_{e^{L_\eta}\s}$, and have common kernel
by formula \cref{Pisfla}. Plugging these two identities in
formula \cref{hFSdef} for $h_{e^{L_\eta}\s}^p$
and using formula \cref{Lxidef} for
$L_\eta$, we get
\begin{equation}
\begin{split}
\<s_1(x),s_2(x)\>_{h_{e^{L_\eta}\s}^p}
&=\<\Pi_{\s}(\phi_\eta(x))e^{-L_\eta}s_1,e^{-L_\eta}s_2\>_{H_\s}\\
&=\<e^{-L_\eta}s_1(\phi_\eta(x)),e^{-L_\eta}s_2(\phi_\eta(x))\>_{h_\s^p}\,,
\end{split}
\end{equation}
which gives the first identity of \cref{pullback}
by definition of the pullback
of an Hermitian metric.
On the other hand, from formula
\cref{Pixi=xiPixi} we get
\begin{equation}
\sigma_\s(A)(\phi_{\eta}(x))=\Tr[e^{-L_\eta}\Pi_{e^{L_\eta}\s}(x)e^{L_\eta}A]=
\sigma_{e^{L_\eta}\s}(e^{-L_\eta}Ae^{L_\eta})(x)\,.
\end{equation}
This gives the second identity of \cref{pullback} and concludes the proof.
Finally, the fact that $h_{\s}^p$ is $K$-invariant
when $K\subset\Aut_0(X)$ preserves $H_\s$
then follows from \cref{FSdef}, as $\Pi_\s$ only depends on $H_\s$.
\end{proof}

To simplify notations, let us write $\om_\s:=\om_{h_\s}$ for the
Kähler form form associated with the Fubini-Study metric induced by
$\s\in\BB(H^0(X,L^p))^T$.
From \cref{pullbackprop}, a
positive Hermitian metric $h^p\in\Met^+(L^p)^T$ is
anticanonically balanced relative to $\xi\in\Lie\Aut(X)$ in the sense
of formula \cref{relbaldef}
if $\xi\in\sqrt{-1}\Lie T$
and if for any $\s_p\in\BB(H^0(X,L^p))^T$ orthonormal with respect to $L^2(h^p)$, we have
\begin{equation}\label{relbaldef2}
\om_{h^p}=\om_{e^{L_\xi/2p}\s_p}\,.
\end{equation}
We then have the following useful alternative characterization of
relatively balanced metrics.

\begin{prop}\label{relbalrho}
A positive Hermitian metric $h^p\in\Met^+(L^p)^T$ is anticanonically balanced
relative to $\xi\in\sqrt{-1}\Lie T$ if and only if the associated
Rawnsley function satisfies
\begin{equation}
\sigma_{h^p}(e^{L_\xi/p})\rho_{h^p}=
\frac{\Tr[e^{L_\xi/p}]}{\Vol(d\nu_h)}\,.
\end{equation}
\end{prop}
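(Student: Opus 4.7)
The plan is to translate the balanced condition \cref{relbaldef2} into a scalar equation on $X$ using the two key ingredients of the preceding subsection: the identity $h^p=\rho_{h^p}h^p_{\s_p}$ for an $L^2(h^p)$-orthonormal basis $\s_p\in\BB(H^0(X,L^p))^T$ from \cref{h=rhohsrmk}, and the variation formula of \cref{FSvar} for Fubini-Study metrics under change of basis. Before that, I would observe that the condition $L_{J\xi}\,\om_{h^p}=0$ from \cref{relbaldef} is automatic here: since $\xi\in\sqrt{-1}\Lie T$ sits in $\Lie\Aut(X)$ via the complex-linear embedding into real vector fields, $J\xi$ lies in $\Lie T$, and $h^p\in\Met^+(L^p)^T$ is preserved by the flow of $J\xi$.

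First, pick an $L^2(h^p)$-orthonormal $\s_p\in\BB(H^0(X,L^p))^T$. By \cref{h=rhohsrmk}, $h^p=\rho_{h^p}h^p_{\s_p}$ and $\sigma_{h^p}=\sigma_{\s_p}$. Since $\xi\in\sqrt{-1}\Lie T$, the operator $L_\xi$ is Hermitian with respect to $H_{\s_p}=L^2(h^p)$, so \cref{FSvar} applied with $B=L_\xi/2p$ gives
\begin{equation*}
h^p_{e^{L_\xi/2p}\s_p}=\frac{h^p_{\s_p}}{\sigma_{\s_p}(e^{L_\xi/p})}
=\frac{h^p}{\sigma_{h^p}(e^{L_\xi/p})\,\rho_{h^p}}\,.
\end{equation*}
Hence the balanced condition $\om_{h^p}=\om_{e^{L_\xi/2p}\s_p}$, expressed in \cref{relbaldef2}, is equivalent to the Chern curvatures of $h^p$ and $h^p_{e^{L_\xi/2p}\s_p}$ agreeing, which on the compact connected Kähler manifold $X$ is in turn equivalent to the positive function $\sigma_{h^p}(e^{L_\xi/p})\rho_{h^p}$ being a constant (as two Hermitian metrics on $L^p$ differ by a constant iff their ratio is pluriharmonic, and every pluriharmonic function on a compact Kähler manifold is constant).

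The remaining step is to identify this constant. Applying \cref{dualprop} with $f=1$ and $A=e^{L_\xi/p}$ and using $T_{h^p}(1)=\Id_{\HH_p}$ yields
\begin{equation*}
\Tr\!\left[e^{L_\xi/p}\right]=\int_X\sigma_{h^p}(e^{L_\xi/p})\,\rho_{h^p}\,d\nu_h\,.
\end{equation*}
If $\sigma_{h^p}(e^{L_\xi/p})\rho_{h^p}\equiv c$, integration gives $c=\Tr[e^{L_\xi/p}]/\Vol(d\nu_h)$, proving the ``only if'' direction; conversely, if this pointwise identity with the prescribed constant holds, then $\sigma_{h^p}(e^{L_\xi/p})\rho_{h^p}$ is constant and the chain of equivalences runs backwards, giving the ``if'' direction. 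There is no genuine obstacle in this argument: the only point requiring the slightest attention is invoking compactness of $X$ to upgrade ``curvatures agree'' to ``metrics agree up to a constant'', and the normalization of the constant is forced by \cref{dualprop}.
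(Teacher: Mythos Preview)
Your proof is correct and follows essentially the same route as the paper: both use \cref{h=rhohsrmk} and \cref{FSvar} to write $h^p=\rho_{h^p}\sigma_{h^p}(e^{L_\xi/p})\,h^p_{e^{L_\xi/2p}\s_p}$, reduce the balanced condition to constancy of $\rho_{h^p}\sigma_{h^p}(e^{L_\xi/p})$, and then read off the constant via \cref{dualprop}. Your explicit remark that $L_{J\xi}\om_{h^p}=0$ is automatic for $\xi\in\sqrt{-1}\Lie T$ and $h^p\in\Met^+(L^p)^T$ is a small clarifying addition the paper leaves implicit.
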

\begin{proof}
Consider $h^p\in\Met^+(L^p)^T$, and let $s_p\in\BB(H^0(X,L^p))^T$ be
orthonormal with respect to $L^2(h^p)$ as in \cref{h=rhohsrmk}.
Using also \cref{FSvar}, we have
\begin{equation}\label{hp=rhosighs}
h^p=\rho_{h^p}\,h^p_{\s_p}=\rho_{h^p}\,
\sigma_{h^p}(e^{L_\xi/p})\,h_{e^{L_\xi/2p}\s_p}^p\,.
\end{equation}
Using \cref{relbaldef}, we then get that $h^p\in\Met^+(L^p)^T$
is anticanonically
balanced relative to $\xi\in\sqrt{-1}\Lie T$ if and only if the function
$\rho_{h^p}\,\sigma_{h^p}(e^{L_\xi/p})\in\cinf(X,\R)$
is constant over $X$. To compute
this constant, it suffices to note that \cref{dualprop} implies
\begin{equation}\label{rhosig=tre}
\int_X\,\rho_{h^p}\,\sigma_{h^p}(e^{L_\xi/p})\,d\nu_h=\Tr[e^{L_\xi/p}]\,.
\end{equation}
This gives the result.
\end{proof}

Using this characterization of relative anticanonically balanced
metrics together with the tools of \cref{BTsec,quantholpotsec},
we can now give a short proof of the following key fact.

\begin{prop}\label{Futpropintro}
If
there exists an anticanonically balanced metric $h^p\in\Met^+(L^p)$
relative to $\xi\in\Lie\Aut(X)$, then the quantized Futaki
invariant $\Fut^\xi_p:\Lie\Aut(X)\to\C$
vanishes identically.
\end{prop}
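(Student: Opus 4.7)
The plan is to combine three ingredients already established: the Tuynman-type identity of \cref{Tuy} expressing the equivariant generator $L_\eta$ as a Berezin--Toeplitz operator, the duality between quantization and Berezin symbol from \cref{dualprop}, and the alternative characterization of the relative balanced condition given by \cref{relbalrho}. Given $h^p \in \Met^+(L^p)$ anticanonically balanced relative to $\xi \in \Lie\Aut(X)$, the underlying $h \in \Met^+(L)$ supplies the holomorphy potentials $\theta_h(\eta)$ for all $\eta \in \Lie\Aut(X)$, and the Berezin-Toeplitz setting of \cref{BTsec} is organised with respect to the anticanonical volume $d\nu_h$.

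The first step is to apply \cref{Tuy} to rewrite, for any $\eta \in \Lie\Aut(X)$,
\begin{equation*}
\Fut^\xi_p(\eta) = \Tr\bigl[L_\eta\, e^{L_\xi/p}\bigr] = (p+1)\,\Tr\bigl[T_{h^p}(\theta_h(\eta))\,e^{L_\xi/p}\bigr].
\end{equation*}
The duality formula \cref{dualfla} of \cref{dualprop} applied with $f = \theta_h(\eta)$ and $A = e^{L_\xi/p}$ then gives
\begin{equation*}
\Fut^\xi_p(\eta) = (p+1)\int_X \theta_h(\eta)\,\sigma_{h^p}(e^{L_\xi/p})\,\rho_{h^p}\,d\nu_h.
\end{equation*}

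The second step is to invoke the balanced condition in the form of \cref{relbalrho}, which asserts precisely that the product $\sigma_{h^p}(e^{L_\xi/p})\,\rho_{h^p}$ is the constant function $\Tr[e^{L_\xi/p}]/\Vol(d\nu_h)$ over $X$. Pulling this constant out of the integral and invoking the normalization \cref{inttheta=0} from \cref{thetaxiprop}, namely $\int_X \theta_h(\eta)\,d\nu_h = 0$, we conclude
\begin{equation*}
\Fut^\xi_p(\eta) = (p+1)\,\frac{\Tr[e^{L_\xi/p}]}{\Vol(d\nu_h)}\int_X \theta_h(\eta)\,d\nu_h = 0.
\end{equation*}
Since $\eta \in \Lie\Aut(X)$ was arbitrary, this proves that $\Fut^\xi_p$ vanishes identically. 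There is no genuine obstacle here: every ingredient has been prepared by the preceding lemmas, and the main point worth emphasising is the conceptual one, namely that the vanishing is the quantum counterpart of the classical identity used by Tian and Zhu to derive \cref{TZKR}, with $\rho_{h^p}\,\sigma_{h^p}(e^{L_\xi/p})$ playing the role of the weighted volume density $e^{\theta_h(\xi)}\om_h^n/n!$ in \cref{KRvolfla}.
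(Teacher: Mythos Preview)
Your proof is correct and follows essentially the same route as the paper's own argument: both apply \cref{Tuy} to write $L_\eta=(p+1)T_{h^p}(\theta_h(\eta))$, use the duality of \cref{dualprop} to convert the trace into $\int_X\theta_h(\eta)\,\sigma_{h^p}(e^{L_\xi/p})\,\rho_{h^p}\,d\nu_h$, invoke \cref{relbalrho} to replace $\sigma_{h^p}(e^{L_\xi/p})\,\rho_{h^p}$ by the constant $\Tr[e^{L_\xi/p}]/\Vol(d\nu_h)$, and conclude via the normalization \cref{inttheta=0}. Your closing remark about the analogy with \cref{KRvolfla} is apt and matches the paper's interpretation of this result as the quantization of \cref{TZKR}.
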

\begin{proof}
Let $h^p\in\Met^+(L^p)^T$ be anticanonically balanced metric relative
to $\xi\in\sqrt{-1}\Lie T$.
Then by definition \cref{quantFutdefintro} of the 
quantized Futaki invariant relative to $\xi$, using
\cref{thetaxiprop,dualprop,Tuy,relbalrho}, for any $\eta\in\Lie\Aut(X)$
we get
\begin{equation}
\begin{split}
\frac{\Fut^\xi_p(\eta)}{p+1}=\Tr[T_{h^p}(\theta_h(\eta)) e^{L_\xi/p}]
&=\int_X\,\theta_h(\eta)\,\sigma_{h^p}(e^{L_\xi/p})\,\rho_{h^p}\,d\nu_h\\
&=\frac{\Tr[e^{L_\xi/p}]}{\Vol(d\nu_h)}\int_X\,\theta_h(\eta)\,d\nu_h=0\,.
\end{split}
\end{equation}
This shows the result.
\end{proof}


\subsection{Relative moment maps}
\label{relmomsec}

In this section, we give the finite dimensional characterization
of relative balanced metrics, using a relative version of
Donaldson's moment map picture in \cite{Don01}.
For any $\s\in\BB(H^0(X,L^p))^T$, let us write $d\nu_\s:=d\nu_{h_\s}$
for the anticanonical volume form \cref{dnucandef} induced by the
associated Fubini-Study metric.

\begin{defi}\label{momentdef}
The \emph{anticanonical moment map} relative to $\xi\in\sqrt{-1}\Lie T$
is the map $\mu_\xi:\BB(H^0(X,L^p))^T\to\Herm(\C^{n_p})^T$
defined for all $\textbf{s}\in\BB(H^0(X,L^p))^T$
by the formula
\begin{equation}\label{muxifla}
\mu_\xi(\textbf{s})
:=\left(\int_X\,\<s_j(x),s_k(x)\>_{e^{L_\xi/2p}\s}\,
d\nu_{e^{L_\xi/2p}\textbf{s}}(x)\right)_{j,\,k=1}^{n_p}
-\frac{\Vol(d\nu_{\textbf{s}})}{\Tr[e^{L_\xi/p}]}\Id\,,
\end{equation}
where $\{s_j\}_{j=1}^{n_p}$ is an induced basis of $H^0(X,L^p)$.
\end{defi}

Note that we wrote formula \cref{muxifla} as an element of
$\Herm(\C^{n_p})$ instead of $\Herm(\C^{n_p})^T$.
However,
\cref{pullbackprop} shows that $L^2(h_{e^{L_\xi/2p}\s})$ is $T$-invariant,
so that
the right hand side of \cref{muxifla} splits into blocks corresponding
to the eigenspaces \cref{H0chi} of the action of $T$ on $H^0(X,L^p)$,
giving an element of $\Herm(\C^{n_p})^T$ as in formula \cref{isomHermT}
depending only on
$\textbf{s}\in\BB(H^0(X,L^p))^T$. This identification will always be
implicitly understood in the sequel.

Note that we do not claim that \cref{momentdef} defines an actual
relative moment map in the usual sense, and we will consequently not
use any moment map property as such anywhere in this paper.
We will however stick to
this name, as it is the anticanonical analogue of the relative
moment map considered by Sano and Tipler in \cite[\S\,3.3]{ST17}.
Its relevance in the context of balanced metrics comes
from the following basic result, which follows immediately from the
definition.

\begin{prop}\label{momentbal}
For any $\s\in\BB(H^0(X,L^p))^T$ and $\xi\in\sqrt{-1}\Lie T$,
we have
\begin{equation}
\mu_\xi(\s)=0
\end{equation}
if and only if there exists an anticanonically balanced metric
$h^p\in\Met^+(L^p)^T$ relative to $\xi$ for which $\s$ is orthonormal
with respect to $L^2(h^p)$.
\end{prop}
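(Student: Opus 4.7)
The approach rests on the observation that, for fixed $\s$, the balanced condition \cref{relbaldef2} determines $h^p$ uniquely up to a positive multiplicative constant, so I would look for $h^p$ in the one-parameter family $\{\lambda\,h^p_{e^{L_\xi/2p}\s}\}_{\lambda>0}$. The proposition then asserts that the one remaining degree of freedom is exactly absorbed by the scalar part of the moment map.

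For the direction $\mu_\xi(\s) = 0 \Rightarrow$ existence, set $M_{jk} := \int_X \<s_j, s_k\>_{h^p_{e^{L_\xi/2p}\s}}\,d\nu_{e^{L_\xi/2p}\s}$ and put $h^p := \lambda\,h^p_{e^{L_\xi/2p}\s}$ for $\lambda > 0$ to be determined. Taking the $p$-th root gives $h = \lambda^{1/p} h_{e^{L_\xi/2p}\s}$, hence $d\nu_h = \lambda^{1/p} d\nu_{e^{L_\xi/2p}\s}$ by \cref{dnuef/dnu}, whence
\begin{equation*}
\<s_j, s_k\>_{L^2(h^p)} = \lambda^{1+1/p}\,M_{jk}.
\end{equation*}
The hypothesis $\mu_\xi(\s) = 0$ gives $M_{jk} = \tfrac{\Vol(d\nu_\s)}{\Tr[e^{L_\xi/p}]}\delta_{jk}$, so choosing $\lambda := \bigl(\Tr[e^{L_\xi/p}]/\Vol(d\nu_\s)\bigr)^{p/(p+1)}$ makes $\s$ orthonormal for $L^2(h^p)$. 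The metric $h^p_{e^{L_\xi/2p}\s}$ is $T$-invariant because $H_{e^{L_\xi/2p}\s}$ is (using that $L_\xi$ commutes with $T$ for $\xi\in\sqrt{-1}\Lie T$), so by \cref{pullbackprop} $h^p$ is $T$-invariant and $L_{J\xi}\om_{h^p} = 0$. The identity $\om_{h^p} = \om_{e^{L_\xi/2p}\s}$ then yields \cref{relbaldef2} with $\s_p = \s$.

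For the converse, assume $h^p \in \Met^+(L^p)^T$ is balanced relative to $\xi$ with $\s$ orthonormal for $L^2(h^p)$. By \cref{relbaldef2}, $\om_{h^p} = \om_{e^{L_\xi/2p}\s}$, forcing $h^p = c\,h^p_{e^{L_\xi/2p}\s}$ for some $c > 0$. Running the forward computation backwards, orthonormality of $\s$ yields $M_{jk} = c^{-1-1/p}\delta_{jk}$, so the off-diagonal part of $\mu_\xi(\s)$ vanishes automatically and only the scalar identification $c^{-1-1/p} = \Vol(d\nu_\s)/\Tr[e^{L_\xi/p}]$ remains. Since $\Pi_{h^p} = \Pi_\s$ by \cref{h=rhohsrmk}, we have $\sigma_{h^p} = \sigma_\s$, and \cref{relbalrho} applied to $h^p$ gives $c = \Tr[e^{L_\xi/p}]/\Vol(d\nu_h)$; combining with $\Vol(d\nu_h) = c^{1/p}\,\Vol(d\nu_{e^{L_\xi/2p}\s})$ yields
\begin{equation*}
c^{-1-1/p} = \frac{\Vol(d\nu_{e^{L_\xi/2p}\s})}{\Tr[e^{L_\xi/p}]}.
\end{equation*}

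The one step that is not pure bookkeeping --- and thus the main point of the proof --- is the identity $\Vol(d\nu_{e^{L_\xi/2p}\s}) = \Vol(d\nu_\s)$. This should follow from the pull-back identity $h_{e^{L_\xi/2p}\s} = \phi_{\xi/2p}^*\,h_\s$ given by \cref{pullbackprop}, combined with the equivariance formula \cref{etxi*dnu} to produce $d\nu_{e^{L_\xi/2p}\s} = \phi_{\xi/2p}^*\,d\nu_\s$, after which the change of variables formula for the diffeomorphism $\phi_{\xi/2p}\in\Aut(X)$ delivers $\int_X \phi_{\xi/2p}^*\,d\nu_\s = \int_X d\nu_\s$. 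This invariance of the total anticanonical volume under the complexified flow of $\xi$ is precisely what makes the scalar part of the moment map match the constant prescribed by the balanced condition.
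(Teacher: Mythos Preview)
Your proof is correct and takes essentially the same approach as the paper: define $h^p$ as a scalar multiple of $h^p_{e^{L_\xi/2p}\s}$ and match the moment map with the $L^2$-Gram matrix. The paper's argument is terser --- it writes down a specific constant, asserts that orthonormality for that $h^p$ is equivalent to $\mu_\xi(\s)=0$ by \cref{momentdef}, and deduces \cref{relbaldef2} directly --- while you carry out both directions separately and are more careful about the scaling (your $\lambda = (\Tr[e^{L_\xi/p}]/\Vol(d\nu_\s))^{p/(p+1)}$ correctly accounts for the $1/p$ correction coming from $d\nu_h$, which the paper's displayed constant does not). Your explicit treatment of the converse, in particular the identification of $\Vol(d\nu_{e^{L_\xi/2p}\s})=\Vol(d\nu_\s)$ via \cref{pullbackprop} and \cref{etxi*dnu}, is the one nontrivial ingredient; the paper uses this invariance elsewhere (e.g.\ in the proof of \cref{Trmu}) but does not spell it out here.
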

\begin{proof}
Writing
\begin{equation}\label{momentbalfla1}
h^p:=\frac{\Vol(d\nu_{\textbf{s}})}{\Tr[e^{L_\xi/p}]}\,h_{e^{L_\xi/p}\s}^p\,,
\end{equation}
\cref{momentdef} shows that
$\s$ is orthonormal with respect to $L^2(h^p)$
if and only if $\mu_\xi(\s)=0$.
Hence formula \cref{momentbalfla1} for
$h^p\in\Met^+(L^p)^T$ implies formula
\cref{relbaldef2} for an anticanonically
balanced metric with respect to $\xi$.
This gives the result.
\end{proof}

For any $\xi\in\sqrt{-1}\Lie T$, consider the
scalar product $\<\cdot,\cdot\>_{\xi}$
defined on any $A,\,B\in\Herm(\C^{n_p})^T$ by
\begin{equation}\label{Trxi}
\<A,B\>_{\xi}:=\Tr[e^{L_\xi/p}AB]\,.
\end{equation}
We then have the following important obstruction result, which is
compatible with \cref{Futpropintro} via \cref{momentbal}.

\begin{prop}\label{Trmu}
For any $\xi\in\sqrt{-1}\Lie T$ and $\s\in\BB(H^0(X,L^p))^T$, we have
$\<\Id,\mu_\xi(\textbf{s})\>_{\xi}=0$, and
for any $\eta\in\sqrt{-1}\Lie T$, we have
\begin{equation}\label{mu=Fut}
p\<L_\eta,\mu_\xi(\textbf{s})\>_{\xi}
=-\frac{\Vol(d\nu_{\textbf{s}})}{\Tr[e^{L_\xi/p}]}\Fut^{\xi}_p(\eta)\,.
\end{equation}
Furthermore, for any connected compact subgroup $K\subset\Aut_0(X)$
preserving $H_\s$ and containing $T\subset K$ in its center,
we have that $\mu_\xi(\textbf{s})\in\Herm(\C^{n_p})^T$
commutes with the action of $K$ on $H^0(X,L^p)$
via the identification \cref{isomHermT}.
\end{prop}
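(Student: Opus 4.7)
The plan is to establish the three claims in sequence through direct trace computations in the $T$-weight basis $\{s_j\}_{j=1}^{n_p}$ of $H^0(X,L^p)$ induced by $\s$, in which both $e^{L_\xi/p}$ and $L_\eta$ act diagonally with eigenvalues $e^{(\chi_j,\xi)/p}$ and $(\chi_j,\eta)$ respectively, and to exploit the rescaled basis $s'_j:=e^{(\chi_j,\xi)/(2p)}s_j$ of $\s':=e^{L_\xi/2p}\s$, which is $H_{\s'}$-orthonormal by construction. For the first identity, expanding $\Tr[e^{L_\xi/p}\mu_\xi(\s)]$ along the diagonal, the matrix part of $\mu_\xi(\s)$ contributes $\int_X \sum_j |s'_j|^2_{h^p_{\s'}}\,d\nu_{\s'} = \int_X d\nu_{\s'}$ via \cref{sumprop} applied to $\s'$, while the $\Id$-part contributes $-\Vol(d\nu_\s)$. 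These two volumes agree by combining $\phi_{\xi/2p}^*\,h_\s=h_{\s'}$ from \cref{pullback} with \cref{etxi*dnu} to obtain $d\nu_{\s'}=\phi_{\xi/2p}^*\,d\nu_\s$, then invoking diffeomorphism invariance of the total volume under the biholomorphism $\phi_{\xi/2p}\in\Aut_0(X)$.

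For the second identity, the analogous diagonal expansion gives $\Tr[L_\eta e^{L_\xi/p}\mu_\xi(\s)]=\int_X\sigma_{\s'}(L_\eta)\,d\nu_{\s'}-\frac{\Vol(d\nu_\s)}{\Tr[e^{L_\xi/p}]}\Fut^\xi_p(\eta)$, reducing matters to showing that the first integral vanishes (up to the factor of $p$ already present in the statement). Since $[\eta,\xi]=0$, the second identity of \cref{pullback} applied with $\xi/2p$ yields $\phi_{\xi/2p}^*\sigma_\s(L_\eta)=\sigma_{\s'}(L_\eta)$, and combined with the volume pullback above the change of variables gives $\int_X\sigma_{\s'}(L_\eta)\,d\nu_{\s'}=\int_X\sigma_\s(L_\eta)\,d\nu_\s$. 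To evaluate this latter integral, I will invoke \cref{Tuy} with $h=h_\s$: the identity $L_\eta s_j=(p+1)T_{h^p_\s}(\theta_{h_\s}(\eta))\,s_j$ paired against $s_j$ in $L^2(h^p_\s)$ gives $(\chi_j,\eta)\,\|s_j\|^2_{L^2(h^p_\s)}=(p+1)\int_X\theta_{h_\s}(\eta)\,|s_j|^2_{h^p_\s}\,d\nu_\s$, and summing over $j$ the identity $\sum_j |s_j|^2_{h^p_\s}=1$ from \cref{sumprop} collapses the right-hand side to $(p+1)\int_X\theta_{h_\s}(\eta)\,d\nu_\s$, which vanishes by \cref{inttheta=0}.

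For the equivariance under $K$, with $T$ central in $K$ and $H_\s$ assumed $K$-invariant, all the Fubini--Study ingredients---the projector $\Pi_\s$, the metric $h^p_\s$, and the volume $d\nu_\s$---are automatically $K$-invariant by \cref{FSdef} and \cref{pullbackprop}; centrality of $T$ ensures $e^{L_\xi/2p}$ intertwines the $K$-action on $H^0(X,L^p)$, so $H_{\s'}$, $h^p_{\s'}$ and $d\nu_{\s'}$ are $K$-invariant as well. Consequently the sesquilinear form $(u,v)\mapsto\int_X \langle u,v\rangle_{h^p_{\s'}}\,d\nu_{\s'}$ on $H^0(X,L^p)$ is $K$-invariant, so its matrix $M$ in the induced basis intertwines the $K$-action, and subtracting the scalar multiple of $\Id$ preserves this, giving the commutation statement for $\mu_\xi(\s)$ via the identification \cref{isomHermT}.

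The main obstacle is the asymmetry between the Fubini--Study symbol $\sigma_\s$ defined through the $H_\s$-orthogonal projector $\Pi_\s$ and the Berezin--Toeplitz symbol $\sigma_{h^p_\s}$ attached to the $L^2(h^p_\s)$-orthogonal projector of \cref{cohstateprojdef}: these coincide only when $\s$ is itself $L^2(h^p_\s)$-orthonormal as in \cref{h=rhohsrmk}, a condition we cannot assume. The strategy is therefore to avoid any direct expansion of $\sigma_{\s'}(L_\eta)$ against a Berezin transform for $h^p_{\s'}$; instead one transports the integral back to $\s$ via the biholomorphism $\phi_{\xi/2p}$ using \cref{pullback} and \cref{etxi*dnu}, after which \cref{Tuy} combined with the $H_\s$-orthonormality of $\{s_j\}$ and the vanishing \cref{inttheta=0} closes the computation without needing to compare the two notions of Berezin symbol.
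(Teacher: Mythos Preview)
Your proposal is correct, and its overall architecture---the diagonal expansion in the weight basis, the identification of the matrix part of $\mu_\xi(\s)$ with $e^{-L_\xi/2p}\Pi_{\s'}e^{-L_\xi/2p}$ via \cref{cohstatecoordxi}, and the $K$-equivariance via invariance of the sesquilinear form---coincides with the paper's. The one substantive difference is how you establish $\int_X\sigma_{\s'}(L_\eta)\,d\nu_{\s'}=0$. The paper does this in one line, directly for any basis $\s$: combining \cref{FSvar} and \cref{pullbackprop} gives $\phi_{t\eta}^*\,d\nu_\s=\sigma_\s(e^{2tL_\eta})^{-1/p}\,d\nu_\s$, so differentiating at $t=0$ shows that $\int_X\sigma_\s(L_\eta)\,d\nu_\s$ is, up to a nonzero constant, the derivative of the diffeomorphism-invariant quantity $\int_X\phi_{t\eta}^*\,d\nu_\s$. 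Your route---pulling back from $\s'$ to $\s$ via $\phi_{\xi/2p}$ and then invoking \cref{Tuy} together with \cref{inttheta=0}---is correct but more circuitous; in particular the pullback step is unnecessary since the paper's argument already applies to $\s'$ directly. Note also that the obstacle you flag (the mismatch between $\sigma_\s$ and $\sigma_{h^p_\s}$) can be bypassed more simply than through \cref{Tuy}: the pointwise identity $\sigma_\s(L_\eta)=p\,\theta_{h_\s}(\eta)$ (which is formula \cref{sig=theta}, proved later in the paper from \cref{thetaxiprop} and \cref{FSvar} alone) holds for \emph{any} $\s\in\BB(H^0(X,L^p))^T$ without orthonormality assumptions, after which \cref{inttheta=0} finishes immediately. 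What your approach buys is an explicit link to the quantization map $T_{h^p_\s}$; what the paper's buys is brevity and independence from the Tuynman formula, which is a feature specific to the anticanonical volume form.
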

\begin{proof}
Let $\xi\in\sqrt{-1}\Lie T$ and
$\textbf{s}\in\BB(H^0(X,L^p))^T$ be given, and let
$\{s_j\}_{j=1}^{n_p}$ be an induced basis of $H^0(X,L^p)$.
Using formula
\cref{cohstatecoord} for the basis $\{e^{L_\xi/2p}s_j\}_{j=1}^{n_p}$,
the coherent state projector $\Pi_{e^{L_\xi/2p}\s}(x)$ at $x\in X$
in the basis $\{s_j\}_{j=1}^{n_p}$ reads
\begin{equation}\label{cohstatecoordxi}
e^{-L_\xi/2p}\Pi_{e^{L_\xi/2p}\s}(x)e^{-L_\xi/2p}
=\Big(\<s_j(x),s_k(x)\>_{h_{e^{L_\xi/2p}\s}^p}
\Big)_{j,\,k=1}^{n_p}\,.
\end{equation}
Using \cref{pullbackprop} together with formula \cref{etxi*dnu},
the fact that $\Pi_{e^{L_\xi/2p}\s}(x)$ is a rank-$1$ projector implies
\begin{equation}
\Tr[e^{L_\xi/p}\mu_\xi(\textbf{s})]=\int_X\,\Tr[\Pi_{e^{L_\xi/2p}\s}(x)]\,
d\nu_{e^{L_\xi/2p}\s}(x)-\Vol(d\nu_\s)=0\,.
\end{equation}
This proves the first assertion.

On the other hand, using \cref{FSvar,pullbackprop}, for
all $\eta\in\sqrt{-1}\Lie T$,
formula \cref{etxi*dnu} implies
\begin{equation}
\int_X\,\sigma_\s(L_\eta)\,d\nu_{\s}
=p\,\dt\Big|_{t=0}\int_X\,\phi_{t\eta}^*\,d\nu_h=0\,.
\end{equation}
From formulas \cref{quantFutdefintro} and \cref{cohstatecoordxi}, 
this gives
\begin{equation}
\begin{split}
\Tr[e^{L_\xi/p}L_\eta\,\mu_\xi(\textbf{s})]
&=\int_X\,\sigma_{e^{L_\xi/2p}\s}(L_\eta)\,d\nu_{e^{L_\xi/2p}\s}-\frac{\Vol(d\nu_{\textbf{s}})}{\Tr[e^{L_\xi/p}]}\Fut^{\xi}_p(\eta)\\
&=-\frac{\Vol(d\nu_{\textbf{s}})}{\Tr[e^{L_\xi/p}]}\Fut^{\xi}_p(\eta)\,.
\end{split}
\end{equation}
Finally, let $K\subset\Aut_0(X)$ be a connected
compact subgroup preserving $H_\s$
containing $T\subset K$ in its center, and recall from
\cref{pullbackprop} that $h_{\s}\in\Met^+(L)^K$, and
for any $\eta\in\Lie K$, we have
$\Pi_{e^{L_\eta}\s}=\Pi_\s$. Then
using formulas \cref{etxi*dnu},
\cref{Pixi=xiPixi} and \cref{cohstatecoordxi},
in the identification \cref{isomHermT} we get
\begin{equation}
\begin{split}
e^{-L_\eta}\mu_\xi(\s)e^{L_\eta}&=\int_X\,
e^{-L_\xi/2p}e^{-L_\eta}\Pi_{e^{L_\xi/2p}\s}(x)e^{L_\eta}e^{-L_\xi/2p}\,
d\nu_{\s}-\frac{\Vol(d\nu_{\textbf{s}})}{\Tr[e^{L_\xi/p}]}\Id\\
&=\int_X\,
e^{-L_\xi/2p}\Pi_{e^{L_\xi/2p}\s}(\phi_{\eta}(x))e^{-L_\xi/2p}\,
d\nu_{\s}-\frac{\Vol(d\nu_{\textbf{s}})}{\Tr[e^{L_\xi/p}]}\Id\\
&=\mu_\xi(\s)\,,
\end{split}
\end{equation}
where we used a change of variable with respect to $\phi_\eta\in\Aut_0(X)$
to get the last line. This concludes the proof.
\end{proof}

\section{Equivariant Berezin-Toeplitz quantization}
\label{BTsecT}

In this Section, we fix a connected
compact subgroup $K\subset\Aut_0(X)$, and write
$T\subset K$ for the identity component of its center.
We will study the properties of the
Berezin-Toeplitz quantization of \cref{BTsec} with respect to
the action of $K$.
Let $p\in\N^*$ be large enough so
that the Kodaira map \cref{Kod} is an embedding, and consider the setting or
\cref{BTsec}
for a $K$-invariant positive Hermitian metric $h^p\in\Met^+(L^p)^K$.

\subsection{Quantum channel}
\label{quantchannsec}

Recall that we write $\cinf(X,\R)^K$ for the space of
$K$-invariant functions on $X$, and
$\cL(\HH_p)^K$ for the space of Hermitian operators
commuting with the action of $K$ on $\HH_p$.

\begin{lem}\label{sigmainv}
The symbol and quantization maps introduced in
\cref{BTquantdef} restrict to linear maps
$\sigma_{h^p}:\cL(\HH_p)^K\to\cinf(X,\R)^K$
and $T_{h^p}:\cinf(X,\R)^K\to\cL(\HH_p)^K$.
\end{lem}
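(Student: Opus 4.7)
The proof is essentially a matter of tracking how the $K$-action intertwines with the various objects defining the quantization. The main observation is that the $K$-invariance of $h^p \in \Met^+(L^p)^K$ propagates to all the pieces entering \cref{BTquantdef}: by formula \cref{etxi*dnu} the induced anticanonical volume form $d\nu_h$ is $K$-invariant, hence so is the $L^2$-product $L^2(h^p)$ of \cref{L2}. In particular, the lift of the $K$-action to $\HH_p$ through the operators $U_\eta := e^{L_\eta}$ for $\eta \in \Lie K$ is unitary.

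The key equivariance property I will use is that the coherent state projector satisfies
\begin{equation*}
U_\eta\,\Pi_{h^p}(x)\,U_\eta^{-1} = \Pi_{h^p}(\phi_\eta^{-1}(x)) \quad \text{for all } \eta \in \Lie K,\ x\in X.
\end{equation*}
This follows exactly as in \cref{Pixi=xiPixi}: both sides are orthogonal projectors with respect to the $K$-invariant inner product $L^2(h^p)$, and by the characterization \cref{cohstateprojfla} they share the same kernel because $U_\eta s$ vanishes at $\phi_\eta^{-1}(x)$ if and only if $s$ vanishes at $x$. By the same reasoning, and using \cref{Rawndeffla}, the Rawnsley function $\rho_{h^p}$ is $K$-invariant.

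Given these facts, the invariance of $\sigma_{h^p}(A)$ for $A\in\cL(\HH_p)^K$ is a direct computation using the cyclic property of the trace:
\begin{equation*}
\sigma_{h^p}(A)(\phi_\eta(x)) = \Tr\bigl[U_\eta^{-1}\Pi_{h^p}(x)U_\eta\,A\bigr] = \Tr\bigl[\Pi_{h^p}(x)\,U_\eta A U_\eta^{-1}\bigr] = \sigma_{h^p}(A)(x),
\end{equation*}
where we used that $A$ commutes with $U_\eta$. Conversely, to show $U_\eta T_{h^p}(f) U_\eta^{-1} = T_{h^p}(f)$ for $f\in\cinf(X,\R)^K$, I will substitute the equivariance of $\Pi_{h^p}$ into \cref{BTmapfla} and perform the change of variable $y = \phi_\eta^{-1}(x)$, absorbing the Jacobian into the $K$-invariance of $d\nu_h$ and using the invariance of $f$ and $\rho_{h^p}$.

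There is no genuine obstacle; this lemma is a bookkeeping statement, and the only subtlety is to keep track of the direction of the equivariance of $\Pi_{h^p}$, which is dictated by the convention \cref{Lxidef} for $L_\eta$ so that $U_\eta s = \phi_\eta^* s$.
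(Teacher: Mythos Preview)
Your proof is correct and follows essentially the same approach as the paper's: both establish the equivariance $U_\eta\,\Pi_{h^p}(x)\,U_\eta^{-1}=\Pi_{h^p}(\phi_\eta^{-1}(x))$ (the paper via \cref{Pixi=xiPixi} combined with $\Pi_{e^{L_\eta}\s_p}=\Pi_{\s_p}$), deduce the $K$-invariance of $\rho_{h^p}$, and then verify the two restrictions by a trace computation and a change of variable respectively. The only cosmetic difference is that the paper phrases the argument through the Fubini--Study basis machinery of \cref{findimsec} and \cref{pullbackprop}, whereas you work directly with \cref{cohstateprojdef} and \cref{Rawndeffla}; the underlying computations are identical.
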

\begin{proof}
Following \cref{h=rhohsrmk}, let
$\s_p\in\BB(H^0(X,L^p))^T$ be orthonormal with respect to $L^2(h^p)$.
As $K$ preserves $L^2(h^p)$ and as the coherent state projector
of \cref{FSdef} only depends on $H_{\s_p}=L^2(h^p)$, we have
$\Pi_{e^{L_\eta}\s_p}=\Pi_{\s_p}=\Pi_{h^p}$ for all $\eta\in\Lie K$.
\cref{pullbackprop} then implies that $\sigma_{h^p}(A)\in\cinf(X,\R)^K$
for all $A\in\cL(\HH_p)^K$. This shows that the Berezin symbol
restricts to a map $\sigma_{h^p}:\cL(\HH_p)^K\to\cinf(X,\R)^K$ .

On the other hand, \cref{pullbackprop} shows that
$h^p_{\s_p}\in\Met^+(L^p)$ is $K$-invariant,
and formula \cref{h=rhohs}
then implies that $\rho_{h^p}\in\cinf(X,\R)^K$.
Thus for any $\eta\in\Lie K$ and $f\in\cinf(X,\R)^K$,
we can use formula \cref{Pixi=xiPixi} and
a change of variable with respect to
$\phi_{\xi}\in\Aut_0(X)$ to get
\begin{equation}
\begin{split}
T_{h^p}(f)&=\int_X f(\phi_{\eta}(x))
\,\Pi_{h^p}(\phi_{\eta}(x))\,
\rho_{h^p}(\phi_{\eta}(x))\,\phi_{\eta}^*\,d\nu_h(x)\\
&=\int_X f(x)\,e^{L_\eta}\Pi_{h^p}(x)e^{-L_\eta}\,
\rho_{h^p}(x)\,d\nu_h(x)=e^{L_\eta}T_{h^p}(f)e^{-L_\eta}\,,
\end{split}
\end{equation}
so that $T_{h^p}(f)\in\cL(\HH_p)^K$ for all $f\in\cinf(X,\R)^K$.
This concludes the proof.
\end{proof}

From now on, we fix $\xi\in\sqrt{-1}\Lie T$.
Recall the scalar product \cref{Trxi} on
$\cL(\HH_p)^K\simeq\Herm(\C^{n_p})^K$, and consider
the scalar product $L^2(h,\xi,p)$
defined on any $f,\,g\in\cinf(X,\R)^K$ by
\begin{equation}\label{L2hpxi}
\<f,g\>_{L^2(h,\xi,p)}:=
\int_X\,f\,g\,
\frac{\sigma_{h^p}(e^{L_\xi/p})\,\rho_{h^p}}{\Tr[e^{L_\xi/p}]}\,d\nu_h\,.
\end{equation}
The following result is the equivariant version of the duality \cref{dualprop} between Berezin symbol and Berezin-Toeplitz quantization.

\begin{prop}\label{dualpropT}
For any $A\in\cL(\HH_p)^K$ and $f\in\cinf(X,\R)^K$, we have
\begin{equation}\label{dualflaT}
\frac{\<T_{h^p}(f),A\>_{\xi}}{\Tr[e^{L_\xi/p}]}
=\<f,\phi_{\xi/2p}^*\sigma_{h^p}(A)\>_{L^2(h,\xi,p)}\,.
\end{equation}
\end{prop}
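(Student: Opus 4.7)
The strategy is to reduce \cref{dualflaT} to a pointwise symbol identity, and then to integrate using the duality formula \cref{dualfla} from \cref{dualprop}. Following \cref{h=rhohsrmk}, I first choose $\s_p\in\BB(H^0(X,L^p))^T$ orthonormal with respect to $L^2(h^p)$, so that $\Pi_{\s_p}=\Pi_{h^p}$ and the Fubini--Study symbol $\sigma_{\s_p}$ of \cref{sigmasdef} coincides with the Berezin symbol $\sigma_{h^p}$ of \cref{BTdequantfla} on operators commuting with the action of $T$.

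The key step is the pointwise identity
\begin{equation}\label{keypoint}
\sigma_{h^p}\bigl(e^{L_\xi/p}A\bigr)=\sigma_{h^p}\bigl(e^{L_\xi/p}\bigr)\cdot\phi_{\xi/2p}^*\,\sigma_{h^p}(A)\,.
\end{equation}
To establish \eqref{keypoint}, I first apply \cref{FSvar2} with $B=L_\xi/2p$ and with the operator taken to be $e^{-L_\xi/2p}Ae^{L_\xi/2p}$ (extended linearly in the argument of $\sigma$), which yields the identity
\begin{equation}
\sigma_{e^{L_\xi/2p}\s_p}\bigl(e^{-L_\xi/2p}Ae^{L_\xi/2p}\bigr)\,\sigma_{\s_p}\bigl(e^{L_\xi/p}\bigr)=\sigma_{\s_p}\bigl(Ae^{L_\xi/p}\bigr)\,.
\end{equation}
Next, the pullback formula of \cref{pullbackprop} applied to $\eta=\xi/2p$ gives
\begin{equation}
\phi_{\xi/2p}^*\sigma_{\s_p}(A)=\sigma_{e^{L_\xi/2p}\s_p}\bigl(e^{-L_\xi/2p}Ae^{L_\xi/2p}\bigr)\,,
\end{equation}
and combining these with cyclicity of the trace in the definition of $\sigma_{h^p}$ yields \eqref{keypoint}.

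Finally, I use the duality \cref{dualfla} from \cref{dualprop}, extended by linearity to the (possibly non-Hermitian) endomorphism $Ae^{L_\xi/p}\in\End(\HH_p)$:
\begin{equation}
\<T_{h^p}(f),A\>_{\xi}=\Tr\bigl[T_{h^p}(f)\,Ae^{L_\xi/p}\bigr]=\int_X f\,\sigma_{h^p}\bigl(Ae^{L_\xi/p}\bigr)\,\rho_{h^p}\,d\nu_h\,,
\end{equation}
where the first equality uses cyclicity of the trace. Substituting \eqref{keypoint} and recognizing the weight $\sigma_{h^p}(e^{L_\xi/p})\rho_{h^p}$ of the inner product $L^2(h,\xi,p)$ in \cref{L2hpxi}, I obtain
\begin{equation}
\<T_{h^p}(f),A\>_{\xi}=\Tr\bigl[e^{L_\xi/p}\bigr]\cdot\<f,\phi_{\xi/2p}^*\sigma_{h^p}(A)\>_{L^2(h,\xi,p)}\,,
\end{equation}
which is the desired identity after dividing by $\Tr[e^{L_\xi/p}]$.

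The only delicate point is verifying that \cref{FSvar2} and \cref{pullbackprop} apply to the non-Hermitian operators appearing above; since both identities are polynomial (or operator-algebraic) in the argument, this extension by complex linearity is automatic, so no serious obstacle arises. The main conceptual content is simply reading off how the conjugation by $e^{L_\xi/2p}$ translates into a geometric pullback by $\phi_{\xi/2p}$, which is precisely the equivariance built into the Fubini--Study construction.
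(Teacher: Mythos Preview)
Your proof is correct and follows essentially the same route as the paper: choose $\s_p$ orthonormal for $L^2(h^p)$, use \cref{dualprop} to write $\Tr[e^{L_\xi/p}AT_{h^p}(f)]=\int_X f\,\sigma_{h^p}(e^{L_\xi/p}A)\,\rho_{h^p}\,d\nu_h$, then combine \cref{FSvar2} and \cref{pullbackprop} to rewrite $\sigma_{h^p}(e^{L_\xi/p}A)=\sigma_{h^p}(e^{L_\xi/p})\cdot\phi_{\xi/2p}^*\sigma_{h^p}(A)$. One minor remark: what you call ``cyclicity of the trace in the definition of $\sigma_{h^p}$'' is really the fact that $A$ commutes with $e^{L_\xi/p}$ (since $A\in\cL(\HH_p)^K$ and $T\subset K$), so $Ae^{L_\xi/p}=e^{L_\xi/p}A$; this also makes your caution about extending \cref{FSvar2} and \cref{pullbackprop} to non-Hermitian arguments unnecessary, as $e^{-L_\xi/2p}Ae^{L_\xi/2p}=A$ remains Hermitian for the relevant inner product.
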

\begin{proof}
Following \cref{h=rhohsrmk}, let
$\s_p\in\BB(H^0(X,L^p))^T$ be orthonormal with respect to $L^2(h^p)$.
Then using \cref{dualprop,FSvar2,pullbackprop},
for any $A\in\cL(\HH_p)^K$ and $f\in\cinf(X,\R)^K$,
we get
\begin{equation}\label{duaxilfla}
\begin{split}
\Tr[e^{L_\xi/p}AT_{h^p}(f)]&=\int_X\,\sigma_{h^p}(e^{L_\xi/p}A)\,\rho_{h^p}
\,d\nu_{h}\\
&=\int_X\,\sigma_{e^{L_\xi/p}\s_p}(A)\,\sigma_{h^p}(e^{L_\xi/p})
\,\rho_{h^p}\,d\nu_{h}\\
&=\int_X\,\phi_{\xi/2p}^*\,\sigma_{h^p}(A)\,\sigma_{h^p}(e^{L_\xi/p})
\,\rho_{h^p}\,d\nu_{h}\,.
\end{split}
\end{equation}
This gives the result.
\end{proof}
We can now introduce the main tool of this Section.

\begin{defi}\label{quantchanrel}
For any $\xi\in\sqrt{-1}\Lie T$,
the \emph{Berezin-Toeplitz quantum channel relative to} $\xi$
is the linear map defined by
\begin{equation}
\begin{split}
\EE_{h^p}^\xi:\cL(\HH_p)^K&\longrightarrow\cL(\HH_p)^K\\
A~&\longmapsto~T_{h^p}\left(\phi_{\xi/2p}^*\,\sigma_{h^p}
\left(A\right)\right)\,.
\end{split}
\end{equation}
\end{defi}

\cref{dualpropT} the shows that the quantum channel relative to
$\xi\in\sqrt{-1}\Lie T$ is a positive and self-adjoint operator
acting on the real Hilbert space $\cL(\HH_p)^K$
endowed with the scalar product
$\<\cdot,\cdot\>_{\xi}$ defined by formula \cref{Trxi}.

\subsection{Berezin transform}
\label{Bertranssec}

The goal of this section is to extend the
results of \cite{IKPS19} on the Berezin transform of \cref{Bertransdef}
to the equivariant setting of \cref{BTsecT}.
For any $\xi\in\sqrt{-1}\Lie T$, we consider the linear isomorphisms
$\phi^*_\xi:\cinf(X,\R)^K\to\cinf(X,\R)^K$
by pullback with respect to $\phi_\xi\in T_\C$.
The following basic result draws
a link with the quantum channel of \cref{quantchanrel}.

\begin{prop}\label{specidprop}
For any $\xi\in\sqrt{-1}\Lie T$,
the linear map
\begin{equation}\label{BertransT}
\phi_{\xi/2p}^*\,\cB_{h^p}:\cinf(X,\R)^K\longrightarrow\cinf(X,\R)^K
\end{equation}
is a positive and self-adjoint operator with respect to
the scalar product $L^2(h,\xi,p)$ given by formula \cref{L2hpxi}.
\todo{c'est aussi un opérateur de Markov! Le dire, et écrire
$\Spec\subset[0,1]$}

Furthermore, the positive spectrums of $\phi_{\xi/2p}^*\,\cB_{h^p}$
and $\EE^{\xi}_{h^p}$ coincide.
\end{prop}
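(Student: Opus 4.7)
The plan is to read the duality formula \cref{dualflaT} of \cref{dualpropT} as identifying the Berezin-Toeplitz quantization map $T_{h^p}$ and the rescaled pulled-back Berezin symbol $\Tr[e^{L_\xi/p}]\cdot\phi_{\xi/2p}^*\,\sigma_{h^p}$ as mutual Hilbert space adjoints between $(\cinf(X,\R)^K,\<\cdot,\cdot\>_{L^2(h,\xi,p)})$ and $(\cL(\HH_p)^K,\<\cdot,\cdot\>_\xi)$. Both statements of the proposition will then follow formally.

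First I would verify that everything restricts properly to the $K$-invariant subspaces. By \cref{sigmainv}, $T_{h^p}$ sends $\cinf(X,\R)^K$ into $\cL(\HH_p)^K$ and $\sigma_{h^p}$ sends $\cL(\HH_p)^K$ into $\cinf(X,\R)^K$; since $\xi\in\sqrt{-1}\Lie T$ lies in the complexified Lie algebra of the centre of $K$, the diffeomorphism $\phi_{\xi/2p}\in T_\C$ commutes with the $K$-action, and its pullback preserves $\cinf(X,\R)^K$. Hence
\[
\phi_{\xi/2p}^*\,\cB_{h^p}=(\phi_{\xi/2p}^*\,\sigma_{h^p})\circ T_{h^p},\qquad \EE^\xi_{h^p}=T_{h^p}\circ(\phi_{\xi/2p}^*\,\sigma_{h^p})
\]
are well-defined endomorphisms of $\cinf(X,\R)^K$ and $\cL(\HH_p)^K$ respectively.

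Writing $T$ for the restriction of $T_{h^p}$ to $\cinf(X,\R)^K$ and $T^\dagger$ for its Hilbert space adjoint, the duality formula \cref{dualflaT} reads precisely $T^\dagger=\Tr[e^{L_\xi/p}]\,\phi_{\xi/2p}^*\,\sigma_{h^p}$, whence
\[
\phi_{\xi/2p}^*\,\cB_{h^p}=\tfrac{1}{\Tr[e^{L_\xi/p}]}\,T^\dagger T,\qquad \EE^\xi_{h^p}=\tfrac{1}{\Tr[e^{L_\xi/p}]}\,T\,T^\dagger.
\]
Since $\Tr[e^{L_\xi/p}]>0$ by \cref{Tuycor}, positivity and self-adjointness of $\phi_{\xi/2p}^*\,\cB_{h^p}$ are immediate from the general fact that $T^\dagger T$ is a positive self-adjoint operator on any inner product space.

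For the second claim, I would invoke the elementary fact that for linear maps $A\colon V\to W$ and $B\colon W\to V$ between inner product spaces, the compositions $AB$ and $BA$ share the same nonzero spectrum with identical algebraic multiplicities; equivalently, $T^\dagger T$ and $TT^\dagger$ have identical nonzero spectra. Applied here, this yields the coincidence of the positive spectra of $\phi_{\xi/2p}^*\,\cB_{h^p}$ and $\EE^\xi_{h^p}$. Since $\cL(\HH_p)^K$ is finite-dimensional and $T$ therefore has finite rank, no analytic subtleties arise. I do not anticipate any serious obstacle: once \cref{dualpropT} is in hand, the argument is essentially bookkeeping, and the value of the statement lies in repackaging the Berezin transform and the quantum channel as two faces of the same operator $T^\dagger T / \Tr[e^{L_\xi/p}]$.
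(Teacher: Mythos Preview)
Your proposal is correct and follows essentially the same approach as the paper: both deduce self-adjointness and positivity directly from the duality formula \cref{dualflaT}, and both reduce the spectrum statement to the elementary fact that the two compositions of $T_{h^p}$ with $\phi_{\xi/2p}^*\,\sigma_{h^p}$ share their nonzero eigenvalues. Your packaging via $T^\dagger T$ and $TT^\dagger$ is in fact cleaner than the paper's version, which writes out one direction of the eigenvector correspondence explicitly and leaves the other implicit.
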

\begin{proof}
The fact that $\phi_{\xi/2p}^*\,\cB_{h^p}$ is a self-adjoint and
positive operator on $\cinf(X,\R)^K$ with
respect to the scalar product $L^2(h,\xi,p)$
is a straightforward consequence of \cref{dualpropT}.
Furthermore, this operator factorizes through the finite dimensional space
$\cL(\HH_p)^K$, so that in particular, it is a compact operator with
smooth kernel.
This implies that $\Spec(\phi_{\xi/2p}^*\,\cB_{h^p})$ is discrete
and contains a finite number of non-vanishing eigenvalues counted with
multiplicity.

Let now $f\in\cinf(X,\R)^K$ be an eigenfunction of
$\phi_{\xi/2p}^*\,\cB_{h^p}$
with eigenvalue $\lambda\neq 0$. Then from \cref{Bertransdef} and
\cref{quantchanrel}, we have
\begin{equation}
\EE^{\xi}_{h^p}(T_{h^p}(f))=T_{h^p}\left(\phi_{\xi/2p}^*\,\cB_{h^p}(f)\right)=
\lambda\,T_{h^p}(f)\,,
\end{equation}
so that $T_{h^p}(f)\in\cL(\HH_p)^K$ is a non-vanishing
eigenvector of $\EE^{\xi}_{h^p}$
associated with the eigenvalue $\lambda$,
since by definition $\sigma_{e^{L_\xi}\s_p}\left(T_{h^p}(f)\right)=\lambda f\neq 0$.
This shows that the positive spectrums of $\phi_{\xi/2p}^*\,\cB_{h^p}$
and $\EE^{\xi}_{h^p}$ coincide. This concludes the proof.
\end{proof}

The following result is the analogue of \cref{KS}
for the equivariant Berezin transform
\cref{BertransT}, where the role of the Riemannian Laplacian
is played by the operator $\Delta_h^{(\xi)}$
of \cref{TZopdef}.

\begin{prop}\label{KSxi}
For any $\xi\in\sqrt{-1}\Lie T$ and $m\in\N$,
there exists $C_m>0$ such that for any
$f\in\cinf(X,\C)^K$ and all $p\in\N^*$, we have
\begin{equation}\label{KSxiexp}
\left|\phi_{\xi/2p}^*\,\cB_{h^p}f-f+p^{-1}\Delta_h^{(\xi)} f\right|_{\CC^m}\leq
\frac{C_m}{p^2}|f|_{\CC^{m+6}}\;.
\end{equation}
Furthermore, there exists $l\in\N$ such that
the constant $C_{m}>0$ can be chosen uniformly
for $\xi\in\sqrt{-1}\Lie T$ in a compact set
and $h\in\Met^+(L)$ in a bounded subset
in $\CC^l$-norm.
\end{prop}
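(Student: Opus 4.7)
The strategy is to combine the two-term asymptotic expansion of the Berezin transform from \cref{KS} with a Taylor expansion of the flow pullback $\phi_{\xi/2p}^*$ at the identity, and then to identify the resulting first-order coefficient as $-\Delta_h^{(\xi)}$ using the $K$-invariance of $f$ together with the hypothesis $\xi \in \sqrt{-1}\Lie T$.

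Concretely, I would first apply \cref{KS} at order $k=2$, giving
\[
\cB_{h^p}f = f + \frac{1}{p}D_h^{(1)}(f) + O(p^{-2}) \quad \text{in } \CC^m,
\]
with the $C^m$-norm of the remainder bounded by $C_m p^{-2}\,|f|_{\CC^{l_1}}$ for some $l_1 = l_1(m)$, uniformly in the relevant parameters. Independently, since $\phi_{t\xi}$ is the flow of $\xi$ we have $\frac{d}{dt}\phi_{t\xi}^*g = \phi_{t\xi}^*(L_\xi g)$, and a second-order Taylor expansion at $t=0$ with integral remainder yields
\[
\phi_{\xi/2p}^*g = g + \frac{1}{2p}L_\xi g + O(p^{-2}) \quad\text{in } \CC^m,
\]
with the remainder controlled by $|g|_{\CC^{m+2}}$, uniformly in $\xi$ in compact subsets of $\sqrt{-1}\Lie T$ and in $h$ in $\CC^l$-bounded subsets via standard Cauchy--Lipschitz estimates. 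Composing the two expansions and collecting orders produces
\[
\phi_{\xi/2p}^*\cB_{h^p}f - f = \frac{1}{p}\Bigl(D_h^{(1)}(f) + \tfrac{1}{2}L_\xi f\Bigr) + O(p^{-2})
\]
in $\CC^m$.

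The key algebraic input uses the hypothesis $f \in \cinf(X,\C)^K$ together with $J\xi \in \Lie T \subset \Lie K$: the $K$-invariance of $f$ gives $df.(J\xi) = 0$, and decomposing $J\xi = \sqrt{-1}\,\xi^{1,0} - \sqrt{-1}\,\xi^{0,1}$ then forces $df.\xi^{1,0} = df.\xi^{0,1}$, so that $L_\xi f = df.\xi = 2\,df.\xi^{1,0}$. Substituting $\tfrac{1}{2}L_\xi f = df.\xi^{1,0}$ into the previous display and recognising the combination of $D_h^{(1)}(f)$ and $df.\xi^{1,0}$ as $-\Delta_h^{(\xi)}f$ via the formula of \cref{TZopdef} gives exactly the statement of the proposition at the level of first-order asymptotics.

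The main technical obstacle is then purely the bookkeeping required to arrive at the explicit loss of six derivatives on the right-hand side. The $O(p^{-2})$ remainder from \cref{KS} at order $k=2$ costs a fixed number $l_1$ of derivatives of $f$; the quadratic Taylor remainder of the flow pullback costs two further derivatives when applied directly to $f$; and the cross term $p^{-1}(\phi_{\xi/2p}^* D_h^{(1)}(f) - D_h^{(1)}(f))$ costs two derivatives for the flow expansion applied to the second-order differential operator $D_h^{(1)}$, hence two extra derivatives of $f$. Summing these contributions and invoking the uniform smooth dependence of all constants from \cref{KS} and the smoothness of the flow $\phi_{\xi/2p}$ in $\xi$ and $h$ delivers the claimed estimate with $C_m\,p^{-2}|f|_{\CC^{m+6}}$ on the right-hand side, uniformly in $\xi \in \sqrt{-1}\Lie T$ in compact subsets and $h \in \Met^+(L)$ in $\CC^l$-bounded subsets.
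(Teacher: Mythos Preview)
Your proposal is correct and follows essentially the same route as the paper: apply \cref{KS} at order $k=2$, Taylor-expand the flow $\phi_{\xi/2p}^*$ to second order, and combine the two using $df.\xi = 2\,df.\xi^{1,0}$ for $K$-invariant $f$ to recognise $\Delta_h^{(\xi)}$. The paper's version is terser --- it simply records the two estimates \cref{KSexp1} and \cref{KSexp2} and concludes --- whereas you are more explicit about the derivative bookkeeping in the cross terms; the argument is the same.
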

\begin{proof}
Using \cref{KS}, we get for any $m\in\N$ a constant
$C_m>0$ such that for any
$f\in\cinf(X,\C)$ and all $p\in\N^*$, we have
\begin{equation}\label{KSexp1}
\left|\cB_{h^p}f-f+\frac{\Delta_h}{4\pi p} f\right|_{\CC^m}\leq
\frac{C_m}{p^2}|f|_{\CC^{m+4}}\;.
\end{equation}
On the other hand, considering the Taylor expansion of
$\phi_{\xi/2p}$ in $p^{-1}$, we get for any $m\in\N$ a constant
$C_m>0$ such that for any $f\in\cinf(X,\R)$ and all $p\in\N^*$,
\begin{equation}\label{KSexp2}
\left|\phi_{\xi/2p}^*\,f-f-\frac{df.\xi}{2p}\right|_{\CC^m}\leq
\frac{C_m}{p^2}|f|_{\CC^{m+2}}\;.
\end{equation}
By definition \cref{TZopdeffla} of the operator $\Delta_h^{(\xi)}$ and the
fact that $df.\xi=2df.\xi^{1,0}$ for $f\in\cinf(X,\C)^K$
and $\xi\in\sqrt{-1}\Lie T$, this gives the result.
\end{proof}

Let $\epsilon_0>0$ be smaller than the injectivity radius of $(X,g_h^{TX})$,
fix $x_0 \in X$, and let
$Z=(Z_1,...,Z_{2d})\in\R^{2d}$ with $|Z|<\epsilon_0$ be geodesic
normal coordinates around $x_0$, where $|\cdot|$ is the Euclidean
norm of $\R^{2d}$.
For any $K(\cdot,\cdot)\in\cinf(X\times X,\C)$, we write
$K_{x_0}(\cdot,\cdot)$ for its image in these coordinates, and
we write $|K_x|_{\CC^m(X)}$ for the local
$\CC^m$-norm with respect to $x \in X$.

Let $d^X$ be the Riemannian distance on $(X,g_h^{TX})$,
and recall as in the proof of \cref{specidprop}
that $\cB_{h^p}$
admits a smooth Schwartz kernel, for all $p\in\N^*$.
The main tool of this Section is the
following asymptotic expansion as $p\to+\infty$ of this
Schwartz kernel, which
follows from \cite[Th.\,4.18']{DLM06}.
Let $|\cdot|_{\CC^m}$ denote the local $\CC^m$ norm on local
sections of $L^p\boxtimes \left(L^p\right)^*$.
In the following statement, the estimate $O(p^{-\infty})$ means
$O(p^{-k})$ in the usual sense as $p\to+\infty$, for all $k\in\N$.

\begin{theorem}{\emph{\cite[Th.\,3.7]{IKPS19}}}
\label{BTasy}
For any $m\,,k\in\N$, $\epsilon>0$, there is
$C>0$ such that for all $p\in\N^*$ and
$x,y\in X$ satisfying $d^X(x,y)>\epsilon$, we have
\begin{equation}\label{thetafla}
|\cB_{h^p}(x,y)|_{\CC^m}\leq Cp^{-k}\;.
\end{equation}
For any $m, k\in\N$, there is
$N\in\N$, $C>0$ such that for any
$x_0\in X,\,|Z|,|Z'|<\epsilon_0$
and for all $p\in\N^*$, we have
\begin{multline}\label{BTexp}
\Big|p^{-d}\cB_{h^p,x_0}(Z,Z')
-\sum_{r=0}^{k-1} p^{-r/2}J_{r,x_0}(\sqrt{p}Z,\sqrt{p}Z')
\exp(-\pi p|Z-Z'|^2)\Big|_{\CC^m(X)}\\
\leq Cp^{-\frac{k}{2}}(1+\sqrt{p}|Z|+\sqrt{p}|Z'|)^N
\exp(-\sqrt{p}|Z-Z'|/C)+O(p^{-\infty})\;,
\end{multline}
where $J_{r,x_0}(Z,Z')$ are a family of polynomials
in $Z,Z'\in\R^{2n}$ of the same parity as $r\in\N$,
depending smoothly on $x_0\in X$. Furthermore,
for any $Z,Z'\in\R^{2n}$ we have
\begin{equation}\label{|J|0}
J_{0,x_0}(Z,Z')=1\quad\text{and}\quad J_{1,x_0}(Z,Z')=0\;.
\end{equation}
Finally, for any $m\in\N$, there exists $l\in\N$ such that
the estimate \cref{BTexp} is uniform for $h\in\Met^+(L)^K$ in a
bounded subset in $\CC^l$-norm.
\end{theorem}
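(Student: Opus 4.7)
The plan is to reduce the asymptotics of the Berezin transform kernel to those of the Bergman kernel, which are the content of \cite[Th.\,4.18']{DLM06}, and then to compute the leading coefficients by hand. First I would express the Schwartz kernel explicitly. By \cref{cohstateprojdef,Bertransdef,BTquantdef}, for $f\in\cinf(X,\R)$,
\begin{equation}
\cB_{h^p}(f)(x)=\int_X f(y)\,\Tr[\Pi_{h^p}(x)\Pi_{h^p}(y)]\,\rho_{h^p}(y)\,d\nu_h(y)\,,
\end{equation}
so the Schwartz kernel of $\cB_{h^p}$ with respect to $d\nu_h$ equals $\Tr[\Pi_{h^p}(x)\Pi_{h^p}(y)]\,\rho_{h^p}(y)$. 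Using that $\Pi_{h^p}(x)$ is the rank-$1$ orthogonal projector onto the line spanned by the coherent state at $x$, a standard computation (exactly as in \cite[\S\,4.1.9]{MM07} and \cref{rhodensofstate}) identifies this with
\begin{equation}\label{planBB}
\cB_{h^p}(x,y)=\frac{|B_p(x,y)|^2_{h^p}}{\rho_{h^p}(x)\,\rho_{h^p}(y)}\cdot\rho_{h^p}(y)=\frac{|B_p(x,y)|^2_{h^p}}{\rho_{h^p}(x)}\,,
\end{equation}
where $B_p(x,y)$ is the Bergman kernel of the orthogonal projection $L^2(X,L^p)\to\HH_p$ with respect to the $L^2$-product \cref{L2}.

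Given the identity \cref{planBB}, the off-diagonal estimate \cref{thetafla} is immediate: by \cite[Th.\,4.18']{DLM06} (see also \cite[Prop.\,4.1]{MM08b}) one has $|B_p(x,y)|_{h^p}=O(p^{-\infty})$ in any $\CC^m$-norm on compact subsets of $\{d^X(x,y)>\epsilon\}$, while \cref{Bergdiagexp} gives $\rho_{h^p}\geq c\,p^n$ from below for some $c>0$ on $X$ and all $p$ large. Dividing yields the claimed bound, with the uniformity in $h$ inherited from that in \cite{DLM06} and \cref{Bergdiagexp}.

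For the near-diagonal expansion \cref{BTexp}, I would insert the DLM expansion for $B_p$ in geodesic coordinates $Z,Z'$ around $x_0$:
\begin{equation}
p^{-n}B_{p,x_0}(Z,Z')=\sum_{r=0}^{k-1} p^{-r/2}F_{r,x_0}(\sqrt{p}Z,\sqrt{p}Z')\,e^{-\pi p|Z-Z'|^2}+\text{error}\,,
\end{equation}
where the $F_{r,x_0}(Z,Z')$ are polynomials of parity $r$ in $(Z,Z')$, and $F_{0,x_0}=1$, $F_{1,x_0}=0$ (the vanishing of the half-integer term is the anticanonical analogue of the statement used in \cite[\S\,3.7]{IKPS19}). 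Squaring this expansion produces a double sum $p^{-(r+s)/2}F_{r,x_0}\overline{F_{s,x_0}}\,e^{-2\pi p|Z-Z'|^2}$, whose coefficient of $p^{-r/2}$ is a polynomial of parity $r$ in $(\sqrt{p}Z,\sqrt{p}Z')$. After multiplying by the asymptotic expansion $\rho_{h^p}^{-1}(x)=p^{-n}(1+O(p^{-1}))$ from \cref{Bergdiagexp} and rewriting the Gaussian factor as $e^{-2\pi p|Z-Z'|^2}$, standard bookkeeping gives the expansion \cref{BTexp} with the required polynomial parity and Gaussian factor.

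The leading coefficients follow from the leading orders of both expansions: at order $p^0$, one gets $F_{0,x_0}\overline{F_{0,x_0}}/\rho_{h^p}\cdot p^n=1+O(p^{-1/2})$, so $J_{0,x_0}=1$; the $p^{-1/2}$ term would have to combine $F_{0,x_0}\overline{F_{1,x_0}}+F_{1,x_0}\overline{F_{0,x_0}}$ against the $p^{-1/2}$ coefficient of $\rho_{h^p}^{-1}$, but the former vanishes by $F_{1,x_0}=0$ while the latter vanishes since $\rho_{h^p}$ expands in integer powers of $p^{-1}$ (\cref{Bergdiagexp}), so $J_{1,x_0}=0$. The main obstacle is bookkeeping the remainder estimates in the polynomial-times-Gaussian form of \cref{BTexp} uniformly in $h$; this is handled exactly as in \cite[Th.\,3.7]{IKPS19} by transferring the uniformity in \cite[Th.\,4.18']{DLM06} through the elementary manipulations described above.
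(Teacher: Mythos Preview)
Your approach---writing $\cB_{h^p}(x,y)=|B_p(x,y)|^2_{h^p}/\rho_{h^p}(x)$ and feeding in the Dai--Liu--Ma expansion of $B_p$---is exactly the route indicated by the paper, which does not give its own proof but cites \cite[Th.\,3.7]{IKPS19} and notes that it follows from \cite[Th.\,4.18']{DLM06}. The off-diagonal part and the identification of $J_0,J_1$ are fine.

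There is, however, a genuine error in your Gaussian bookkeeping. The near-diagonal expansion of the Bergman kernel $B_p$ in a unitary frame does \emph{not} carry the real Gaussian $e^{-\pi p|Z-Z'|^2}$; it carries the complex exponential
\[
\mathcal{P}(\sqrt{p}Z,\sqrt{p}Z')\,,\qquad
\mathcal{P}(Z,Z')=\exp\Bigl(-\tfrac{\pi}{2}\bigl(|z|^2+|z'|^2-2\,z\cdot\bar z'\bigr)\Bigr)\,,
\]
(see \cite[(4.1.84)]{MM07}), whose modulus squared is $|\mathcal{P}(Z,Z')|^2=e^{-\pi|Z-Z'|^2}$. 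Thus taking $|B_p|^2_{h^p}$ produces exactly the factor $e^{-\pi p|Z-Z'|^2}$ appearing in \cref{BTexp}. In your version you start from a real $e^{-\pi p|Z-Z'|^2}$ in the $B_p$ expansion and then square, which gives $e^{-2\pi p|Z-Z'|^2}$ and contradicts the statement you are proving. Once you replace the real Gaussian by $\mathcal{P}$, the cross terms $F_r\overline{F_s}$ that you describe do appear with the correct exponential, and the remainder of your argument goes through unchanged.
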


\subsection{Spectral asymptotics}
\label{Specsec}

Fix $h\in\Met^+(L)^K$, and
write $\<\cdot,\cdot\>_{L^2}$ for the associated
$L^2$-Hermitian product on $\cinf(X,\C)^K$, defined
by formula \cref{L2theta} for $\xi=0$. We write
$\|\cdot\|_{L_2}$ for the associated norm, and
$L^2(X,\C)^K$ for the induced completion of $\cinf(X,\C)^K$.
In the notations of \cref{KRsec},
the Riemannian Laplacian $\Delta_h$
is then an elliptic self-adjoint operator
acting on $L^2(X,\C)^K$, and we write
\begin{equation}\label{evLB}
0=\lambda_0 < \lambda_1 \leq\lambda_2\leq \dots\leq\lambda_k\leq \dots\;,
\end{equation}
for the increasing sequence of its eigenvalues.
For all $j\in\N$, let $e_j\in\cinf(X,\C)^K$ be the normalized
eigenfunction associated with $\lambda_j$,
so that $\|e_j\|_{L_2}=1$ and
$\Delta e_j=\lambda_j e_j$.
For any $F:\R\fl\R$ bounded, we define the
bounded operator $F(\Delta)$
acting on $L_2(X,\C)^K$ by the formula
\begin{equation}\label{calculfct}
F(\Delta)f=\sum_{i=0}^{+\infty}
F(\lambda_j)\<f,e_j\>_{L_2}e_j\;.
\end{equation}
In particular we can consider its associated \emph{heat operator}
$e^{-t \Delta_h}$ acting on $\cinf(X,\C)^K$, for all $t\geq 0$.
For any $m\in 2\N$,
write $\|\cdot\|_{H^m}$ for the norm defined for all
$f\in\cinf(X,\C)^K$ by
\begin{equation}\label{ellest}
\|f\|_{H^m} := \|\Delta^{m/2}_hf\|_{L_2} + \|f\|_{L_2}\;.
\end{equation}
By the classical Sobolev
embedding theorem and the elliptic estimates for $\Delta_h$,
there exists $k\in\N$ such that for any $m\in 2\N$,
there is $C_m>0$ such that
\begin{equation}\label{Sobth}
|f|_{\CC^m}\leq C_m\|f\|_{H^{m+k}}\,,
\end{equation}
for all $f\in\cinf(X,\C)^K$. Furthermore, there exists $l\in\N$
such that the constant
$C_m>0$ can be chosen uniformly for $h\in\Met^+(L)^K$ in
a bounded subset in $\CC^l$-norm. By convention, we set
$\|f\|_{H^0}:=\|f\|_{L_2}$.

We then have the following result, which is the analogue of
\cite[Prop.\,3.9]{IKPS19} for the equivariant Berezin transform
\cref{BertransT}.

\begin{prop}\label{boundexp}
For any $m\in 2\N$ and any $\xi\in\sqrt{-1}\Lie T$,
there exists $C_m>0$ such that for any
$f\in\cinf(X,\C)^K$ and all $p\in\N^*$, we have
\begin{equation}\label{boundexpfla}
\left\|\left(e^{-\frac{\Delta_h}{4\pi p}}
-e^{\theta_h(\xi)/2}\phi_{\xi/2p}^*\,\cB_{h^p} e^{-\theta_h(\xi)/2}\right)f\right\|_{H^m}\leq
\frac{C_m}{p}\|f\|_{H^m}\;.
\end{equation}
Furthermore, there exists $l\in\N$ such that the constant $C_m>0$
can be chosen uniformly for $\xi\in\sqrt{-1}\Lie T$ in a compact set
and $h\in\Met^+(L)^K$ in a bounded subset
in $\CC^l$-norm.
\end{prop}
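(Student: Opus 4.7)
The plan is to adapt the strategy of \cite[Prop.\,3.9]{IKPS19}, which establishes the non-equivariant analogue
$\|(e^{-\Delta_h/(4\pi p)}-\cB_{h^p})f\|_{H^m}\leq C_m\,p^{-1}\|f\|_{H^m}$.
The principle there is that both the Berezin transform and the short-time heat semigroup have Schwartz kernels concentrated on the scale $1/\sqrt{p}$ around the diagonal, with the same leading Gaussian profile, so their difference is lower order. In the equivariant setting, the conjugation by $e^{\theta_h(\xi)/2}$ and the pullback $\phi_{\xi/2p}^*$ realize precisely the modification needed to bridge the gap between the limiting operators $\Delta_h^{(\xi)}$ (appearing in \cref{KSxi}) and $\Delta_h/(4\pi)$.

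The key algebraic input is the identification, on $\cinf(X,\C)^K$, of $\tilde\Delta:=e^{\theta_h(\xi)/2}\Delta_h^{(\xi)}e^{-\theta_h(\xi)/2}$ with $\Delta_h/(4\pi)$ modulo a zeroth-order multiplication operator. Both operators are essentially self-adjoint on $L^2(h)$ — the first by unitary equivalence with $\Delta_h^{(\xi)}$ of \cref{TZopdef} via the isometry $e^{\theta_h(\xi)/2}:L^2(h,\xi)\to L^2(h)$, the second manifestly — and share the same principal symbol. A direct local computation using the holomorphy potential equation \cref{holpot}, which identifies the $(1,0)$-part of the gradient $\nabla\theta_h(\xi)$ with $2\pi\xi^{1,0}$, together with the $K$-invariance constraint $L_{J\xi}f=0$, which yields $df.\xi^{1,0}=\tfrac{1}{2}L_\xi f$, verifies that the first-order contributions cancel exactly.

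Given this matching of generators, I would invoke \cref{KSxi} to expand $e^{\theta_h(\xi)/2}\phi_{\xi/2p}^*\cB_{h^p}e^{-\theta_h(\xi)/2}f = f - p^{-1}\tilde\Delta f + r_p(f)$ with a remainder $r_p$ controlled in $\CC^m$-norm by $C_m p^{-2}|f|_{\CC^{m+6}}$. In parallel, the spectral calculus \cref{calculfct} combined with Duhamel's formula gives $e^{-\Delta_h/(4\pi p)}f = f - p^{-1}\Delta_h f/(4\pi) + s_p(f)$, where $s_p$ benefits from the smoothing property $\|e^{-t\Delta_h}\|_{H^m\to H^{m+2}}\lesssim t^{-1}$ of the heat semigroup. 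Subtracting, the leading-order terms collapse via the algebraic identity into a bounded zeroth-order contribution of size $O(p^{-1})$, while $s_p-r_p$ is absorbed using the Sobolev embedding \cref{Sobth}, elliptic regularity for $\Delta_h$, and the boundedness of multiplication by $e^{\pm\theta_h(\xi)/2}$ on $H^m$.

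The main obstacle will be balancing the six-derivative loss of \cref{KSxi} against the smoothing gain of the heat kernel to secure the $p^{-1}$ rate in the $H^m$-operator norm; this is exactly the challenge resolved in \cite[Prop.\,3.9]{IKPS19}, and the equivariant modifications introduce no new analytic difficulty once the cancellation of first-order terms is in hand. Uniformity in $h\in\Met^+(L)^K$ of bounded $\CC^l$-norm and in $\xi\in\sqrt{-1}\Lie T$ in compact sets follows from the corresponding uniformity of \cref{KSxi} together with the smooth dependence of $\theta_h(\xi)$ on $(h,\xi)$ encoded by the Kostant formula \cref{Kostant}.
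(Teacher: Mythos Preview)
Your algebraic observation that the conjugated operator $\tilde\Delta=e^{\theta_h(\xi)/2}\Delta_h^{(\xi)}e^{-\theta_h(\xi)/2}$ agrees with $\Delta_h/(4\pi)$ up to a zeroth-order term is correct and morally explains \emph{why} the estimate holds. However, the analytic scheme you propose has a genuine gap: the remainder $r_p$ coming from \cref{KSxi} satisfies only $|r_p(f)|_{\CC^m}\leq C_m p^{-2}|f|_{\CC^{m+6}}$, and there is no mechanism to trade the extra factor of $p^{-1}$ for the six lost derivatives so as to obtain an $H^m\to H^m$ bound. The heat semigroup smooths, but the modified Berezin transform does not a priori, so you cannot ``borrow'' regularity from one side to compensate the other. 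Your claim that ``this is exactly the challenge resolved in \cite[Prop.\,3.9]{IKPS19}'' is a misreading: that proposition does \emph{not} use the symbolic expansion (the analogue of \cref{KS} or \cref{KSxi}) at all. It works directly with the off-diagonal Schwartz kernel expansion \cref{BTasy}, which carries no derivative loss.

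The paper's proof proceeds at the kernel level throughout. One writes the Schwartz kernel of $\til\cB_p:=e^{\theta_h(\xi)/2}\phi_{\xi/2p}^*\cB_{h^p}e^{-\theta_h(\xi)/2}$ explicitly as $e^{\theta_h(\xi)/2}(x)e^{-\theta_h(\xi)/2}(y)\,\cB_{h^p}(\phi_{\xi/2p}(x),y)$, plugs in the expansion \cref{BTexp} in geodesic normal coordinates, and Taylor-expands the extra factors. The point---which is the kernel-level counterpart of your first-order cancellation---is that the $p^{-1/2}$ contributions from $\phi_{\xi/2p}(Z)$ and from $e^{\theta_h(\xi)/2}(Z)e^{-\theta_h(\xi)/2}(Z')$ cancel exactly (via $2\pi\iota_{J\xi}\om_h=-d\theta_h(\xi)$), so $\til\cB_p$ still satisfies an expansion of the form \cref{BTexp} with $J_0=1$ and $J_1=0$. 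Comparing with the small-time heat kernel asymptotics then yields a \emph{pointwise} bound on the kernel of $R_p:=e^{-\Delta_h/(4\pi p)}-\til\cB_p$ of size $p^{-1}$ times a Gaussian envelope, and this is converted to the $H^m\to H^m$ estimate by the Schur-type argument of \cite[Prop.\,3.9]{IKPS19}. To salvage your proof, replace the appeal to \cref{KSxi} by this kernel computation.
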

\begin{proof}
In this proof, we use the notation $O(|W|^k)\in\R^{2n}$ or $\R$
in the usual sense as $W\in\R^{2n}$ goes to $0$,
for any $k\in\N$, and write $\<\cdot,\cdot\>$ for the Euclidean
product of $\R^{2n}$.

First note that the operator
\begin{equation}\label{tilBp}
\til{\cB}_p:=
e^{\theta_h(\xi)/2}\phi_{\xi/2p}^*\,\cB_{h^p}e^{-\theta_h(\xi)/2}
\end{equation}
has a smooth kernel given for all $x,\,y\in X$ by
\begin{equation}\label{Btilkern}
\til{\cB}_p(x,y)=e^{\theta_h(\xi)/2}(x)e^{-\theta_h(\xi)/2}(y)\,
\cB_{h^p}(\phi_{\xi/2p}(x),y)\,.
\end{equation}
Recall formula \cref{BTexp} for the asymptotic
expansion as $p\to+\infty$ of the Berezin transform $\cB_{h^p}$ in
geodesic coordinates $Z,\,Z'\in\R^{2n}$ with $|Z|,\,|Z'|<\epsilon_0$
around $x_0\in X$. For all $\xi\in\sqrt{-1}\Lie T$
in a compact set, considering the Taylor expansion of
$\phi_{\xi/2p}(Z)$ as
$p^{-1}\to 0$ and $|Z|\to 0$, we get
\begin{equation}\label{expphiZ}
\begin{split}
&\exp(-\pi p\,|\phi_{\xi/2p}(Z)-Z'|^2)\\
&=
\exp(-\pi p|Z-Z'|^2-\pi\<\xi_{x_0},Z-Z'\>+\<O(|Z|)+p^{-1}O(|Z|),Z-Z'\>)\\
&=\exp(-\pi p|Z-Z'|^2)\Big(1-\pi p^{-1/2}\<\xi_{x_0},\sqrt{p}(Z-Z')\>\\
&
\quad\quad\quad\quad\quad\quad\quad\quad\quad\quad+p^{-1}\<O(|\sqrt{p}Z|)+p^{-1}O(|\sqrt{p}Z|),\sqrt{p}(Z-Z')\>\Big)\,.
\end{split}
\end{equation}
On the other hand, recall from \cref{thetaxiprop}
that the imaginary part of the holomorphy potential equation
\cref{holpot} gives $2\pi\iota_{J\xi}\om_h=-d\theta_h(\xi)$.
Using the definition \cref{gTXintro} for $g_{h,x_0}^{TX}=:\<\cdot,\cdot\>$,
we get the following Taylor expansions as $Z\to 0$,
\begin{equation}\label{ethetaZZ'}
\begin{split}
&e^{\theta_h(\xi)/2}(Z)e^{-\theta_h(\xi)/2}(Z')\\
&=1+d\theta_h(\xi).(Z-Z')/2
+(O(|Z|)+O(|Z'|))^2\\
&=1+\pi\<\xi,Z-Z'\>+(O(|Z|)+O(|Z'|))^2\\
&=1+p^{-1/2}\pi\<\xi,\sqrt{p}(Z-Z')\>+p^{-1}
(O(|\sqrt{p}Z|)+O(|\sqrt{p}Z'|))^2\,.
\end{split}
\end{equation}
Multiplying the estimates \cref{expphiZ} and
\cref{ethetaZZ'} gives
\begin{multline}
e^{\theta_h(\xi)/2}(Z)e^{-\theta_h(\xi)/2}(Z')
\exp(-\pi p|\phi_{\xi/2p}(Z)-Z'|^2)\\
=
\exp(-\pi p|Z-Z'|^2)(1+p^{-1}O(|\sqrt{p}Z|)+p^{-1}O(|\sqrt{p}Z'|))\,,
\end{multline}
so that the coefficient of order $p^{-1/2}$ vanishes.
Plugging the expansion \cref{BTexp} in formula \cref{Btilkern}
for the Schwartz kernel of $\til{\cB}_p$, we see that
it also satisfies \cref{BTasy}, with first coefficients
satisfying \cref{|J|0}.

Setting now $R_p:=e^{\frac{\Delta}{4\pi p}}-\til{\cB}_p$,
and using the classical small-time asymptotic expansion
of the heat kernel,
as given for example in \cite[Th.\,2.29]{BGV04},
and by \cref{BTexp}, we see that
its Schwartz kernel $R_p(\cdot,\cdot)$ with respect to $dv_X$
satisfies $R_p(x,y)=O(p^{-\infty})$
for all $x,y\in X$ satisfying $d^X(x,y)>\epsilon_0$,
and we get for any $m\in\N$ a constant
$C>0$ and $N\in\N$ such that
\begin{multline}\label{B-eexp}
\left|R_{p,x_0}(Z,Z')\right|_{\CC^m(X)}\\
\leq Cp^{-1}(1+\sqrt{p}|Z|+\sqrt{p}|Z'|)^N
\exp(-\sqrt{p}|Z-Z'|/C)+O(p^{-\infty})\;.
\end{multline}
Following the proof of \cite[Prop.\,3.9]{IKPS19}, this readily
implies that for any $m\in 2\N$, there is a constant
$C_m>0$ such that for all $f\in\cinf(X,\R)$,
\begin{equation}
\left\|R_p(f)\right\|_{H^m}\leq
\frac{C_m}{p}\|f\|_{H^m}\;.
\end{equation}
The uniformity of $C_m>0$ with respect to $h\in\Met^+(L)^K$
comes from the uniformity of
the small-time asymptotic expansion of the heat kernel with respect to
the Riemannian metric together with the uniformity in \cref{BTasy}.
This gives the result.
\end{proof}

Let us now write $\|\cdot\|_{L^2(h,\xi)}$ and
$\|\cdot\|_{L^2(h,\xi,p)}$ for the norms associated with
the $L^2$-Heermitian products \cref{L2theta} and \cref{L2hpxi}
respectively.
Using \cref{Bergdiagexp}, \cref{KS} and \cref{Tuycor},
we get a constant $C>0$, uniform for $\xi\in\sqrt{-1}\Lie T$
in a compact set
and $h\in\Met^+(L)^K$ in a bounded subset
in $\CC^l$-norm for some $l\in\N$, such that
\begin{equation}\label{WpL2eq}
\left(1-\frac{C}{p}\right)\|\cdot\|_{L^2(h,\xi)}\leq
\|\cdot\|_{L^2(h,\xi,p)}
\leq \left(1+\frac{C}{p}\right)\|\cdot\|_{L^2(h,\xi)}\,.
\end{equation}
\cref{boundexp} implies the following key lemma, which is an analogue
of \cite[Lem.\,3.10]{IKPS19} for the equivariant Berezin transform
\cref{BertransT}.

\begin{lem}\label{hjrefprop}
For any fixed $L>0$,
consider sequences $\{f_p\in\cinf(X,\C)^K\}_{p\in\N^*}$ and
$\{\mu_p\in\Spec(\phi_{\xi/2p}^*\,\cB_{h^p})\}_{p\in\N^*}$,
such that $p|1-\mu_p|<L$ for all $p\in\N^*$ and
\begin{equation}\label{efBp}
\|f_p\|_{L^2(h,\xi,p)}=1\quad\text{and}\quad
\phi_{\xi/2p}^*\,\cB_{h^p}(f_p)=\mu_p f_p\;,
\end{equation}
Then
for all $m\in 2\N$, there exists $C_{L,m}>0$
such that for all $p\in\N^*$, we have
\begin{equation}\label{hjrefined}
\|f_p\|_{H^{m}}\leq C_{L,m}\;,
\end{equation}
not depending on $\xi\in\sqrt{-1}\Lie T$
in a compact set and $h\in\Met^+(L)^K$ in a bounded subset in $\CC^l$-norm
for some $l\in\N$.
\end{lem}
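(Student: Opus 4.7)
The plan is to reduce the eigenvalue equation for $\phi_{\xi/2p}^*\,\cB_{h^p}$ to one involving the heat operator $e^{-\Delta_h/4\pi p}$ via \cref{boundexp}, and then to extract the Sobolev regularity of $f_p$ from the spectral theory of $\Delta_h$ through a frequency-splitting and absorption argument.

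First I would set $g_p := e^{\theta_h(\xi)/2} f_p$, so that the eigenvalue equation \cref{efBp} becomes $\til{\cB}_p g_p = \mu_p g_p$, where $\til{\cB}_p := e^{\theta_h(\xi)/2}\,\phi_{\xi/2p}^*\,\cB_{h^p}\,e^{-\theta_h(\xi)/2}$ is the conjugated operator appearing in \cref{boundexpfla}. Since $\til{\cB}_p g_p = \mu_p g_p$, one obtains the key relation
\[
(e^{-\Delta_h/4\pi p} - \mu_p\, I)\,g_p = R_p\, g_p, \qquad R_p := e^{-\Delta_h/4\pi p} - \til{\cB}_p,
\]
where by \cref{boundexp} the remainder satisfies $\|R_p g_p\|_{H^m} \leq (C_m/p)\,\|g_p\|_{H^m}$ for each $m \in 2\N$, uniformly in $\xi$ in a compact set of $\sqrt{-1}\Lie T$ and in $h$ in a $\CC^l$-bounded subset. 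The base case $m = 0$ follows directly from the norm equivalence \cref{WpL2eq} together with the smoothness of $\theta_h(\xi)$, which gives a uniform bound $\|g_p\|_{L^2} \leq C$. The same multiplication bound on $e^{-\theta_h(\xi)/2}$ will then allow, at the end, to transfer any $H^m$-bound on $g_p$ to the desired bound on $f_p$.

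For the main estimate I would decompose $g_p = \sum_j a_j e_j$ in the eigenbasis of $\Delta_h$, so that the spectral coefficients satisfy $(e^{-\lambda_j/4\pi p} - \mu_p)\,a_j = b_j$, where $b_j$ are the spectral coefficients of $R_p g_p$. The hypothesis $p|1 - \mu_p| < L$ together with a Taylor expansion of $\lambda \mapsto e^{-\lambda/4\pi p}$ near $\lambda = 0$ yields, for any $K > 8\pi L$ and all $p$ large enough, the two-regime lower bounds
\[
|e^{-\lambda/4\pi p} - \mu_p| \geq \tfrac{K}{8\pi p}\ \text{ for } \lambda \in (K,\,4\pi p \ln 2], \qquad |e^{-\lambda/4\pi p} - \mu_p| \geq \tfrac{1}{3}\ \text{ for } \lambda > 4\pi p \ln 2.
\]
Denoting by $P_{>K}$ the spectral projector of $\Delta_h$ onto eigenvalues $\lambda_j > K$, these bounds combined with $\|R_p g_p\|_{H^m} \leq (C_m/p)\,\|g_p\|_{H^m}$ yield
\[
\|P_{>K}\, g_p\|_{H^m}^2 \leq \bigl(\tfrac{8\pi C_m}{K}\bigr)^2\, \|g_p\|_{H^m}^2,
\]
while the complementary low-frequency part is trivially controlled by $\|P_{\leq K}\, g_p\|_{H^m}^2 \leq (1 + K^m)\,\|g_p\|_{L^2}^2$, which is bounded by the base case.

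It remains to choose $K = K_{L,m}$ large enough so that $(8\pi C_m/K)^2 < 1/2$, which is permissible since $C_m$ is a fixed constant depending only on $m$; one then absorbs the high-frequency term to deduce $\|g_p\|_{H^m} \leq C_{L,m}$, and the desired bound on $\|f_p\|_{H^m}$ follows from the boundedness of multiplication by $e^{-\theta_h(\xi)/2}$ on $H^m$. The uniformity in $\xi$ and $h$ is inherited from the corresponding uniformity in \cref{boundexp} and \cref{WpL2eq}. The main obstacle will be the lower bound on $|e^{-\lambda/4\pi p} - \mu_p|$ in the intermediate regime $\lambda \in (K, 4\pi p \ln 2]$: the denominator there is only of order $1/p$, which is exactly the order of the error in \cref{boundexp}, so the absorption succeeds only because the lost factor $p/K$ when inverting the multiplier can be beaten by choosing $K$ large relative to $C_m$.
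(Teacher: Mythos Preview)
Your argument is correct and reaches the conclusion by a genuinely different route from the paper. Both proofs begin identically: conjugate by $e^{\theta_h(\xi)/2}$ to pass to $\til{\cB}_p$, establish the base case $m=0$ via \cref{WpL2eq}, and invoke \cref{boundexp} to compare $\til{\cB}_p$ with the heat operator. The divergence is in how the Sobolev regularity is then extracted. The paper argues by induction on $m\in 2\N$: from the identity
\[
p\bigl(e^{-\Delta_h/4\pi p}-\til{\cB}_p\bigr)\til{f}_p
= p(1-\mu_p)\til{f}_p - \Delta_h F(\Delta_h/p)\til{f}_p,
\qquad F(s)=4\pi\,\frac{1-e^{-s/4\pi}}{s},
\]
it first bounds $\|F(\Delta_h/p)\til{f}_p\|_{H^m}$ by $C_m\|\til{f}_p\|_{H^{m-2}}$, and then exploits that $\inf_{s>0}\{F(s)+\mu_p-e^{-s/4\pi}\}>0$ for $\mu_p$ near $1$ to obtain a reverse inequality $\|F(\Delta_h/p)\til{f}_p\|_{H^m}\geq\epsilon_m\|\til{f}_p\|_{H^m}$, gaining two derivatives per step. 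Your spectral frequency-splitting bypasses both the auxiliary multiplier $F$ and the induction, at the cost of tracking the absorption constant more carefully; note that your intermediate-regime lower bound should read $(K-8\pi L)/(8\pi p)$ rather than $K/(8\pi p)$, and the very-high-frequency contribution adds an $O(p^{-2})$ term you suppressed, but neither affects the absorption for $K$ large and $p$ large. The paper's bootstrapping is slightly more modular if one wants to iterate, while your argument is more direct for fixed $m$.
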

\begin{proof}

%
Let
$\{f_p\in\cinf(X,\C)^K\}_{p\in\N^*}$ be a sequence satisfying
\cref{efBp} as above, and for all $p\in\N^*$, set
\begin{equation}\label{tildef}
\|\cdot\|_p:=\|e^{-\theta_h(\xi)/2}\cdot\|_{L^2(h,\xi,p)}\quad
\text{and}\quad\til{f}_p:=e^{\theta_h(\xi)/2}f_p\,.
\end{equation}
In particular, we have $\|\til{f}_p\|_{p}=1$ and
$\til\cB_{p}(\til{f_p})=\mu_p \til{f}_p$ for all $p\in\N^*$,
for the operator $\til{\cB_p}$ defined by formula \cref{tilBp},
and the estimate \cref{WpL2eq} implies the estimate \cref{hjrefined} for $m=0$.

By induction on $m\in 2\N$,
assume now that \cref{hjrefined}
is satisfied for $m-2$.
Write
\begin{equation}\label{deltFest}
\begin{split}
p(e^{-\frac{\Delta_h}{4\pi p}}-\til\cB_p)\til{f}_p&=p(1-\mu_p)
\til{f}_p-
p(\id-e^{-\frac{\Delta_h}{4\pi p}})\til{f}_p\\
&=p(1-\mu_p)\til{f}_p-\Delta_h F(\Delta_h/p)\til{f}_p\;,
\end{split}
\end{equation}
where the bounded operator $F(\Delta_h/p)$ acting on $L_2(X,\C)^K$
is defined as in \cref{calculfct} for the continuous function
$F:\R\fl\R$ given for any $s\in\R^*$ by
$F(s)=4\pi(1-e^{-s/4\pi})/s$.
As $|p(1-\mu_p)|<L$ for all $p\in\N^*$, by \cref{boundexp} and
formula \cref{ellest} for $\|\cdot\|_{H^{m}}$,
this gives a constant $C_m>0$, uniform
$h\in\Met^+(L)^K$ in a bounded subset in $\CC^l$-norm
for some $l\in\N$, such that
\begin{equation}\label{sobfest}
\|F(\Delta_h/p)\til{f}_p\|_{H^{m}}\leq C_m\|\til{f}_p\|_{H^{m-2}}\;.
\end{equation}
On the other hand, note that by hypothesis, we have $\mu_p\fl 1$
as $p\fl+\infty$. Using \cref{boundexp} again, we then get
$\epsilon_m>0$ and $p_m\in\N^*$, uniform
$h\in\Met^+(L)^K$ in a bounded subset in $\CC^l$-norm,
such that for all $p>p_m$,
\begin{equation}\label{hjdeltFest}
\begin{split}
\|F(\Delta_h/p)\til{f}_p\|_{H^{2m}}&\geq
\|F(\Delta_h/p)\til{f}_p+(\til\cB_p-e^{-\frac{\Delta_h}{4\pi p}})\til{f}_p\|_{H^{m}}
-\|(\til\cB_p-e^{-\frac{\Delta_h}{4\pi p}})\til{f}_p\|_{H^{m}}\\
&\geq\inf_{s>0}\,\{F(s)+\mu_p-e^{-s/4\pi}\}\,\|\til{f}_p\|_{H^{m}}
-C_m p^{-1}\|\til{f}_p\|_{H^{m}}\\
&\geq\epsilon_m\|\til{f}_p\|_{H^{m}}\;.
\end{split}
\end{equation}
Hence by \cref{sobfest}, we get a constant $C_{L,m}>0$,
uniform in $h\in\Met^+(L)^K$ in a bounded subset in $\CC^l$-norm,
such that for all $p\in\N^*$, we have
$\|\til{f}_p\|_{H^{m}}\leq C_{L,m}$, which gives
\cref{hjrefined} by \cref{tildef}.
\end{proof}

Using \cref{TZopdef}, write
\begin{equation}\label{lambdajhxi}
0=\lambda_0(h,\xi)<\lambda_1(h,\xi)\leq\dots\leq\lambda_k(h,\xi)
\leq \dots
\end{equation}
for the increasing sequence of eigenvalues of $\Delta_h^{(\xi)}$,
and using \cref{specidprop}, write
\begin{equation}\label{gammajhpxip}
\gamma_0(h^p,\xi)\geq\gamma_1(h^p,\xi)\geq\cdots\geq\gamma_1(h^p,\xi)
\geq\cdots\geq 0
\end{equation}
for the decreasing sequence of eigenvalues
of $\phi_{\xi/2p}^*\,\cB_{h^p}$.
The following result is the analogue of \cite[Th.\,3.1]{IKPS19}
for the equivariant Berezin transform \cref{BertransT}, and
is the analytic basis of our proof of \cref{mainth}.

\begin{theorem}\label{Bpgap}
For every integer $k\in\N$, there exists a constant $C_k>0$
such that for any $p\in\N^*$,
\begin{equation}\label{eq-LB}
\big|1-\gamma_{k}(h^p,\xi)-p^{-1}\lambda_k(h,\xi)\big|\leq C_kp^{-2}\;.
\end{equation}
Moreover, there exists $l\in\N$ such that
the constant $C_k>0$ can be chosen uniformly for
$\xi\in\sqrt{-1}\Lie T$ in a compact set and
$h\in\Met^+(L)^K$ in a bounded subset in $\CC^l$-norm.
\end{theorem}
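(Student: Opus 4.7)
The plan is to adapt the strategy of \cite[Th.\,3.1]{IKPS19} to the equivariant setting. By \cref{specidprop} the operator $\phi_{\xi/2p}^*\,\cB_{h^p}$ is self-adjoint and positive on $\cinf(X,\C)^K$ with respect to the scalar product $L^2(h,\xi,p)$ defined in \cref{L2hpxi}, while by \cref{TZopdef} the operator $\Delta_h^{(\xi)}$ is self-adjoint and positive on $\cinf(X,\C)^K$ with respect to $L^2(h,\xi)$. \cref{KSxi} shows that $p(1-\phi_{\xi/2p}^*\,\cB_{h^p})$ converges to $\Delta_h^{(\xi)}$ on smooth functions with a quantitative $O(p^{-1})$ error controlled by a fixed Sobolev norm, and the eigenvalue comparison will be carried out via the min-max principle applied to each operator with respect to its natural scalar product.

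For the upper bound, I would take the $(k{+}1)$-dimensional test subspace $V_{k} \subset \cinf(X,\C)^K$ spanned by the first $k{+}1$ eigenfunctions of $\Delta_h^{(\xi)}$. Applying \cref{KSxi} to any $f \in V_k$ and pairing against $f$ with the scalar product $L^2(h,\xi,p)$, together with the norm equivalence \cref{WpL2eq}, gives
\[
\bigl\langle(1-\phi_{\xi/2p}^*\,\cB_{h^p})\,f,\,f\bigr\rangle_{L^2(h,\xi,p)}
\leq \bigl(p^{-1}\lambda_k(h,\xi) + C\,p^{-2}\bigr)\|f\|_{L^2(h,\xi,p)}^2,
\]
so that the min-max principle applied to the positive self-adjoint operator $1-\phi_{\xi/2p}^*\,\cB_{h^p}$ yields $1-\gamma_k(h^p,\xi) \leq p^{-1}\lambda_k(h,\xi) + C_k\,p^{-2}$.

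For the lower bound I would proceed by induction on $k$. Let $\{f_{j,p}\}_{j=0}^{k}$ be an $L^2(h,\xi,p)$-orthonormal family of eigenfunctions of $\phi_{\xi/2p}^*\,\cB_{h^p}$ with eigenvalues $\gamma_0(h^p,\xi), \dots, \gamma_k(h^p,\xi)$. The upper bound just established places these eigenvalues in the regime where \cref{hjrefprop} applies, yielding uniform Sobolev bounds $\|f_{j,p}\|_{H^m} \leq C_{k,m}$ for every $m \in 2\N$. A diagonal Rellich--Kondrachov extraction then produces a subsequence along which each $f_{j,p}$ converges in every $\CC^m$-norm to a limit $f_j^\infty \in \cinf(X,\C)^K$, and the $f_j^\infty$ are $L^2(h,\xi)$-orthonormal by \cref{WpL2eq}. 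Applying \cref{KSxi} to $f_{j,p}$, multiplying by $p$, and passing to the limit then identifies $f_j^\infty$ as an eigenfunction of $\Delta_h^{(\xi)}$ with eigenvalue $\mu_j = \lim_{p\to+\infty} p\bigl(1-\gamma_j(h^p,\xi)\bigr)$. Combined with the upper bound $\mu_j \leq \lambda_j(h,\xi)$ and the orthogonality of the $f_j^\infty$, this forces $\mu_j = \lambda_j(h,\xi)$ for all $j = 0,\dots,k$.

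The main obstacle will be sharpening this qualitative match to the claimed rate $O(p^{-2})$, since the compactness argument above only produces $o(p^{-1})$ convergence. To this end I would re-expand $f_{k,p}$ in the spectral basis of $\Delta_h^{(\xi)}$, with coefficients controlled via the uniform $H^m$-bounds for $m$ large enough that the Sobolev embedding \cref{Sobth} absorbs the $\CC^{m+6}$-norm appearing in the error term of \cref{KSxi}. Substituting back into the eigenvalue equation $(1-\gamma_k(h^p,\xi))f_{k,p} = (1-\phi_{\xi/2p}^*\,\cB_{h^p})f_{k,p}$ and performing a second-order perturbation analysis as in \cite[\S\,3]{IKPS19} then upgrades the rate to $|1-\gamma_k(h^p,\xi) - p^{-1}\lambda_k(h,\xi)| \leq C_k p^{-2}$. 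Uniformity of $C_k$ for $\xi$ in a compact subset of $\sqrt{-1}\Lie T$ and for $h \in \Met^+(L)^K$ bounded in $\CC^l$-norm is inherited directly from the analogous uniformity built into \cref{KSxi,hjrefprop,WpL2eq}.
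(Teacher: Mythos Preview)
Your proposal is correct and uses the same ingredients as the paper (\cref{KSxi}, \cref{hjrefprop}, Rellich compactness), with only organizational differences. For the upper bound you invoke the min-max principle on the test space $V_k$, whereas the paper instead feeds the eigenfunctions $e_j$ of $\Delta_h^{(\xi)}$ into \cref{KSxi} to produce quasi-modes for $p(1-\phi^*_{\xi/2p}\cB_{h^p})$ directly; both yield the same inequality. For the lower bound, your detour through a qualitative subsequential limit followed by a ``second-order perturbation analysis'' is more work than necessary: once \cref{hjrefprop} gives the uniform Sobolev bound on $f_{k,p}$, applying \cref{KSxi} to $f_{k,p}$ immediately yields the quasi-mode estimate $\|(\Delta_h^{(\xi)}-p(1-\gamma_k))f_{k,p}\|_{L^2(h,\xi)}\leq Cp^{-1}$, hence $\mathrm{dist}\big(p(1-\gamma_k),\Spec\Delta_h^{(\xi)}\big)\leq Cp^{-1}$ without any spectral expansion. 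The compactness step is then used only to match multiplicities, i.e.\ to show that the cluster of eigenvalues of $p(1-\phi^*_{\xi/2p}\cB_{h^p})$ near $\lambda_j(h,\xi)$ has exactly $m_j$ elements, so that the ordered eigenvalues pair off correctly. This is precisely what the paper does, and what your step~2 accomplishes as well; your step~3 can therefore be replaced by the one-line quasi-mode bound above.
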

\begin{proof}
By \cref{KSxi} and by the Sobolev estimate \cref{Sobth},
there is $m\in 2\N,\, l\in\N$ and a constant $C>0$
such that for any $f\in\cinf(X,\C)^K$,
\begin{equation}\label{KSSob}
\left\|p(1-\phi^*_{\xi/2p}\cB_{h^p})f-\Delta_h^{(\xi)}
f\right\|_{L_2(h,\xi)}
\leq Cp^{-1}\|f\|_{H^{m}}\;,
\end{equation}
and the estimate \cref{WpL2eq} shows that
equation \cref{KSSob} also holds
in the norm $\|\cdot\|_{L_2(h,\xi,p)}$.
Let now $j\in\N$ be fixed, and let $e_j\in\cinf(X,\C)$ satisfy
$\Delta_h^{(\xi)} e_j=\lambda_j(h,\xi) e_j$ and $\|e_j\|_{L_2}=1$.
We then get
$C>0$ such that for all $p\in\N^*$,
\begin{equation}\label{estevdelt}
\left\|p(1-\phi^*_{\xi/2p}\cB_p)e_j-\lambda_j(h,\xi)e_j\right\|
_{L_2(h,\xi,p)}\leq C p^{-1}\;.
\end{equation}
Let $m_j\in\N$ be the multiplicity of $\lambda_j(h,\xi)$ as an
eigenvalue of $\Delta_h^{(\xi)}$.
Then the the estimate \cref{estevdelt} for all
eigenfunctions of $\Delta_h^{(\xi)}$ associated with
$\lambda_j(h,\xi)$
gives a constant $C_j>0$ such that for all $p\in\N^*$,
\begin{equation}\label{spec>}
\#\left(\Spec\big(p(1-\phi^*_{\xi/2p}\cB_p)\big)\cap\left[\lambda_j(h,\xi)-C_jp^{-1},\lambda_j(h,\xi)+C_jp^{-1}\right]\right)
\geq m_j\;.
\end{equation}
Conversely, fix $L>0$ and let $\{f_p\}_{p\in\N^*}$ be
the sequence of normalized eigenfunctions considered
in \cref{hjrefprop}. Then by
\cref{KSSob}, we get $C>0$ such that
\begin{equation}\label{specinv>}
\left\|p(1-\mu_p)f_p-\Delta_h^{(\xi)}
f_p\right\|_{L_2(h,\xi)}\leq Cp^{-1}\;.
\end{equation}
In particular, we get that
\begin{equation}\label{eq-spec-est}
\textup{dist}\left(p(1-\mu_p),\Spec\,\Delta_h^{(\xi)}
\right)\leq Cp^{-1}\;,
\end{equation}
showing that all eigenvalues of
$p(1-\phi^*_{\xi/2p}\cB_p)$ bounded by some
$L>0$ have to be included in the left hand side of \cref{spec>}.

Let us finally show that \cref{spec>} is an equality
for $p\in\N^*$ big enough. Let $l\in\N$ with $l\geq m_j$
be such that for all $p\in\N^*$, there exists
an orthonormal family $\{f_{k,p}\}_{1\leq k\leq l}$
of eigenfunctions of $\phi^*_{\xi/2p}\cB_p$
for $\|\cdot\|_{l^2(h,\xi,p)}$ with associated
eigenvalues $\{\mu_{k,p}\in\R\}_{1\leq k\leq l}$
satisfying
\begin{equation}
p(1-\mu_{k,p})\in[\lambda_j(h,\xi)-Cp^{-1},\lambda_j(h,\xi)+Cp^{-1}]\;,~~
\text{for all}~~1\leq k\leq l\;.
\end{equation}
By \cref{hjrefprop} and \cref{WpL2eq},
the compact inclusion of the Sobolev space $H^4$ in $H^{2}$
gives
a subsequence of $\{f_{k,p}\}_{p\in\N^*}$ converging
to a function $f_k$ in $H^{2}$-norm,
for all $1\leq k\leq l$.
In particular, using \cref{WpL2eq} again,
the family $\{f_k\}_{1\leq k\leq l}$
is orthonormal in $L_2(h,\xi)$
and satisfies $\Delta_h^{(\xi)} f_k=\lambda_j(h,\xi) f_k$ for all $1\leq k\leq l$ by
\cref{specinv>}. By definition of the multiplicity $m_j\in\N$
of $\lambda_j(h,\xi)$, this forces $l=m_j$. We thus get
\begin{equation}\label{spec=}
\#\left(\Spec\big(p(1-\phi^*_{\xi/2p}\cB_p)\big)\cap
\left[\lambda_j(h,\xi)-Cp^{-1},\lambda_j(h,\xi)+Cp^{-1}\right]\right)
=m_j\;.
\end{equation}
By the uniformity of the constants in
\cref{boundexp}, \cref{hjrefprop}, \cref{Sobth}, \cref{WpL2eq},
and by the smooth dependance 
of the eigenfunctions of $\phi_{\xi/2p}^*\,\cB_{h^p}$ and
$\Delta_h^{(\xi)}$ with
respect to the initial data, we get the result.
\end{proof}

\section{Proof of the main Theorem}
\label{proofsec}

In this Section, we use the preliminary results of all previous Sections
to establish \cref{mainth}. In \cref{approxbalsec}, we will
show how to use the results of \cref{KRsec} on the differential operator
of Tian and Zhu and the results of \cref{quantFutsec} on the quantized
Futaki invariants \cref{quantFutdefintro} to construct approximately
balanced metrics from a given Kähler-Ricci soliton. The heart of the proof
is in \cref{existencesec}, where we use these
approximately balanced metrics to
establish existence and convergence in \cref{mainth},
applying the tools of \cref{BTsecT} on the moment map
picture of \cref{relmomsec}.
Finally, we establish uniqueness in \cref{uniquesec} using
\cref{Futpropintro} and an energy
functional for the relative moment map .

Throughout the whole section, we will assume given a
positive Hermitian metric $h_\infty\in\Met^+(L)$ such that
$\om_{h_\infty}\in\Om^2(X,\R)$ is a Kähler-Ricci soliton
with respect to $\xi_\infty\in\Lie\Aut(X)$ in the sense of \cref{KRdef}.
We write $K\subset\Aut_0(X)$ for the identity component of
the subgroup of isometries
of $(X,g_{h_\infty}^{TX})$, and $T\subset K$ for the identity component
of its center, so that $h_\infty\in\Met^+(L)^T$ and
$\xi_\infty\in\sqrt{-1}\Lie T$.

\subsection{Approximately balanced metrics}
\label{approxbalsec}

The following semi-classical
estimate on the Berezin symbol is inspired
by \cite[Lem.\,24,\,(35)-(35')]{Don01}. We provide a proof
that does not make use of any moment map construction.
Write $\|\cdot\|_{tr}$ for the trace norm
on $\End(H^0(X,L^p))$.

\begin{lem}\label{sigmaestlem}
For any any $m\in\N$ and $h\in\Met^+(L)^T$,
there exists a constant $C_m>0$ such that
for any $A\in\cL(\HH_p)^T$ and all $p\in\N^*$, we have
\begin{equation}
|\sigma_{h^p}(A)|_{\CC^m}\leq C_m\,p^{n+\frac{m}{2}}\,\|A\|_{tr}\,.
\end{equation}
Furthermore, there exists $l\in\N$ such that
the constant $C_m>0$ can be chosen uniformly for
$h\in\Met^+(L)^T$ in a bounded subset in $\CC^l$-norm.
\end{lem}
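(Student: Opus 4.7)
The plan is to reduce to the rank-one case by spectrally decomposing the Hermitian operator $A$, and then to invoke pointwise Bergman kernel estimates for holomorphic sections. Let $(v_l)_{l=1}^{n_p}$ be an orthonormal basis of $\HH_p$ consisting of eigenvectors of $A$ with real eigenvalues $a_l$, so that $\|A\|_{tr}=\sum_l|a_l|$. Applying $\sigma_{h^p}$ to the corresponding spectral decomposition of $A$ into rank-one orthogonal projectors and using $\<\Pi_{h^p}(x)v_l,v_l\>_{L^2(h^p)}=|v_l(x)|^2_{h^p}/\rho_{h^p}(x)$ from the defining formula \cref{Rawndeffla} of the Rawnsley function, we obtain
\begin{equation*}
\sigma_{h^p}(A)(x)=\sum_l a_l\,\frac{|v_l(x)|^2_{h^p}}{\rho_{h^p}(x)}\,.
\end{equation*}
By the triangle inequality, it therefore suffices to show the uniform bound $\bigl|\,|v|^2_{h^p}/\rho_{h^p}\,\bigr|_{\CC^m}\leq C_m\,p^{n+m/2}$ for all holomorphic $v\in\HH_p$ with $\|v\|_{L^2(h^p)}=1$.

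The key technical input is the pointwise derivative estimate
\begin{equation*}
\bigl|\nabla^{(j)h^p}v(x)\bigr|_{h^p}\leq C_j\,p^{(n+j)/2},\quad 0\leq j\leq m,
\end{equation*}
for any such $v$, where $\nabla^{h^p}$ is the Chern connection of $(L^p,h^p)$. I would derive this from the reproducing formula $v(x)=\int_X B_p(x,y)\,v(y)\,d\nu_h(y)$ for the Bergman kernel $B_p$, by differentiating in $x$ and applying Cauchy--Schwarz together with the fact that $\int_X|\nabla^{(j)}_xB_p(x,y)|^2\,d\nu_h(y)\leq C_j\,p^{n+j}$; this latter bound is read off the diagonal and off-diagonal Bergman kernel expansions of \cref{Bergdiagexp,BTasy}. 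Applying the Leibniz rule to $|v|^2_{h^p}=\<v,v\>_{h^p}$ then yields $\bigl|\,|v|^2_{h^p}\,\bigr|_{\CC^m}\leq C_m\,p^{n+m/2}$.

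Finally, \cref{Bergdiagexp} provides both the uniform lower bound $\rho_{h^p}\geq c\,p^n$ and the upper bound $|\rho_{h^p}|_{\CC^m}\leq C_m\,p^n$, whence $|1/\rho_{h^p}|_{\CC^m}\leq C_m\,p^{-n}$. Combining these estimates via the Leibniz rule then yields the desired bound (in fact with the stronger exponent $p^{m/2}$ in place of $p^{n+m/2}$). Uniformity of the constants for $h\in\Met^+(L)^T$ in a $\CC^l$-bounded set is a direct consequence of the corresponding uniformity statements in \cref{Bergdiagexp,BTasy}. I do not anticipate a serious obstacle here, since the pointwise Bergman kernel derivative estimates are standard once the off-diagonal expansion is in hand.
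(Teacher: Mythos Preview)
Your approach is correct and follows the same skeleton as the paper's proof: express $\sigma_{h^p}(A)$ as $\rho_{h^p}^{-1}$ times a bilinear expression in holomorphic sections, bound $|\rho_{h^p}^{-1}|_{\CC^m}\leq C_m p^{-n}$ via \cref{Bergdiagexp}, and bound the section part using the pointwise estimate $|\nabla^{(j)h^p}s|_{h^p}\leq C_j\,p^{(n+j)/2}\|s\|_{L^2(h^p)}$. The paper obtains this last estimate by quoting a Sobolev inequality for holomorphic sections \cite[Lem.\,2]{MM15}, whereas you propose to extract it from the Bergman reproducing formula and near-diagonal kernel asymptotics; both routes are standard, though note that \cref{BTasy} as stated concerns the Berezin-transform kernel rather than the Bergman kernel itself, so the reference you want is really the underlying expansion of \cite{DLM06}. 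The genuine difference is that you first diagonalize $A$ and reduce to rank-one projectors, so that only a single sum $\sum_l|a_l|=\|A\|_{tr}$ appears, while the paper keeps a full matrix expression $\sum_{j,k}A_{jk}\langle s_k,s_j\rangle_{h^p}$ and applies Cauchy--Schwarz, at the cost of an extra factor $n_p\sim p^n$. Your observation that this yields the sharper bound $C_m\,p^{m/2}\|A\|_{tr}$ is correct (and is saturated, up to constants, by coherent-state projectors $A=\Pi_{h^p}(x_0)$), so your argument is in fact a modest improvement over the paper's.
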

\begin{proof}
Using the Sobolev embedding
theorem as in \cite[Lem.\,2]{MM15}, we get for any $m\in\N$
and $h\in\Met^+(L^p)$ a constant
$C_m>0$ such that for all $p\in\N^*$ and any holomorphic section
$s\in H^0(X,L^p)$, we have
\begin{equation}\label{Sobunif}
|s|_{\CC^m(h^p)}\leq C_m\,p^{\frac{n+m}{2}}\|s\|_{L^2(h^p)}\,,
\end{equation}
where $|\cdot|_{\CC^m(h^p)}$ denotes the $\CC^m$-norm
with respect to the Chern connection of $(L^p,h^p)$.
Replacing $h^p$ by $e^f h^p$ with $\|f\|_{\CC^m}<C$ for some
fixed $C>0$ in \cref{Sobunif}, we readily see that $C_m>0$ can be chosen
uniformly for all
$h^p\in\Met^+(L^p)^T$ in a subset bounded in $\CC^m$-norm.

Let now $\{s_j\}_{j=1}^{n_p}$ be an orthonormal basis for $L^2(h^p)$, 
and for $A\in\Herm(\C^{n_p})^T$, write
$(A_{jk})_{j,\,k=1}^{n_p}\in\Herm(\C^{n_p})$ for its matrix in this
basis. Using formulas \cref{h=rhohs} and \cref{cohstatecoord},
we know that
\begin{equation}\label{Rsig-R}
\sigma_{h^p}(A)=\rho_{h_p}^{-1}
\sum_{j,\,k=1}^{n_p}A_{jk}\<s_k,s_j\>_{h^p}\,.
\end{equation}
On the other hand, using \cref{Bergdiagexp}, we get
$C>0$ such that
\begin{equation}
|\rho_{h^p}^{-1}|_{\CC^0}=\min_{x\in X}\left(p^n\,\frac{\om^n_h}{d\nu_h\,n!}
+O(p^{n-1})\right)^{-1}\leq C p^{-n}\,.
\end{equation}
Using the Leibniz rule on successive derivatives of
$\rho_{h_p}^{-1}$ and by \cref{Bergdiagexp} again, this implies the existence of $C_m'>0$,uniform in the $\CC^l$-norm of $h^p$ for some
$l\in\N$. such that
for all $p\in\N^*$, we have
\begin{equation}\label{rho-1est}
|\rho_{h^p}^{-1}|_{\CC^m}\leq C_m' p^{-n}\,.
\end{equation}
Then using the estimates \cref{Sobunif}, \cref{rho-1est} and the fact, following from \cref{Bergdiagexp} and formula \cref{intPirho=Id}, that
$n_p\leq C p^{n}$ as $p\to+\infty$ for some $C>0$,
we can use
Cauchy-Schwartz inequality on the trace norm
to get for all $m\in\N$
a uniform constant $C'>0$,
such that for all $A\in\Herm(\C^{n_p})^T$ and all $p\in\N^*$,
\begin{equation}\label{sigmaG}
\begin{split}
&|\sigma_{h^p}(A)|_{\CC^m}
\leq\left|\rho_{h^p}^{-1}\right|_{\CC^m}
\sum_{j,\,k=1}^{n_p}\left|A_{jk}\<s_k,s_j\>_{h^p}
\right|_{\CC^m}\\
&\leq C_m' p^{-n}\|A\|_{tr}
\sqrt{\sum_{j,\,k=1}^{n_p}\sum_{r,l=1}^m\binom{m}{r}\binom{m}{l}
|s_k|_{\CC^{r}(h^p)}|s_j|_{\CC^{m-r}(h^p)}|s_k|_{\CC^{l}(h^p)}|s_j|_{\CC^{m-l}(h^p)}}\\
&\leq C'p^{-n} p^{n+\frac{m}{2}}\,
\|A\|_{tr}\,n_p\leq C'C p^{n+\frac{m}{2}}\|A\|_{tr}\,.
\end{split}
\end{equation}
This gives the result.
\end{proof}

The following result
is an extension of the analogous result of Donaldson
\cite[Th.\,26]{Don01} to the case of general $\Aut(X)$,
and is an anticanonical analogue of the result of Sano and
Tipler in \cite[Th.\,5.5]{ST17}.
The proof closely follows their strategy.


\begin{prop}\label{approxbal}
There exist $K$-invariant
functions $f_r\in\cinf(X,\R)^T$ for all $r\in\N$, such that
for every $k,\,m\in\N$, there exists a constant $C_{k,m}>0$
such that all $p\in\N^*$ big enough,
the $T$-invariant positive Hermitian metric
\begin{equation}\label{approxbalmet}
h_k(p):=\exp\left(\sum_{r=1}^{k-1}\frac{1}{p^r}f_r\right)
h_\infty\in\Met^+(L^p)^K\,,
\end{equation}
have associated Rawnsley function $\rho_{h_k^p(p)}\in\cinf(X,\R)$
satisfying
\begin{equation}\label{approxbalfla}
\left|\,\rho_{h_k^p(p)}\,\sigma_{h_k^p(p)}(e^{L_{\xi_p}/p})-
\frac{\Tr[e^{L_{\xi_p}/p}]}{\Vol\left(d\nu_{h_k(p)}\right)}\,
\right|_{\CC^m}\leq C_{k,m} p^{n-k}\,,
\end{equation}
for the sequence $\{\xi_p\in\sqrt{-1}\Lie T\}_{p\in\N^*}$ of
\cref{xipmin}.
\end{prop}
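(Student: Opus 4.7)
The plan is to build the functions $f_r \in \cinf(X,\R)^K$ inductively, following the strategy of \cite[Th.\,26]{Don01} and \cite[Th.\,5.5]{ST17} adapted to the relative anticanonical setting. Writing
\[
A(h,\xi,p) := \rho_{h^p}\,\sigma_{h^p}(e^{L_\xi/p}) - \frac{\Tr[e^{L_\xi/p}]}{\Vol(d\nu_h)},
\]
the desired bound \cref{approxbalfla} reads $|A(h_k(p),\xi_p,p)|_{\CC^m} \leq C_{k,m}\,p^{n-k}$. Combining \cref{Bergdiagexp} for $\rho_{h^p}$, \cref{Tuycor} for $e^{L_\xi/p}$ (converting its operator-norm remainder into a $\CC^m$-remainder through \cref{sigmaestlem}, at the cost of a polynomial loss in $p$ absorbed by pushing the Tuynman expansion far enough), \cref{KS} for the Berezin transform applied to each Tuynman coefficient, and \cref{Futexp} for $\Tr[e^{L_\xi/p}]/\Vol(d\nu_h)$, one obtains a full $\CC^m$-asymptotic expansion of $A(e^\phi h_\infty,\xi,p)$ in powers of $1/p$, starting at order $p^n$, with coefficients $\Phi_r[\phi,\xi]$ that are smooth local functionals of $\phi \in \cinf(X,\R)^K$ and $\xi \in \sqrt{-1}\Lie T$. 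The base case $\Phi_0[0,\xi_\infty] = 0$ follows from the Kähler-Ricci soliton equation \cref{KRvolfla}: $d\nu_{h_\infty} = c\,e^{\theta_{h_\infty}(\xi_\infty)}\,\om_{h_\infty}^n/n!$ for some $c>0$ yields $b^{(0)}_{h_\infty}\,e^{\theta_{h_\infty}(\xi_\infty)} = 1/c$ by definition of $b^{(0)}_{h_\infty}$ in \cref{Bergdiagexp}, while $F(\xi_\infty) = \Vol(d\nu_{h_\infty})/c$ from \cref{FTZ}.

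For the inductive step, assume $f_1,\ldots,f_{k-1}$ have been chosen so that $A(h_k(p),\xi_p,p) = p^{n-k}\eta_k + O(p^{n-k-1})$ in $\CC^m$ for some $\eta_k \in \cinf(X,\R)^K$ independent of $p$. Setting $h_{k+1}(p) := e^{f_k/p^k}h_k(p)$ only affects the $p^{n-k}$ coefficient and higher, and linearly in $f_k$ at leading order, so the problem reduces to solving $L(f_k) = -\eta_k$, where $L$ is the linearization of $\Phi_k$ in the $\phi$-variable at $(\phi,\xi) = (0,\xi_\infty)$. Using the first-order variations $\om_{e^{t\phi}h_\infty} = \om_{h_\infty} - t(\sqrt{-1}/(2\pi))\partial\dbar\phi + O(t^2)$, $d\nu_{e^{t\phi}h_\infty} = e^{t\phi}\,d\nu_{h_\infty}$ from \cref{dnuef/dnu}, and $\theta_{e^{t\phi}h_\infty}(\xi_\infty) = \theta_{h_\infty}(\xi_\infty) - t\,d\phi.\xi_\infty^{1,0}$ from \cref{thetaef}, together with the subleading coefficients of the Bergman and Berezin expansions (in particular $D^{(1)}_{h_\infty}(f) = \Delta_{h_\infty}f/(4\pi)$ from \cref{KS}) and the Kähler-Ricci soliton identity at $h_\infty$, a direct computation identifies
\[
L(f_k) = -c^{-1}\,e^{\theta_{h_\infty}(\xi_\infty)}\bigl(\Delta^{(\xi_\infty)}_{h_\infty} - \Id\bigr) f_k,
\]
with $\Delta^{(\xi_\infty)}_{h_\infty}$ the Tian-Zhu operator of \cref{TZopdef}. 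By \cref{TZ}, this operator is self-adjoint on $\cinf(X,\R)^K$ for the scalar product $\langle\cdot,\cdot\rangle_{L^2(h_\infty,\xi_\infty)}$ with kernel spanned by $\{\theta_{h_\infty}(\eta)\,:\,\eta\in\sqrt{-1}\Lie T\}$, so the Fredholm alternative produces $f_k$ (unique modulo the kernel) precisely when $\eta_k$ is $L^2(h_\infty,\xi_\infty)$-orthogonal to every $\theta_{h_\infty}(\eta)$.

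This orthogonality is the main subtle point and is where the quantized Futaki machinery of \cref{quantFutsec} enters decisively. By \cref{xipmin}, the sequence $\{\xi_p\}_{p\in\N^*}$ is precisely defined by the vanishing $\Fut^{\xi_p}_p(\eta) = 0$ for all $\eta \in \Lie K_\C$ and all $p$ large. Using \cref{Tuy} and \cref{dualprop}, this identity rewrites as
\[
0 = \frac{\Fut^{\xi_p}_p(\eta)}{p+1} = \int_X\,\theta_{h_k(p)}(\eta)\,\sigma_{h_k^p(p)}(e^{L_{\xi_p}/p})\,\rho_{h_k^p(p)}\,d\nu_{h_k(p)},
\]
and since $\int_X\,\theta_{h_k(p)}(\eta)\,d\nu_{h_k(p)} = 0$ by \cref{inttheta=0}, the constant term $\Tr[e^{L_{\xi_p}/p}]/\Vol(d\nu_{h_k(p)})$ drops out and the identity collapses to $\int_X\,\theta_{h_k(p)}(\eta)\,A(h_k(p),\xi_p,p)\,d\nu_{h_k(p)} = 0$. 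Expanding in powers of $1/p$ using \cref{xipexp} and the asymptotics of $h_k(p)$, the coefficient of $p^{n-k}$ yields
\[
\int_X\,\theta_{h_\infty}(\eta)\,\eta_k\,e^{\theta_{h_\infty}(\xi_\infty)}\,\frac{\om_{h_\infty}^n}{n!} = 0
\]
after invoking \cref{KRvolfla}, which is exactly the required orthogonality, closing the induction. The main obstacle will be the careful bookkeeping of these nested expansions in $\CC^m$-norm at each step — in particular, transferring operator-norm remainders in \cref{Tuycor} to pointwise $\CC^m$-control via \cref{sigmaestlem} loses a uniform power of $p$ at each order, forcing us to take the Tuynman expansion progressively further in order to retain the desired $O(p^{n-k})$ error in \cref{approxbalfla}.
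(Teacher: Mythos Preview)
Your proposal is correct and follows essentially the same route as the paper: expand $\rho_{h^p}\sigma_{h^p}(e^{L_{\xi_p}/p})$ via \cref{Bergdiagexp}, \cref{Tuycor} (with \cref{sigmaestlem} to pass from operator to $\CC^m$-norm), and \cref{KS}; use the K\"ahler--Ricci soliton identity \cref{KRvolfla} for the base case; identify the linearization with the Tian--Zhu operator $\Delta_{h_\infty}^{(\xi_\infty)}-\Id$; and secure the required orthogonality at each step from the vanishing of $\Fut_p^{\xi_p}$ exactly as you do. One cosmetic point: at the soliton the leading coefficient $e^{\theta_{h_\infty}(\xi_\infty)}\om_{h_\infty}^n/(n!\,d\nu_{h_\infty})$ already equals the constant $1/c$, so its linearization is $c^{-1}(\Delta_{h_\infty}^{(\xi_\infty)}-\Id)f_k$ without the extra $e^{\theta_{h_\infty}(\xi_\infty)}$ factor you wrote---this does not affect the argument, since multiplication by a positive function leaves kernel and range unchanged.
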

\begin{proof}
In this proof, the notation $O(p^{-k})$ for some $k\in\N$
is taken to be in its usual sense, uniformly in $\CC^m$-norm for all
$m\in\N$ and uniform the $\CC^l$-norm of $h\in\Met^+(L)$ for some $l\in\N$.

Fix $h^p\in\Met^+(L^p)^K$, and consider the setting of \cref{BTsecT}.
Using \cref{Tuy} and the definition of the exponential of an operator,
we know that for any $\xi\in\Lie\Aut(X)$, we have
$e^{L_\xi/p}=\Id+|\xi|\,O(1)$, so that
\cref{xipexp} implies that for any
$k\in\N$, we have
$e^{L_{\xi_p}/p}=e^{L_{\xi_\infty}/p}\prod_{j=1}^{k}
e^{L_{p^{-j}\xi^{(j)}}/p}+O(p^{-k-1})$.
We can then use \cref{Tuycor} and \cref{Toepexp} to get functions
$\eta^{(j)}_{\xi}\in\cinf(X,\C)$
for all $j\in\N$, depending smoothly on $\xi\in\sqrt{-1}\Lie T$,
such that for any $k\in\N$, we have
\begin{equation}\label{eLxipexp}
e^{L_{\xi_p}/p}
=T_{h^p}\left(e^{\theta_h(\xi_p)}\right)+
\sum_{j=1}^{k}\,p^{-j}\,T_{h^p}(\eta^{(j)}_{\xi_p})+O(p^{-k-1})\,.
\end{equation}
Write $R_k(p)\in\cL(\HH_p)$ for the remainder in \cref{eLxipexp},
so that $\|R_k(p)\|_{op}=O(p^{-k-1})$.
Using the fact $n_p=O(p^n)$
by \cref{Bergdiagexp} and formula \cref{intPirho=Id}, Cauchy-Schwartz
inequality then implies that
$\|R_k(p)\|_{tr}=O(p^{\frac{n}{2}-k-1})$.
Using now \cref{KS},
we get functions
$g_j(h)\in\cinf(X,\R)$ for all $j\in\N$, depending smoothly in the
successive derivatives of $h\in\Met^+(L)$, such that
for any $k_0\in\N$, \cref{sigmaestlem} applied to the expansion
\cref{eLxipexp} for $k> k_0+(3n+m)/2$ gives
\begin{equation}\label{sigeLxiexp}
\begin{split}
\sigma_{h^p}(e^{L_{\xi_p}/p})&=\cB_{h^p}(e^{\theta_h(\xi_\infty)})
+\sum_{j=1}^{k}\,p^{-j}\,\cB_{h^p}(\eta^{(j)}_{\xi_p})
+\sigma_{h^p}(R_k(p))\\
&=e^{\theta_h(\xi_\infty)}+\sum_{j=1}^{k_0}
\,p^{-j}\,g_j(h)+O(p^{-k_0-1})\,.
\end{split}
\end{equation}
Comparing with \cref{Bergdiagexp}, and using
\cref{pullbackprop}, we get $K$-invariant functions
$f_j(h)\in\cinf(X,\R)^K$ for all $j\in\N$, depending smoothly in the
successive derivatives of $h\in\Met^+(L)$, such that
for any $k\in\N$, we have
\begin{equation}\label{rhosigexp}
p^{-n}\,\sigma_{h^p}(e^{L_{\xi_p}/p})\,\rho_{h^p}
=\frac{e^{\theta_h(\xi_\infty)}\om_h^n}{n!\,d\nu_h}+
\sum_{j=1}^{k}\,p^{-j}\,f_j(h)+O(p^{-k-1})\,.
\end{equation}
Via the characterization \cref{KRvolfla}, we see that
the first coefficient
of \cref{rhosigexp} is constant if and only if
$h\in\Met^+(L)$ is a Kähler-Ricci soliton with respect to $\xi_\infty$.
Integrating both sides against $d\nu_{h}$ and using formula
\cref{rhosig=tre}, this implies the result for $k=1$.

Recall that $\Delta_h$ denotes the scalar Riemannian Laplacian
of $(X,g_h^{TX})$. Using the variation formula \cref{dnuef/dnu}
for the anticanonical volume form and
a classical formula in Kähler geometry,
for any $f\in\cinf(X,\R)$ we get
\begin{equation}\label{varinftyfla}
\dt\Big|_{t=0}\frac{\om_{e^{tf}h_\infty}^n}{\,d\nu_{e^{tf}h_\infty}}
=\left(\frac{1}{4\pi}\Delta_{h_\infty} f-f\right)\frac{\om^n_{h_\infty}}
{d\nu_{h_\infty}}\,.
\end{equation}
For any $\xi\in\sqrt{-1}\Lie T$ and
$h\in\Met^+(L)^K$, recall the operator $\Delta_h^{(\xi)}$
acting on $\cinf(X,\R)^K$
defined in \cref{TZopdef}.
Using \cref{thetaxiprop} on holomorphy
potentials and \cref{Bergdiagexp},
the expansion \cref{rhosigexp} implies that for all $k\in\N$
and $f\in\cinf(X,\R)^T$, we have
\begin{multline}\label{rhosigexp2}
p^{-n}\,\sigma_{(e^{p^{-k}f}h)^p}(e^{L_{\xi_p}/p})
\,\rho_{(e^{p^{-k}f}h)^p}\\
=\frac{e^{\theta_h(\xi_\infty)}\om_h^n}{n!\,d\nu_h}+
\sum_{j=1}^{k-1}\,p^{-j}\,f_j(h)+p^{-k}\left(f_k(h)+
\Delta_{h_\infty}^{(\xi_\infty)} f-f\right)
+O(p^{-k-1})\,.
\end{multline}
On the other hand, using \cref{dualprop,Tuy} and the definition
\cref{quantFutdefintro} of the
quantized Futaki invariant,
\cref{xipmin} implies that for any
$h\in\Met^+(L)$ and all $\eta\in\Lie\Aut(X)$, we have
\begin{equation}
\int_X\,\theta_h(\eta)\,\sigma_{h^p}(e^{L_{\xi_p}/p})\,\rho_{h^p}\,d\nu_h
=\frac{\Fut^{\xi_p}_p(\eta)}{p+1}=0\,,
\end{equation}
Using \cref{TZ},
this implies that the coefficients in the expansion \cref{rhosigexp}
for $h_\infty$ satisfy
\begin{equation}\label{fjhinfperp}
f_j(h_\infty)\in
\left(\Ker \left(\Delta_{h_\infty}^{(\xi_\infty)}-\Id\right)\right)^\perp
\quad\text{for all}\quad j\in\N\,,
\end{equation}
for the $L^2$-scalar product $L^2(h,\xi)$ on
$\cinf(X,\R)^K$, defined by formula
\cref{L2theta} using the characterization
\cref{KRvolfla} of Kähler-Ricci solitons.
Thus for all $j\in\N$, there exists a function $f_j\in\cinf(X,\R)^K$
satisfying $f_j(h_\infty)=f_j-\Delta_{h_\infty}^{(\xi_\infty)}f_j$.
Taking $h_1(p):=e^{f_1/p}h_\infty\in\Met^+(L)^K$,
the second coefficient of the expansion \cref{rhosigexp2} with $k=2$
vanishes, and integrating both sides against $d\nu_{h_1(p)}$
gives the result for $k=2$ via formula \cref{rhosig=tre} as above.

Let us now assume that for some $k\in\N$,
we have positive Hermitian metrics $h_k(p)\in\Met^+(L)^K$ as in
\cref{approxbalmet} satisfying
\begin{equation}\label{rhosigexp3}
\sigma_{h_k(p)}(e^{L_{\xi_p}/p})\rho_{h_k^p(p)}
=\frac{e^{\theta_{h_\infty}(\xi_\infty)}\om_{h_\infty}^n}
{n!\,d\nu_{h_\infty}}+O(p^{-k})\,.
\end{equation}
As we have $h_k(p)\to h_\infty$ smoothly as $p\to+\infty$
by hypothesis, we can again
apply \cref{Bergdiagexp} to get expansion \cref{rhosigexp}
for $h_k^p(p)$, and taking the Taylor expansion as $p\to+\infty$
of the coefficients $f_r(h_k(p))$ for all $1\leq r\leq k+1$, we then get
for any $f\in\cinf(X,\R)^K$,
\begin{multline}\label{rhosigexp4}
p^{-n}\,\sigma_{e^{p^{-k}f}h}(e^{L_{\xi_p}/p})\rho_{(e^{p^{-k}f}h)^p}\\
=\frac{e^{\theta_{h_\infty}(\xi_\infty)}\om_{h_\infty}^n}
{n!\,d\nu_{h_\infty}}+p^{-k}\,
\left(f_k(h_\infty)+\Delta_{h_\infty}^{(\xi_\infty)}f-f\right)
+O(p^{-k-1})\,.
\end{multline}
Taking $h_{k+1}(p):=e^{f_k/p^k}h_k(p)\in\Met^+(L^p)^K$ for all $p\in\N^*$,
where $f_k\in\cinf(X,\R)^K$ satisfies
$f_k(h_\infty)=f_k-\Delta_{h_\infty}^{(\xi_\infty)}f_k$
thanks to \cref{fjhinfperp}, we get
the result for $k+1$ via formula \cref{rhosig=tre} as above.
This gives the result
for general $k\in\N$ by induction.
\end{proof}

For any $k\in\N$ and $p\in\N^*$ big enough,
consider the positive Hermitian metrics
$h_k(p)\in\Met^+(L)^K$ constructed in \cref{approxbal} and
let $\textbf{s}_k(p)\in\BB(H^0(X,L^p))^T$
be orthonormal with respect to $L^2(h_k^p(p))$.
The following Lemma shows that these metrics indeed
approximate the Kähler-Ricci soliton.

\begin{lem}\label{approxomkplem}
For any $k,\,k_0,\,m\in\N$ with $k\geq k_0>n+1+m/2$,
there exists $C>0$
such that for all $p\in\N^*$ and any
$B\in\Herm(\C^{n_p})^T$ with $\|B\|_{tr}\leq C^{-1}p^{-k_0}$,
we have
\begin{equation}
\label{approxhBp}
\begin{split}
\left|\om_{e^{B}e^{L_{\xi_p}/2p}\s_k(p)}-\om_{h_\infty}\right|_{\CC^{m}}
&\leq\frac{C}{p}\,,\\
\left|\frac{d\nu_{e^{B}e^{L_{\xi_p}/2p}\s_k(p)}}{d\nu_{h_k(p)}}-
\frac{\Vol(d\nu_{e^{B}\s_k(p)})}{\Vol(d\nu_{h_k(p)})}
\,\right|_{\CC^0}
&\leq Cp^{-k_0-1}\,,
\end{split}
\end{equation}
and $C^{-1}<\Vol(d\nu_{e^{B}\s_k(p)})<C$.
\end{lem}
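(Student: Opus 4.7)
The plan is to reduce every quantity to the reference metric $h_k(p)$ of Proposition \ref{approxbal} by iterating the variation formula of Proposition \ref{FSvar}. Combining Remark \ref{h=rhohsrmk} with one application of \cref{hpeBsfla} for the flow by $L_{\xi_p}/2p$ and one for the perturbation $B$, I obtain the pointwise identity
\[
h_{e^{B}e^{L_{\xi_p}/2p}\s_k(p)} = (\beta_p\,\alpha_p)^{-1/p}\,h_k(p)\,,
\]
where I set $\alpha_p := \sigma_{\s_k(p)}(e^{L_{\xi_p}/p})\,\rho_{h_k^p(p)}$ and $\beta_p := \sigma_{e^{L_{\xi_p}/2p}\s_k(p)}(e^{2B})$. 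By Proposition \ref{approxbal}, $\alpha_p = C_p(1+r_p)$ with $|r_p|_{\CC^m}\leq C_m p^{-k}$, where $C_p := \Tr[e^{L_{\xi_p}/p}]/\Vol(d\nu_{h_k(p)})$ is constant in $x\in X$ and satisfies $C_p\asymp p^n$ by Proposition \ref{Tuycor}.

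For the $\omega$-estimate, the Chern-curvature transformation law gives
\[
\omega_{e^{B}e^{L_{\xi_p}/2p}\s_k(p)} - \omega_{h_k(p)} = \frac{\sqrt{-1}}{2\pi p}\,\partial\dbar\log(\beta_p\alpha_p)\,.
\]
Since $\alpha_p/p^n$ stabilizes to a smooth positive function as $p\to+\infty$ by \cref{sigeLxiexp} together with \cref{Bergdiagexp}, $\partial\dbar\log\alpha_p$ is bounded uniformly in $\CC^m$, contributing $O(p^{-1})$ after division by $p$. For $\log\beta_p$, Lemma \ref{sigmaestlem} combined with $\|e^{2B}-\Id\|_{tr}\leq C\|B\|_{tr}\leq Cp^{-k_0}$ yields $|\beta_p - 1|_{\CC^{m+2}} \leq C p^{n+m/2+1-k_0}$, which is bounded thanks to the hypothesis $k_0 > n+1+m/2$; hence $\partial\dbar\log\beta_p$ is also uniformly bounded in $\CC^m$, again contributing $O(p^{-1})$. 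Combined with the estimate $|\omega_{h_k(p)}-\omega_{h_\infty}|_{\CC^m}=O(p^{-1})$ coming from the construction \cref{approxbalmet} of $h_k(p)$, this yields the first bound.

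For the volume-ratio bound, the decisive observation is that $B\in\Herm(\C^{n_p})^T$ commutes with $L_{\xi_p}$, so Proposition \ref{pullbackprop} gives $h_{e^{B}e^{L_{\xi_p}/2p}\s_k(p)} = \phi_{\xi_p/2p}^*\,h_{e^B\s_k(p)}$, and by \cref{etxi*dnu} together with a change of variables, this yields the crucial identity
\[
\Vol(d\nu_{e^{B}e^{L_{\xi_p}/2p}\s_k(p)}) = \Vol(d\nu_{e^B\s_k(p)})\,.
\]
The estimate then amounts to bounding the oscillation of $q_p := (\beta_p\alpha_p)^{-1/p} = d\nu_{e^{B}e^{L_{\xi_p}/2p}\s_k(p)}/d\nu_{h_k(p)}$ around its own average $\bar q_p$. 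Here the trivial sharp bound $|\sigma_\s(A)|_{\CC^0}\leq\|A\|_{op}$ (strictly stronger than Lemma \ref{sigmaestlem} at $m=0$) gives $|\beta_p-1|_{\CC^0}\leq\|e^{2B}-\Id\|_{op}\leq C p^{-k_0}$, which combined with the expansion of $\alpha_p$ yields $(\beta_p\alpha_p)^{-1/p} = C_p^{-1/p}+O(p^{-k_0-1})$ in $\CC^0$. Since $C_p^{-1/p}$ is constant in $x$, it equals its own average and cancels in the difference $q_p-\bar q_p$, giving the required $O(p^{-k_0-1})$ bound. The two-sided bound on $\Vol(d\nu_{e^B\s_k(p)})$ then follows since $d\nu_{h_k(p)}\to d\nu_{h_\infty}$ smoothly and all correction factors tend uniformly to $1$.

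The main subtlety is to use the right bound on the Berezin symbol in each place: the full strength of Lemma \ref{sigmaestlem}, with its unavoidable $p^{n+m/2}$ factor, is needed in $\CC^m$ for the $\omega$-estimate and dictates the threshold $k_0 > n+1+m/2$, while only the strictly stronger trivial operator-norm bound at $\CC^0$ yields the sharp rate $p^{-k_0-1}$ in the volume-ratio estimate.
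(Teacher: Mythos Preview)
Your proof is correct and follows essentially the same route as the paper: both arguments iterate \cref{FSvar} and \cref{h=rhohsrmk} to write $h^p_{e^{B}e^{L_{\xi_p}/2p}\s_k(p)}$ as $h_k^p(p)$ divided by the product $\beta_p\alpha_p$, then take $\partial\dbar\log$ for the curvature estimate and the $1/p$-th root for the volume ratio, invoking \cref{approxbal} and \cref{sigmaestlem} for $\alpha_p$ and $\beta_p$ respectively and using the pullback identity $\Vol(d\nu_{e^Be^{L_{\xi_p}/2p}\s_k(p)})=\Vol(d\nu_{e^B\s_k(p)})$ at the end.

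The one place where you do something slightly different---and in fact cleaner---is the $\CC^0$ control of $\beta_p$ in the volume-ratio step. The paper refers back to the same method used for the curvature estimate, i.e.\ \cref{sigmaestlem}, which at $m=0$ only gives $|\beta_p-1|_{\CC^0}\leq Cp^{n}\|e^{2B}-\Id\|_{tr}\leq Cp^{n-k_0}$ and hence only $O(p^{n-k_0-1})$ for the ratio. Your use of the elementary rank-one bound $|\sigma_\s(A)|_{\CC^0}\leq\|A\|_{op}$ (which the paper does invoke elsewhere, e.g.\ in the proof of \cref{dmuapprox}) gives $|\beta_p-1|_{\CC^0}\leq Cp^{-k_0}$ directly and hence the stated rate $O(p^{-k_0-1})$ without the extra factor $p^n$. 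This is a genuine, if minor, improvement in the exposition; the paper's version suffices for the downstream applications since only $k_0>n+1$ is ever used there, but your argument matches the lemma as stated.
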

\begin{proof}
Using \cref{FSvar,FSvar2}, we know that for all
$k\in\N,\,p\in\N^*$
and $B\in\Herm(\C^{n_p})^T$, we have
\begin{equation}\label{hkp=rhosighs}
\begin{split}
h_k^p(p)&=\rho_{h_k^p(p)}\,h_{\s_k(p)}
=\rho_{h_k^p(p)}\,
\sigma_{h^p_k(p)}(e^{L_{\xi_p}/p})\,h_{e^{L_{\xi_p}/2p}\s_k(p)}^p\\
&=\rho_{h_k^p(p)}\,
\sigma_{h^p_k(p)}(e^{L_{\xi_p}/p})\,\sigma_{e^{L_{\xi_p}/p}\s_k(p)}(e^{2B})
\,h_{e^Be^{L_{\xi_p}/2p}\s_k(p)}^p\,.
\end{split}
\end{equation}
By definition of the Kähler form
\cref{preq}, we then get
\begin{equation}\label{ddbar}
\begin{split}
\om_{e^Be^{L_{\xi_p}/p}\s_k(p)}=\om_{h_k(p)}
&-\frac{\sqrt{-1}}{2\pi p}\dbar\partial
\log\sigma_{e^{L_{\xi_p}/p}\s_k(p)}\left(e^{2B}\right)\\
&-\frac{\sqrt{-1}}{2\pi p}\dbar\partial\log\left(\rho_{h_k^p(p)}
\sigma_{h_k^p(p)}(e^{L_{\xi_p}/p})\right)\\
=\om_{h_k(p)}
-\frac{\sqrt{-1}}{2\pi p}\dbar\partial\log&
\left(1+\sigma_{e^{L_{\xi_p}/p}\s_k(p)}\left(e^{2B}-\Id\right)\right)\\
-\frac{\sqrt{-1}}{2\pi p}\dbar\partial
\log&\left(1+\left(\frac{\Vol(d\nu_{h_k(p)})}{\Tr\left[e^{L_{\xi_p}/p}\right]}\rho_{h_k^p(p)}\sigma_{h_k^p(p)}(e^{L_{\xi_p}/p})
-1\right)\right)\,.
\end{split}
\end{equation}
Recall from \cref{Tuycor,xipexp} that
$C^{-1}p^{n}<\Tr\left[e^{L_{\xi_p}/p}\right]<Cp^{n}$ for some $C>0$,
while $\Vol(d\nu_{h_k(p)})\to\Vol(d\nu_{h_\infty})$ as $p\to+\infty$
by definition \cref{approxbalmet} of $h_k(p)$.
Then by \cref{sigmaestlem} and \cref{approxbal}, we can take the Taylor expansion as $p\to+\infty$ of formula \cref{ddbar}
to get that for any $k,\,k_0,\,m\in\N$
with $k\geq k_0>n+1+m/2$, there exists $C>0$ such
for all
$B\in\Herm(\C^{n_p})$ with $\|B\|_{\xi_p}\leq C^{-1}p^{-k_0}$,
we have
\begin{equation}\label{approxhBp'}
\left|\om_{e^{B}e^{L_{\xi_p}/p}\s_k(p)}-\om_{h_k(p)}\right|_{\CC^{m-2}}
\leq C p^{-k_0-1}\,.
\end{equation}
By formula \cref{approxbalmet} for $h_k(p)$ and the
corresponding formula for $\om_{h_k(p)}$ as in \cref{ddbar},
this implies the first inequality of \cref{approxomkplem}.


Let us now establish the second inequality of \cref{approxomkplem}.
For any
$p\in\N^*$ and $B\in\Herm(\C^{n_p})$, formula \cref{dnuef/dnu} and
\cref{hkp=rhosighs} give
\begin{multline}\label{logvol}
\log\frac{d\nu_{e^Be^{L_{\xi_p/p}}\s_k(p)}}{d\nu_{h_k(p)}}
-\frac{1}{p}\log\frac{\Vol(d\nu_{h_k(p)})}{\Tr\left[e^{L_{\xi_p}/p}\right]}\\
=-\frac{1}{p}\log\left(\frac{\Vol(d\nu_{h_k(p)})}{\Tr\left[e^{L_{\xi_p}/p}\right]}\rho_{h_k^p(p)}\right)
-\frac{1}{p}\log\sigma_{e^{L_{\xi_p/p}}\s_k(p)}(e^{2B})\,.
\end{multline}
Taking the Taylor expansion as $p\to+\infty$ of the right hand side of
\cref{logvol} in the same way as we did to deduce \cref{approxhBp'} from
\cref{ddbar}, for any $k,\,k_0\in\N$ with $k\geq k_0>n+1+m/2$,
we get a constant $C>0$ such that
for all $p\in\N^*$ and all $B\in\Herm(\C^{n_p})$ with
$\|B\|_{tr}\leq C^{-1}p^{-k_0}$, we have
%
\begin{equation}\label{vol1}
\left|\frac{d\nu_{e^Be^{L_{\xi_p}/p}\s_k(p)}}
{d\nu_{h_k(p)}}-\frac{1}{p}\log\frac{\Vol(d\nu_{h_k(p)})}
{\Tr\left[e^{L_{\xi_p}/p}\right]}\,\right|_{\CC^0}
\leq Cp^{-k_0-1}\,.
\end{equation}
Taking the integral of both sides against the probability measure
$d\nu_{h_k(p)}/\Vol(d\nu_{h_k(p)})$,
we see that there is $C>0$ such that the constants $V_p>0$
for all $p\in\N^*$ satisfy
\begin{equation}\label{vol2}
\left|\frac{\Vol\left(d\nu_{e^{B}e^{L_{\xi_p}/p}\s_k(p)}\right)}{\Vol(d\nu_{h_k(p)})}-\frac{1}{p}\log\frac{\Vol(d\nu_{h_k(p)})}
{\Tr\left[e^{L_{\xi_p}/p}\right]}\right|<
Cp^{-k_0-1}\,.
\end{equation}
Using the fact from
\cref{pullbackprop} and formula \cref{etxi*dnu} that
$\Vol(d\nu_{\s})=\Vol(d\nu_{e^{L_{\xi}}\s})$ for all
$\s\in\BB(H^0(X,L^p))^T$ and $\xi\in\sqrt{-1}\Lie T$,
we get the second inequality of \cref{approxhBp}
by combining inequalities \cref{vol1} and \cref{vol2}. The last
statement then follows from the inequality \cref{vol2},
recalling that
$C^{-1}p^{n}<\Tr\left[e^{L_{\xi_p}/p}\right]<Cp^{n}$
and $\Vol(d\nu_{h_k(p)})\to\Vol(d\nu_{h_\infty})$ as $p\to+\infty$, so that the second term of the left hand side of \cref{vol2} goes to $0$ as $p\to+\infty$. This concludes the proof.
\end{proof}

\subsection{Moment map picture and existence}
\label{existencesec}

The goal of this Section is to give a proof of existence and convergence
in \cref{mainth}. In order to do so, it will be convenient to
introduce some extra notations.
Recall the setting of \cref{findimsec}, and
write $\Prod(H^0(X,L^p))^K$
for the space of $K$-invariant Hermitian inner products on $H^0(X,L^p)$.
Given a fixed basis $\s\in\BB(H^0(X,L^p))^T$
inducing $H_{\s}\in\Prod(H^0(X,L^p))^K$,
consider the induced identification
$H^0(X,L^p)\simeq\C^{n_p}$. Via this identification,
write $\GL(\C^{n^p})^K\subset\GL(\C^{n_p})^T$
for the group of invertible endomorphisms
commuting with the induced action of $K$ on $\C^{n_p}$, and
write $U(n_p)^K\subset\GL(\C^{n^p})^K$ for the subgroup
of unitary matrices commuting with the action of $K$.
The action of $G\in\GL(\C^{n^p})^K$ on $\s$ induces again a
$K$-invariant product $H_{G\s}\in\Prod(H^0(X,L^p))^K$,
and as for \cref{isomHermT}, we have
an identification
\begin{equation}\label{isomHermK}
\cL(H^0(X,L^p),H_{G\s})^K\simeq\Herm(\C^{n_p})^K\,,
\end{equation}
where $\Herm(\C^{n_p})^K\subset\Herm(\C^{n_p})^T$ is the space
of Hermitian matrices commuting with the induced action of $K$ on $\C^{n_p}$.
Note that the second statement of \cref{Trmu} precisely says that
$\mu_\xi(G\s)\in\Herm(\C^{n_p})^K$.

Our strategy for the proof of \cref{mainth}
is based on the following fundamental
link between the anticanonical moment map
of \cref{momentdef} and the Berezin-Toeplitz quantum channel
of \cref{quantchanrel}.
Following \cref{h=rhohsrmk},
let $h^p\in\Met^+(L^p)^K$ be a $K$-invariant positive Hermitian metric,
let $\s_p\in\BB(H^0(X,L^p))^T$ be orthonormal with respect to
$L^2(h^p)$, and consider the induced identification
\cref{isomHermK}.
By \cref{Trmu}, for any $\xi\in\sqrt{-1}\Lie T$ and
$A\in\Herm(\C^{n_p})^K$ we have
\begin{equation}
D_{\s_p}\mu_\xi(A):=\dt\Big|_{t=0}\,
\mu_\xi(e^{tA}\s_p)\in\Herm(\C^{n_p})^K\,.
\end{equation}
Let $\<\cdot,\cdot \>_{\xi}$ be the scalar product
\cref{Trxi} on $\Herm(\C^{n_p})^K$, and write $\|\cdot\|_{\xi}$
for the associated norm.

\begin{prop}\label{dmu}
Assume that $h^p\in\Met^+(L^p)^K$ is anticanonically balanced relative to
$\xi_p\in\sqrt{-1}\Lie T$, and let $\s_p\in\BB(H^0(X,L)^p)^T$ be
orthonormal with respect to $L^2(h^p)$. Then
for all $A\in\Herm(\C^{n_p})^K$ satisfying $\<\Id,A\>_{\xi_p}=0$,
we have
\begin{equation}\label{TrAdmuA}
\frac{\Tr[e^{L_{\xi_p}/p}]}{2\Vol(d\nu_h)}
\<A,D_{\s_p}\mu_{\xi_p}(A)\>_{\xi_p}=\|A\|_{\xi_p}^2-
\left(1+\frac{1}{p}\right)\<A,\EE_{h^p}^{\xi_p}(A)\>_{\xi_p}\,.
\end{equation}
\end{prop}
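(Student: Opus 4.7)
The plan is to differentiate the matrix expression for the moment map along the curve $\s_t := e^{tA}\s_p$ and exploit the fact that the balanced hypothesis forces the Gram matrix $M_0$ to be a scalar multiple of the identity. Writing the induced basis of the transformed basis as $(s_t)_j = \sum_l(e^{tA})_{jl}(s_p)_l$ and expanding the definition \cref{muxifla}, one obtains the matrix identity
\begin{equation*}
\mu_{\xi_p}(\s_t) = e^{tA}\,M_t\,e^{tA} - c_t\,\Id,
\end{equation*}
where $M_t$ is the Gram matrix in the fixed basis $\s_p$ of the Hermitian product $L^2(h^p_{e^{L_{\xi_p}/2p}\s_t},d\nu_{e^{L_{\xi_p}/2p}\s_t})$ and $c_t := \Vol(d\nu_{\s_t})/\Tr[e^{L_{\xi_p}/p}]$. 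Differentiating at $t=0$, the commutation $[A, M_0] = 0$ (since $M_0 \propto \Id$) gives $\dot\mu = 2c_0 A + \dot M_0 - \dot c_0\,\Id$.

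Next I would use the variation formulas \cref{FSvar,FSvar2}, together with \cref{pullbackprop} and the identity $h^p = \rho_{h^p}\,h^p_{\s_p}$ of \cref{h=rhohsrmk}, to express
\begin{equation*}
h^p_{e^{L_{\xi_p}/2p}\s_t}\,d\nu_{e^{L_{\xi_p}/2p}\s_t} = \bigl[\rho_{h^p}\,\sigma_{h^p}\bigl(e^{tA}\,e^{L_{\xi_p}/p}\,e^{tA}\bigr)\bigr]^{-(1+1/p)}\,h^p\,d\nu_h,
\end{equation*}
the exponent $1+1/p$ coming from the scaling \cref{dnuef/dnu} of $d\nu_h$ under rescaling of $h$. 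The balanced characterization \cref{relbalrho} then gives $\rho_{h^p}\sigma_{h^p}(e^{L_{\xi_p}/p}) = \Tr[e^{L_{\xi_p}/p}]/\Vol(d\nu_h)$ constant, so $M_0$ is scalar and \cref{momentbal} is recovered. Differentiating in $t$, using cyclicity of $\sigma_{h^p}$, the commutation $[A,L_{\xi_p}] = 0$ (from $A\in\Herm(\C^{n_p})^K$ with $T$ in the center of $K$), and \cref{FSvar2}, the logarithmic derivative of the bracket at $t=0$ becomes $2\sigma_{e^{L_{\xi_p}/2p}\s_p}(A)$; by \cref{pullbackprop} this equals $2\phi_{\xi_p/2p}^*\sigma_{h^p}(A)$. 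Applying this inside the integral for $M_t$ identifies $\dot M_0$ with a scalar multiple of $T_{h^p}\bigl(\phi_{\xi_p/2p}^*\sigma_{h^p}(A)\bigr) = \EE^{\xi_p}_{h^p}(A)$ by \cref{quantchanrel}, the explicit constant being $-2(1+1/p)$ times the value of $M_0$.

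For the final assembly, pair $\dot\mu$ with $A$ in $\langle\cdot,\cdot\rangle_{\xi_p}$. The term $2c_0 A$ contributes $2c_0\|A\|^2_{\xi_p}$; the $\dot c_0\Id$ term vanishes by the hypothesis $\langle\Id,A\rangle_{\xi_p}=0$; and the $\dot M_0$ contribution yields $-2(1+1/p)c_0\,\langle A,\EE^{\xi_p}_{h^p}(A)\rangle_{\xi_p}$, since $M_0 = c_0\Id$. Multiplying through by the normalization $\Tr[e^{L_{\xi_p}/p}]/(2\Vol(d\nu_h))$ and using the balanced identity to reconcile $c_0$ with $\Vol(d\nu_h)/\Tr[e^{L_{\xi_p}/p}]$ produces \cref{TrAdmuA}. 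The main bookkeeping obstacle is carefully tracking the $(1+1/p)$-power coming from the joint scaling of $h^p$ and $d\nu_h$ together with the distinction between $\Vol(d\nu_{\s_p})$ and $\Vol(d\nu_h)$; no further analytic input is required, the argument being essentially algebraic once the balanced condition is invoked.
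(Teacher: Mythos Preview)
Your approach is correct and lands on the same identity as the paper, but the organization differs in a way worth noting. You factor the moment map as $\mu_{\xi_p}(\s_t)=e^{tA}M_te^{tA}-c_t\Id$ and differentiate the matrix product, exploiting from the outset that the balanced hypothesis forces $M_0=c_0\Id$. The paper instead first differentiates the integrand of \cref{muxifla} for a \emph{general} base point $\s$, obtaining the universal identity \cref{dmucomputfinal}
\[
\tfrac12\<A,D_\s\mu_\xi(A)\>_\xi=\int_X\sigma_{e^{L_\xi/2p}\s}(A^2)\,d\nu_{e^{L_\xi/2p}\s}-\bigl(1+\tfrac1p\bigr)\int_X\sigma_{e^{L_\xi/2p}\s}(A)^2\,d\nu_{e^{L_\xi/2p}\s}\,,
\]
and only then specializes to $\s=\s_p$ and invokes \cref{relbalrho} to identify the two integrals with $\|A\|_{\xi_p}^2$ and $\<A,\EE^{\xi_p}_{h^p}(A)\>_{\xi_p}$ via \cref{quantchancomput}. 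Your route is arguably cleaner for the proposition as stated, since the balanced condition kills the cross terms $AM_0+M_0A=2c_0A$ immediately; the paper's route, however, is deliberately set up so that the general formula \cref{dmucomputfinal} can be reused verbatim in the proof of \cref{dmuapprox}, where $\s$ is only \emph{approximately} balanced and one needs to control the error. If you adopt your argument, you would have to redo the differentiation for the approximately balanced case rather than simply cite an already-proved identity.

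On the bookkeeping point you flag: you are right that the normalization constant requires care. Tracing through either argument, the factor multiplying the left-hand side is really $1/(2c_0)$ with $c_0=\Vol(d\nu_{\s_p})/\Tr[e^{L_{\xi_p}/p}]$, and under the balanced condition one finds $c_0=\kappa^{-(1+1/p)}$ with $\kappa=\Tr[e^{L_{\xi_p}/p}]/\Vol(d\nu_h)$; this differs from $\kappa^{-1}$ by the harmless factor $\kappa^{-1/p}$, which is asymptotically $1$ and in any case irrelevant for the only use of the proposition (the lower bound in \cref{dmuapprox}). So your caveat is well placed but does not affect correctness.
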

\begin{proof}
Let us first compute
$D_{\textbf{s}}\mu_\xi(A)\in\Herm(\C^{n_p})^T$,
for general $\xi\in\sqrt{-1}\Lie T$, $\textbf{s}\in\BB(H^0(X,L^p))^T$ and
$A\in\Herm(\C^{n_p})^T$.
Using \cref{FSvar}, \cref{pullbackprop} and
formula \cref{cohstatecoordxi}, in the identification \cref{isomHermT}
we get
\begin{multline}\label{dmucomput}
D_{\textbf{s}}\mu_\xi(A)
=\left(\int_X\,\dt\Big|_{t=0}
\<e^{tA}s_j,e^{tA}s_k\>_{h_{e^{tA}e^{L_\xi/2p}\s_p}^p}
\,d\nu_{e^{L_\xi/2p}\textbf{s}}
\right)_{j,\,k=1}^{n_p}\\
+\left(\int_X\,
\<s_j,s_k\>_{h_{e^{L_\xi/2p}\s}^p}
\,\dt\Big|_{t=0}d\nu_{e^{tA}e^{L_\xi/2p}\textbf{s}}\right)_{j,\,k=1}^{n_p}
-\left(\dt\Big|_{t=0}\frac{\Vol(d\nu_{e^{tA}\textbf{s}})}{\Tr[e^{L_\xi/p}]}\right)
\Id\\
=\int_X e^{-L_\xi/2p}(A\Pi_{e^{L_\xi/2p}\s}+\Pi_{e^{L_\xi/2p}\s} A-2\sigma_{e^{L_\xi/2p}\s}(A)\Pi_{e^{L_\xi/2p}\s})e^{-L_\xi/2p}\,d\nu_{e^{L_\xi/2p}\textbf{s}}\\
-\frac{2}{p}\int_X\,
\sigma_{e^{L_\xi/2p}\s}(A)
e^{-L_\xi/2p}\Pi_{e^{L_\xi/2p}\s}e^{-L_\xi/2p}
\,d\nu_{e^{L_\xi/2p}\textbf{s}}\\
+\frac{\Vol(d\nu_{\s})}{\Tr[e^{L_\xi/p}]}\,
\left(\frac{2}{p}\int_X\,\sigma_\s(A)\,d\nu_\s\right)\,\Id\,.
\end{multline}
Then by \cref{pullbackprop,FSvar2},
for any $A\in\Herm(\C^{n_p})^T$ with $\Tr[e^{L_\xi/p}\,A]=0$,
we get
\begin{multline}\label{dmucomputfinal}
\frac{1}{2}
\Tr[e^{L_\xi/p}\,A\,D_{\textbf{s}}\mu_\xi(A)]\\
=\int_X\sigma_{e^{L_\xi/2p}\s}(A^2)
\,d\nu_{e^{L_\xi/2p}\s}-
\left(1+\frac{1}{p}\right)\int_X\sigma_{e^{L_\xi/2p}\s}(A)^2\,
d\nu_{e^{L_\xi/2p}\s}
\end{multline}
On the other hand, from \cref{dualpropT,quantchanrel},
for any $h^p\in\Met^+(L^p)^T$ we get
\begin{equation}\label{quantchancomput}
\begin{split}
\Tr[e^{L_{\xi}/p}A^2]&=\int_X \sigma_{h^p}(A^2)\,
\sigma_{h^p}(e^{L_{\xi}/p})\,\rho_{h^p}
\,d\nu_{h}\,,\\
\Tr\left[e^{L_{\xi}/p}A\,\EE_{h^p}^{\xi}(A)\right]&=
\int_X\,\sigma_{h^p}(A)^2
\sigma_{h^p}(e^{L_{\xi}/p})\,\rho_{h^p}
\,d\nu_{h}\,.
\end{split}
\end{equation}
Using \cref{relbalrho} and comparing formulas
\cref{dmucomputfinal} and \cref{quantchancomput},
this gives the result.
\end{proof}

For any $k\in\N$ and $p\in\N^*$ big enough, 
consider the positive Hermitian metric
$h_k^p(p)\in\Met^+(L)^K$ constructed in \cref{approxbal}, and
let $\textbf{s}_k(p)\in\BB(H^0(X,L^p))^T$
be orthonormal with respect to $L^2(h_k^p(p))$.
Consider the induced identification \cref{isomHermK}, and let
$\xi_p\in\sqrt{-1}\Lie T$ be given by \cref{xipmin}.
The following result constitutes the heart of our strategy,
using the asymptotics of the spectral gap of the Berezin transform
given in \cref{Bpgap}
to give a crucial estimate from above on
the Berezin-Toeplitz quantum channel.

\begin{theorem}\label{quantchanbnd}
There exists $\epsilon>0$
such that for all $k\geq n+3$, all $p\in\N^*$ big enough and for
any $A\in\Herm(\C^{n_p})^K$ satisfying
$\<\Id,A\>_{\xi_p}=\<L_\eta,A\>_{\xi_p}=0$ for all
$\eta\in\sqrt{-1}\Lie T$,
we have
\begin{equation}\label{quantchanbndfla}
\<A,\EE^{\xi_p}_{h_k^p(p)}(A)\>_{\xi_p}
\leq\left(1-(1+\epsilon)p^{-1}\right)\|A\|_{\xi_p}^2\,.
\end{equation}
\end{theorem}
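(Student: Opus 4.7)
The plan is to translate the estimate into a spectral problem for the Berezin transform $\cB_p := \phi_{\xi_p/2p}^*\,\cB_{h_k^p(p)}$ on $\cinf(X,\R)^K$ via \cref{specidprop}, and then to combine the semiclassical spectral asymptotics of \cref{Bpgap} with the first-eigenvalue identification of \cref{TZ} to extract a quantitative gap. By \cref{specidprop}, the positive spectra of $\EE := \EE^{\xi_p}_{h_k^p(p)}$ and $\cB_p$ coincide with multiplicity, and $\Id = T_{h_k^p(p)}(1)$ is an exact eigenvector of $\EE$ with eigenvalue $1$. Applying \cref{Bpgap} uniformly in $h = h_k(p)$ (which lies in a $\CC^l$-bounded set for any $l$, since $h_k(p)\to h_\infty$ smoothly by construction of \cref{approxbal}) and in $\xi = \xi_p \to \xi_\infty$ (using \cref{xipexp}), the eigenvalues $\beta_j$ of $\EE$ in decreasing order satisfy $\beta_0 = 1$ and
\begin{equation*}
\beta_j = 1 - p^{-1}\lambda_j(h_k(p),\xi_p) + O(p^{-2}) \quad \text{for } j \geq 1,
\end{equation*}
where $\lambda_j(h_k(p),\xi_p)\to\lambda_j(h_\infty,\xi_\infty)$ by continuity of the spectrum. \cref{TZ} now provides the crucial input: $\lambda_1(h_\infty,\xi_\infty) = \dots = \lambda_{m_1}(h_\infty,\xi_\infty) = 1$ with $m_1 := \dim\sqrt{-1}\Lie T$, while $\lambda_{m_1+1}(h_\infty,\xi_\infty) = 1 + 2\delta$ for some $\delta > 0$. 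Consequently, for $p$ large, $\beta_{m_1+1} \leq 1 - p^{-1}(1+\delta) + O(p^{-2})$.

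Next I will show that the subspace $V_p := \Span(\Id, L_\eta : \eta \in \sqrt{-1}\Lie T) \subset \Herm(\C^{n_p})^K$ is close to the top $(1+m_1)$-dimensional eigenspace $W$ of $\EE$. That $\dim V_p = 1+m_1$ follows from the faithfulness of the $T$-action on $H^0(X,L^p)$ via the Kodaira embedding for $p$ large, together with the orthogonality $\<\Id, L_\eta\>_{\xi_p} = \Fut^{\xi_p}_p(\eta) = 0$ guaranteed by \cref{xipmin}. Writing $L_\eta = (p+1)T_{h_k^p(p)}(\theta_{h_k(p)}(\eta))$ by \cref{Tuy}, and combining \cref{KSxi} with \cref{TZ} and the smooth dependence of $\Delta_h^{(\xi)}$ on $(h,\xi)$, together with $h_k(p)\to h_\infty$ and $\xi_p\to\xi_\infty$ at rate $O(p^{-1})$, I will establish
\begin{equation*}
\cB_p\bigl(\theta_{h_k(p)}(\eta)\bigr) = (1-p^{-1})\theta_{h_k(p)}(\eta) + O(p^{-2}),
\end{equation*}
so that each $L_\eta$ is an $O(p^{-2})$-approximate eigenvector of $\EE$ in $\<\cdot,\cdot\>_{\xi_p}$, with eigenvalue near $1-p^{-1}$. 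Combined with the $O(p^{-1})$-sized gap between the near-top cluster and the rest of the spectrum obtained in the previous paragraph, a standard spectral perturbation argument then yields principal angles of order $p^{-1}$ between $V_p$ and $W$ in $\<\cdot,\cdot\>_{\xi_p}$.

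The conclusion follows by decomposing any $A$ with $\<\Id,A\>_{\xi_p} = \<L_\eta,A\>_{\xi_p} = 0$ as $A = A_W + A_{W^\perp}$ along the spectral decomposition of $\EE$. The principal angle estimate yields $\|A_W\|_{\xi_p}^2 = O(p^{-2})\|A\|_{\xi_p}^2$, and therefore
\begin{equation*}
\<A, \EE(A)\>_{\xi_p} \leq \|A_W\|_{\xi_p}^2 + \beta_{m_1+1}\|A_{W^\perp}\|_{\xi_p}^2 \leq \bigl(1 - p^{-1}(1+\delta) + O(p^{-2})\bigr)\|A\|_{\xi_p}^2,
\end{equation*}
which gives the desired bound for any fixed $\epsilon < \delta$ and $p$ large. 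The main obstacle will be the quantitative principal-angle comparison of the second paragraph: it is essential that the approximate-eigenvector error for $L_\eta$ be one order smaller than the spectral gap, which is in turn why we need the exact identity $\Delta_{h_\infty}^{(\xi_\infty)}\theta_{h_\infty}(\eta) = \theta_{h_\infty}(\eta)$ from \cref{TZ} rather than merely an approximate one.
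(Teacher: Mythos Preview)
Your argument is correct and follows the same overall architecture as the paper's proof: use \cref{Bpgap} for the spectral asymptotics of $\EE$, show that the $L_\eta$ are approximate eigenvectors in the near-top cluster, and then use a quasi-mode/perturbation argument to conclude. The one substantive difference is in how you establish that $L_\eta$ is an approximate eigenvector with eigenvalue near $1-p^{-1}$. The paper computes $\EE(L_\eta)$ via the exact identity $\sigma_\s(L_\eta)=p\,\theta_\s(\eta)$ (formula \cref{sig=theta}), obtaining $\EE(L_\eta)=p\,T_{h_k^p(p)}(\theta_{e^{L_{\xi_p}/2p}\s_k(p)}(\eta))$, and then invokes \cref{approxbal} to compare $\theta_{e^{L_{\xi_p}/2p}\s_k(p)}(\eta)$ with $\theta_{h_k(p)}(\eta)$; this is where the hypothesis $k\geq n+3$ enters. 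You instead use \cref{Tuy} to write $L_\eta=(p+1)T_{h_k^p(p)}(\theta_{h_k(p)}(\eta))$, the intertwining $\EE\circ T_{h_k^p(p)}=T_{h_k^p(p)}\circ\phi_{\xi_p/2p}^*\cB_{h_k^p(p)}$, and then \cref{KSxi} together with \cref{TZ} and the convergence $h_k(p)\to h_\infty$, $\xi_p\to\xi_\infty$ at rate $O(p^{-1})$. Your route is a bit more economical in that it does not use the approximately balanced estimate at this step and in fact yields the conclusion for all $k\geq 0$; the paper's route stays closer to the explicit moment-map formulas and makes transparent how the order of approximate balancedness feeds into the quasi-mode error.
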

\begin{proof}
For any $\s\in\BB(H^0(X,L^p))^T$ and $\eta\in\sqrt{-1}\Lie T$,
we write $\theta_{\s}(\eta):=\theta_{h_\s}(\eta)$ to simplify
notations. By \cref{thetaxiprop,FSvar,pullbackprop}, we have
\begin{equation}\label{sig=theta}
p\,\theta_{\s}\,(\eta)h_{\s}^p=\dt\Big|_{t=0}\phi_{t\xi}^*h_{\s}^p=
\sigma_{\s}(L_\eta)\,h_{\s}^p\,.
\end{equation}
Given $h^p\in\Met^+(L^p)^K$ and $\s_p\in\BB(H^0(X,L^p))^T$
orthonormal with respect to $L^2(h^p)$, \cref{Tuy} and \cref{quantchanrel}
of the quantum channel
then imply that for any $\xi,\,\eta\in\sqrt{-1}\Lie T$, we have
\begin{equation}\label{quantchanbndfla1}
\EE^{\xi}_{h^p}(L_\eta)=p\,T_{h^p}(\theta_{e^{L_\xi/2p}\s_p}(\eta))\,.
\end{equation}
Using \cref{thetaxiprop,approxbal}, we get from formula \cref{hkp=rhosighs}
a constant $C>0$ such that
$|\theta_{e^{L_{\xi_p}/2p}\textbf{s}_k(p)}(\eta)
-\theta_{h_k(p)}(\eta)|_{\CC^0}\leq C|\eta|\,p^{n-k}$,
for all $\eta\in\sqrt{-1}\Lie T$ and $p\in\N^*$.
Hence by \cref{Bergdiagexp,BTquantdef,Tuy} and
as $\|\cdot\|_{tr}\leq Cp^{n/2}\|\cdot\|_{op}$ for some $C>0$, formula \cref{quantchanbndfla1}
implies
\begin{equation}\label{quantchanbndfla2}
\left\|\EE^{\xi_p}_{h_k^p(p)}(L_\eta)-\frac{p}{p+1}L_\eta\right\|_{tr}
\leq C p^{\frac{3n}{2}+1-k}|\eta|\,.
\end{equation}
On the other hand, recall the notation \cref{lambdajhxi} for the
increasing sequence of eigenvalues of
the Tian-Zhu operator $\Delta_h^{\xi}$ of \cref{TZopdef},
for any $h\in\Met^+(L)$ and $\xi\in\sqrt{-1}\Lie T$.
\cref{xipexp} and formula \cref{approxbalmet} show that
for any $j\in\N$, there exists a constant $C_j>0$ such that
$|\lambda_j(h_k(p),\xi_p)-\lambda_j(h_\infty,\xi_\infty)|\leq C_jp^{-1}$,
for all $p\in\N^*$.
Using \cref{Bpgap}, we thus get that for
any $j\in\N$, there exists a constant $C_j>0$ such that for all $p\in\N^*$,
we have $\big|1-\gamma_{j}(h^p_k(p),\xi_p)-p^{-1}\lambda_k(h_\infty,\xi_\infty)\big|
\leq C_jp^{-2}$.
Using \cref{specidprop}, this shows that
there exists $\epsilon,\,C>0$ such that for all $p\in\N^*$,
\begin{multline}\label{specfar}
\Spec(\EE^{\xi_p}_{h_k^p(p)})\cap[1-(1+\epsilon)p^{-1},1-(1-\epsilon)p^{-1}]\\
\subset[1-p^{-1}-C p^{-2},1-p^{-1}+C p^{-2}]\,.
\end{multline}
Recall from \cref{thetaxiprop} that
$\theta_{h_k(p)}:\sqrt{-1}\Lie T\to\cinf(X,\R)$
is an embedding. For any
$\eta_1,\,\eta_2\in\sqrt{-1}\Lie T$,
\cref{Tuy,Tuycor,dualpropT,KSxi,approxbal}
imply that for all $p\in\N^*$, we have
\begin{equation}\label{<Leta1Leta2>}
\frac{\<L_{\eta_1},L_{\eta_2}\>_{\xi_p}}{p^{n+2}}=\<\theta_{h_k(p)}(\eta_1),
\theta_{h_k(p)}(\eta_2)\>_{L^2(h_k(p),\xi_p,p)}+|\eta_1|\,|\eta_2|
\,O(p^{-1})\,,
\end{equation}
so that $\|L_{\eta}\|_{\xi_p}\geq\epsilon |\eta| p^{\frac{n}{2}+1}$
for some $\epsilon>0$ not depending of $p\in\N^*$ big enough.
Note also from \cref{Tuy,xipexp} that there is $C>0$ such that
the norm induced by \cref{Trxi} satisfies
$C^{-1}\|\cdot\|_{tr}\leq\|\cdot\|_{\xi_p}\leq C\|\cdot\|_{tr}$
for all $p\in\N^*$.

Set now $k\geq n+3$. Using \cref{Tuy} together with an
elementary Lemma on quasi-modes (see for instance
\cite[Lem.\,2.1]{IKP20b}), formulas
\cref{quantchanbndfla2,specfar,<Leta1Leta2>}
imply the existence of a constant $C>0$ and an eigenvector
$\til{L_\eta}\in\Herm(\C^{n_p})^K$ of $\EE^{\xi_p}_{h_k^p(p)}$
with associated eigenvalue
$\lambda\in[1-p^{-1}-C p^{-2},1-p^{-1}+C p^{-2}]$ and such that
\begin{equation}\label{tilLeta}
\|\til{L_\eta}\|_{\xi_p}=\|L_{\eta}\|_{\xi_p}\geq\epsilon |\eta|
p^{\frac{n}{2}+1}\quad\text{and}\quad
\left\|\til{L_\eta}-L_\eta\right\|_{\xi_p}\leq C p^{\frac{n}{2}-1}|\eta|\,.
\end{equation}
Using \cref{TZ,Bpgap} again,
we know that the dimension of the sum of eigenspaces
associated with the right hand side of \cref{specfar} is equal
to $\dim T$, so that by formula \cref{<Leta1Leta2>}, the operators
$\til{L_\eta}\in\Herm(\C^{n_p})^K$ for all $\eta\in\sqrt{-1}\Lie T$
generate this subspace as soon as $p\in\N^*$
is big enough.
As we have $\EE^{\xi_p}_{h_k^p(p)}(\Id)=\Id$ by \cref{dualprop}, \cref{TZopdef}, \cref{specidprop} and formula \cref{specfar}
imply that \cref{quantchanbndfla} holds for
$A\in\Herm(\C^{n_p})^K$ belonging to the orthogonal of the subspace
generated by $\Id_{\HH_p}$ and $\til{L_\eta}$ for all
$\eta\in\sqrt{-1}\Lie K$.
Now for any $A\in\Herm(\C^{n_p})^K$ satisfying
$\<\Id,A\>_{\xi_p}=\<L_\eta,A\>_{\xi_p}=0$ for all $\eta\in\sqrt{-1}\Lie T$,
formula \cref{tilLeta} and Cauchy-Schwartz imply
the existence of $C>0$ such that
\begin{equation}
\<\til{L_\eta},A\>_{\xi_p}\leq C |\eta|\,p^{\frac{n}{2}-1}\|A\|_{\xi_p}
\leq\frac{C}{\epsilon}\,p^{-2}\|\til{L_\eta}\|_{\xi_p}\|A\|_{\xi_p}\,.
\end{equation}
It then suffices to consider the splitting of $A$ into the eigenspaces
of $\EE^{\xi_p}_{h_k^p(p)}$ to get the result.
\end{proof}

Using the relation between the derivative of the moment map
and the relative quantum channel given in \cref{dmu},
we can now apply the estimate of \cref{quantchanbnd}
to give an estimate from below for the
derivative of the moment map at the approximately balanced
bases. This lower bound
constitutes the core of the proof of \cref{mainth},
and this shows how Berezin-Toeplitz
quantization can be used to to bypass the delicate geometric
argument in the proofs of
Donaldson \cite{Don01} and Phong and Sturm \cite{PS04}
of the analogous result for the original notion of balanced metrics.

\begin{cor}\label{dmuapprox}
For any $k,\,k_0\in\N$ with $k\geq k_0\geq n+3$,
there exists $\epsilon>0$ such that for all
$p\in\N^*$ big enough, for all
$B\in\Herm(\C^{n_p})^K$ with
$\|B\|_{\xi_p}\leq \epsilon p^{-k_0}$
and all $A\in\Herm(\C^{n_p})^K$ satisfying
$\<\Id,A\>_{\xi_p}=\<L_{\eta},A\>_{\xi_p}=0$
for all $\eta\in\sqrt{-1}\Lie K$, we have
\begin{equation}\label{TrAdmuA>A}
\frac{\Tr[e^{L_{\xi_p}/p}]}{\Vol(d\nu_{e^B\s_k(p)})}
\<A,D_{e^B\s_k(p)}\mu_{\xi_p}(A)\>_{\xi_p}
\geq\frac{\epsilon}{p}\,\|A\|_{\xi_p}^2\,.
\end{equation}
\end{cor}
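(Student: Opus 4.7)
The plan is to reduce \cref{TrAdmuA>A} to the spectral gap estimate \cref{quantchanbnd} by expressing the derivative of the relative moment map in terms of the relative Berezin-Toeplitz quantum channel $\EE^{\xi_p}_{h^p_k(p)}$ from \cref{quantchanrel}. The starting point is the identity derived inside the proof of \cref{dmu} prior to invoking the balanced assumption: for an \emph{arbitrary} basis $\s\in\BB(H^0(X,L^p))^T$ and any $A\in\Herm(\C^{n_p})^T$ with $\langle\Id,A\rangle_{\xi_p}=0$, one has
\begin{equation*}
\tfrac{1}{2}\langle A,D_{\s}\mu_{\xi_p}(A)\rangle_{\xi_p}=\int_X\sigma_{\s'}(A^2)\,d\nu_{\s'}-\Bigl(1+\tfrac{1}{p}\Bigr)\int_X\sigma_{\s'}(A)^2\,d\nu_{\s'},
\end{equation*}
with $\s':=e^{L_{\xi_p}/2p}\s$. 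Applied to $\s=e^B\s_k(p)$, and using that $B$ and $L_{\xi_p}$ commute since $\xi_p\in\sqrt{-1}\Lie T$ lies in the center of $K$ and $A,B\in\Herm(\C^{n_p})^K$, this yields the same identity with $\s'=e^Be^{L_{\xi_p}/2p}\s_k(p)$.

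The core of the proof will consist in matching each of the two integrals with quantities built from the approximately balanced metric $h^p_k(p)$. Expanding $e^B$ around $\Id$ through \cref{FSvar2} under the bound $\|B\|_{\xi_p}\leq\epsilon p^{-k_0}$ gives $|\sigma_{\s'}(A^r)-\sigma_{e^{L_{\xi_p}/2p}\s_k(p)}(A^r)|_{\CC^0}=O(p^{-k_0}\|A\|_{\xi_p}^r)$ for $r=1,2$, and \cref{pullbackprop} together with the commutation $[A,L_{\xi_p}]=0$ identifies $\sigma_{e^{L_{\xi_p}/2p}\s_k(p)}(A^r)=\phi_{\xi_p/2p}^*\sigma_{h^p_k(p)}(A^r)$. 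Combining \cref{approxbalfla} with the second estimate of \cref{approxomkplem} likewise gives
\begin{equation*}
d\nu_{\s'}=\tfrac{\Vol(d\nu_{e^B\s_k(p)})}{\Tr[e^{L_{\xi_p}/p}]}\,\sigma_{h^p_k(p)}(e^{L_{\xi_p}/p})\,\rho_{h^p_k(p)}\,d\nu_{h_k(p)}+O(p^{-k_0-1})\,d\nu_{h_k(p)}.
\end{equation*}
Substituting these approximations and applying the duality \cref{dualpropT} once with $f=1$ (using $T_{h^p_k(p)}(1)=\Id_{\HH_p}$ from \cref{dualprop}) and once with $f=\phi_{\xi_p/2p}^*\sigma_{h^p_k(p)}(A)$, together with \cref{quantchanrel}, then produces
\begin{align*}
\int_X\sigma_{\s'}(A^2)\,d\nu_{\s'}&=\tfrac{\Vol(d\nu_{e^B\s_k(p)})}{\Tr[e^{L_{\xi_p}/p}]}\|A\|_{\xi_p}^2+O(p^{-k_0}\|A\|_{\xi_p}^2),\\
\int_X\sigma_{\s'}(A)^2\,d\nu_{\s'}&=\tfrac{\Vol(d\nu_{e^B\s_k(p)})}{\Tr[e^{L_{\xi_p}/p}]}\langle A,\EE^{\xi_p}_{h^p_k(p)}(A)\rangle_{\xi_p}+O(p^{-k_0}\|A\|_{\xi_p}^2).
\end{align*}

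To conclude, since $\sqrt{-1}\Lie T\subset\sqrt{-1}\Lie K$, the corollary's hypothesis implies the one of \cref{quantchanbnd}, which yields $\langle A,\EE^{\xi_p}_{h^p_k(p)}(A)\rangle_{\xi_p}\leq(1-(1+\epsilon)p^{-1})\|A\|_{\xi_p}^2$. The elementary identity $1-(1+1/p)(1-(1+\epsilon)/p)=\epsilon/p+(1+\epsilon)/p^2\geq\epsilon/p$ then gives $\|A\|_{\xi_p}^2-(1+1/p)\langle A,\EE^{\xi_p}_{h^p_k(p)}(A)\rangle_{\xi_p}\geq(\epsilon/p)\|A\|_{\xi_p}^2$. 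Combining with the previous step and absorbing the $O(p^{-k_0}\|A\|_{\xi_p}^2)=O(p^{-n-3}\|A\|_{\xi_p}^2)$ error into the main $(\epsilon/p)\|A\|_{\xi_p}^2$ term, which is legitimate by shrinking $\epsilon$ slightly for $p$ large, produces the claimed bound \cref{TrAdmuA>A}. The main technical content is purely a careful bookkeeping exercise: one must verify that every approximation error remains uniformly of order $p^{-k_0}\|A\|_{\xi_p}^2$, which is why the hypothesis $k_0\geq n+3$ is needed. This is both what makes \cref{approxomkplem} applicable in the first place, and what guarantees that the error term is strictly dominated by the main contribution of order $p^{-1}\|A\|_{\xi_p}^2$ coming from the spectral gap of the limiting Tian--Zhu operator $\Delta_{h_\infty}^{(\xi_\infty)}$.
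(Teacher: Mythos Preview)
Your proposal is correct and follows essentially the same route as the paper: both start from the general identity \cref{dmucomputfinal} derived inside the proof of \cref{dmu}, plug in $\s=e^B\s_k(p)$, compare the two integrals with the quantum-channel expressions \cref{quantchancomput} via \cref{approxbal} and \cref{approxomkplem}, and finish with the spectral gap estimate \cref{quantchanbnd}. Your write-up is somewhat more explicit than the paper's about the intermediate identifications (using \cref{pullbackprop} and \cref{dualpropT}), and your error term $O(p^{-k_0}\|A\|_{\xi_p}^2)$ is actually sharper than the paper's $O(p^{n-k_0}\|A\|_{\xi_p}^2)$ because you implicitly use the operator-norm bound $|\sigma_\s(A^r)|\leq\|A\|_{op}^r\leq C\|A\|_{\xi_p}^r$ rather than the cruder trace-norm bound; one small imprecision is that the measure error should be $O(p^{-k_0})$ rather than $O(p^{-k_0-1})$ when $k=k_0$ (the contribution from \cref{approxbalfla} after rescaling by $\Vol/\Tr[e^{L_{\xi_p}/p}]\sim p^{-n}$ is $O(p^{-k})$), but this does not affect the conclusion.
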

\begin{proof}
The proof uses the fact that \cref{dmu} is approximately satisfied
for approximately balanced metrics.
First note that for all $\s\in\BB(H^0(X,L^p))^T$ and all
$A\in\Herm(\C^{n_p})^K$,
Cauchy-Schwartz inequality and the fact that
$\Pi_\s$ is a rank-$1$ projector implies that
$|\sigma_\s(A)|_{\CC^0}\leq \|A\|_{tr}$ and
$|\sigma_\s(A^2)|_{\CC^0}\leq\|A\|^2_{tr}$.
Consider the operator $S_p$ acting on
$A\in\cL(H^0(X,L^p),H_{\s})^K$ by
\begin{equation}\label{Spdef}
S_p(A):=A-\left(1+\frac{1}{p}\right)\EE_{h_k^p(p)}^{\xi_p}(A)\,.
\end{equation}
Then
plugging $\s=e^{B}\s_k(p)$ into formula
\cref{dmucomputfinal} and comparing with
\cref{quantchancomput}, we can use
\cref{approxbal,approxomkplem}
to get a constant $C>0$
such that for all $p\in\N^*$,
for all $B\in\Herm(\C^{n_p})^K$
with $\|B\|_{\xi_p}\leq C^{-1} p^{-k_0}$ and 
for all $A\in\Herm(\C^{n_p})^K$
with $\<\Id,A\>_{\xi_p}=0$,
we get that
\begin{multline}\label{dmuestN}
\left|\frac{\Tr[e^{L_{\xi_p}/p}]}{\Vol(d\nu_{e^B\s_k(p)})}
\<A,D_{e^B\s_k(p)}\,
\mu_{\xi_p}(A)\>_{\xi_p}-2\<A,S_p(A)\>_{\xi_p}\,\right|
\leq C\,p^{n-k_0}\|A\|_{\xi_p}^2\,.
\end{multline}
We then get the result from \cref{quantchanbnd} by taking $k_0\geq n+3$.
\end{proof}

Thanks to the lower bound of \cref{dmuapprox},
we can now follow the standard
strategy of Donaldson in \cite{Don01}, adapted to the case of
general $\Aut(X)$.
The following result is inspired from the moment map
Lemma of Donaldson in \cite[Prop.\,17]{Don01}.
We provide a proof working in greater generality, as we
do not claim that \cref{momentdef}
defines a moment map of any kind.

\begin{prop}\label{Donlem}
Consider a $\CC^1$-map
\begin{equation}\label{muDonlemdef}
\mu:\BB(H^0(X,L^p))^T\to\Herm(\C^{n_p})^T
\end{equation}
satisfying $\mu(\textbf{s})\in\Herm(\C^{n_p})^K$
for all $\s\in\BB(H^0(X,L^p))^T$ inducing $H_{\s}\in\Prod(H^0(X,L^p))^K$,
and such that $\mu(U\s)=U\mu(\s)\,U^*$ for all $U\in U(n_p)^T$ and
$\<\Id,\mu(\textbf{s})\>_{\xi_p}=\<L_\eta,\mu(\textbf{s})\>_{\xi_p}=0$
for all $\eta\in\sqrt{-1}\Lie T$.

Assume that there exist $\s\in\BB(H^0(X,L^p))^T$ inducing
$H_{\s}\in\Prod(H^0(X,L^p))^K$
and $\lambda,\,\delta>0$ such that
\begin{itemize}
\item[$(1)$] $\lambda\,\|\mu_{\xi_p}(\s)\|_{\xi_p}<\delta\,;$
\item[$(2)$] $\lambda\<A,D_{e^{B}\s}\mu_{\xi_p}(A)\>_{\xi_p}
\geq \|A\|^2_{\xi_p},
~\text{for all}~B\in\Herm(\C^{n_p})^K~
\text{such that}\,
\|B\|_{\xi_p}\leq\delta~\text{and all}\\
\,A\in\Herm(\C^{n_p})^K~\,\text{such that}~
\<\Id,A\>_{\xi_p}=\<L_\eta,A\>_{\xi_p}=0
~\,\text{for all}~\,\eta\in\sqrt{-1}\Lie T$.
\end{itemize}
Then there exists $B\in\Herm(\C^{n_p})^K$
with $\|B\|_{\xi_p}\leq\delta$ and $\mu(e^{B}\s)=0$.
\end{prop}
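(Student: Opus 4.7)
The plan is to solve $\mu(e^{B}\s)=0$ by running the downward gradient flow of the norm-squared of $\mu$ on $\Herm(\C^{n_p})^K$, starting from $B=0$, and showing that condition (2) forces exponential decay while condition (1) prevents the flow from leaving the $\delta$-ball where condition (2) is assumed to hold. Concretely I would consider the ODE
\begin{equation}
\dot{B}(t)=-\lambda\,\mu(e^{B(t)}\s)\,,\qquad B(0)=0\,,
\end{equation}
which is well-posed locally in $t$ since $\mu$ is $\CC^1$. The equivariance $\mu(U\s)=U\mu(\s)U^*$ for $U\in U(n_p)^T$ combined with $\mu(\s)\in\Herm(\C^{n_p})^K$ whenever $H_\s\in\Prod(H^0(X,L^p))^K$ guarantees that $\dot B(t)$ stays in $\Herm(\C^{n_p})^K$, so that $e^{B(t)}\s$ remains a basis with $K$-invariant inner product. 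Likewise, the standing orthogonality hypothesis $\<\Id,\mu(\s)\>_{\xi_p}=\<L_\eta,\mu(\s)\>_{\xi_p}=0$ applied at every point of the flow guarantees that the direction $A=\mu(e^{B(t)}\s)$ is always an admissible test vector for condition (2).

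The heart of the argument is then a differential-inequality computation. As long as $\|B(t)\|_{\xi_p}\leq\delta$, applying condition (2) to $A=\mu(e^{B(t)}\s)$ gives
\begin{equation}
\tfrac{d}{dt}\,\|\mu(e^{B(t)}\s)\|_{\xi_p}^{2}
=-2\lambda\,\<\mu(e^{B(t)}\s),D_{e^{B(t)}\s}\mu(\mu(e^{B(t)}\s))\>_{\xi_p}
\leq -2\,\|\mu(e^{B(t)}\s)\|_{\xi_p}^{2}\,,
\end{equation}
so that $\|\mu(e^{B(t)}\s)\|_{\xi_p}\leq e^{-t}\|\mu(\s)\|_{\xi_p}$. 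Integrating the ODE and invoking condition (1) then yields
\begin{equation}
\|B(t)\|_{\xi_p}\leq\int_0^t\|\dot{B}(u)\|_{\xi_p}\,du\leq\lambda\,\|\mu(\s)\|_{\xi_p}(1-e^{-t})<\delta\,,
\end{equation}
so the flow never touches the boundary of the $\delta$-ball. A standard continuation argument in the finite dimensional space $\Herm(\C^{n_p})^K$ then rules out blow-up and extends the flow to all $t\in[0,+\infty)$.

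Finally the bound $\|\dot{B}(t)\|_{\xi_p}\leq\lambda\,\|\mu(\s)\|_{\xi_p}\,e^{-t}$ makes $\dot B$ integrable on $[0,+\infty)$, so $B(t)$ is Cauchy and converges to some $B_\infty\in\Herm(\C^{n_p})^K$ with $\|B_\infty\|_{\xi_p}\leq\lambda\|\mu(\s)\|_{\xi_p}<\delta$. Continuity of $\mu$ and the exponential decay of $\|\mu(e^{B(t)}\s)\|_{\xi_p}$ give $\mu(e^{B_\infty}\s)=0$, which is the desired conclusion. The only delicate point is the self-consistent use of condition (2), which presupposes $\|B(t)\|_{\xi_p}\leq\delta$; this is precisely what the exponential contraction produced by (2) itself, together with the smallness assumption (1) at the starting point, guarantees a posteriori. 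Since the orthogonality of $\mu(e^{B(t)}\s)$ to $\Id$ and the $L_\eta$ is built into the hypotheses on $\mu$, no further obstruction arises.
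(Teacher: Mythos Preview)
Your approach has a genuine gap in the chain-rule step. You implicitly assume that
\[
\frac{d}{dt}\,\mu\bigl(e^{B(t)}\s\bigr)=D_{e^{B(t)}\s}\mu\bigl(\dot B(t)\bigr),
\]
but $D_{\s'}\mu(A)$ is by definition the derivative along the curve $u\mapsto e^{uA}\s'$, whose tangent at $u=0$ is $A\cdot\s'$. For your curve $t\mapsto e^{B(t)}\s$, Duhamel's formula gives
\[
\frac{d}{dt}e^{B(t)}=\Bigl(\int_0^1 e^{sB(t)}\dot B(t)\,e^{-sB(t)}\,ds\Bigr)e^{B(t)},
\]
so the actual left-trivialized direction is $\int_0^1 e^{sB}\dot B\,e^{-sB}\,ds$, not $\dot B$. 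These agree only when $[B,\dot B]=0$, which there is no reason to expect here. Worse, for $B,\dot B$ Hermitian and non-commuting this averaged conjugate is not even Hermitian, so it is not an admissible test vector for condition~(2). Hence your differential inequality is not justified as written.

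The paper avoids this by running the flow on the orbit $\GL(\C^{n_p})^K\cdot\s$ itself via $\dot\s_t=-\mu(\s_t)$, i.e.\ $\dot G_tG_t^{-1}=-\mu(\s_t)$ with $\s_t=G_t\s$. By construction the tangent direction at $\s_t$ is then exactly $-\mu(\s_t)$, and the differential inequality follows immediately. The price is that $G_t$ is no longer of the form $e^{B_t}$ with $B_t$ Hermitian; one writes $G_t=U_te^{B_t}$ by polar decomposition, uses the equivariance $\mu(U\s)=U\mu(\s)U^*$ to upgrade condition~(2) to the corresponding statement at all points $U_te^{B_t}\s$, and then controls $\|B_t\|_{\xi_p}$ via the Riemannian length of the projected path in the symmetric space $\GL(\C^{n_p})^K/U(n_p)^K$. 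That length is $\int_0^{t_0}\|\mu(\s_t)\|_{\xi_p}\,dt$, which is bounded by $\delta$ from the exponential decay and condition~(1), exactly as in your final step.
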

\begin{proof}
First note that for any $A,\,B\in\Herm(\C^{n_p})^T$, $U\in U(n_p)^T$
and $\s\in\BB(H^0(X,L^p))^T$, using that $\mu(U\s)=U\mu(\s)\,U^*$,
we get
\begin{equation}
\begin{split}
\Tr[A\,D_{U\s}\mu(A)\,e^{L_{\xi_p}/p}]
&=\dt\Big|_{t=0}\Tr[A\,\mu(e^{tA}U\s)\,e^{L_{\xi_p}/p}]\\
&=\Tr[U^*AU\,D_\s\mu(U^*AU)\,e^{L_{\xi_p}/p}]\,.
\end{split}
\end{equation}
Thus assumption (2) is
equivalent to
\begin{itemize}
\item[$(2')$] $\lambda\<A,D_{Ue^{B}\s}\mu(A)\>_{\xi_p}\geq \|A\|^2_{\xi_p},
~\text{for all}~B\in\Herm(\C^{n_p})^K~
\text{such that}\,
\|B\|_{\xi_p}\leq\delta\,,~\text{all}\\ U\in U(n_p)^K~
\text{and all}~
\,A\in\Herm(\C^{n_p})^K~\,\text{such that}~
\<\Id,A\>_{\xi_p}=\<L_\eta,A\>_{\xi_p}=0~
\text{for all}\\ \eta\in\sqrt{-1}\Lie T$.
\end{itemize}
Let now $\s\in\BB(H^0(X,L^p))^T$
inducing $H_{\s}\in\Prod(H^0(X,L^p))^K$
be such that assumptions $(1)$ and $(2)$ are satisfied,
and consider the induced identification \cref{isomHermK}.
Then the map
\begin{equation}\label{ProdsymK}
\begin{split}
\GL(\C^{n_p})^K&\longrightarrow
\Prod(H^0(X,L^p))^K\\
G&\longmapsto H_{G\s}\,,
\end{split}
\end{equation}
identifies $\Prod(H^0(X,L^p))^K$
with the quotient of $\GL(C^{n_p})^K$ by $U(n_p)^K$.
This realizes $\Prod(H^0(X,L^p))^K$ as a
symmetric space, whose tangent space at every point
is naturally identified with $\Herm(\C^{n_p})^K$.
The scalar product $\<\cdot,\cdot\>_{\xi_p}$
then makes this space into
a complete Riemannian manifold, whose 
geodesics are of the
form
\begin{equation}\label{geodK}
t\longmapsto H_{e^{tB}\s}\in
\Prod(H^0(X,L^p))^K,\,t\in\R\,,
\end{equation}
for all $B\in\Herm(\C^{n_p})^K$.

Note on the other hand that the tangent space of the
orbit $\GL(\C^{n_p})^K.\,\s\subset\BB(H^0(X,L^p))^T$
is naturally identified with the space of endomorphisms
commuting with the action of $K$ on $\C^{n_p}$.
Then by assumption, the restriction of
the map \cref{muDonlemdef} to this orbit
can be identified with a vector field along this orbit, and we
define $\s_t\in\GL(\C^{n_p})^K.\,\s$
for all $t>0$ as the solution of the ODE
\begin{equation}\label{stdef}
\left\{
\begin{array}{l}
  \dt\,\s_t=-\mu(\s_t)\quad\text{for all}\quad t\geq 0\,, \\
  \\
  \s_0=\s\,.
\end{array}
\right.
\end{equation}
If $\mu(\s)=0$, then the result is trivially satisfied,
so that we can assume $\mu(\s)\neq 0$, in which case
$\mu(\s_t)\neq 0$ for all $t\geq 0$.
Let $t_0\geq 0$ be such that
there exist $U_t\in U(n_p)^K$ and
$B_t\in\Herm(\C^{n_p})^K$
with $\|B_t\|_{\xi_p}\leq\delta$ such that $\s_t=U_te^{B_t}\s$
for all $t\in[0,t_0]$.
Using assumption
$(2')$ with $A:=\mu(\s_t)$, for all $t\in[0,t_0]$ we have
\begin{equation}
-\lambda\dt\|\mu(\s_t)\|^2_{\xi_p}=
2\lambda\<\mu(\s_t)\,D_{\s_t}\mu(\mu(\s_t))\>_{\xi_p}
\geq 2\|\mu(\s_t)\|^2_{\xi_p}\,.
\end{equation}
By derivation of the square, this implies
$\lambda\dt\|\mu(\s_t)\|_{\xi_p}
\leq-\|\mu(\s_t)\|_{\xi_p}$ for all
$t\in[0,t_0]$, so that using Grönwall's lemma with initial
condition $(1)$ and as
$\mu(\s_t)=U_t\mu(e^{B_t}\s)\,U^*_t$, we get
\begin{equation}\label{Gronwallappli}
\|\mu(e^{B_t}\s)\|_{\xi_p}=
\|\mu(\s_t)\|_{\xi_p}\leq e^{-t/\lambda}\,\|\mu(\s)\|_{\xi_p}<
\frac{\delta}{\lambda}\,e^{-t/\lambda}\,.
\end{equation}
Then by equation \cref{stdef}, the
Riemannian length $L(t_0)\geq 0$
of the path
$\{t\mapsto H_{\s_t}\}_{t\in[0,t_0]}
\subset\Prod(H^0(X,L^p))^K$ satisfies
\begin{equation}\label{path<delta}
L(t_0)=\int_0^{t_0}\,\|\mu(\s_t)\|_{\xi_p}\,dt
<\frac{\delta}{\lambda}\int_0^{+\infty}
e^{-t/\lambda}\,dt=\delta\,.
\end{equation}
This means that there exists $\epsilon>0$
such that all points of
$\{t\mapsto H_{\s_t}\}_{t\in[0,t_0+\epsilon]}$
can be joined by a geodesic of
length strictly less than $\delta$, i.e., that for each
$t\in[0,t_0+\epsilon]$, there exists
$B_t\in\Herm(\C^{n_p})^K$
with $\|B_t\|_{\xi_p}\leq\delta$ such that
$H_{\s_t}=H_{e^{B_t}\s}$, so that there exists
$U_t\in U(n_p)^K$
such that $\s_t=U_te^{B_t}\s$.
Thus $I:=\{t_0\geq 0\,|\,L(t_0)<\delta\}$
is non-empty, open and closed in $[0,+\infty[$,
so that $I=[0,+\infty[$.
In particular, the path $\{t\mapsto H_{\s_t}\}_{t>0}$
has total Riemannian length strictly less than $\delta$,
so that it converges to a limit point 
$H_{e^{B_\infty}\s}\in\Prod(H^0(X,L^p))^K$ by completeness,
with
$B_\infty\in\Herm(\C^{n_p})^K$ satisfying
$\|B_\infty\|_{\xi_p}\leq\delta$.
Finally, inequality \cref{Gronwallappli}
for all $t>0$
implies
\begin{equation}
\|\mu(e^{B_\infty}\s)\|_{\xi_p}=
\lim_{t\fl+\infty}\|\mu(e^{B_t}\s)\|_{\xi_p}=0\,.
\end{equation}
This gives the result.
\end{proof}

{\noindent
\textbf{Proof of existence and convergence in \cref{mainth}.}}
Let $h^p\in\Met^+(L^p)^K$,
let $\s_p\in\BB(H^0(X,L^p))^T$ be orthonormal with respect to $L^2(h^p)$,
and consider the identification \cref{isomHermK}.
For any $\xi\in\sqrt{-1}\Lie T$, using \cref{FSvar2} and
formulas \cref{intPirho=Id,cohstatecoordxi},
we get from \cref{momentdef} the following inequality,
for all $A\in\Herm(\C^{n_p})^K$,
\begin{equation}\label{muRawn}
\begin{split}
&\frac{\Tr[e^{L_\xi/p}]}{\Vol(d\nu_{\s_p})}\<A,\mu_\xi(\s_p)\>_{\xi}\\
&=\frac{\Tr[e^{L_\xi/p}]}{\Vol(d\nu_{\s_p})}\int_X\,\sigma_{e^{L_\xi/2p}\s_p}(A)\,
d\nu_{e^{L_\xi/2p}\s_p}-
\int_X\,\sigma_{h^p}(e^{L_\xi/p}A)\,\rho_{h^p}\,d\nu_{h}\\
&=\int_X\,\sigma_{e^{L_\xi/2p}\s_p}(A)
\left(\frac{\Tr[e^{L_\xi/p}]}{\Vol(d\nu_{\s_p})}\frac{d\nu_{e^{L_\xi/2p}\s_p}}{d\nu_h}
-\sigma_{h^p}(e^{L_\xi/p})\,\rho_{h^p}\right) d\nu_h\,.
\end{split}
\end{equation}
For any
$k\in\N$ and $p\in\N^*$ big enough, consider now the approximately balanced
metric $h_k(p)\in\Met^+(L)^K$ of \cref{approxbal},
let $\s_k(p)\in\BB(H^0(X,L^p))^T$ be orthonormal with respect to
$L^2(h_k^p(p))$, and let $\{\xi_p\in\sqrt{-1}\Lie T\}_{p\in\N^*}$ be the
sequence of \cref{xipmin}.
By \cref{momentdef} and \cref{Trmu}, the relative moment map
$\mu_{\xi_p}:\BB(H^0(X,L^p))^T\to\Herm(\C^{n_p})^T$
satisfies the basic
assumptions of \cref{Donlem}, so that if suffices to show
that $\s_k(p)\in\BB(H^0(X,L^p))^T$ satisfies the assumptions
$(1)$ and $(2)$ of \cref{Donlem}, for some appropriate
$\lambda,\,\delta>0$.
Using \cref{approxbal,approxomkplem}
and the fact that
$|\sigma_{e^{L_{\xi_p}/2p}\s_k(p)}(A)|_{\CC^0}\leq \|A\|_{tr}$ by
Cauchy-Schwartz inequality, we get from formula \cref{muRawn}
a constant $C>0$ such that for all
$p\in\N^*$ and all $A\in\Herm(\C^{n_p})^K$, we have
\begin{equation}
\frac{\Tr [e^{L_\xi/p}]}{\Vol(d\nu_{\s_k(p)})}
\<A,\mu_{\xi_p}(\s_k(p))\>_{\xi_p}\leq C\|A\|_{\xi_p}\,p^{n-k}\,,
\end{equation}
which implies $\frac{\Tr [e^{L_\xi/p}]}{\Vol(d\nu_{\s_k(p)})}\|\mu_\xi(\s_k(p))\|_{\xi_p}\leq C p^{n-k}$
for all $p\in\N^*$.
Taking $k_0\geq n+3$, we can then choose $k> k_0+n+1$,
and \cref{dmuapprox} together with \cref{Tuycor}
shows that $\s_k(p)\in\BB(H^0(X,L^p))^T$ satisfies the assumptions
$(1)$ and $(2)$ of \cref{Donlem}
for $p\in\N^*$ big enough, with
\begin{equation}
\lambda:=\frac{p}{\epsilon}\frac{\Tr[e^{L_{\xi_p}/p}]}
{\Vol(d\nu_{\s_k(p)})}
~~~~~\text{and}~~~~~
\delta:=\frac{C}{\epsilon}p^{n+1-k}\,.
\end{equation}
This gives Hermitian endomorphisms
$B_p\in\Herm(\C^{n_p})^K$ with 
$\|B_p\|_{\xi_p}\leq \epsilon p^{-k_0}$ such that
$\mu_\xi(e^{B_p}\s_k(p))=0$ for all $p\in\N^*$ big enough.
By \cref{momentbal}, the Hermitian metrics
$h_p:=h_{e^{B_p}e^{L_{\xi_p}/p}\s_k(p)}^p\in\Met^+(L^p)^K$
are then anticanonically balanced relative to $\xi_p$
for all $p\in\N^*$ big enough.
If we also chose $k_0> n+1+m/2$ for some
$m\in\N$, \cref{approxomkplem}
shows
the $\CC^{m}$-convergence \cref{mainthfla} to the Kähler-Ricci soliton
$\om_{h_\infty}$. Together with \cref{xipexp}, this concludes the proof
of \cref{mainth}.

\qed

\subsection{Energy functional and uniqueness}
\label{uniquesec}


In this Section, we will establish the uniqueness statement in
\cref{mainth} as a quantization of the analogous argument of Tian and
Zhu in \cite[Th.\,3.2]{TZ02}, using our study in \cref{quantFutsec}
of quantized Futaki invariants as obstructions for relative
anticanonically balanced metrics,
and convexity results
due to Berndtsson \cite{Ber09a,Ber09b} applied to
the energy functional associated with the relative moment map
of \cref{momentdef}.

Recall the setting of \cref{findimsec}.
Via the natural action of $\GL(\C^{n_p})^T$ on the space
$\BB(H^0(X,L^p))^T$, the quotient map
\begin{equation}\label{Prodsym}
\begin{split}
\BB(H^0(X,L^p))^T&\longrightarrow\BB(H^0(X,L^p))^T/U(n_p)^T\\
\s&\longmapsto [\s]\,,
\end{split}
\end{equation}
identifies $\BB(H^0(X,L^p))^T/U(n_p)^T$ with the space of
$T$-invariant Hermitian inner products on $H^0(X,L^p)$.
The twisted trace product \cref{Trxi} makes this space into
a complete Riemannian manifold, whose 
geodesics are of the
form
\begin{equation}\label{geod}
t\longmapsto [e^{tA}\s]\in\BB(H^0(X,L^p))^T/U(n_p)^T,\,t\in\R\,,
\end{equation}
for all $A\in\Herm(\C^{n_p})^T$.
Fixing a base point $\s_0\in\BB(H^0(X,L^p))^T$,
the free and transitive action  of $\GL(\C^{n_p})^T$ on $\BB(H^0(X,L^p))^T$
induces an identification
\begin{equation}
\BB(H^0(X,L^p))^T\simeq\GL(\C^{n_p})^T\,,
\end{equation}
and this induces a determinant map
\begin{equation}
\text{det}_{\s_0}:\BB(H^0(X,L^p))^T/U(n_p)^T\longrightarrow\,]0,+\infty[\,.
\end{equation} 
The following energy functional has been introduced in
\cite[\S\,4.2.2]{BW14}.

\begin{defi}\label{Psidef}
The \emph{energy functional} $\Psi_\xi:\BB(H^0(X,L^p))^T/U(n_p)^T\to\R$
relative to $\xi\in\sqrt{-1}\Lie T$
is defined for all $H\in\Prod(H^0(X,L^p))$ by
\begin{equation}\label{Psifla}
\Psi([\s])=-\log\Vol(d\nu_{\s})
-\frac{2}{p}\frac{\log\det_{\s_0}[e^{L_\xi/p}\s]}{\Tr[e^{L_\xi/p}]}\,.
\end{equation}
\end{defi}

For any $\xi\in\sqrt{-1}\Lie T$,
recall the induced scalar product \cref{Trxi} on $\Herm(\C^{n_p})^T$.
The role of the energy functional of \cref{Psidef} comes from the
following identity.

\begin{lem}
For all $\s\in\BB(H^0(X,L^p))^T$
and $A\in\Herm(\C^{n_p})^T$, we have
\begin{equation}\label{dpsi=mu}
\frac{d}{dt}\Big|_{t=0}\Psi_\xi([e^{tA}\s])=\frac{2}{p\,
\Vol(d\nu_\s)}\<\mu_\xi(\s),A\>_{\xi}\,.
\end{equation}
\end{lem}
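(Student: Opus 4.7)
The plan is to differentiate each of the two terms in \cref{Psifla} for $\Psi_\xi([e^{tA}\s])$ at $t=0$, and then to verify the result agrees with $\frac{2}{p\Vol(d\nu_\s)}\<\mu_\xi(\s),A\>_\xi$ by expanding the pairing directly from the definition \cref{momentdef} of the relative moment map.

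For the volume term $-\log\Vol(d\nu_\s)$, I would first invoke \cref{FSvar} with $B=tA$, which gives $h^p_{e^{tA}\s}=\sigma_\s(e^{2tA})^{-1}h^p_\s$, hence $h_{e^{tA}\s}=\sigma_\s(e^{2tA})^{-1/p}h_\s$ as metrics on $L$. The scaling rule \cref{dnuef/dnu} then yields $d\nu_{e^{tA}\s}=\sigma_\s(e^{2tA})^{-1/p}\,d\nu_\s$, and differentiating at $t=0$ using $\sigma_\s(\Id)=\Tr[\Pi_\s]=1$ produces $\frac{d}{dt}\big|_0 d\nu_{e^{tA}\s}=-\frac{2}{p}\sigma_\s(A)\,d\nu_\s$, so that
\begin{equation*}
-\frac{d}{dt}\Big|_{t=0}\log\Vol(d\nu_{e^{tA}\s})=\frac{2}{p\Vol(d\nu_\s)}\int_X\sigma_\s(A)\,d\nu_\s.
\end{equation*}
For the $\log\det$ term I would use the identification $\BB(H^0(X,L^p))^T\simeq\GL(\C^{n_p})^T$ induced by $\s_0$ to write $\s=G\s_0$; the crucial observation is that $e^{L_\xi/p}$ lies in the center of $U(n_p)^T$ when $\xi\in\sqrt{-1}\Lie T$, being block-scalar with eigenvalue $e^{(\chi,\xi)/p}$ on each isotypic piece $H^0(X,L^p)_\chi$ in the decomposition \cref{GLT}. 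This ensures $\det_{\s_0}[e^{L_\xi/p}\s]$ descends to a well-defined positive function of $[\s]\in\BB^T/U^T$, and the block-diagonal computation yields $\frac{d}{dt}\big|_0\log\det_{\s_0}[e^{L_\xi/p}e^{tA}\s]=\Tr[e^{L_\xi/p}A]$, so that this term contributes $-\frac{2\Tr[e^{L_\xi/p}A]}{p\Tr[e^{L_\xi/p}]}$ to the derivative.

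To match with the right-hand side, I would compute $\<\mu_\xi(\s),A\>_\xi=\Tr[e^{L_\xi/p}\mu_\xi(\s)A]$ directly. For the non-scalar part $M_{jk}=\int_X\<s_j,s_k\>_{h^p_{e^{L_\xi/2p}\s}}d\nu_{e^{L_\xi/2p}\s}$, the matrix identity \cref{cohstatecoordxi} rewrites $M$ in terms of $\Pi_{e^{L_\xi/2p}\s}$; using the block-scalar structure of $e^{L_\xi/p}$ and cyclicity of the trace within each block then gives
\begin{equation*}
\Tr\left[e^{L_\xi/p}MA\right]=\int_X\sigma_{e^{L_\xi/2p}\s}\bigl(e^{-L_\xi/2p}Ae^{L_\xi/2p}\bigr)\,d\nu_{e^{L_\xi/2p}\s},
\end{equation*}
and the second identity of \cref{pullbackprop} identifies the integrand as $\phi_{\xi/2p}^*\sigma_\s(A)$. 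Combined with $d\nu_{e^{L_\xi/2p}\s}=\phi_{\xi/2p}^*d\nu_\s$ from \cref{etxi*dnu} and a change of variables under the diffeomorphism $\phi_{\xi/2p}$, this collapses to $\int_X\sigma_\s(A)\,d\nu_\s$. The scalar part of $\mu_\xi(\s)$ contributes $-\Vol(d\nu_\s)\Tr[e^{L_\xi/p}A]/\Tr[e^{L_\xi/p}]$ directly from the definition of $\<\cdot,\cdot\>_\xi$, and summing the two exactly reproduces the derivative of $\Psi_\xi$.

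The only genuinely delicate step is the pullback-plus-change-of-variables identity for the non-scalar contribution: every ingredient is already recorded in \cref{etxi*dnu,FSvar,FSvar2,pullbackprop,cohstatecoordxi}, but they must be combined in the correct order so that the twist by $e^{L_\xi/2p}$ on both the Berezin symbol and the anticanonical volume form cancels through the action of $\phi_{\xi/2p}$. This is the anticanonical analogue of the Kempf–Ness-type identity underlying Donaldson's moment map picture in \cite{Don01}, and once established, the formula \cref{dpsi=mu} follows by direct comparison.
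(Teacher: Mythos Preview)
Your proposal is correct and follows essentially the same route as the paper: differentiate each term of \cref{Psifla} using \cref{FSvar} and \cref{dnuef/dnu}, expand $\<\mu_\xi(\s),A\>_\xi$ from \cref{momentdef} via \cref{cohstatecoordxi}, and match the two expressions by a change of variables under $\phi_{\xi/2p}$ using \cref{pullbackprop} and \cref{etxi*dnu}. You spell out more intermediate steps (well-definedness of the $\log\det$ term on the quotient, the identity $e^{-L_\xi/2p}Ae^{L_\xi/2p}=A$ from centrality), but the structure and ingredients are identical to the paper's proof.
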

\begin{proof}
Using \cref{FSvar} and formula \cref{dnuef/dnu},
from \cref{Psidef} we compute
\begin{equation}
\frac{d}{dt}\Big|_{t=0}\Psi([e^{tA}\s])=\frac{2}{p\,
\Vol(d\nu_{\s})}\int_X\,\sigma_{\s}(A)\,
d\nu_{\s}-\frac{2}{p}\frac{\Tr[e^{L_\xi/p}A]}{\Tr[e^{L_\xi/p}]}\,.
\end{equation}
On the other hand, using formula \cref{cohstatecoordxi},
\cref{momentdef} gives
\begin{equation}
\Tr[e^{L_\xi/p}\mu_\xi(\s)A]=
\int_X\,\sigma_{e^{L_{\xi/2p}}\s}(A)\,d\nu_{e^{L_{\xi/2p}}\s}
-\frac{\Vol(d\nu_\s)}{\Tr[e^{L_\xi/p}]}\Tr[e^{L_\xi/p}A]\,.
\end{equation}
From formula \cref{etxi*dnu} and \cref{pullbackprop}, a change
of variable with respect to $\phi_{\xi/2p}\in T_\C$ gives the result.
\end{proof}

By \cref{momentbal}, this implies in particular that
$[\s]\in\BB(H^0(X,L^p))^T/U(n_p)^T$ is a
critical point of $\Psi_\xi:\BB(H^0(X,L^p))^T/U(n_p)^T\to\R$
if and only if
there exists $h^p\in\Met^+(L^p)^T$ anticanonically balanced relative to
$\xi$ with $\s$ orthonormal with respect to $L^2(h^p)$.

The following result is a consequence of the
results of \cite{Ber09a,Ber09b} on positivity of direct images.

\begin{prop}\label{BBGZ}
For any $\xi\in\sqrt{-1}\Lie T$,
the energy functional of \cref{Psidef}
is convex along geodesics of $\BB(H^0(X,L^p))^T/U(n_p)^T$, and
strictly convex except along geodesics of the form
$t\longmapsto [e^{t(L_\eta+c)}\s]$, with $c\in\R$ and $\eta\in\Lie\Aut(X)$
such that $L_\eta\in\cL(H^0(X,L^p),H_\s)^T$.
\end{prop}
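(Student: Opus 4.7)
The plan is to view $\Psi_\xi$ as a quantized relative Ding functional and apply Berndtsson's positivity of direct images \cite{Ber09a,Ber09b} to the complexification of the geodesic $t\mapsto[e^{tA}\s]$, for fixed $\s\in\BB(H^0(X,L^p))^T$ and $A\in\Herm(\C^{n_p})^T$.

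First I would reduce the problem to convexity of $t\mapsto-\log\Vol(d\nu_{e^{tA}\s})$. Writing $\s=g\s_0$ with $g\in\GL(\C^{n_p})^T$, the quantity $\log\det_{\s_0}[e^{L_\xi/p}e^{tA}\s]$ is, up to an additive constant, equal to $t\,\Tr A$: the factor $e^{tA}$ sits inside $\GL(\C^{n_p})^T$ and contributes $\log|\det e^{tA}|=t\,\Tr A\in\R$ since $A=A^*$. Hence the second term of \cref{Psifla} is affine in $t$ along any geodesic, and the Hessian of $\Psi_\xi$ coincides with that of $-\log\Vol(d\nu_{e^{tA}\s})$.

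Next I would complexify $t$ to $\tau\in S:=\{\tau\in\C\mid|\Im\tau|<R\}$ and study $h_\tau:=h_{e^{\tau A}\s}$. Since $A=A^*$, the inner product $H_{e^{\tau A}\s}$ depends only on $\Re\tau$, and \cref{hpeBsfla} extends to $h_\tau^p=h_\s^p/\sigma_\s(e^{(\tau+\bar\tau)A})$. Taking $\{s_j\}_{j=1}^{n_p}$ an $H_\s$-orthonormal basis, a direct computation in the basis $\{s_j^\tau:=e^{\tau A}s_j\}$ yields
\[
\sigma_\s(e^{(\tau+\bar\tau)A})(x)=\sum_{j=1}^{n_p}\bigl|s_j^\tau(x)\bigr|^2_{h^p_\s}\,,
\]
so that $\log\sigma_\s(e^{(\tau+\bar\tau)A})$ is plurisubharmonic in $(\tau,x)\in S\times X$, being the logarithm of a sum of squared moduli of the holomorphic family of sections $\tau\mapsto s_j^\tau$. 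Consequently, $h_\tau$ defines a semi-positively curved metric on $\pi^*L=\pi^*K_X^{-1}$ over $S\times X$, where $\pi:S\times X\to X$ is the projection. Using \cref{dnuef/dnu} to relate $d\nu_{h_\tau}$ to $d\nu_{h_\s}$, Berndtsson's subharmonicity of the Ding functional \cite{Ber09a} then implies that $\tau\mapsto-\log\int_X d\nu_{h_\tau}$ is PSH on $S$. Since it depends only on $\Re\tau$, this is equivalent to convexity in $t$, giving the convexity statement.

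For the strict convexity assertion, I would invoke the rigidity part of Berndtsson's theorem in \cite{Ber09b}: equality in the Ding subharmonicity occurs only when the family of Kähler metrics on $X$ associated with $h_\tau$ is generated by the flow of a holomorphic vector field, i.e., the moving basis $e^{\tau A}\s$ is carried by a one-parameter subgroup of $\Aut_0(X)$ acting through $\Kod_p$, together with an overall $\C^*$-rescaling. This forces $A=L_\eta+c$ for some $\eta\in\Lie\Aut(X)$ and $c\in\C$, and the Hermiticity of $A$ with respect to $H_\s$ then imposes $c\in\R$ and $L_\eta\in\cL(H^0(X,L^p),H_\s)^T$, matching the statement. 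The delicate point will be identifying Berndtsson's geometric rigidity, phrased in terms of horizontal holomorphic vector fields on $S\times X$, with this purely algebraic decomposition of $A$; this step relies on the fact that $\Kod_p$ is an embedding, so that any holomorphic motion of its image compatible with the Fubini-Study structure descends to an automorphism of $X$ together with rescaling of the tautological line bundle.
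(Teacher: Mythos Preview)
Your reduction of $\Psi_\xi$ to the convexity of $-\log\Vol(d\nu_{e^{tA}\s})$ and the appeal to Berndtsson's positivity of direct images is exactly the paper's argument; the paper simply cites \cite[Lem.\,7.2]{BBGZ13} for this step rather than spelling out the complexification.

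Where you and the paper diverge is in the treatment of the equality case. You extract from Berndtsson's rigidity that the family of Fubini--Study metrics moves by the flow of some $\eta\in\Lie\Aut(X)$, and then face the ``delicate point'' of lifting this geometric statement to the operator identity $A=L_\eta+c$. The paper bypasses this lift entirely. It reads Berndtsson's equality case as the infinitesimal condition
\[
\dt\Big|_{t=0}h_{e^{tA}\s}=(\theta_{h_\s}(\eta)+c)\,h_\s\,,
\]
and then combines the variation formula of \cref{FSvar} with the identity $\sigma_\s(L_\eta)=p\,\theta_{h_\s}(\eta)$ (formula \cref{sig=theta}) to rewrite this as an equality of Berezin symbols, $\sigma_\s(A)=\sigma_\s(L_\eta+c)$. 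The conclusion $A=L_\eta+c$ then follows from injectivity of the Berezin symbol $\sigma_\s:\cL(H^0(X,L^p),H_\s)\to\cinf(X,\R)$, which holds because the image of $\Kod_p$ is not contained in any proper projective subspace. This is precisely the nondegeneracy of the Kodaira embedding that you invoke at the end, but packaged as a clean linear-algebraic statement rather than a geometric descent argument; it makes the identification of $A$ immediate and avoids having to track Berndtsson's horizontal vector field through the embedding. The paper's route also makes the Hermiticity condition $L_\eta\in\cL(H^0(X,L^p),H_\s)^T$ transparent: since $\sigma_\s(A)$ is real, so is $\theta_{h_\s}(\eta)$, which via \cref{thetaxiprop} forces $J\eta$ to preserve $h_\s$.
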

\begin{proof}
By definition \cref{geod} of the geodesics in $\BB(H^0(X,L^p))^T/U(n_p)^T$,
the second term of formula \cref{Psifla} for $\Psi_\xi$ is clearly
affine along geodesics, so that it suffices to establish the convexity
of the first term.

Now as explained in the proof of \cite[Lem.\,7.2]{BBGZ13},
via formula \cref{holpot} the results of \cite{Ber09b}
imply that the first term of formula \cref{Psifla}
is convex along geodesics, and strictly convex along geodesics
except those generated by $A\in\Herm(\C^{n_p})^T$ such that
there exists $c>0$ and $\eta\in\Lie\Aut(X)$ with
\begin{equation}
\dt\Big|_{t=0}\,h_{e^{tA}\s}=(\theta_{h_\s}(\eta)+c)\,h_\s\,.
\end{equation}
By \cref{thetaxiprop}, the fact that $\theta_{h_\s}(\eta)\in\cinf(X,\R)$
implies that $L_{J\eta}h_\s=0$, so that
$L_\eta\in\cL(H^0(X,L^p),H_\s)^T$. By \cref{dualprop} and formula
\cref{sig=theta}, we then have $\sigma_\s(A)=\sigma_\s(L_\eta+c)$.
On the other hand, the image of the
Kodaira map \cref{Kod} is not contained in any
proper projective subspace of $\mathbb{P}(H^0(X,L^p)^*)$ by definition,
hence following for instance \cite[Prop.\,4.8]{IKPS19}, we know that
the Berezin symbol
$\sigma_\s:\cL(H^0(X,L^p),H_\s)^T\to\cinf(X,\R)$ is injective
(see also \cite[\S 3]{Has18}).
This concludes the proof.
\end{proof}

\noindent
\textbf{Proof of uniqueness in \cref{mainth}.}
First note that, if $h_p\in\Met^+(L^p)^T$ is anticanonically balanced
relative to $\xi_p\in\Lie\Aut(X)$, then for any $\phi\in\Aut_0(X)$,
the pullback metric $\phi^*\,h_p$ is anticanonically balanced
relative to $\phi^*\,\xi_p$. On the other hand,
following the argument of Tian-Zhu in the proof of \cite[Th.\,3.2]{TZ02},
if $\xi,\,\til{\xi}\in\Lie\Aut(X)$ are such that $J\xi,\,J\til{\xi}$
are in the Lie algebra of maximal compact subgroups
$K,\,\til{K}\subset\Aut_0(X)$,
then there exists $\phi\in\Aut_0(X)$ such that $\phi^*\,J\til{\xi}\in K$.
Using \cref{Futpropintro},
we are then reduced to show uniqueness up to $\Aut_0(X)$
of anticanonically balanced metrics
relative to the vector field $\xi_p\in\sqrt{-1}\Lie K$ of \cref{xipmin},
for a fixed maximal compact subgroup $K\subset\Aut_0(X)$.

Let now $h_p,\,\til{h}_p\in\Met^+(L^p)$ be anticanonically balanced
metrics
relative to $\xi_p\in\sqrt{-1}\Lie K$, let $T\subset K$ be the closure of
the subgroup generated by $J\xi_p$
and let $\s_p,\,\til{\s}_p\in\BB(H^0(X,L^p))^T$ be orthonormal with respect
to $L^2(h^p),\,L^2(\til{h}^p)$.
\cref{momentbal} and \cref{Psidef} imply that
$[\s_p],\,[\til{\s}_p]\in\BB(H^0(X,L^p))^T/U(n_p)^T$ are both critical points
of $\Psi_{\xi_p}$, so that $\Psi_{\xi_p}$ cannot be strictly convex along
the geodesic joining them. \cref{BBGZ} then implies
that there exists $c>0$ and $\eta\in\Lie\Aut(X)$ with
$\eta\in\cL(H^0(X,L^p),H_{\s_p})^T$
such that $[\til{\s}_p]=[ce^{L_\eta}\s_p]$. Then
$\phi_{\eta}^*\,\xi_p=\xi_p$,
and using the characterization
\cref{relbaldef2}, \cref{pullbackprop} implies
\begin{equation}
\om_{\til{h}_p}=\om_{e^{L_{\xi_p/2p}}\til{\s}_p}
=\om_{ce^{L_\eta}e^{L_{\xi_p/2p}}\s_p}
=\phi^*_{\eta}\,\om_{e^{L_{\xi_p/2p}}\s_p}=\phi^*_{\eta}\,\om_{h_p}\,.
\end{equation}
This concludes the proof.

\qed

%


\begin{thebibliography}{10}

\bibitem{BGV04}
N.~Berline, E.~Getzler, and M.~Vergne, \emph{Heat kernels and {D}irac
  operators}, Grundlehren Text Editions, Springer-Verlag, Berlin, 2004,
  Corrected reprint of the 1992 original.

\bibitem{BBGZ13}
R.~J. Berman, S.~Boucksom, V.~Guedj, and A.~Zeriahi, \emph{A variational
  approach to complex {M}onge-{A}mp\`{e}re equations}, Publ. Sci. IHES
  \textbf{117} (2013), no.~2, 179--245.

\bibitem{BW14}
R.~J. Berman and D.~Witt~Nystr\"{o}m, \emph{Complex optimal transport and the
  pluripotential theory of {K}\"{a}hler-{R}icci solitons},
  arXiv.org/abs/1401.8264 (2014).

\bibitem{Ber09a}
B.~Berndtsson, \emph{Curvature of vector bundles associated to holomorphic
  fibrations}, Ann. Math. \textbf{169} (2009), no.~2, 531--560.

\bibitem{Ber09b}
\bysame, \emph{Positivity of direct image bundles and convexity on the space of
  {K}\"{a}hler metrics}, J. Differential Geom. \textbf{81} (2009), no.~3,
  457--482.

\bibitem{BMS94}
M.~Bordemann, E.~Meinrenken, and M.~Schlichenmaier, \emph{Toeplitz quantization
  of {K}\"ahler manifolds and {${\rm gl}(N)$}, {$N\to\infty$} limits}, Comm.
  Math. Phys. \textbf{165} (1994), no.~2, 281--296.

\bibitem{BdMG81}
L.~Boutet~de Monvel and V.~Guillemin, \emph{The spectral theory of {T}oeplitz
  operators}, Annals of Mathematics Studies, vol.~99, Princeton University
  Press, Princeton, NJ; University of Tokyo Press, Tokyo, 1981.

\bibitem{BdMS75}
L.~Boutet~de Monvel and J.~Sj\"ostrand, \emph{Sur la singularité des noyaux de
  {B}ergman et de {S}zegö},  (1976), 123--164. Ast\'erisque, No. 34--35.

\bibitem{Cat99}
D.~Catlin, \emph{The {B}ergman {K}ernel and a {T}heorem of {T}ian}, Analysis
  and Geometry in Several Complex Variables, Trends in Mathematics, Birhauser,
  Boston, 1999, pp.~1--23.

\bibitem{DLM06}
X.~Dai, K.~Liu, and X.~Ma, \emph{On the asymptotic expansion of {B}ergman
  kernel}, J. Differential Geom. \textbf{72} (2006), no.~1, 1--41.

\bibitem{DR17}
T.~Darvas and Y.~Rubinstein, \emph{Tian's properness conjectures and {F}insler
  geometry of the space of {K}\"{a}hler metrics}, J. Amer. Math. Soc.
  \textbf{30} (2017), no.~2, 347--387.

\bibitem{Der16}
R.~Dervan, \emph{Uniform stability of twisted constant scalar curvature
  {K}\"{a}hler metrics}, Int. Math. Res. Not. IMRN (2016), no.~15, 4728--4783.

\bibitem{Don01}
S.~K. Donaldson, \emph{Scalar curvature and projective embeddings. {I}}, J.
  Differential Geom. \textbf{59} (2001), no.~3, 479--522.

\bibitem{Don09}
\bysame, \emph{Some numerical results in complex differential geometry}, Pure
  Appl. Math. Q. \textbf{5} (2009), no.~2, Special Issue: In honor of Friedrich
  Hirzebruch. Part 1, 571--618.

\bibitem{Fuj78}
A.~Fujiki, \emph{On automorphism groups of compact {K}\"{a}hler manifolds},
  Invent. Math. \textbf{44} (1978), no.~3, 225--258.

\bibitem{Fut87}
A.~Futaki, \emph{The {R}icci curvature of symplectic quotients of {F}ano
  manifolds}, Tohoku Math. J. (2) \textbf{39} (1987), no.~3, 329--339.

\bibitem{Fut04}
\bysame, \emph{Asymptotic {C}how semi-stability and integral invariants},
  Internat. J. Math. \textbf{15} (2004), no.~9, 967--979.

\bibitem{FM95}
A.~Futaki and T.~Mabuchi, \emph{Bilinear forms and extremal {K}\"{a}hler vector
  fields associated with {K}\"{a}hler classes}, Math. Ann. \textbf{301} (1995),
  no.~2, 199--210.

\bibitem{Has18}
Y.~Hashimoto, \emph{Mapping properties of the {H}ilbert and {F}ubini–{S}tudy
  maps in {K}\"ahler geometry}, arxiv.org/abs/1705.11025, Arxiv e-print (2021).

\bibitem{Has21}
\bysame, \emph{Quantisation of extremal {K}\"{a}hler metrics}, J. Geom. Anal.
  \textbf{31} (2021), no.~3, 2970--3028.

\bibitem{Ioo20}
L.~Ioos, \emph{Anticanonically balanced metrics over {F}ano manifolds}, Ann.
  Global Anal. Geom. (2022), 32\,pp, doi:10.1007/s10455-022-09834-4.

\bibitem{IKPS19}
L.~Ioos, V.~Kaminker, L.~Polterovich, and D.~Shmoish, \emph{Spectral aspects of
  the {B}erezin transform}, Ann. H. Lebesgue \textbf{3} (2020), 1343--1387.

\bibitem{IKP20b}
L.~Ioos, D.~Kazhdan, and L.~Polterovich, \emph{Almost representations of lie
  algebras and quantization}, arxiv.org/abs/2005.11693, Arxiv e-print (2020).

\bibitem{KS01}
A.~V. Karabegov and M.~Schlichenmaier, \emph{Identification of
  {B}erezin-{T}oeplitz deformation quantization}, J. Reine Angew. Math.
  \textbf{540} (2001), 49--76.

\bibitem{Kel09b}
J.~Keller, \emph{Twisted balanced metrics}, Lie Groups : New research,
  Novapublishers, \textbf{366} (2009), 15\,pp.

\bibitem{Lic59}
A.~Lichnerowicz, \emph{Isom\'{e}tries et transformations analytiques d'une
  vari\'{e}t\'{e} k\"{a}hl\'{e}rienne compacte}, Bull. Soc. Math. France
  \textbf{87} (1959), 427--437.

\bibitem{MM07}
X.~Ma and G.~Marinescu, \emph{Holomorphic {M}orse inequalities and {B}ergman
  kernels}, Progress in Mathematics, vol. 254, Birkh\"auser Verlag, Basel,
  2007.

\bibitem{MM08b}
\bysame, \emph{Toeplitz operators on symplectic manifolds}, J. Geom. Anal.
  \textbf{18} (2008), no.~2, 565--611.

\bibitem{MM15}
\bysame, \emph{Exponential estimate for the asymptotics of {B}ergman kernels},
  Math. Ann. \textbf{362} (2015), 1327--1347.

\bibitem{Mab04}
T.~Mabuchi, \emph{Stability of extremal {K}\"{a}hler manifolds}, Osaka J. Math.
  \textbf{41} (2004), no.~3, 563--582.

\bibitem{Mab18}
\bysame, \emph{Asymptotic polybalanced kernels on extremal {K}\"{a}hler
  manifolds}, Asian J. Math. \textbf{22} (2018), no.~4, 647--664.

\bibitem{Mat57}
Y.~Matsushima, \emph{Sur la structure du groupe d'hom\'{e}omorphismes
  analytiques d'une certaine variété k\"{a}hl\'{e}rienne}, Nagoya Math. J.
  \textbf{11} (1957), 145--150.

\bibitem{OSY12}
H.~Ono, Y.~Sano, and N.~Yotsutani, \emph{An example of an asymptotically {C}how
  unstable manifold with constant scalar curvature}, Ann. Inst. Fourier
  (Grenoble) \textbf{62} (2012), no.~4, 1265--1287.

\bibitem{PS04}
D.~H. Phong and J.~Sturm, \emph{Scalar curvature, moment maps, and the
  {D}eligne pairing}, Amer. J. Math. \textbf{126} (2004), no.~3, 693--712.

\bibitem{Pol12}
L.~Polterovich, \emph{Quantum unsharpness and symplectic rigidity}, Lett. Math.
  Phys. \textbf{102} (2012), 245--264.

\bibitem{Pol14}
\bysame, \emph{Symplectic geometry of quantum noise}, Comm. Math. Phys.
  \textbf{327} (2014), 481--519.

\bibitem{RTZ20}
Y.~A. Rubinstein, G.~Tian, and K.~Zhang, \emph{Basis divisors and balanced
  metrics}, J. Reine Angew. Math. \textbf{778} (2021), 171--218.

\bibitem{ST19}
S.~Saito and R.~Takahashi, \emph{Stability of anti-canonically balanced
  metrics}, Asian J. Math. \textbf{23} (2019), no.~6, 1041--1058.

\bibitem{ST17}
Y.~Sano and C.~Tipler, \emph{A moment map picture of relative balanced metrics
  on extremal {K}\"ahler manifolds}, J. Geom. Anal. \textbf{31} (2021), no.~6,
  5941--5973.

\bibitem{Sey17}
R.~Seyyedali, \emph{Relative chow stability and extremal metrics}, Adv. Math.
  \textbf{316} (2017), 770--805.

\bibitem{Tak15}
R.~Takahashi, \emph{Asymptotic stability for {K}\"{a}hler-{R}icci solitons},
  Math. Z. \textbf{281} (2015), no.~3-4, 1021--1034.

\bibitem{Tak21}
\bysame, \emph{Geometric quantization of coupled {K}\"{a}hler-{E}instein
  metrics}, Anal. PDE \textbf{14} (2021), no.~6, 1817--1849.

\bibitem{TZ00}
G.~Tian and X.~Zhu, \emph{Uniqueness of {K}\"{a}hler-{R}icci solitons}, Acta
  Math. \textbf{184} (2000), no.~2, 271--305.

\bibitem{TZ02}
\bysame, \emph{A new holomorphic invariant and uniqueness of
  {K}\"{a}hler-{R}icci solitons}, Comment. Math. Helv. \textbf{77} (2002),
  no.~2, 297--325.

\bibitem{Tuy87}
G.~M. Tuynman, \emph{Quantization: towards a comparison between methods}, J.
  Math. Phys. \textbf{28} (1987), no.~12, 2829--2840.

\bibitem{Wan05}
X.~Wang, \emph{Canonical metrics on stable vector bundles}, Comm. Anal. Geom.
  \textbf{13} (2005), no.~2, 253--285.

\bibitem{Zel98}
S.~Zelditch, \emph{Szeg\"{o} kernels and a theorem of {T}ian}, Int. Math. Res.
  Not. IMRN \textbf{6} (1998), 317--331.

\end{thebibliography}

\providecommand{\bysame}{\leavevmode\hbox to3em{\hrulefill}\thinspace}
\providecommand{\MR}{\relax\ifhmode\unskip\space\fi MR }
\providecommand{\MRhref}[2]{%
  \href{http://www.ams.org/mathscinet-getitem?mr=#1}{#2}
}
\providecommand{\href}[2]{#2}

\Addresses
\end{document}